%% Do not use another style files!!!
%
%%Format: Latex2e
%\documentclass[12pt, twoside, draft]{article}
%\usepackage[cp1251]{inputenc}
%\newcommand{\CopyName}{A.\ I.\ Bandura, O.\ B. Skaskiv} % Author's name
%\newcommand{\NAME}{A.\ I.\ Bandura, O.\ B.\ Skaskiv} %
%\newcommand{\Year}{2016} % Publishing year (to fill in by the Editors)
%\newcommand{\Volume}{?} % Volume (to fill in by the Editors)
%\newcommand{\Number}{?} % Issue (to fill in by the Editors)
%\newcommand{\Pages}{?--?} % Pages (to fill in by the Editors)
%\newcommand{\rightheadtext}{Boundedness of $\mathbf{L}$-index in a ball} %please give short version of the title, not exceeding 40 symbols
%\usepackage[english,russian,ukrainian]{babel}
%\usepackage{ms_we2}
%\usepackage{color}
%
%\newcommand{\tblue}[1]{\textcolor{blue}{#1}}
%\newcommand{\tred}[1]{\textcolor{red}{#1}}
%\newcommand{\tgreen}[1]{\textcolor{green}{#1}}
%\newcommand{\tyellow}[1]{\textcolor{yellow}{#1}}
%
%\newenvironment{zenumerate}{\newcounter{zitem}\setcounter{zitem}{0}}
%{}
%\renewcommand{\refname}{\refnam}
%\setcounter{lang}{0} %%% 0 - English, 1 - Ukrainian, 2 - Russian, 3 - Francais. Enter the number corresponding the lenguage of the paper
%%Hyphen: Uncomment the command, corresponding the language of the paper.
%%\English
%%\Ukrainian
%%\Russian
%%\pageno{?}
%\vs
%\newcommand{\tit}{Analytic  in a ball  functions of bounded $\mathbf{L}$-index in joint variables} % A title of the paper.
%\date{}
%\allowdisplaybreaks
\documentclass[12pt,draft,oneside]{article}
\usepackage[cp1251]{inputenc}
\usepackage[english,russian,ukrainian]{babel}
\usepackage{amsmath}
\usepackage{amsthm}
\usepackage{amssymb}
\usepackage{amsfonts}
\usepackage{latexsym}
\usepackage{calc}
\usepackage{indentfirst}
\pagestyle{myheadings}

\linespread{1.1}

\oddsidemargin=0mm \evensidemargin=0mm \topmargin=-15mm
\textheight=245mm \textwidth=175mm
\parindent=10mm

\newtheorem*{proposition*}{Proposition}
\newtheorem{theorem}{Theorem}
\newtheorem*{theorem*}{Theorem}
\newtheorem{remark}{Remark}

\newtheorem*{definition*}{Definition}
\newtheorem{lemma}{Lemma}
\newtheorem*{lemma*}{Lemma}
\newtheorem{corollary}{Corollary}
\newtheorem*{corollary*}{Corollary}
\newtheorem{example}{Example}
\newcommand{\sep}{/\kern-2pt/ }

\allowdisplaybreaks

\linespread{1.3}
\begin{document}
	
 \begin{center}
	\Large\bf SOME PROPERTIES OF ANALYTIC IN A BALL FUNCTIONS OF BOUNDED $\mathbf{L}$-INDEX IN JOINT VARIABLES
\end{center}

\begin{center}
	\large\bf \MakeUppercase{A.\ I.\ BANDURA, O.\ B.\ SKASKIV}
\end{center}

\selectlanguage{english}
\vspace{20pt plus 0.5pt} {\abstract{ \noindent A.\ I.\ Bandura,  O.\ B.\ Skaskiv, \ % Author's name (in English).
		\textit{Some properties of analytic  in a ball  functions of bounded $\mathbf{L}$-index in joint variables} \vspace{3pt} %\English % A title of the paper (in English).
		
	A concept of boundedness of the $\mathbf{L}$-index in joint variables (see in Bandura A. I., Bordulyak M. T., Skaskiv 
	O. B. {\it Sufficient conditions of boundedness of L-index
		in joint variables,} Mat. Stud. \textbf{45} (2016), 12--26. dx.doi.org/10.15330/ms.45.1.12-26) is generalized
	for analytic in a ball function.
	There are proved criteria of boundedness of the $\mathbf{L}$-index in joint variables which describe local behavior of 
	partial derivatives and maximum modudus on a skeleton of a polydisc, properties of 
	power series expansion. Also we obtained analog of Hayman's Theorem.
	
	As a result, they are applied to study linear higher-order systems of partial differential equations 
	and to deduce sufficient conditions of boundedness of the $\mathbf{L}$-index in joint variables for their 
	analytic solutions and to estimate it growth. 
	
	We used an exhaustion of ball in $\mathbb{C}^n$ by polydiscs. Also growth estimates of analytic in ball functions of 
	 bounded $\mathbf{L}$-index in joint variables are obtained. 
	Note that this paper and paper "Analytic functions in a bidisc of bounded L-index in joint variables" (arXiv:1609.04190) do not overlap 
	because analytic in a ball and analytic in a bidisc functions are different classes of holomorphic functions. 
	Some results have similar formulations but a bidisc and a ball are particular geometric objects.
}}

 2010 AMS Mathematics Subject Classification: 32A10, 32A22, 35G35, 32A40, 32A05, 32W50

Keywords: analytic function, ball, bounded $\mathbf{L}$-index in joint variables, maximum modulus, partial derivative, Cauchy's integral formula,
geometric exhaustion, growth estimates, linear higher-order systems of PDE

\vskip10pt

\section{Introduction} 
 A concept of entire function of bounded index appeared in a paper of B. Lepson \cite{lepson}. 
  An entire function $f$ is said
to be of bounded index if there exists an integer $N > 0$ that
\begin{equation}\label{deflepson}
(\forall z\in\mathbb{C})(\forall n\in\{0,1,2,\ldots\})\colon\
\frac{|f^{(n)}(z)|}{n!}\leq\max\Big\{\frac{|f^{(j)}(z)|}{j!}\colon
0\leq j\leq N \Big\}.
\end{equation}
The least such integer $N$ is called the index of $f.$
 
 Note that the functions from this class have interesting properties. 
 The concept is convenient to study the properties of entire solutions of differential equations. 
   In particular, if an entire solution has bounded index then it immediately yields
  its growth estimates, an uniform in a some sense distribution of its zeros, a certain regular behavior of the 
solution, etc.
 
Afterwards, S. Shah \cite{shah} and W. Hayman \cite{Hayman}
independently   proved that every entire function of bounded index
is a function of exponential type. 
 Namely, its growth is at most the first order and normal type.
 
 To study more general entire functions, A.~D. Kuzyk and M.~M. Sheremeta \cite{vidlindex}
introduced a boundedness of the $l$-index, replacing $ \frac{ |f^{(p)}(z)|}{p!}$ on $ \frac{ 
|f^{(p)}(z)|}{p!l^{p}(|z|)}$ in 
\eqref{deflepson}, where $l: \mathbb{R}_{+}\to \mathbb{R}_{+}$  is
a continuous function.
  It allows to consider an arbitrary entire function $f$ with bounded multiplicity of zeros. 
 Because for the function $f$ there exists a positive continuous function $l(z)$ 
 such that $f$ is of bounded $l$-index \cite{bordproof}.
 Besides, there are papers where the definition of bounded $l$-index is generalizing for analytic 
function of one variable \cite{strosher,kusher}.
 
In a multidimensional case a situation is more difficult and interesting. 
Recently we with N. V. Petrechnko \cite{petrechko1}-\cite{petrechkopower} proposed approach to 
consider bounded $\mathbf{L}$-index in joint variables for analytic in a polydisc functions, 
where $\mathbf{L}(z)=(l_1(z),$ $\ldots,$ $l_{n}(z)),$ $l_j: \mathbb{C}^n \to \mathbb{R}_+$ is 
a positive continuous functions, $j\in\{1,\ldots,n\}.$
Although J.~Gopala Krishna and S.M.~Shah \cite{krishna} introduced an analytic in a domain (a 
nonempty connected open set) $\Omega\subset \mathbb{C}^n$ $(n\in\mathbb{N})$ function of bounded index for 
$\alpha=(\alpha_1,\ldots,\alpha_n)\in \mathbb{R}^n_{+}.$ 
But analytic in a domain function of bounded index
by Krishna and Shah is an entire function. It follows from necessary condition
of the $l$-index boundedness for analytic in the unit disc function (\cite[Th.3.3,p.71]{sher}): 
$\int_0^rl(t)dt\to\infty$ as $r\to 1$ (we take $l(t)\equiv \alpha_1$). 
Thus, there arises necessity to introduce and to investigate bounded $\mathbf{L}$-index in joint variables for analytic 
in polydisc domain functions. 
Above-mentioned paper \cite{petrechko1} is devoted analytic in a polydisc functions. 
Besides a polydisc, other example of polydisc domain in $\mathbb{C}^n$ is a ball. 
There are two known monographs \cite{kehe-zhu,rudin-ball} about spaces of holomorphic functions in the unit ball of 
$\mathbb{C}^n:$ Bergman spaces, Hardy spaces, Besov spaces, Lipschitz spaces, the Bloch space, etc. 
 It shows the relevance of research of properties of holomorphic function in the unit 
ball. 
In this paper we will introduce and study analytic in a ball functions of bounded $\mathbf{L}$-index in 
joint variables.
Of course, there are wide bibliography about entire functions of bounded $\mathbf{L}$-index in 
joint variables \cite{sufjointdir}-\cite{ball-exhaustion}, \cite{bagzmin}-\cite{Chakraborty2001}, \cite{nuraypattersonexponent2015}-\cite{nuraypattersonmultivalence2015}.

Note that there exists other approach to consider bounded index in $\mathbb{C}^n$ --- so-called 
functions of bounded $L$-index in direction (see \cite{BandSk}-\cite{unbound-each-direct}), where $L: 
\mathbb{C}^n\to \mathbb{R}_+$ is a positive 
continuous function.

\section{Main definitions and notations}
%We consider two-dimensional complex space $\mathbb{C}^n.$
%This helps to distinguish main methods of investigation.
% Indeed our results can be easy deduced for $\mathbb{C}^n.$

We need some standard notations. Denote $\mathbb{R}_+=[0,+\infty),$
$\mathbf{0}=(0,\ldots,0)\in\mathbb{R}^n_{+},$
$\mathbf{1}=(1,\ldots,1)\in\mathbb{R}^n_{+},$ 
$\mathbf{1}_j=(0,\ldots,0, \underbrace{1}_{j-\mbox{th place}}, 0,\ldots,0)\in\mathbb{R}^n_{+},$
 $R=(r_1,\ldots,r_n)\in\mathbb{R}^n_+,$ $z=(z_1,\ldots,z_n)\in\mathbb{C}^n,$
 $|z|=\sqrt{\sum_{j=1}^n|z_j|^2}.$
 For $A=(a_1,\ldots,a_n)\in\mathbb{R}^n,$ $B=(b_1,\ldots,b_n)\in\mathbb{R}^n$ we will use formal notations 
without violation of the existence of these expressions 
% \begin{gather*}
 $AB=(a_1b_1,\cdots,a_nb_n),$
$A/B=(a_1/b_1,\ldots,a_n/b_n),$ 
$A^B=a_1^{b_1}a_2^{b_2}\cdot \ldots \cdot a_n^{b_n},$ $\displaystyle\|A\|=a_1+\cdots+a_n,$\
% \end{gather*}
 and the notation $A<B$ means that $a_j<b_j,$ $j\in\{1,\ldots,n\};$ the relation $A\leq B$ is defined similarly.
For $K=(k_1,\ldots,k_n)\in \mathbb{Z}^n_{+}$ denote
$K!=k_1!\cdot \ldots \cdot k_n!.$
Addition, scalar multiplication, and conjugation are defined on $\mathbb{C}^n$ componentwise. 
For $z\in\mathbb{C}^n$ and $w\in\mathbb{C}^n$ we define 
$$
\langle z,w\rangle =z_1\overline{w}_1+\cdots+z_n\overline{w}_n,
$$
where $w_k$ is the complex conjugate of $w_k.$
The polydisc $\{z\in\mathbb{C}^n: \ |z_j-z_j^0|<r_j, \ j=1,\ldots, n\}$ is denoted by $\mathbb{D}^n(z^0,R),$ its skeleton $\{z\in\mathbb{C}^n: \ |z_j-z_j^0|=r_j, \ j=1,\ldots, n\}$ is denoted by $\mathbb{T}^n(z^0,R),$ and the closed polydisc
$\{z\in\mathbb{C}^n: \ |z_j-z_j^0|\leq r_j, \ j=1,\ldots, n\}$ is denoted by $\mathbb{D}^n[z^0,R],$
 $\mathbb{D}^n=\mathbb{D}^n(\mathbf{0},\mathbf{1}),$
 $\mathbb{D}=\{z\in\mathbb{C}: \ |z|<1\}.$ 
The open ball $\{z\in\mathbb{C}^n: \ |z-z^0|<r\}$ is denoted by $\mathbb{B}^n(z^0,r),$ 
its boundary is a sphere $\mathbb{S}^n(z^0,r)=\{z\in\mathbb{C}^n: \ |z-z^0|=r\},$ 
the closed ball  $\{z\in\mathbb{C}^n: \ |z-z^0|\leq r\}$ is denoted by $\mathbb{B}^n[z^0,r],$ 
 $\mathbb{B}^n=\mathbb{B}^n(\mathbf{0},1),$
$\mathbb{D}=\mathbb{B}^1=\{z\in\mathbb{C}: \ |z|<1\}.$

For $K=(k_1,\ldots,k_n)\in \mathbb{Z}^n_{+}$ and the  partial derivatives of an analytic in $\mathbb{B}^n$ function $F(z)=F(z_1,\ldots,z_n)$ we  use the notation
$$
F^{(K)}(z)=\frac{\partial^{\|K\|} F}{\partial z^{K}}= \frac{\partial^{k_1+\cdots+k_n}f}{\partial z_1^{k_1}\ldots \partial z_n^{k_n}}.
$$
 Let $\mathbf{L}(z)=(l_1(z),\ldots, l_{n}(z)),$ where $l_j(z): \mathbb{B}^n\to \mathbb{R}_+$ is a continuous function such that
\begin{equation} \label{Lbeta-ball}
(\forall z\in \mathbb{B}^n)\colon\ l_j(z)>{\beta}/{(1-|z|)}, \ j\in\{1,\ldots,n\},
\end{equation}
where $\beta>\sqrt{n}$ is a some constant. 

S.N. Strochyk, M.M. Sheremeta, V.O. Kushnir \cite{strosher}--\cite{sher} imposed a similar condition for a function $l: \mathbb{D}\to \mathbb{R}_+$ and $l: G\to \mathbb{R}_+,$ where $G$ is arbitrary domain in $\mathbb{C}$.

\begin{remark}
Note that if $R\in \mathbb{R}^n_+,$ $|R|\leq \beta,$\  $z^0\in \mathbb{B}^n$ and $z \in \mathbb{D}^n[z^0,R/\mathbf{L}(z^0)]$ then $z\in \mathbb{B}^n.$  
 Indeed,  we have 
 $|z|\leq |z-z^0|+|z^0|\leq 
 \sqrt{\sum_{j=1}^n \frac{r_j^2}{l_j^2(z^0)}}+|z^0| < 
 \sqrt{\sum_{j=1}^n \frac{r_j^2}{\beta^2}(1-|z^0|)^2}+|z^0| = 
 \frac{(1-|z^0|)}{\beta}\sqrt{\sum_{j=1}^n r_j^2}+|z^0| 
 \leq  \frac{(1-|z^0|)}{\beta} \beta +|z^0| =1.$
\end{remark}

 An analytic function $F\colon\mathbb{B}^n\to\mathbb{C}$ is said to be of \textit{bounded $\mathbf{L}$-index (in joint variables),} if there exists $n_0\in \mathbb{Z}_+$ such that for all $z\in \mathbb{B}^n$ and for all $J\in\mathbb{Z}^n_+$
\begin{equation} \label{ineqoz2}
\frac{|F^{(J)}(z)|}{J!\mathbf{L}^{J}(z)}\leq\max
\left\{\frac{|F^{(K)}(z)|}{K! \mathbf{L}^{K}(z)}:\
K\in\mathbb{Z}^{n}_{+},\ \|K\|\leq n_0\right\}.
\end{equation}
 The least such integer $n_{0}$
is called the {\it $\mathbf{L}$-index in joint variables of the function $F$} and is denoted by $N(F,\mathbf{L},\mathbb{B}^n)$ (see \cite{sufjointdir}--\cite{prostir}).

By $Q(\mathbb{B}^n)$ we denote the class of functions $\mathbf{L}$, which satisfy \eqref{Lbeta-ball} and the following condition
\begin{equation} \label{clasqballn}
(\forall R\in \mathbb{R}^n_+,  |R|\leq \beta,\  j\in\{1,\ldots,n\})\colon \ 0<\lambda_{1,j}(R)\leq \lambda_{2,j}(R)<\infty,
\end{equation}
 where
\begin{gather} \label{lam1}
\lambda_{1,j}(R)=\inf\limits_{ z^0\in \mathbb{B}^n} \inf \left \{
\frac{l_j(z)}{l_j(z^0)}: z\in \mathbb{D}^n\left[z^0, {R}/{\mathbf{L}(z^0)}\right]\right\},\\
\lambda_{2,j}(R)=\sup\limits_{z^0\in \mathbb{B}^n} \sup \left \{
\frac{l_j(z)}{l_j(z^0)}: z\in \mathbb{D}^n\left[z^0, {R}/{\mathbf{L}(z^0)}\right]\right\}.\label{lam2}\\
\Lambda_1(R)=(\lambda_{1,1}(R),\ldots,\lambda_{1,n}(R)), \ \Lambda_2(R)=(\lambda_{2,1}(R),\ldots,\lambda_{2,n}(R)).
%\frac{R}{\mathbf{L}(z^0)}={R}/{\mathbf{L}(z^0)}:= \left({r_1}/{l_1(z^0)},{r_2}/{l_2(z^0)}\right). \nonumber
\end{gather}
 
 It is not difficult to verify that the class $Q(\mathbb{B}^n)$ can be defined as following:
 \begin{equation} \label{umfavorovball}
\text{ for every } j\!\in\!\{1,\ldots,n\} \  \sup_{z,w\in\mathbb{B}^n} \left\{ \frac{l_j(z)}{l_j(w)}\colon |z_k-w_k| \leq \frac{r_k}{\min\{l_k(z),l_k(w)\}},  k\in\{1,\ldots,n\}\right\}\!<\!\infty,
 \end{equation}
 i. e.  conditions \eqref{clasqballn} and \eqref{umfavorovball} are equivalent.
 
 \begin{example}\rm
The function $F(z)=\exp\{ \frac{1}{(1-z_1)(1-z_2)}\}$ has bounded $\mathbf{L}$-index in joint variables with
 $\mathbf{L}(z)=\big(\frac{1}{(1-|z_1|)^2(1-|z|)},\frac{1}{(1-|z|)(1-|z_2|)^2}\big)$ and 
$N(F,\mathbf{L},\mathbb{B}^n)=0.$
\end{example}
 \section{Local behavior of derivatives of function of bounded $\mathbf{L}$-index in joint variables}
 
 The following theorem is basic in theory of functions of bounded index. It was necessary to prove more efficient criteria of index boundedness 
 which describe a behavior of maximum modulus on a disc or a behavior of logarithmic derivative (see \cite{kusher,sher,BandSk,monograph}). 
\begin{theorem} \label{petr1-ball}
	Let $\mathbf{L} \in Q(\mathbb{B}^n)$. An analytic in $\mathbb{B}^n$ function $F$ has bounded $\mathbf{L}$-index in joint variables if and only if for each
	$R\in\mathbb{R}^n_+,$ $|R|\leq \beta,$ there exist $n_0\in \mathbb{Z}_+$, $p_0>0$ such that for every $z^0 \in\mathbb{B}^n$ there exists $K^0\in \mathbb{Z}_+^n$, $\|K^0\|\leq n_0$, and
	\begin{gather}
	\max\left\{\frac{|F^{(K)}(z)|}{K!\mathbf{L}^K(z)}\colon  \|K\| \leq n_0, \ z\in \mathbb{D}^n\left[z^0, {R}/{\mathbf{L}(z^0)}\right] \right\} \leq %\nonumber\\ \leq
	p_0 \frac{|F^{(K^0)}(z^0)|}{K^0!\mathbf{L}^{K^0} (z^0)}.
	\label{net1}
	\end{gather}
\end{theorem}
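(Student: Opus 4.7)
The plan is to prove both implications: \emph{sufficiency} by Cauchy's inequality on a cleverly chosen polydisc, and \emph{necessity} in two stages --- a small-radius Taylor estimate followed by a chaining argument to cover every $R$ with $|R|\le\beta$.

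\emph{Sufficiency.} I would pick $R$ with $r_j=\beta/\sqrt n$ for each $j$, so $|R|=\beta$ and every $r_j>1$ since $\beta>\sqrt n$. Taking $K=\mathbf{0}$ in \eqref{net1} bounds $|F(z)|$ on $\mathbb D^n[z^0,R/\mathbf{L}(z^0)]$ by $p_0|F^{(K^0)}(z^0)|/(K^0!\mathbf{L}^{K^0}(z^0))$; Cauchy's inequality applied to $F^{(J)}$ on that polydisc then yields
\begin{equation*}
\frac{|F^{(J)}(z^0)|}{J!\mathbf{L}^J(z^0)}\le\frac{p_0}{R^{J}}\cdot\frac{|F^{(K^0)}(z^0)|}{K^0!\mathbf{L}^{K^0}(z^0)}.
\end{equation*}
Since every $r_j>1$, the factor $R^{-J}$ decays exponentially in $\|J\|$, so for $\|J\|$ above a threshold $M_0$ the right side is already dominated by $\max_{\|K\|\le n_0}|F^{(K)}(z^0)|/(K!\mathbf{L}^K(z^0))$. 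Hence $N(F,\mathbf{L},\mathbb B^n)\le\max\{n_0,M_0\}$.

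\emph{Necessity, small radius.} Set $N=N(F,\mathbf{L},\mathbb B^n)$ and let $K^0=K^0(z^0)$ with $\|K^0\|\le N$ realise the maximum in \eqref{ineqoz2} at $z^0$. Fix an auxiliary $R'$ with $|R'|\le\beta$ and every $r'_j<1$. For $\|K\|\le N$ and $z\in\mathbb D^n[z^0,R'/\mathbf{L}(z^0)]$, Taylor-expand $F^{(K)}$ at $z^0$, bound each coefficient $|F^{(K+J)}(z^0)|$ via \eqref{ineqoz2}, and use \eqref{clasqballn} to replace $\mathbf{L}^K(z^0)/\mathbf{L}^K(z)$ by $\Lambda_1(R')^{-K}$. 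Collecting,
\begin{equation*}
\frac{|F^{(K)}(z)|}{K!\mathbf{L}^K(z)}\le\Lambda_1(R')^{-K}\frac{|F^{(K^0)}(z^0)|}{K^0!\mathbf{L}^{K^0}(z^0)}\sum_{J\in\mathbb{Z}^n_+}\binom{K+J}{J}(R')^J,
\end{equation*}
with the series equal to $\prod_{i=1}^n(1-r'_i)^{-(k_i+1)}$ --- finite since $r'_i<1$, and uniformly bounded in $K$ since $\|K\|\le N$. This proves \eqref{net1} for the auxiliary radius $R'$, with $n_0=N$.

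\emph{Necessity, chaining.} For arbitrary $R$ with $|R|\le\beta$, join $z^0$ to any $z\in\mathbb D^n[z^0,R/\mathbf{L}(z^0)]$ by a straight-line chain $z^0=w^{(0)},\dots,w^{(m)}=z$ whose consecutive members lie in each other's small polydisc $\mathbb D^n[w^{(i-1)},R'/\mathbf{L}(w^{(i-1)})]$, and iterate the small-radius estimate along the chain --- applicable to any $\|K\|\le n_0=N$ and, crucially, to the optimal intermediate index $K^{(i)}$ produced at each step. This propagates the bound from $w^{(m)}=z$ back to $w^{(0)}=z^0$ with accumulated constant $p_0^m$. The main obstacle is showing that $m$ can be chosen uniformly in $z^0$: this is where I would invoke \eqref{clasqballn}, which bounds $l_j(w^{(i)})/l_j(z^0)$ by $\lambda_{2,j}(R)$ throughout the polydisc, so $m$ depends only on $R$, $R'$, and $\Lambda_2(R)$, not on $z^0$.
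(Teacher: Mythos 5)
Your proof is correct, but the necessity direction takes a genuinely different route from the paper's. Both approaches to sufficiency are essentially the same: pick $R$ with each $r_j>1$ and apply Cauchy's inequality (your version is slightly leaner, using only $K=\mathbf{0}$ in \eqref{net1}, which suffices). For necessity, the paper fixes a single large $q=q(R)$, considers the nested maxima $S_p^*(z^0,R)$ over the polydiscs $\mathbb{D}^n[z^0,pR/(q\mathbf{L}(z^0))]$, and proves $S_p^*\le 2S_{p-1}^*$ via a mean-value integral estimate on $|F^{(K)}|$ between the maximizing point $z^{(p)}$ and its radial contraction $\widetilde z^{(p)}$, then telescopes from $p=q$ down to $p=0$. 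You instead split the argument into a clean, self-contained Taylor step valid for any $R'$ with every $r'_j<1$ — the series $\sum_J\binom{K+J}{J}(R')^J=\prod_i(1-r'_i)^{-(k_i+1)}$ converges, giving \eqref{net1} for that $R'$ with $n_0=N$ — and then a separate chaining step, in which \eqref{clasqballn} (through $\Lambda_2(R)$) bounds the number $m$ of small polydiscs needed, uniformly in $z^0$, so the constant $p_0=(p_0')^{m}$ is admissible. The two are structurally similar (both iterate through intermediate polydiscs), but your decomposition is more modular: the single-step estimate is elementary (no MVT, no careful choice of $q$), and the uniformity burden is isolated in the step count. The paper's argument yields more explicit constants but requires the intricate $S_p$/$S_p^*$ machinery. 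Both correctly deduce bounded $\mathbf{L}$-index from \eqref{net1} and conversely, so the proposal stands.
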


\begin{proof}
	Let $F$ be of bounded $\mathbf{L}$-index in joint variables with $N=N(F,\mathbf{L},\mathbb{B}^n)<\infty.$
	For every 	$R\in\mathcal{R}^n_+,$ $|R|\leq \beta,$ we put
	$$q= q(R)= [2(N+1)\|R\|\prod_{j=1}^{n}(\lambda_{1,j}(R))^{-N}(\lambda_{2,j}(R))^{N+1}]+1$$
	%\end{gather*}	
	where $[x]$ is the entire part of the real number $x,$ i.e. it is a floor function.
	For $p\in\{0,\ldots,q\}$ and $z^0\in\mathbb{B}^n$ we denote
	\begin{gather*}
	S_p(z^0,R)=\max\left\{\frac{|F^{(K)}(z)|}{K! \mathbf{L}^{K} (z)}: \|K\|\leq N, z\in \mathbb{D}^n \left[z^0,\frac{pR}{q\mathbf{L}(z^0)}\right] \right\},
	%\label{nets1}
	\\
	S^{*}_p(z^0,R)=\max\left\{\frac{|F^{(K)}(z)|}{K! \mathbf{L}^{K} (z^0)}: \|K\|\leq N, z\in \mathbb{D}^n \left[z^0,\frac{pR}{q\mathbf{L}(z^0)}\right] \right\}.
	%\label{nets2}	
	\end{gather*}
	Using \eqref{lam1} and $\mathbb{D}^n\left[z^0,\frac{pR}{q\mathbf{L}(z^0)}\right] \subset \mathbb{D}^n \left[z^0,\frac{R}{\mathbf{L}(z^0)}\right] $, we have
	\begin{gather*}
	S_p(z^0,R)=\!\max\left\{\frac{|F^{(K)}(z)|}{K! \mathbf{L}^{K} (z)}\frac{\mathbf{L}^{K} (z^0)}{\mathbf{L}^{K} (z^0)}: \|K\|\leq \!N,
	z\in\! \mathbb{D}^n\! \left[z^0,\frac{pR}{q\mathbf{L}(z^0)}\!\right]\! \right\}\! \leq\! \nonumber \\ \leq S^{*}_p(z^0,R)\max\left\{ \prod_{j=1}^n \frac{l_j^{N} (z^0)}{l_j^{N} (z)}: z\in \mathbb{D}^n \left[z^0,\frac{pR}{q\mathbf{L}(z^0)}\right] \right\} \leq S^{*}_p(z^0,R)\prod_{j=1}^{n} (\lambda_{1,j}(R))^{-N}.
	% \label{netss1}
	\end{gather*}
	and, using \eqref{lam2}, we obtain
	\begin{gather}
	S^{*}_p(z^0,R)= \max\left\{\frac{|F^{(K)}(z)|}{K!\mathbf{L}^{K} (z)}\!\frac{\mathbf{L}^{K} (z)}{\mathbf{L}^{K} (z^0)}: \|K\|\leq N,
	z\in \mathbb{D}^n \left[z^0,\frac{pR}{q\mathbf{L}(z^0)}\right] \right\} \leq \nonumber \\
	\! \leq\! \max\left\{\frac{|F^{(K)}(z)|}{\!K! \mathbf{L}^{K} (z)}\!(\Lambda_{2}(R))^{K}: \|K\|\leq \!N,
	z\in \mathbb{D}^n \left[z^0,\frac{pR}{q\mathbf{L}(z^0)}\right] \right\} \! \leq\! S_p(z^0,R)\prod_{j=1}^{n} (\lambda_{2,j}(R))^N.
	\label{netss2}
	\end{gather}
	Let $K^{(p)}$ with $\| K^{(p)}\|\leq N$ and $z^{(p)} \in \mathbb{D}^n \left[z^0,\frac{pR}{q\mathbf{L}(z^0)}\right]$ be such that
	\begin{gather}
	S_p^*(z^0,R)=\frac{|F^{(K^{(p)})}(z^{(p)})|}{K^{(p)}!\mathbf{L}^{K^{(p)}} (z^{0})}
	\label{stars}
	\end{gather}
	Since by the maximum principle $z^{(p)}\in \mathbb{T}^n(z^0,\frac{pR}{q\mathbf{L}(z^0)}),$ we have $z^{(p)}\neq z^0.$ We choose \\
	$\widetilde z^{(p)}_{j}=z_j^0+\frac{p-1}{p}(z_j^{(p)}-z_j^0).$ 
	Then for every $j\in\{1,\ldots,n\}$ we have 
	\begin{gather}
	|\widetilde z^{(p)}_{j}-z_j^0|=\frac{p-1}{p}|z^{(p)}_{j}-z_j^0|=\frac{p-1}{p} \frac{pr_j}{ql_j(z^0)},
	\label{zet}\\
	|\widetilde z^{(p)}_{j}-z_j^{(p)}|=|z_j^{0}+\frac{p-1}{p}(z_j^{(p)}-z_j^0)-z_j^{(p)}|=
	\frac{1}{p}|z_j^0-z_j^{(p)}|= \frac{1}{p}\frac{pr_j}{ql_j(z^0)}=\frac{r_j}{ql_j(z^0)}. \label{zetwave}
	\end{gather}
	From \eqref{zet} we obtain
	$\widetilde z^{(p)} \in \mathbb{D}^n \left[z^0,\frac{(p-1)R}{q(R)\mathbf{L}(z^0)}\right]$ and 
	\begin{gather*}
	S^{*}_{p-1}(z^0,R)\geq
	\frac{|F^{(K^{(p)})}(\widetilde z^{(p)})|}{K^{(p)}!\mathbf{L}^{K^{(p)}} (z^0)}.
	\end{gather*}
	From \eqref{stars} it follows that
	\begin{gather}
	0\leq S^{*}_p(z^0,R)-S^{*}_{p-1}(z^0,R) \leq \frac{|F^{(K^{(p)})}(z^{(p)})|-|F^{(K^{(p)})}(\widetilde z^{(p)})|}{K^{(p)}!\mathbf{L}^{K^{(p)}} (z^0)}=\nonumber \\
	= \frac{1}{K^{(p)}!\mathbf{L}^{K^{(p)}} (z^0)}\int_{0}^{1}
	\frac{d}{dt}|F^{(K^{(p)})}(\widetilde z^{(p)}+t(z^{(p)}-\widetilde z^{(p)}))|dt \leq
	\nonumber \\
	\leq \frac{1}{K^{(p)}!\mathbf{L}^{K^{(p)}}(z^0)}\int_0^1 \sum_{j=1}^n |z_j^{(p)}-z_{*j}^{(p)}| %\times \nonumber \\
	% \cdot 
	\left|\frac{\partial^{\|K^{(p)}\| + 1}F}{\partial z_1^{k_1^{(p)}}  \ldots  \partial z_j^{k_j^{(p)}+1} \ldots \partial z_n^{k_n^{(p)}}} (\widetilde z^{(p)}+t(z^{(p)}-\widetilde z^{(p)}))  \right|dt= \nonumber \\ =
	\frac{1}{K^{(p)}!\mathbf{L}^{K^{(p)}}(z^0)} \sum_{j=1}^n |z_j^{(p)}-z_{*j}^{(p)}|
	\left|\frac{\partial^{\|K^{(p)}\|+1}F}{\partial z_1^{k_1^{(p)}} \ldots \partial z_j^{k_j^{(m)}+1} \ldots \partial z_n^{k_n^{(p)}}} (\widetilde z^{(p)}+t^*(z^{(p)}-\widetilde z^{(p)}))  \right|,
	\label{big}
	\end{gather}
	where $0\leq t^*\leq 1, \widetilde z^{(p)}+t^*(z^{(p)}-\widetilde z^{(p)}) \in \mathbb{D}^n (z^0,\frac{pR}{q\mathbf{L}(z^0)})$.
	For $z\in \mathbb{D}^n (z^0,\frac{pR}{q\mathbf{L}(z^0)})$ and $J\in\mathbb{Z}^n_+$, $\|J\| \leq N+1$ we have
	\begin{gather*}
	\frac{|F^{(J)}(z)|\mathbf{L}^{J} (z)}{J!\mathbf{L}^{J} (z^0) \mathbf{L}^{J} (z)}
	\leq (\Lambda_{2}(R))^{J} \max\left\{\frac{|F^{(K)}(z)|}{K! \mathbf{L}^{K} (z)}: \|K\|\leq N \right\} %\leq \\
	\leq \prod_{j=1}^{n} (\lambda_{2,j}(R))^{N+1}(\lambda_{1,j}(R))^{-N} \times \\ \times \max\left\{\frac{|F^{(K)}(z)|}{K!\mathbf{L}^{K} (z^0)}: \|K\|\leq N \right\} %\leq \\
	\leq \prod_{j=1}^{n} (\lambda_{2,j}(R))^{N+1}(\lambda_{1,j}(R))^{-N} S^{*}_p(z^0,R).
	\end{gather*}
	From \eqref{big} and \eqref{zetwave} we obtain
	\begin{gather*}
	0\leq S^{*}_p(z^0,R)-S^{*}_{p-1}(z^0,R) \leq \\ \leq \prod_{j=1}^{n} (\lambda_{2,j}(R))^{N+1}(\lambda_{1,j}(R))^{-N} S^{*}_p(z^0,R)\sum_{j=1}^{n}(k_j^{(p)}+1)l_j(z^0)|z_j^{(p)}-\widetilde{z}_j^{(p)}|= \\
	\\ = \prod_{j=1}^{n} (\lambda_{2,j}(R))^{N+1}(\lambda_{1,j}(R))^{-N} \frac{S^{*}_p(z^0,R)}{q(R)}\sum_{j=1}^{n}(k_j^{(p)}+1)r_j \leq \\
	\leq \prod_{j=1}^{n} (\lambda_{2,j}(R))^{N+1}(\lambda_{1,j}(R))^{-N} \frac{S^{*}_p(z^0,R)}{q(R)} (N+1)\|R\| \leq \frac{1}{2}S^{*}_p(z^0,R).
	\end{gather*}
	This inequality implies
	%\begin{gather*}
	$S^{*}_p(z^0,R) \leq 2S^{*}_{p-1}(z^0,R),$
	%\end{gather*}
	and in view of inequalities \eqref{net1ex} and \eqref{stars} we have
	\begin{gather*}
	S_p(z^0,R) \leq 2 \prod_{j=1}^{n} (\lambda_{1,j}(R))^{-N}S^{*}_{p-1}(z^0,R) \leq
	2 \prod_{j=1}^{n} (\lambda_{1,j}(R))^{-N}(\lambda_{2,j}(R))^{N}S_{p-1}(z^0,R)
	\end{gather*}
	Therefore,
	\begin{gather}
	\max\left\{\frac{|F^{(K)}(z)|}{K!\mathbf{L}^{K} (z)}: \|K\|\leq N, z\in \mathbb{D}^n \left[z^0,\frac{pR}{q\mathbf{L}(z^0)}\right] \right\}%=\nonumber \\
	= S_q(z^0,R) \leq \nonumber \\ \leq 2 \prod_{j=1}^{n} (\lambda_{1,j}(R))^{-N}(\lambda_{2,j}(R))^{N}S_{q-1}(z^0,R) \leq \ldots %\leq \nonumber \\ 
	\leq (2 \prod_{j=1}^{n} (\lambda_{1,j}(R))^{-N}(\lambda_{2,j}(R))^{N})^q S_{0}(z^0,R)=\nonumber \\
	= (2 \prod_{j=1}^{n} (\lambda_{1,j}(R))^{-N}(\lambda_{2,j}(R))^{N})^q \max\left\{\frac{|F^{(K)}(z^0)|}{K!\mathbf{L}^{K} (z^0)}: \|K\|\leq N \right\}.
	\label{lastth1}
	\end{gather}
	
	From \eqref{lastth1} we obtain inequality \eqref{net1} with $p_0=(2 \prod_{j=1}^{n} (\lambda_{1,j}(R))^{-N}(\lambda_{2,j}(R))^{N})^q $ and some $K^0$ with $\|K^0\|\leq N$.
	The necessity of condition \eqref{net1} is proved.
	
	Now we prove the sufficiency. Suppose that for every $R\in\mathbb{R}^n_+,$ $|R|\leq \beta,$ there exist $ n_0 \in \mathbb{Z}_+,$ $p_0>1 $ such that for all $ z_0 \in \mathbb{B}^n $ and some
	$ K^0 \in \mathbb{Z}_+^n,$ $\|K^0\|\leq n_0,$ the inequality \eqref{net1} holds.
	
	We write Cauchy's formula as following
	$\forall z^0\in \mathbb{B}^n$ $\forall k\in \mathbb{Z}_+^n$ $\forall s \in \mathbb{Z}_+^n$
	$$
	\frac{F^{(K+S)}(z^0)}{S!}=\frac{1}{(2\pi i)^n} \int_{\mathbb{T}^n\left(z^0,\frac{R}{\mathbf{L}(z^0)}\right)} \frac{F^{(K)}(z)}{(z-z^0)^{S+\mathbf{1}}} dz.
	$$
	Therefore, applying \eqref{net1}, we have
	\begin{gather*}
	\frac{|F^{(K+S)}(z^0)|}{S!}\leq \frac{1}{(2\pi)^n} \int_{\mathbb{T}^n\left(z^0,\frac{R}{\mathbf{L}(z^0)}\right)} \frac{|F^{(K)}(z)|}{|z-z^0|^{S+\mathbf{1}}} |dz|
	% \leq  \\
	\leq  \int_{\mathbb{T}^n\left(z^0,\frac{R}{\mathbf{L}(z^0)}\right)} |F^{(K)}(z)| \frac{\mathbf{L}^{S+\mathbf{1}}(z^0)}{(2\pi)^nR^{S+\mathbf{1}}} |dz| \leq \\
	\leq
	\int_{\mathbb{T}^n\left(z^0,\frac{R}{\mathbf{L}(z^0)}\right)} |F^{(K^0)}(z^0)| \frac{K!p_0 \prod_{j=1}^n \lambda^{n_0}_{2,j}(R) {\mathbf{L}^{S+K+\mathbf{1}}(z^0)}}
	{(2\pi)^nK^0!{R}^{S+\mathbf{1}} {\mathbf{L}^{K^0}(z^0)} } |dz|= \\
	=|F^{(K^0)}(z^0)| \frac{K!p_0\prod_{j=1}^n \lambda^{n_0}_{2,j}(R){\mathbf{L}^{S+K}(z^0)}}
	{K^0!{R}^{S} {\mathbf{L}^{K^0}(z^0)} }.
	\end{gather*}
	This implies
	\begin{gather}
	\! \frac{|F^{(K+S)}(z^0)|}{(K+S)!\mathbf{L}^{S+K}(z^0)} %\leq \\
	\leq \frac{\prod_{j=1}^n \lambda^{n_0}_{2,j}(R)p_0K! S!}{(K+S)! R^{S}}
	\frac{|F^{(K^0)}(z^0)|}{K^0! \mathbf{L}^{K^0}(z^0)}. \label{dopner}
	\end{gather}
	Obviously, that
	$$\frac{K! S!}{(K+S)!}=\frac{s_1!}{(k_1+1)\cdot\ldots \cdot (k_1+s_1)}\cdots \frac{s_n!}{(k_n+1)\cdot\ldots \cdot (k_n+s_n)}\le1.$$
	We choose $r_j\in(1,\beta/\sqrt{n}],$ $j\in\{1,\ldots,n\}.$ Then $|R|=\sqrt{\sum_{j=1}^n r_j^2}\leq \beta.$
	% such that
	% $$
	% \frac{(\lambda_{2,1}\lambda_{2,2})^{n_0}p_0k_1!k_2!s_1!s_2!}{(k_1+s_1)!(k_2+s_2)!r_1^{s_1}r_2^{s_2}}\leq 1.
	% $$
	% Since $s=(s_1,s_2)\in \mathbb{Z}_+^n$ then as $s\rightarrow \infty,$
	Hence,
	$ \frac{p_0\prod_{j=1}^n \lambda^{n_0}_{2,j}(R)}{R^{S}} \rightarrow 0 \text{ as } \|S\|\to +\infty.
	$ Thus, there exists $s_0$ such that for all $S\in\mathbb{Z}^n_+$  with $\|S\|\geq s_0$ the inequality holds
	$$
	\frac{p_0K!S!\prod_{j=1}^n \lambda^{n_0}_{2,j}(R) }{(K+S)!R^{S}}\leq 1.
	$$
	Inequality \eqref{dopner} yields 
	% \begin{gather*}
	$ \frac{|F^{(K+S)}(z^0)|}{(K+S)!\mathbf{L}^{K+S}(z^0)} \leq
	\frac{|F^{(K^0)}(z^0)|}{K^0!{\mathbf{L}}^{K^0}(z^0)}.$
	% \end{gather*}
	This means that for every $ j\in \mathbb{Z}_+^n$
	\begin{gather*}
	\frac{|F^{(J)}(z^0)|}{J! \mathbf{L}^{J}(z^0)} \leq
	\max\left\{ \frac{|F^{(K)}(z^0)|}{K! {\mathbf{L}^{K}(z^0)}}: \|K\|\le s_0+n_0\right\}
	\end{gather*}
	where $s_0$ and $n_0$ are independent of $z_0$. Therefore, the analytic in $\mathbb{B}^n$ function $F$ has bounded $\mathbf{L}$-index in joint variables with $N(F,\mathbf{L},\mathbb{B}^n)\le s_0+n_0.$
\end{proof}

\begin{theorem}
	\label{cor1}
	Let $\mathbf{L} \in Q(\mathbb{B}^n).$ In order that an analytic in $\mathbb{B}^n$ function $F$ be of bounded $\mathbf{L}$-index in joint variables it is necessary that for every $R\in\mathbb{R}^n_+,$ $|R|\leq \beta,$ $\exists n_0 \in \mathbb{Z}_+$ $\exists p\geq 1$ $\forall z^0 \in \mathbb{B}^n$ $\exists K^0 \in \mathbb{Z}_+^n $, $\|K^0\| \leq n_0,$ and
	\begin{gather}
	\max\left \{ |F^{(K^0)}(z)|:z \in \mathbb{D}^n\left[z^0, {R}/{\mathbf{L}(z^0)}\right] \right\} \leq p|F^{(K^0)}(z^0)|
	\label{conc1}
	\end{gather}
	and it is sufficient that for every $R\in\mathbb{R}^n_+,$ $|R|\leq \beta,$ $\exists n_0 \in \mathbb{Z}_+$ $\exists p\geq 1$ $\forall z^0 \in \mathbb{B}^n$  $\forall j\in\{1, \ldots, n\}$ $\exists K^0_j=(0,\ldots,0, \underbrace{k^0_j}_{j\text{-th place}},0,\ldots,0)$  such that $k_j^0 \leq n_0$  and
	\begin{gather}
	\!\max \left\{ |F^{(K^0_j)}(z)|:  z \in \mathbb{D}^n\left[z^0, {R}/{\mathbf{L}(z^0)}\right] \right\} \leq p|F^{(K_j^0)}(z^0)|.
	\label{conc3}
	\end{gather}
\end{theorem}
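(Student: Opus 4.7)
The plan is to deduce the necessity of \eqref{conc1} directly from Theorem~\ref{petr1-ball} by restricting the maximum on its left-hand side to a single multi-index, and to deduce the sufficiency of \eqref{conc3} by transferring the polydisc bound to $z^0$ via the multidimensional Cauchy formula, followed by a pigeonhole argument on the coordinates of high-order multi-indices.

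For the necessity, I would fix $R\in\mathbb{R}^n_+$ with $|R|\leq\beta$, invoke Theorem~\ref{petr1-ball} to obtain $n_0$, $p_0$ and a multi-index $K^0=K^0(z^0)$ with $\|K^0\|\leq n_0$, and restrict \eqref{net1} to the single index $K=K^0$ on its left-hand side. This gives
\[
\frac{|F^{(K^0)}(z)|}{\mathbf{L}^{K^0}(z)}\leq p_0\,\frac{|F^{(K^0)}(z^0)|}{\mathbf{L}^{K^0}(z^0)}
\]
for every $z\in\mathbb{D}^n[z^0,R/\mathbf{L}(z^0)]$. Bounding the $\mathbf{L}$-ratio by $\prod_{j=1}^n(\lambda_{2,j}(R))^{n_0}$ via \eqref{lam2} then yields \eqref{conc1} with $p:=p_0\prod_{j=1}^n(\lambda_{2,j}(R))^{n_0}$.

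For the sufficiency, the first step is to apply the multidimensional Cauchy formula on the skeleton $\mathbb{T}^n(z^0,R/\mathbf{L}(z^0))$ to $F^{(K_j^0)}$ and substitute the pointwise bound \eqref{conc3} into the integrand, which produces
\[
\frac{|F^{(K_j^0+S)}(z^0)|}{(K_j^0+S)!\,\mathbf{L}^{K_j^0+S}(z^0)}\leq\frac{p\,S!\,K_j^0!}{(K_j^0+S)!\,R^S}\cdot\frac{|F^{(K_j^0)}(z^0)|}{K_j^0!\,\mathbf{L}^{K_j^0}(z^0)}
\]
for every $S\in\mathbb{Z}_+^n$ and every $j\in\{1,\ldots,n\}$. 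Given arbitrary $J=(j_1,\ldots,j_n)\in\mathbb{Z}_+^n$, I would choose $i$ maximizing $j_i$: if $\|J\|\geq n\cdot n_0$ then $j_i\geq n_0\geq k_i^0$ by pigeonhole, so $S:=J-K_i^0\in\mathbb{Z}_+^n$ is admissible in the above inequality with $j=i$. Choosing $r_k\in(1,\beta/\sqrt n]$ (permitted since $\beta>\sqrt n$) with $|R|\leq\beta$, and setting $r_*:=\min_kr_k>1$, one has $R^S\geq r_*^{\|J\|-n_0}$ while $S!\,K_i^0!/J!=\binom{j_i}{k_i^0}^{-1}\leq 1$; thus the coefficient on the right is at most $p\,r_*^{n_0-\|J\|}$, which is $\leq 1$ for $\|J\|$ above a universal threshold $N_1$. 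Taking $N:=\max(n\cdot n_0,N_1)$ produces \eqref{ineqoz2} at $z^0$ with this $N$, uniformly in $z^0$, so $N(F,\mathbf{L},\mathbb{B}^n)\leq N$.

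The main obstacle is the joint bookkeeping of admissibility and decay in the sufficiency: the pigeonhole choice of $i$ ensures $J-K_i^0\in\mathbb{Z}_+^n$ exactly once $\|J\|_\infty\geq n_0$, while the strict inequality $\beta>\sqrt n$ is precisely what permits $r_k>1$ and hence the exponential decay of the Cauchy factor $R^{-S}$ needed to absorb the combinatorial residues uniformly in $z^0$. Everything else is routine substitution and the use of $\mathbf{L}\in Q(\mathbb{B}^n)$ already exploited in the necessity.
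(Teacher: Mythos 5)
Your proposal is correct and follows essentially the same route as the paper: for necessity, restrict the maximum in \eqref{net1} to the single index $K^0$ and absorb the $\mathbf{L}$-ratio using $\lambda_{2,j}(R)$; for sufficiency, apply the multidimensional Cauchy integral formula on $\mathbb{T}^n(z^0,R/\mathbf{L}(z^0))$, feed in \eqref{conc3}, and exploit the geometric decay of $R^{-S}$ for $|R|$ with components exceeding $1$.

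One small point worth noting in your favour: in the sufficiency direction you make the pigeonhole step explicit — for an arbitrary $J$ you first locate an index $i$ with $j_i\geq k_i^0$ so that $S:=J-K_i^0\in\mathbb{Z}_+^n$, and you correctly flag that this requires $\|J\|\gtrsim n\,n_0$ (not merely $\|J\|\geq n_0+s_0$). The paper's proof glides past this and concludes $N(F,\mathbf{L},\mathbb{B}^n)\leq n_0+s_0$, which implicitly assumes the decomposition $J=K_j^0+S$ is always available once $\|J\|$ exceeds that threshold; for $n>1$ this is not automatic, and the correct uniform bound is $\max\{n\,n_0,\,n_0+s_0\}$ as you derive. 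Your version of the argument is therefore slightly more rigorous, though the overall strategy, the use of the Cauchy formula, and the final conclusion (boundedness of the $\mathbf{L}$-index) coincide with the paper's.
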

\begin{proof}
	Proof of Theorem \ref{petr1-ball} implies that the inequality \eqref{net1}
	is true for some $K^0.$
	Therefore, we have
	\begin{gather*}
	\frac{p_0}{K^0!} \frac{|F^{(K^0)}(z^0)|}{\mathbf{L}^{K^0} (z^0)}%\geq \\
	\geq
	\max\left\{ \frac{|F^{(K^0)}(z)|}{K^0!\mathbf{L}^{K^0} (z)}: z\in \mathbb{D}^n 
\left[z^0,{R}/{\mathbf{L}(z^0)}\right] \right\} =
	\\
	= \max\left\{ \frac{|F^{(K^0)}(z)|}{K^0!} \frac{\mathbf{L}^{K^0} (z^0)}{ \mathbf{L}^{K^0} (z^0) \mathbf{L}^{K^0} 
(z)}: z\in \mathbb{D}^n \left[z^0,{R}/{\mathbf{L}(z^0)}\right] \right\} \geq
	\\
	\geq
	\max\left\{ \frac{|F^{(K^0)}(z)|}{K^0!} \frac{\prod_{j=1}^n{(\lambda_{2,j}(R))}^{-n_0}}{\mathbf{L}^{K^0} (z^0)}: 
z\in \mathbb{D}^n \left[z^0,{R}/{\mathbf{L}(z^0)}\right] \right\}.
	\end{gather*}
	This inequality implies
	\begin{gather}
	\frac{p_0\prod_{j=1}^n(\lambda_{2,j}(R))^{n_0}}{K^0!} \frac{|F^{(K^0)}(z^0)|}{\mathbf{L}^{K^0} (z^0)} %\geq 
\nonumber\\
	\geq
	\max\left\{  \frac{|F^{(K^0)}(z)|}{K^0!\mathbf{L}^{K^0} (z^0)}: z\in \mathbb{D}^n 
\left[z^0,{R}/{\mathbf{L}(z^0)}\right] \right\}.
	\label{conc2}
	\end{gather}
	From \eqref{conc2} we obtain inequality \eqref{conc1} with
	$p=p_0\prod_{j=1}^n{(\lambda_{2,j}(R))}^{n_0}$.
	The necessity of condition \eqref{conc1} is proved.
	
	Now we prove the sufficiency of \eqref{conc3}. Suppose that for every $R\in\mathbb{R}^n_+,$ $|R|\leq\beta,$ 
$\exists n_0 \in \mathbb{Z}_+, p>1 $ such that $\forall z_0 \in \mathbb{B}^n $ and some
	$ K_J^0\in\mathbb{Z}^n_+$ with $k^0_j\leq n_0$ the inequality \eqref{conc3}  holds.
	
	We write Cauchy's formula as following
	$\forall z^0\in \mathbb{B}^n$  $\forall s \in \mathbb{Z}_+^n $
	$$
	\frac{F^{(K_J^0+S)}(z^0)}{S!}=\frac{1}{(2\pi i)^n} \int_{\mathbb{T}^n\left(z^0,{R}/{\mathbf{L}(z^0)}\right)} 
\frac{F^{(K_J^0)}(z)}{(z-z^0)^{S+\mathbf{1}}} dz.
	$$
	This yields
	\begin{gather*}
	\frac{|F^{(K_j^0+S)}(z^0)|}{S!}\leq \frac{1}{(2\pi)^n} \int_{\mathbb{T}^n\left(z^0,{R}/{\mathbf{L}(z^0)}\right)} 
\frac{|F^{(K_j^0)}(z)|}{|z-z^0|^{S+\mathbf{1}}} |dz|
	\leq \\
	\\ \leq \frac{1}{(2\pi)^n} \int_{\mathbb{T}^n\left(z^0,{R}/{\mathbf{L}(z^0)}\right)} \max\{|F^{(K_j^0)}(z)|: z 
\in \mathbb{D}^n\left[z^0, {R}/{\mathbf{L}(z^0)}\right]\} \frac{\mathbf{L}^{S+\mathbf{1}}(z^0)}{R^{S+\mathbf{1}}} |dz| = 
\\
	=
	\max\{|F^{(K_j^0)}(z)|: z \in \mathbb{D}^n\left[z^0, {R}/{\mathbf{L}(z^0)}\right] \} 
\frac{{\mathbf{L}^{S}(z^0)}}{R^{S}}.
	\end{gather*}
	Now we put $R=(\frac{\beta}{\sqrt{n}},\ldots,\frac{\beta}{\sqrt{n}})$ and use \eqref{conc3}
	\begin{gather}
	\frac{|F^{(K_j^0+S)}(z^0)|}{S!} \!\leq\!
	\frac{\mathbf{L}^{S}(z^0)}{{(\beta/\sqrt{n})}^{\|S\|}}
	\max\{|F^{(K_j^0)}(z)|:z\in \mathbb{D}^n\left[z^0, {R}/{\mathbf{L}(z^0)}\right] \}%\leq \nonumber \\
	\!\leq\!
	\frac{p\mathbf{L}^{S}(z^0)}{(\beta/\sqrt{n})^{\|S\|}}
	|F^{(K_j^0)}(z^0)|. \label{eqa1}
	\end{gather}
	We choose $S\in\mathbb{Z}^n_+$ such that $\|S\|\geq s_0$, where $\frac{p}{(\beta/\sqrt{n})^{s_0}} \leq 1$.
	Therefore, \eqref{eqa1} implies that for all $j\in\{1,\ldots,n\}$ and $k_j^0\leq n_0$ 
	\begin{gather*}
	\frac{|F^{(K_j^0+S)}(z^0)|}
	{{\mathbf{L}^{K_j^0+S}(z^0)}(K_j^0+S)!} \leq
	\frac{p}{{(\beta/\sqrt{n})}^{\|S\|}} \frac{S!K_j^0!}{(S+K_j^0)!}
	\frac{|F^{(K_j^0)}(z^0)|}
	{{\mathbf{L}^{K_j^0}(z^0)}K_j^0!} \leq
	\frac{|F^{(K_j^0)}(z^0)|}{{\mathbf{L}^{K_j^0}(z^0)}K_j^0!}.
	\end{gather*}
	Consequently, $N(F,\mathbf{L},\mathbb{B}^n)\leq n_0+s_0.$
\end{proof}

\begin{remark}\rm
Inequality \eqref{conc1} is necessary and sufficient condition of boundedness of $l$-index for functions of one variable 
\cite{sher,kusher,sherkuz}. But it is unknown whether this condition is sufficient condition of boundedness of 
$\mathbf{L}$-index in joint variables. Our restrictions \eqref{conc3} are corresponding multidimensional sufficient 
conditions.
 \end{remark}

\begin{lemma} \label{indexgreatl}
	Let $\mathbf{L}_1, $ $\mathbf{L}_2\in Q(\mathbb{B}^n)$ and for every $z\in\mathbb{B}^n$ 
	$\mathbf{L}_1(z)\leq \mathbf{L}_2(z).$ If analytic in $\mathbb{B}^n$ function $F$ has bounded 
$\mathbf{L}_1$-index in joint variables then 
	$F$ is of bounded $\mathbf{L}_2$-index in joint variables and $N(F,\mathbf{L}_2,\mathbb{B}^n)\leq n 
N(F,\mathbf{L}_1,\mathbb{B}^n).$
\end{lemma}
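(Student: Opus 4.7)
Denote $N_1 := N(F,\mathbf{L}_1,\mathbb{B}^n)$. The plan is to verify the defining inequality \eqref{ineqoz2} for $\mathbf{L}_2$ with $n_0 = n N_1$. Fix $z \in \mathbb{B}^n$ and $J = (j_1,\ldots,j_n) \in \mathbb{Z}_+^n$; the case $\|J\| \leq nN_1$ is trivial (take $K = J$ in the maximum), so assume $\|J\| > nN_1$. Combining the componentwise domination $\mathbf{L}_1(z) \leq \mathbf{L}_2(z)$ with the bounded $\mathbf{L}_1$-index hypothesis yields some $K = (k_1,\ldots,k_n) \in \mathbb{Z}_+^n$ with $\|K\| \leq N_1$ satisfying
\[
\frac{|F^{(J)}(z)|}{J!\,\mathbf{L}_2^J(z)} \leq \frac{|F^{(J)}(z)|}{J!\,\mathbf{L}_1^J(z)} \leq \frac{|F^{(K)}(z)|}{K!\,\mathbf{L}_1^K(z)} = \frac{|F^{(K)}(z)|}{K!\,\mathbf{L}_2^K(z)}\prod_{i=1}^n\left(\frac{l_{2,i}(z)}{l_{1,i}(z)}\right)^{k_i}.
\]
Absorbing the $J$-denominator ratios into the product, this collapses to
\[
\frac{|F^{(J)}(z)|}{J!\,\mathbf{L}_2^J(z)} \leq \frac{|F^{(K)}(z)|}{K!\,\mathbf{L}_2^K(z)}\prod_{i=1}^n\left(\frac{l_{1,i}(z)}{l_{2,i}(z)}\right)^{j_i - k_i},
\]
so the target bound by $\max\bigl\{|F^{(K')}(z)|/(K'!\mathbf{L}_2^{K'}(z)):\|K'\|\leq nN_1\bigr\}$ reduces to making the product on the right at most $1$, i.e., to arranging $k_i \leq j_i$ for every $i$ (the factors are $\leq 1$ since $l_{1,i}/l_{2,i} \leq 1$).

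To secure this componentwise domination I propose to replace $K$ by its truncation $\widetilde K$ with $\widetilde k_i := \min(k_i, j_i)$. Then $\widetilde K \leq J$ componentwise, and $\widetilde k_i \leq k_i \leq N_1$ for every coordinate, so $\|\widetilde K\| \leq nN_1$. This is precisely the source of the factor $n$ in the statement: passing from the simplex constraint $\|K\|\leq N_1$ to the cube constraint $\max_i k_i \leq N_1$ enlarges the $\ell^1$-radius from $N_1$ to $nN_1$. Justifying the passage $K \mapsto \widetilde K$ in the chain above is the technical step; I would express $F^{(K)} = \partial^{K-\widetilde K} F^{(\widetilde K)}$ and apply Cauchy's integral formula on the skeleton $\mathbb{T}^n(z,R/\mathbf{L}_1(z))$ with $|R|\leq\beta$, then use the $Q(\mathbb{B}^n)$-property \eqref{clasqballn} of $\mathbf{L}_1$ to absorb the variation of $\mathbf{L}_1$ over the skeleton, following the template of the sufficiency part of the proof of Theorem \ref{petr1-ball}.

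The main obstacle I expect is carrying out this Cauchy-based replacement so that no residual multiplicative constant survives in the final inequality, since \eqref{ineqoz2} permits no constant in front of the maximum. Once the passage $K \mapsto \widetilde K$ is legitimised, the product $\prod_i (l_{1,i}(z)/l_{2,i}(z))^{j_i - \widetilde k_i} \leq 1$ holds by construction and the size bound $\|\widetilde K\| \leq nN_1$ completes the verification of \eqref{ineqoz2} for $\mathbf{L}_2$ with $n_0 = nN_1$.
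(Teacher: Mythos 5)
Your proposal sets up the same initial inequality chain as the paper, arriving at
\[
\frac{|F^{(J)}(z)|}{J!\,\mathbf{L}_2^J(z)} \leq \frac{|F^{(K)}(z)|}{K!\,\mathbf{L}_2^K(z)}\prod_{i=1}^n\left(\frac{l_{1,i}(z)}{l_{2,i}(z)}\right)^{j_i - k_i},
\]
and your observation that the product on the right is $\leq 1$ only when $k_i\leq j_i$ for every $i$ is exactly the point at which care is required. However, your proposed repair --- truncating $K$ to $\widetilde K$ with $\widetilde k_i=\min(k_i,j_i)$ and controlling $F^{(K)}$ from $F^{(\widetilde K)}$ via Cauchy's integral formula on $\mathbb{T}^n(z,R/\mathbf{L}_1(z))$ --- is not carried out, and, as you yourself anticipate, that passage cannot be made constant-free: Cauchy's estimate brings in factors of the form $(\mathbf{L}_1(z)/R)^{K-\widetilde K}$ together with $\lambda_{2,j}(R)$-powers coming from the $Q(\mathbb{B}^n)$-uniformity, and none of them cancel, whereas \eqref{ineqoz2} forbids any multiplicative constant in front of the maximum. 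So the argument is incomplete precisely at the step you flag as the main obstacle. Your accounting for the factor $n$ is also off: truncation only shrinks the $\ell^1$-norm, so $\|\widetilde K\|\leq\|K\|\leq N_1$, and if the truncation were free of cost it would in fact give the sharper conclusion $N(F,\mathbf{L}_2,\mathbb{B}^n)\leq N_1$ --- which, given that Lemma~\ref{indexgreatlex} needs the extra hypothesis $\mathcal{L}_1(z)\leq\ell_2(z)$ before it can reach that sharper bound, is a strong signal that the constant-free truncation you require is simply unavailable.

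The paper's own proof takes a shorter route that involves no truncation and no Cauchy estimate. After the same chain of inequalities it bounds the prefactor $\max_{\|K\|\leq n_0}\bigl(\mathbf{L}_1(z)/\mathbf{L}_2(z)\bigr)^{J-K}$ by $1$ outright, invoking only $\mathbf{L}_1\leq\mathbf{L}_2$ and the size hypothesis $\|J\|\geq n\,n_0$, and then concludes immediately. So where you reach for Cauchy's formula, the intended argument is a one-line direct estimate on that prefactor; your concern about coordinates with $j_i<k_i$ (for which $(l_{1,i}/l_{2,i})^{j_i-k_i}>1$) is a legitimate worry that this direct estimate must address, but your proposed Cauchy-based detour does not resolve it.
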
	
\begin{proof}
Let $N(F,\mathbf{L}_1,\mathbb{B}^n)=n_0.$
Using \eqref{ineqoz2} we deduce 
\begin{gather*}
\frac{|F^{(J)}(z)|}{J!\mathbf{L}^{J}_2(z)} = \frac{\mathbf{L}_1^J(z)}{\mathbf{L}_2^J(z)} 
\frac{|F^{(J)}(z)|}{J!\mathbf{L}^{J}_1(z)} 
\leq  \frac{\mathbf{L}_1^J(z)}{\mathbf{L}_2^J(z)}  \max
\left\{\frac{|F^{(K)}(z)|}{K! \mathbf{L}_1^{K}(z)}:\
K\in\mathbb{Z}^{n}_{+},\ \|K\|\leq n_0\right\} \leq \\
\leq \frac{\mathbf{L}_1^J(z)}{\mathbf{L}_2^J(z)} 
\max
\left\{
 \frac{\mathbf{L}_2^K(z)}{\mathbf{L}_1^K(z)} 
\frac{|F^{(K)}(z)|}{K! \mathbf{L}_2^{K}(z)}:\
K\in\mathbb{Z}^{n}_{+},\ \|K\|\leq n_0\right\}\leq \\
\leq \max_{\|K\|\leq n_0} \left(\frac{\mathbf{L}_1(z)}{\mathbf{L}_2(z)}\right)^{J-K} 
\max
\left\{\frac{|F^{(K)}(z)|}{K! \mathbf{L}_2^{K}(z)}:\
K\in\mathbb{Z}^{n}_{+},\ \|K\|\leq n_0\right\}.
\end{gather*} 
Since $\mathbf{L}_1(z)\leq \mathbf{L}_2(z)$ it means that for all $\|J\|\geq n n_0$
$$
\frac{|F^{(J)}(z)|}{J!\mathbf{L}^{J}_2(z)} \leq 
\max
\left\{\frac{|F^{(K)}(z)|}{K! \mathbf{L}_2^{K}(z)}:\
K\in\mathbb{Z}^{n}_{+},\ \|K\|\leq n_0\right\}.
$$
Thus, $F$ has bounded $\mathbf{L}_2$-index in joint variables and $N(F,\mathbf{L}_2,\mathbb{B}^n)\leq n 
N(F,\mathbf{L}_1,\mathbb{B}^n).$
\end{proof}

Denote $\widetilde{\mathbf{L}}(z)=(\widetilde{l}_1(z),\ldots,\widetilde{l}_n(z))$. 
The notation $\mathbf{L}\asymp \widetilde{\mathbf{L}}$ means that there exist 
$\varTheta_1=(\theta_{1,j},\ldots,\theta_{1,n})\in \mathbb{R}_+^n,$ $\varTheta_2=(\theta_{2,j},\ldots,\theta_{2,n})\in 
\mathbb{R}_+^n$ such that  
$\forall z \in \mathbb{B}^n$
$\theta_{1,j}\widetilde{l}_j(z) \leq l_j(z)\leq \theta_{2,j}\widetilde{l}_j(z)$
 for each $j\in\{1,\ldots,n\}.$

\begin{theorem} \label{petr2}
	Let $\mathbf{L} \in Q(\mathbb{B}^n),$ $\mathbf{L}\asymp \widetilde{\mathbf{L}},$ $\beta|\Theta_1|>\sqrt{n}.$ An 
analytic in $\mathbb{B}^n$  function $F$ has bounded $\widetilde{\mathbf{L}}$-index in joint variables if and only if it 
has bounded $\mathbf{L}$-index.
\end{theorem}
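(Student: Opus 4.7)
The strategy is to pass through the local characterization of bounded index given by Theorem~\ref{petr1-ball} and to exploit that the pointwise ratio $l_j(z)/\widetilde l_j(z)\in[\theta_{1,j},\theta_{2,j}]$ is uniformly bounded on $\mathbb{B}^n$. Because the relation $\mathbf{L}\asymp\widetilde{\mathbf{L}}$ is symmetric — swapping the roles of the two vectors replaces $(\Theta_1,\Theta_2)$ by $(1/\Theta_2,1/\Theta_1)$ — it suffices to prove one implication, and then repeat the argument with $\mathbf{L}$ and $\widetilde{\mathbf{L}}$ interchanged.

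First I would verify that $\widetilde{\mathbf{L}}\in Q(\mathbb{B}^n)$, which is required before Theorem~\ref{petr1-ball} can be applied to $\widetilde{\mathbf{L}}$. The bound $\widetilde l_j(z)\ge l_j(z)/\theta_{2,j}>\beta/(\theta_{2,j}(1-|z|))$ gives condition~\eqref{Lbeta-ball} for $\widetilde{\mathbf{L}}$ with a new constant $\widetilde\beta$; the hypothesis $\beta|\Theta_1|>\sqrt n$ is used to guarantee that $\widetilde\beta>\sqrt n$. For condition~\eqref{clasqballn} I would rewrite $\widetilde l_j(z)/\widetilde l_j(z^0)$ as $\bigl(\widetilde l_j(z)/l_j(z)\bigr)\bigl(l_j(z)/l_j(z^0)\bigr)\bigl(l_j(z^0)/\widetilde l_j(z^0)\bigr)$ and use the uniform two-sided bound on the outer two factors to derive finite $\widetilde\lambda_{1,j}(R),\widetilde\lambda_{2,j}(R)$ from the corresponding quantities for $\mathbf{L}$.

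Next, assume $F$ has bounded $\mathbf{L}$-index. Fix $\widetilde R$ with $|\widetilde R|\le\widetilde\beta$ and set $R=(\theta_{2,1}\widetilde r_1,\ldots,\theta_{2,n}\widetilde r_n)$; by the choice of $\widetilde\beta$ we have $|R|\le\beta$. The inequality $\widetilde r_j/\widetilde l_j(z^0)\le\theta_{2,j}\widetilde r_j/l_j(z^0)=r_j/l_j(z^0)$ yields the polydisc inclusion $\mathbb{D}^n[z^0,\widetilde R/\widetilde{\mathbf{L}}(z^0)]\subset\mathbb{D}^n[z^0,R/\mathbf{L}(z^0)]$. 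By Theorem~\ref{petr1-ball} applied to $\mathbf{L}$, there exist $n_0,p_0$ and, for each $z^0$, a multi-index $K^0$ with $\|K^0\|\le n_0$ for which \eqref{net1} holds on the larger polydisc. Multiplying both sides by the bounded factors $(\widetilde{\mathbf{L}}/\mathbf{L})^K$ on the left (for $\|K\|\le n_0$) and $(\mathbf{L}/\widetilde{\mathbf{L}})^{K^0}$ on the right, both uniformly controlled by $\prod_j\theta_{i,j}^{\pm n_0}$, I obtain the analogous inequality \eqref{net1} for $\widetilde{\mathbf{L}}$ on $\mathbb{D}^n[z^0,\widetilde R/\widetilde{\mathbf{L}}(z^0)]$, with the same $n_0$ and an enlarged constant $\widetilde p_0$. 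The sufficiency part of Theorem~\ref{petr1-ball} then delivers bounded $\widetilde{\mathbf{L}}$-index, and the converse follows by the same argument with the roles of $\mathbf{L}$ and $\widetilde{\mathbf{L}}$ swapped.

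The only real technical point is the bookkeeping: one must check simultaneously that the scaled radii $R$ lie in the admissible range $|R|\le\beta$ and that the multiplicative factors $\prod_j\theta_{i,j}^{\pm n_0}$ remain finite. Both are immediate from the standing hypotheses, and the quantitative condition $\beta|\Theta_1|>\sqrt n$ is used precisely to keep $\widetilde\beta$ above the threshold $\sqrt n$ so that Theorem~\ref{petr1-ball} is applicable to $\widetilde{\mathbf{L}}$ throughout a nontrivial range of $\widetilde R$.
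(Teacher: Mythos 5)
Your proof is correct and follows essentially the same route as the paper: verify $\widetilde{\mathbf{L}}\in Q(\mathbb{B}^n)$, pass to the local criterion of Theorem~\ref{petr1-ball}, use the uniform two-sided bounds on $l_j/\widetilde l_j$ to compare the polydiscs and to carry the multiplicative factors, and conclude via the sufficiency part of Theorem~\ref{petr1-ball}. The paper explicitly proves only the implication bounded-$\widetilde{\mathbf{L}}$-index $\Rightarrow$ bounded-$\mathbf{L}$-index (you chose the reverse one and appealed to the symmetry of $\asymp$, which is fine); one small imprecision to note is your claim that $\beta|\Theta_1|>\sqrt{n}$ ``guarantees $\widetilde\beta>\sqrt n$'': your own line $\widetilde l_j(z)>\beta/(\theta_{2,j}(1-|z|))$ gives $\widetilde\beta=\beta/\max_j\theta_{2,j}$, which involves $\Theta_2$ and not $\Theta_1$, but the paper glosses over exactly the same point, so it is not a substantive gap relative to the source.
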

\begin{proof}
	It is easy to prove that if $\mathbf{L} \in Q(\mathbb{B}^n)$ and $\mathbf{L}\asymp \widetilde{\mathbf{L}}$ then 
$\widetilde{\mathbf{L}} \in Q(\mathbb{B}^n).$
	
	Let $N(F,\widetilde{\mathbf{L}},\mathbb{B}^n)=\widetilde{n}_0<+\infty$. Then by Theorem \ref{petr1-ball} for 
every $\widetilde{R}=(\widetilde{r}_1,\ldots,\widetilde{r}_n)\in\mathbb{R}^n_+,$ $|R|\leq \beta,$ there exists 
$\widetilde{p}\ge 1$ such that for each $z^0 \in \mathbb{B}^n$ and some $K^0$ with  $\|K^0\|\leq \widetilde{n}_0,$ the 
inequality \eqref{net1} holds with $\widetilde{\mathbf{L}}$ and $\widetilde{R}$ instead of $\mathbf{L}$ and $R$.
	Hence
	\begin{gather*}
	\frac{\widetilde{p}}{K^0!} \frac{|F^{(K^0)}(z^0)|}{\mathbf{L}^{K^0} (z^0)}=
	\frac{\widetilde{p}}{K^0!} \frac{\Theta_{2}^{K^0} |F^{(K^0)}(z^0)|}
	{\Theta_{2}^{K^0} \mathbf{L}^{K^0} (z^0)} \geq
	%   \\ \geq
	\frac{\widetilde{p}}{K^0!} \frac{|F^{(K^0)}(z^0)|}
	{\Theta_{2}^{K^0} \widetilde{\mathbf{L}}^{K^0} (z^0)} \geq
	\\ \geq
	\frac{1}{\Theta_2^{K^0} }\max \left\{ \frac{|F^{(K)}(z)|}
	{K!\widetilde{\mathbf{L}}^K(z)}: \|K\| \leq \widetilde{n}_0, z \in \mathbb{D}^n\left[z^0, 
{\widetilde{R}}/{\widetilde{\mathbf{L}}(z)}\right] \right\} \geq
	\\ \geq
	\frac{1}{\Theta_2^{K^0} }\max \left\{ \frac{\Theta_{1}^{K} |F^{(K)}(z)|}
	{K!\mathbf{L}^{K}(z)}: \|K\| \leq \widetilde{n}_0, z \in \mathbb{D}^n\left[z^0, 
{\Theta_1\widetilde{R}}/{\mathbf{L}(z)}\right] \right\} \geq \\
	\geq
	\frac{\min \limits_{0\! \leq\! \|K\| \!\leq\! n_0
		} \{\Theta_{1}^{K} \}}{\Theta_2^{K^0} }
	\max \left\{ \frac{|F^{(K)}(z)|}
	{K!\mathbf{L}^{K}(z)}: \|K\| \leq\! \widetilde{n}_0, z \!\in\! \mathbb{D}^n\!\left[z^0, 
{\Theta_1\widetilde{R}}/{\widetilde{\mathbf{L}}(z)}\right] \right\}.
	\end{gather*}
	In view of Theorem \ref{petr1-ball} we obtain that function $F$ has bounded $\mathbf{L}$-index in joint 
variables.
\end{proof}

\begin{theorem} \label{ball-cor1}
	Let $\mathbf{L} \in Q(\mathbb{B}^n)$. An analytic  in $\mathbb{B}^n$  function $F$ has bounded 
$\mathbf{L}$-index in joint variables if and only if there exist $R\in\mathbb{R}^n_+,$ with 
	$|R|\le \beta,$ $n_0 \in \mathbb{Z}_+,$ $p_0>1$ such that for each $z^0 \in 
\mathbb{B}^n$ and for some $K^0 \in \mathbb{Z}_+^n$ with  $\|K^0\| \leq n_0$ the inequality \eqref{net1} holds.
\end{theorem}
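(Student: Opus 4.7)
The proof naturally splits into necessity and sufficiency. Necessity is an immediate corollary of Theorem \ref{petr1-ball}: if $F$ has bounded $\mathbf{L}$-index in joint variables then \eqref{net1} holds for every admissible $R$, and in particular for at least one.

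For sufficiency the plan is to bootstrap the hypothesis---which only asserts \eqref{net1} for a single pair $R_0$ and $(n_0,p_0)$---into the \emph{for each $R$} hypothesis of Theorem \ref{petr1-ball}, and then invoke that theorem. The key step is a chaining argument. Put $M(w):=\max\{|F^{(K)}(w)|/(K!\mathbf{L}^K(w)):\|K\|\le n_0\}$; the hypothesis applied at a point $w$ in place of $z^0$ then reads $|F^{(K)}(z)|/(K!\mathbf{L}^K(z))\le p_0 M(w)$ for every $\|K\|\le n_0$ and $z\in\mathbb{D}^n[w,R_0/\mathbf{L}(w)]$. For an arbitrary $R$ with $|R|\le\beta$, a fixed $z^0\in\mathbb{B}^n$ and any $z\in\mathbb{D}^n[z^0,R/\mathbf{L}(z^0)]$, I would build the straight-line chain $z^{(i)}:=z^0+(i/m)(z-z^0)$, $i=0,1,\ldots,m$, where $m$ is taken to be any integer at least $\max_j r_j\lambda_{2,j}(R)/r_{0,j}$. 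Using the bound $l_j(z^{(i-1)})\le\lambda_{2,j}(R)\,l_j(z^0)$ that follows from $\mathbf{L}\in Q(\mathbb{B}^n)$, one verifies $z^{(i)}\in\mathbb{D}^n[z^{(i-1)},R_0/\mathbf{L}(z^{(i-1)})]$ at every step. Iterating the rewritten hypothesis along the chain yields $M(z^{(i)})\le p_0 M(z^{(i-1)})$ and hence $M(z^{(m)})\le p_0^m M(z^0)$; maximising over $z$ and choosing $K^0$ with $\|K^0\|\le n_0$ that realises $M(z^0)$ gives \eqref{net1} at scale $R$ with constants $n_0$ and $p_0^m$.

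The principal difficulty is keeping $m$ uniform in $z^0$, which is precisely what the finite bound $\lambda_{2,j}(R)<\infty$ from the class $Q(\mathbb{B}^n)$ provides; the remark following \eqref{Lbeta-ball} simultaneously guarantees that all chain points remain inside $\mathbb{B}^n$. Once \eqref{net1} is established at every scale $R$ with $|R|\le\beta$, Theorem \ref{petr1-ball} finishes the proof.
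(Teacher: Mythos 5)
Your necessity step matches the paper. For sufficiency you take a genuinely different route. The paper handles the hypothesis for a single $R$ by first reducing (implicitly) to $r_j<\beta/\sqrt n$, then introducing the rescaled function $\mathbf{L}^*(z)=R_0\mathbf{L}(z)/R$ with $R_0=(\beta/\sqrt n,\ldots,\beta/\sqrt n)$; because $\mathbb{D}^n[z^0,R_0/\mathbf{L}^*(z^0)]=\mathbb{D}^n[z^0,R/\mathbf{L}(z^0)]$, the hypothesis becomes inequality \eqref{net1} for $\mathbf{L}^*$ at the privileged radius $R_0$, the sufficiency argument of Theorem \ref{petr1-ball} then gives bounded $\mathbf{L}^*$-index, and Theorem \ref{petr2} (equivalence of comparable $\mathbf{L}$'s) transfers this back to $\mathbf{L}$. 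You instead bootstrap directly: fixing a target radius $R$ and a point $z\in\mathbb{D}^n[z^0,R/\mathbf{L}(z^0)]$, you break the segment $[z^0,z]$ into $m$ equal steps, use $l_j(z^{(i-1)})\le\lambda_{2,j}(R)\,l_j(z^0)$ (valid because every $z^{(i-1)}$ still lies in $\mathbb{D}^n[z^0,R/\mathbf{L}(z^0)]$, which is the set over which $\lambda_{2,j}$ is defined) to check that each increment fits inside the small polydisc $\mathbb{D}^n[z^{(i-1)},R_0/\mathbf{L}(z^{(i-1)})]$ provided $m\ge\max_j r_j\lambda_{2,j}(R)/r_{0,j}$, and iterate the hypothesis at each $z^{(i-1)}$ to get $M(z)\le p_0^mM(z^0)$. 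Taking the $K^0$ that realises $M(z^0)$ yields \eqref{net1} at scale $R$ with constants $n_0$ and $p_0^m$, after which Theorem \ref{petr1-ball} applies. The chain argument is more elementary (it avoids Theorem \ref{petr2} entirely) and avoids the paper's unstated reduction to $r_j<\beta/\sqrt n$; the paper's rescaling trick is shorter once the equivalence theorem is available. Both are correct, and your use of Remark 1 to keep the chain inside $\mathbb{B}^n$ is exactly the point that needs checking.
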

\begin{proof}
	The necessity of this theorem follows from the necessity of Theorem \ref{petr1-ball}.
	We prove the sufficiency.
	The proof of Theorem \ref{petr1-ball} with $R=(\frac{\beta}{\sqrt{n}},\ldots,\frac{\beta}{\sqrt{n}})$ implies 
that $N(F,\mathbf{L},\mathbb{B}^n)<+\infty.$
	
	Let $\mathbf{L}^{*}(z)=\frac{R_0\mathbf{L}(z)}{R},$ 
$R^0=(\frac{\beta}{\sqrt{n}},\ldots,\frac{\beta}{\sqrt{n}}).$
	In general case from validity of \eqref{net1} for $F,$  $\mathbf{L}$ and 
	$R=(r_1,\ldots,r_n)$ with 
	 $r_j<\frac{\beta}{\sqrt{n}},$ $j\in\{1,\ldots,n\}$ we obtain
	\begin{gather*}
		\max \left\{ \frac{|F^{(K)}(z)|}
	{K!(\mathbf{L}^*(z^0))^{K}}: \|K\| \leq n_0, z \in \mathbb{D}^n\left[z^0, {R_0}/{\mathbf{L}^*(z^0)}\right] 
\right\} = \\
=	\max \left\{ \frac{|F^{(K)}(z)|}
	{K!(R_0 \mathbf{L} (z)/R)^{K}}: \|K\| \leq n_0, z \in \mathbb{D}^n\left[z^0, {R_0}/(R_0 \mathbf{L} (z)/R)\right] 
\right\} \leq \\
	\leq
	\max \left\{ \frac{{n}^{\|K\|/2}|F^{(K)}(z)|}
	{K!\mathbf{L}^{K} (z)}: \|K\| \leq n_0, z\! \in\! \mathbb{D}^n\left[z^0, {R}/{\mathbf{L}(z^0)}\right] \right\} 
\leq
	\\
	\leq 
	\frac{p_0}{K^0!} \frac{{n}^{n_0/2}|F^{(K^0)}(z^0)|}{\mathbf{L}^{K^0} (z^0)}
	=
	\frac{n^{n_0/2}(\beta/\sqrt{n})^{\|K^0\|}p_0}{R^{K^0}K^0!} \frac{|F^{(K^0)}(z)|}{(R_0 \mathbf{L} 
(z)/R)^{K^0}}%<\\
	< \frac{p_0\beta^{n_0}}{\prod_{j=1}^nr_j^{n_0}} \frac{|F^{(K^0)}(z)|}{K^0!(\mathbf{L}^*(z))^{K^0}}.
	\end{gather*}
	i. e. \eqref{net1} holds for $F,$ $\mathbf{L}^*$ and 
$R_0=(\frac{\beta}{\sqrt{n}},\ldots,\frac{\beta}{\sqrt{n}}).$
	As above %for $R=(\beta,\beta)$
	we apply Theorem \ref{petr1-ball} to the function $F(z)$ and $\mathbf{L}^{*}(z)=R_0 \mathbf{L} (z)/R$. This 
implies that $F$ is of bounded $\mathbf{L}^{*}$-index in joint variables. Therefore, by Theorem \ref{petr2} the function 
$F$ has bounded $\mathbf{L}$-index in joint variables.
\end{proof}

\section{Local behaviour of maximum modulus of analytic in ball  function}\
For an analytic in $\mathbb{B}^n$ function $F$ we put
$$
M(r,z^0,F)=\max\{|F(z)|\colon  z\in \mathbb{T}^n(z^0,r)\},
$$
where $z^0\in\mathbb{B}^n,$ $r\in\mathbb{R}^n_+.$  
Then $M(R,z^0,F)=\max\{|F(z)|  \colon z\in \mathbb{D}^n[z^0,R]\},$ because the maximum modulus for an analytic function in a closed polydisc is  attained on its skeleton.

\begin{theorem}\sl  \label{bordte14-ball} % \cite{bagzmin}
	Let $\mathbf{L}\in Q(\mathbb{B}^n).$
	If an analytic in $\mathbb{B}^n$ function $F$ has bounded $\mathbf{L}$-index in joint variables 
	then for any $R',$ 
$R''\in\mathbb{R}^n_+$ $0<|R'|<|R''|<\beta,$ there exists a number $p_1=p_1(R',R'')\geq 1$ such that for every 
$z^0\in\mathbb{B}^n$
	\begin{equation}
	\label{bordriv112nec}
	M\left(\frac{R''}{\mathbf{L}(z^0)},z^0,F\right) \leq p_1  M\left(\frac{R'}{\mathbf{L}(z^0)},z^0,F\right).
	\end{equation}
\end{theorem}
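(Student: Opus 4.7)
The plan is to pick $z^*\in\mathbb{T}^n(z^0, R''/\mathbf{L}(z^0))$ where $|F|$ attains its maximum on the larger closed polydisc, define the componentwise radial contraction $\tilde z$ by $\tilde z_j = z^0_j + (r'_j/r''_j)(z^*_j - z^0_j)$ (working in the natural regime $r'_j\leq r''_j$ for each $j$), and control $|F(z^*) - F(\tilde z)|$ by the fundamental theorem of calculus along the real line segment $[\tilde z, z^*]$. An elementary computation shows $\tilde z\in\mathbb{T}^n(z^0, R'/\mathbf{L}(z^0))$ and $|z^*_j - \tilde z_j| = (r''_j - r'_j)/l_j(z^0)$, while each point of the segment has $j$-th coordinate distance $((1-t)r'_j + t r''_j)/l_j(z^0)\leq r''_j/l_j(z^0)$ from $z^0$, so the whole segment lies in $\mathbb{D}^n[z^0, R''/\mathbf{L}(z^0)]$.

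To bound $|\partial_j F(u)|$ along this segment, I would apply Theorem \ref{petr1-ball} with $R = R''$. It supplies $n_0\in\mathbb{Z}_+$, $p_0>0$ (uniform in $z^0$) and a multi-index $K^0$ with $\|K^0\|\leq n_0$ for which, specializing the derivative conclusion to $J=\mathbf{1}_j$ (enlarging $n_0$ to $\max(n_0,1)$ if necessary) and using $l_j(u)\leq\lambda_{2,j}(R'')\,l_j(z^0)$ from \eqref{lam2},
\begin{equation*}
|\partial_j F(u)|\leq p_0\,\lambda_{2,j}(R'')\,l_j(z^0)\,B_{n_0}(z^0),\qquad B_{n_0}(z^0):=\max_{\|K\|\leq n_0}\frac{|F^{(K)}(z^0)|}{K!\,\mathbf{L}^K(z^0)}.
\end{equation*}
Cauchy's inequality on the smaller polydisc $\mathbb{D}^n[z^0, R'/\mathbf{L}(z^0)]$ bounds every such Taylor coefficient by $M(R'/\mathbf{L}(z^0), z^0, F)/(R')^K$, so $B_{n_0}(z^0)\leq C_1\,M(R'/\mathbf{L}(z^0), z^0, F)$ with $C_1 = \max_{\|K\|\leq n_0}(R')^{-K}$ depending only on $R'$ and $n_0$.

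Feeding these estimates into
\begin{equation*}
|F(z^*)-F(\tilde z)|\leq\sum_{j=1}^n|z^*_j-\tilde z_j|\max_{u\in[\tilde z,z^*]}|\partial_j F(u)|
\end{equation*}
and using $|F(\tilde z)|\leq M(R'/\mathbf{L}(z^0), z^0, F)$ and $|F(z^*)| = M(R''/\mathbf{L}(z^0), z^0, F)$, the triangle inequality yields \eqref{bordriv112nec} with the explicit constant $p_1 = 1 + p_0 C_1 \sum_{j=1}^n \lambda_{2,j}(R'')(r''_j - r'_j)$, depending only on $R'$, $R''$, and $\mathbf{L}$.

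The main obstacle is precisely the derivative bound. A naive Taylor expansion of $F$ around $z^0$ on the larger polydisc would require summing $\sum_K(R''/R')^K$, which diverges as soon as some component $r''_j\geq 1$ --- a scenario permitted by $|R''|<\beta$ with $\beta>\sqrt n$. The detour through Theorem \ref{petr1-ball} avoids this: it converts the divergent Taylor argument into a pointwise derivative bound valid uniformly on the whole polydisc $\mathbb{D}^n[z^0, R''/\mathbf{L}(z^0)]$, after which a single FTC pass over the short radial segment $[\tilde z, z^*]$ delivers an additive correction of the required uniform form (so one does not even need to discretize the interval between $R'$ and $R''$).
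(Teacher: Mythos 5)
Your proof is correct and takes a genuinely different and considerably more streamlined route than the paper. The paper argues by contradiction: it assumes the ratio of maxima can be made arbitrarily large, invokes Theorem~\ref{ball-cor1} to control $F^{(K^0)}$, and then runs an elaborate $n$-stage telescoping estimate (peeling off one coordinate derivative of $F^{(K^0)}$ at a time along a coordinate path), culminating in the cascade of $b_j$ constants and a contradiction with the choice of $p_*$. You instead produce a direct two-point estimate: connect the extremal point $z^*\in\mathbb{T}^n(z^0,R''/\mathbf{L}(z^0))$ to its radial contraction $\tilde z\in\mathbb{T}^n(z^0,R'/\mathbf{L}(z^0))$ by a line segment that stays inside the larger closed polydisc, bound $|F(z^*)-F(\tilde z)|$ by the fundamental theorem of calculus, control the first-order partials along the segment via the conclusion of Theorem~\ref{petr1-ball} applied with $K=\mathbf{1}_j$ (together with the $\lambda_{2,j}$ bound on the ratio $l_j(u)/l_j(z^0)$), and convert $F^{(K^0)}(z^0)$ into the smaller maximum via the Cauchy estimate on $\mathbb{D}^n[z^0,R'/\mathbf{L}(z^0)]$. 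This gives an explicit $p_1=1+p_0C_1\sum_j\lambda_{2,j}(R'')(r''_j-r'_j)$ and avoids the contradiction scaffolding entirely. Both approaches lean on Theorem~\ref{petr1-ball} (or its corollary) as the engine; yours spends it once to get a uniform first-derivative bound, the paper spends it inside the contradiction loop.

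Two small points worth making explicit. First, like the paper, you work ``in the natural regime $r'_j\le r''_j$ for each $j$'' even though the hypothesis only says $|R'|<|R''|$; the paper makes exactly the same tacit assumption (it writes ``in view of \dots\ $R''\ge R'$'' when estimating $\tilde b_1$), so you are not at a disadvantage, but for strict generality one would pass to $\widehat R_j=\min(r'_j,r''_j)$, apply the componentwise result to $(\widehat R,R'')$, and use $M(\widehat R/\mathbf{L}(z^0),z^0,F)\le M(R'/\mathbf{L}(z^0),z^0,F)$. Second, the parenthetical ``enlarging $n_0$ to $\max(n_0,1)$ if necessary'' does work, but deserves a sentence: the defining inequality \eqref{ineqoz2} for bounded $\mathbf{L}$-index remains valid when $n_0$ is replaced by any larger integer, and the proof of necessity in Theorem~\ref{petr1-ball} uses only that $F$ satisfies \eqref{ineqoz2} at the chosen level, so its conclusion with $\|K\|\le n_0$ holds equally with $\|K\|\le\max(n_0,1)$; this guarantees the index $K=\mathbf{1}_j$ is within scope.
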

\begin{proof}
	Let $N(F,\mathbf{L})=N<+\infty.$ Suppose that  inequality \eqref{bordriv112nec} does not hold i.e. there exist
	$R',$ $R'',$ $0<|R'|<|R''|<\beta,$ such that for each $p_*\geq 1$ and for some $z^0=z^0(p_*)$
	\begin{equation}
	\label{bordriv113nec}
	M\left(\frac{R''}{\mathbf{L}(z^0)},z^0,F\right) > p_*  M\left(\frac{R'}{\mathbf{L}(z^0)},z^0,F\right).
	\end{equation}
	By Theorem~\ref{ball-cor1}, there exists a number $p_0=p_0(r'')\geq 1$ such that for every $z^0\in\mathbb{B}^n$ 
and some $K^0\in\mathbb{Z}^n_{+},$ $\|K^0\| \leq N,$  one has
	\begin{equation}
	\label{bordriv114nec}
	M\left(\frac{R''}{\mathbf{L}(z^0)},z^0,F^{(K^0)}\right) \leq p_0 |F^{(K^0)}(z^0)|.
	\end{equation}
	We put 
	\begin{gather*}
	b_1=p_0\left(\prod_{j=2}^n \lambda_{2,j}^N(R'')\right)(N!)^{n-1} \left(\sum_{j=1}^{N} \frac{(N-j)!}{(r_1'')^j} \right)
	\left(\frac{r_1''r_2'' \ldots r_n''}{r_1'r_2'\ldots r_n'} \right)^N,\\
	b_2 = p_0 \left( \prod_{j=3}^n \lambda_{2,j}^N(R'') \right)(N!)^{n-2}  \left( \sum_{j=1}^{N} \frac{(N-j)!}{(r_2'')^j} \right)
	\left(\frac{r_2'' \ldots r_n''}{r_2'\ldots r_n'}  \right)^N \max\left\{1,\frac{1}{(r_1')^N}  \right\}, \\
	\ldots \\
	b_{n-1}=p_0\lambda_{2,n}^N(R')N! \left(\sum_{j=1}^N \frac{(N-j)!}{(r''_{n-1})^j}\right) \left(\frac{r''_{n-1}r''_n}{r'_{n-1}r'_n} \right)^N
	\max\left\{1, \frac{1}{(r_1'\ldots r'_{n-2})^N} \right\}, \\
	b_n=p_0 \left(\sum_{j=1}^N \frac{(N-j)!}{(r''_n)^j} \right) \left(\frac{r''_n}{r'_n}\right)^N \max\left\{1, \frac{1}{(r_1'\ldots r'_{n-1})^N} \right\}
	\end{gather*}
	and
	$$
	p_* = (N!)^n p_0 \left( \frac{r_1''r_2''\ldots r_n''}{r_1'r_2'\ldots r_n'}\right)^N+\sum_{k=1}^n b_k+1.
	$$
	
	Let $z^0=z^0(p_*)$ be a point for which  inequality \eqref{bordriv113nec} holds and $K^0$ be such  that \eqref{bordriv114nec} holds and
	\begin{gather*}
	M\left(\frac{R'}{\mathbf{L}(z^0)},z^0,F\right)=|F(z^*)|, \
	M\left(\frac{r''}{\mathbf{L}(z^0)},z^0,F^{(J)}\right)=|F^{(J)}(z^*_J)|
	\end{gather*}
	for every $J\in\mathbb{Z}^n_{+},$ $\|J\|\leq N.$
	We apply Cauchy's inequality
	\begin{equation}
	\label{bordriv115}
	|F^{(J)}(z^0)|\leq J! \left(\frac{\mathbf{L}(z^0)}{R'}\right)^J |F(z^*)|
	\end{equation}
	for estimate the difference
	\begin{gather}
	|F^{(J)}(z_{J,1}^*,z_{J,2}^*,\ldots,z_{J,n}^*)-F^{(J)}(z_1^0,z_{J,2}^*,\ldots,z_{J,n}^*)|= \nonumber\\ =
	\left|
	\int_{z_1^0}^{z^*_{J,1}} \frac{\partial^{\|J\|+1}F}{\partial z_1^{j_1+1}\partial z_2^{j_2}\ldots \partial z_n^{j_n}}(\xi,z_{J,2}^*,\ldots,z_{J,n}^*)d\xi
	\right|\leq \nonumber\\ \leq
	\left|\frac{\partial^{\|J\|+1}F}{\partial z_1^{j_1+1}\partial z_2^{j_2}\ldots \partial 
z_n^{j_n}}(z^*_{(j_1+1,j_2,\ldots,j_n)}) \right| \frac{r''_1}{l_1(z^0)}. \label{bordriv116}
	\end{gather}
	Taking into account $(z_1^0,z^*_{J,2},\ldots,z^*_{J,n})\in \mathbb{D}^n[z^0,\frac{R''}{\mathbf{L}(z^0)}],$ for all 
$k\in\{1, \ldots, n\}$ 
	$|z^*_{J,k}-z^0_k| = \frac{r''_k}{l_k(z^0)},$
	$l_k(z_1^0,z^*_{J,2},\ldots,z^*_{J,n})\leq$ $\lambda_{2,k}(R'')l_k(z^0)$  and \eqref{bordriv115}
	with $J=K^0,$ by Theorem \ref{petr1-ball}  we have 
	\begin{gather}
	|F^{(J)}(z_1^0,z^*_{J,2},\ldots,z^*_{J,n})| \leq
	\frac{J! l_1^{j_1}(z_1^0,z^*_{J,2},\ldots,z^*_{J,n})\prod_{k=2}^n l_k^{j_k}(z_1^0,z^*_{J,2},\ldots,z^*_{J,n})}{K^0!\mathbf{L}^{K^0}(z^0)}p_0 |F^{(K^0)}(z^0)| \leq \nonumber \\
	\leq \frac{J!\mathbf{L}^J(z^0)\prod_{k=2}^n \lambda^{j_k}_{2,k}(R'')}{K^0!\mathbf{L}^{K^0}(z^0)}p_0K^0! \left(\frac{\mathbf{L}(z^0)}{R'}\right)^{K^0}|F(z^*)|
	=\frac{p_0J!\mathbf{L}^J(z^0)\prod_{k=2}^n \lambda^{j_k}_{2,k}(R'')}{(R')^{K^0}}|F(z^*)|. \label{bordriv117}
	\end{gather}
	From inequalities \eqref{bordriv116} and \eqref{bordriv117} it follows that
	\begin{gather*}
	\left|\frac{\partial^{\|J\|+1}F}{\partial z_1^{j_1+1}\partial z_2^{j_2}\ldots \partial z_n^{j_n}}(z^*_{(j_1+1,j_2,\ldots,j_n)})\right|\geq
	\frac{l_1(z^0)}{r_1''} \left\{|F^{(J)}(z^*_j)|-|F^{(J)}(z_1^0,z^*_{J,2},\ldots,z^*_{J,n})| \right\} \geq \\
	\geq \frac{l_1(z_1^0)}{r_1''}|F^{(J)}(z^*_j)|- \frac{p_0J!\mathbf{L}^{(j_1+1,j_2,\ldots,j_n)}(z^0)\prod_{k=2}^n \lambda^{j_k}_{2,k} (R'')}{r_1''(R')^{K^0}}|F(z^*)|.
	\end{gather*}
	Then
	\begin{gather}
	|F^{(K^0)}(z^*_{K^0})| \geq  \frac{l_1(z^0)}{r_1''} \left|\frac{\partial^{\|K^0\|-1}f}{\partial z_1^{k_1^0-1}\partial z_2^{k_2^0} \ldots \partial z_n^{k_n^0}} (z^*_{(k_1^0-1,k_2^0,\ldots,k_n^0)})\right|-\nonumber\\
	-\frac{p_0(k_1^0-1)!k_2^0!\ldots k_n^0!\mathbf{L}^{K^0}(z^0)\prod_{i=2}^n \lambda_{2,i}^{k_i^0}(R'')}{r_1''(R')^{K^0}}|F(z^*)| \geq \nonumber\\
	\geq \frac{l_1^2(z^0)}{(r_1'')^2} \left| \frac{\partial^{\|K^0\|-2}f}{\partial z_1^{k_1^0-2}\partial z_2^{k_2^0} \ldots \partial z_n^{k_n^0}} (z^*_{(k_1^0-2,k_2^0,\ldots,k_n^0)})\right|-\nonumber\\
	-\frac{p_0(k_1^0-2)!k_2^0!\ldots k_n^0!\mathbf{L}^{K^0}(z^0)\prod_{i=2}^n \lambda_{2,i}^{k_i^0}(R'')}{(r_1'')^2(R')^{K^0}}|F(z^*)|-\nonumber\\
	- \frac{p_0(k_1^0-1)!k_2^0!\ldots k_n^0!\mathbf{L}^{K^0}(z^0)\prod_{i=2}^n \lambda_{2,i}^{k_i^0}(r_i'')}{r_1''(R')^{K^0}}|F(z^*)|\geq \nonumber\\
	\ldots \nonumber \\
	\geq \frac{l_1^{k_1^0}(z^0)}{(r_1'')^{k_1^0}} \left| \frac{\partial^{\|K^0\|-k_1^0}f}{\partial z_2^{k_2^0} \ldots \partial z_n^{k_n^0}} (z^*_{(0,k_2^0,\ldots,k_n^0)})\right|- \nonumber \\
	- \frac{p_0}{(R')^{K^0}} \mathbf{L}^{K^0}(z^0) \left(\prod_{i=2}^n \lambda_{2,i}^{k_i^0}(R'')\right) k_2^0!\ldots k_n^0! \sum_{j_1=1}^{k_1^0}
	\frac{(k_1^0-j_1)!}{(r_1'')^{j_1}}|F(z^*)| \geq \nonumber\\  \ldots \nonumber\\
	\geq \frac{l_1^{k_1^0}(z^0)}{(r_1'')^{k_1^0}} \frac{l_2^{k_2^0}(z^0)}{(r_2'')^{k_2^0}}
	\left| \frac{\partial^{\|K^0\|-k_1^0-k_2^0}f }{\partial z_3^{k_3^0}\ldots \partial z_n^{k_n^0}}(z^*_{(0,0,k_3^0,\ldots,k_n^0)}) \right|- \nonumber\\
	- \frac{l_1^{k_1^0}(z^0)p_0L^{(0,k_2^0,\ldots,k_n^0)}(z^0)}{(r_1'')^{k_1^0}(R')^{K^0}} \left(\prod_{i=3}^n \lambda_{2,i}^{k_i^0}(R'')\right)
	k_3^0!\ldots k_n^0! \sum_{i_2=1}^{k_2^0} \frac{(k_2^0-j_2)!}{(r_2'')^{j_2}}|F(z^*)|-\nonumber\\
	-\frac{p_0}{(R')^{K^0}}\mathbf{L}^{K^0}(z^0) \left(\prod_{i=2}^n \lambda_{2,i}^{k_i^0}(R'') \right) k_2^0! \ldots k_n^0! \sum_{j_1=1}^{k_1^0}
	\frac{(k_1^0-j_1)!}{(r_1'')^{j_1}}|F(z^*)| \geq  \nonumber\\
	\ldots \nonumber \\
	\geq \left(\frac{L(z^0)}{R''}\right)|F(z^*_{\mathbf{0}})|-|F(z^*)|\sum_{i=1}^b \tilde{b}_i, \label{bordriv118}
	\end{gather}
	where  in view of the inequalities $\lambda_{2,i}(R'')\geq 1$ and $R''\geq R'$ we {have}
	\begin{gather*}
	\tilde{b}_1=\frac{p_0}{(R')^{K^0}}\mathbf{L}^{K^0}(z^0) \left(\prod_{i=2}^n \lambda_{2,i}^{k_i^0}(R'')\right) k_2^0!\ldots k_n^0!
	\sum_{j_1=1}^{k_1^0} \frac{(k_1^0-j_1)!}{(r_1'')^{j_1}}= \\
	= \left(\frac{\mathbf{L}(z^0)}{R''} \right)^{K^0} \left( \frac{R''}{R'} \right)^{K^0}p_0 \left(\prod_{i=2}^n \lambda_{2,i}^{k_i^0}(R'') \right)
	k_2^0!\ldots k_n^0! \sum_{j_1=1}^{k_1^0} \frac{(k_1^0-j_1)!}{(r_1'')^{j_1}}
	\leq \left(\frac{\mathbf{L}(z^0)}{R''} \right)^{K^0} b_1,
	\end{gather*}
	\begin{gather*}
	\tilde{b}_2 =  \frac{p_0}{(R')^{K^0}}\mathbf{L}^{K^0}(z^0)  \left(  \prod_{i=3}^n  \lambda_{2,i}^{k_i^0}(R'') \right)  \frac{k_3^0!\ldots k_n^0!}{(r_1'')^{k_1^0}}   \sum_{j_2=1}^{k_2^0} \frac{(k_2^0 - j_2)!}{(r_2'')^{j_2}}
	\leq  \left( \frac{\mathbf{L}(z^0)}{R''}\right)^{K^0}b_2,  \\ \ldots
	\end{gather*}
	\begin{gather*}
	\tilde{b}_{n-1} = \frac{p_0}{(R')^{K^0}}\mathbf{L}^{K^0}(z^0) \lambda_{2,n}^{k_n^0}(R'') \frac{k_n^0!}{(r_1'')^{k_1^0} \ldots (r''_{n-2})^{k^0_{n-2}}} \times \\
	\times \sum_{j_{n-1}=1}^{k_{n-1}^0} \frac{(k^0_{n-1}-j_{n-1})!}{(r''_{n-1})^{j_{n-1}}} \leq \left(\frac{\mathbf{L}(z^0)}{R''}\right)^{K^0}b_{n-1},
	\\
	\tilde{b}_n  =  \frac{p_0}{(R')^{K^0}}\mathbf{L}^{K^0}(z^0)  \frac{1}{(r_1'')^{k_1^0}\ldots (r''_{n-1})^{k_{n-1}^0}}
	\sum_{j_n=1}^{k_n^0} \frac{(k_n^0-j_n)!}{(r_n'')^{j_n}}
	\leq  \left(\frac{\mathbf{L}(z^0)}{R''} \right)^{K^0}b_n.
	\end{gather*}
	Thus, \eqref{bordriv118} implies that
	$$
	|F^{(K^0)}(z^*_{K^0})|\geq \left(\frac{\mathbf{L}(z^0)}{R''} \right)^{K^0} |F(z^*)| \left\{ \frac{|F(z^*_{\mathbf{0}})|}{|F(z^*)|}-\sum_{j=1}^n b_j \right\}.
	$$
	But in view of \eqref{bordriv113nec} and a choice of $p_*$ we have
	$
	\frac{|F(z^*_{\mathbf{0}})|}{|F(z^*)|}\geq p_*> \sum_{j=1}^n b_j.
	$
	Thus, in view of \eqref{bordriv114nec} and \eqref{bordriv115} we obtain
	\begin{gather*}
	|F^{(K^0)}(z^*_{K^0})|\geq \left(\frac{\mathbf{L}(z^0)}{R''} \right)^{K^0}\!\! |F(z^*)| \left\{p_* -\sum_{j=1}^n b_j \right\}\geq \\
	\geq \left(\frac{\mathbf{L}(z^0)}{R''} \right)^{K^0}   \left\{ p_*-\sum_{j=1}^nb_j\right\}
	\frac{|F^{(K^0)}(z^0)|(R')^{K^0}}{K^0!\mathbf{L}^{K^0}(z^0)}
	\geq \left( \frac{r_1'\ldots r_n'}{r_1''\ldots r_n''}\right)^N\!\!\left\{p_*-\sum_{j=1}^nb_j \right\} \frac{|F^{(K^0)}(z^*_{K^0})|}{p_0(n!)^n}.
	\end{gather*}
	Hence, we have
	$
	p_*\leq p_0 \big( \frac{r_1'\ldots r_n'}{r_1''\ldots r_n''}\big)^N (N!)^n+\sum_{j=1}^n b_j,
	$
	but this contradicts the choice of $p_*.$
	\end{proof}

\begin{theorem} \label{bordte43}\sl
	Let $\mathbf{L}\in Q^n,$ $F: \mathbb{B}^n \to \mathbb{C}$ be analytic function. If 
 there exist $R',$ $R''\in\mathbb{R}^n_+,$ $\mathbf{0}<R'<\mathbf{1}<R'',$  and $p_1=p_1(R',R'')\geq 1$ such that for 
every 
$z^0\in\mathbb{C}^n$ inequality \eqref{bordriv112nec} holds 
then  $F$ is of bounded $\mathbf{L}$-index 
in joint variables.
\end{theorem}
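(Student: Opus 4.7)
The strategy is to combine Cauchy's inequality for partial derivatives (which bounds the Taylor coefficients at $z^0$ from above by the maximum modulus of $F$ on the polydisc of polyradius $R''/\mathbf{L}(z^0)$) with the elementary Taylor series bound for the maximum modulus itself on the smaller polydisc (which expresses it as an $(R')^{K}$-weighted sum of the very same coefficients). Hypothesis \eqref{bordriv112nec} lets us pass from the larger polydisc to the smaller one, and the componentwise gap $R'<\mathbf{1}<R''$ supplies a genuine multidimensional geometric decay factor $(R'/R'')^{K}$ that controls the Taylor tail uniformly in $z^0.$

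Fix $z^0\in\mathbb{B}^n$ and write $\mu(z^0):=M(R'/\mathbf{L}(z^0),z^0,F).$ Cauchy's integral formula on $\mathbb{T}^n(z^0,R''/\mathbf{L}(z^0))$ combined with \eqref{bordriv112nec} yields
\[
\frac{|F^{(K)}(z^0)|}{K!\,\mathbf{L}^{K}(z^0)}\leq \frac{p_1\,\mu(z^0)}{(R'')^{K}}\quad(K\in\mathbb{Z}^n_+);
\]
denote this estimate by $(\star).$ The Taylor expansion of $F$ about $z^0$ majorises $\mu(z^0)$ by the sum $\sum_{K}\tfrac{|F^{(K)}(z^0)|}{K!\,\mathbf{L}^{K}(z^0)}(R')^{K}.$ Plugging $(\star)$ into the tail produces
\[
\sum_{\|K\|>n_0}\frac{|F^{(K)}(z^0)|}{K!\,\mathbf{L}^{K}(z^0)}(R')^{K}\leq p_1\,\mu(z^0)\sum_{\|K\|>n_0}\bigl(R'/R''\bigr)^{K},
\]
and since $r'_j<1<r''_j$ for every $j,$ the multi-geometric series $\sum_{K}(R'/R'')^{K}$ converges. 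Choose $n_0$ (independent of $z^0$) so large that $p_1$ times this tail is below $1/2.$ Then $\sum_{\|K\|\leq n_0}\tfrac{|F^{(K)}(z^0)|}{K!\,\mathbf{L}^{K}(z^0)}(R')^{K}\geq \mu(z^0)/2,$ and by pigeonhole some $K^{0}$ with $\|K^{0}\|\leq n_0$ satisfies $\tfrac{|F^{(K^{0})}(z^0)|}{K^{0}!\,\mathbf{L}^{K^{0}}(z^0)}\geq \mu(z^0)/(2N_*),$ where $N_*:=\#\{K\in\mathbb{Z}^n_+:\|K\|\leq n_0\}$ and we used $(R')^{K^{0}}\leq 1.$

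Substituting this lower bound for $\mu(z^0)$ back into $(\star)$ gives, for every $J\in\mathbb{Z}^n_+,$
\[
\frac{|F^{(J)}(z^0)|}{J!\,\mathbf{L}^{J}(z^0)}\leq \frac{2p_1 N_*}{(R'')^{J}}\cdot\frac{|F^{(K^{0})}(z^0)|}{K^{0}!\,\mathbf{L}^{K^{0}}(z^0)}.
\]
Since $\min_j r''_j>1,$ the factor $(R'')^{J}$ grows to infinity with $\|J\|,$ so there exists $n_1$ (independent of $z^0$) with $(R'')^{J}\geq 2p_1N_*$ for every $\|J\|>n_1.$ For each such $J$ we then have $\tfrac{|F^{(J)}(z^0)|}{J!\,\mathbf{L}^{J}(z^0)}\leq\tfrac{|F^{(K^{0})}(z^0)|}{K^{0}!\,\mathbf{L}^{K^{0}}(z^0)}\leq\max_{\|K\|\leq n_0}\tfrac{|F^{(K)}(z^0)|}{K!\,\mathbf{L}^{K}(z^0)},$ while for $\|J\|\leq\max(n_0,n_1)$ the defining inequality \eqref{ineqoz2} is trivial. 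This gives $N(F,\mathbf{L},\mathbb{B}^n)\leq\max(n_0,n_1)<\infty.$

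The only genuinely delicate step is checking that $n_0$ and $n_1$ are uniform in $z^0,$ which is automatic because $R',R'',p_1,$ and hence $N_*,$ are absolute constants. A minor technical prerequisite is that $\mathbb{D}^n[z^0,R''/\mathbf{L}(z^0)]\subset\mathbb{B}^n$ so that $M(R''/\mathbf{L}(z^0),z^0,F)$ is defined; this follows from the standing bound $|R''|\leq\beta$ implicit in the hypothesis (cf.\ the Remark after \eqref{Lbeta-ball}). Notably, and in contrast to the necessity in Theorem \ref{bordte14-ball}, the argument invokes nothing from $Q(\mathbb{B}^n)$---no comparability of $\mathbf{L}$ at nearby points is needed.
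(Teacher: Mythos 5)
Your proof is correct, and it takes a genuinely different route from the paper's. The paper introduces the maximal term $\mu(R,z^0,F)=\max_K |b_K|R^K$ of the Taylor expansion together with its central index $\nu(R,z^0,F)$, then exploits the algebraic relation $\ln\mu(R)\le\ln\mu(R''R)-\|\nu(R)\|\ln\min_j r''_j$ (a one-line consequence of the definition of $\nu$) to bound $\|\nu(R)\|$ by the logarithmic ratio $\ln\bigl(M(R''R,z^0,F)/M(R'R,z^0,F)\bigr)$, and finishes by observing that the local $\mathbf{L}$-index at $z^0$ is at most $\|\nu(\mathbf{1}/\mathbf{L}(z^0),z^0,F)\|$. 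You skip the central-index machinery entirely: Cauchy on the $R''$-polydisc plus the hypothesis gives a uniform geometric decay $(R'/R'')^K$ on the scaled Taylor coefficients, a tail estimate with the convergent series $\prod_j(1-r'_j/r''_j)^{-1}$ plus pigeonhole isolates a coefficient $K^0$ of bounded order that is at least $\mu(z^0)/(2N_*)$, and substituting back into the Cauchy bound produces the defining inequality \eqref{ineqoz2} directly. Both proofs live on the same two ingredients (Cauchy estimates and the Taylor comparison of maximum modulus to coefficient sums) and both depend crucially on the strict componentwise gap $R'<\mathbf{1}<R''$; the paper's route is slightly more conceptual in that it names the central index as the quantity to bound, whereas yours is more elementary and gives an explicit bound $N(F,\mathbf{L},\mathbb{B}^n)\le\max(n_0,n_1)$ with $n_0$, $n_1$ read off from $p_1$, $R'$, $R''$. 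Your observation that $Q(\mathbb{B}^n)$ plays no role is accurate and applies equally to the paper's argument; the only caveat you flag---that one needs $|R''|\le\beta$ for the polydisc to sit inside $\mathbb{B}^n$---is also implicit in the paper's proof (which states $0<|R'|<1<|R''|<\beta$ while applying Cauchy on $\mathbb{T}^n(z^0,R''R)$) but is missing from the theorem's hypotheses as printed, so it is right to note it.
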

\begin{proof}
Let $z^0\in\mathbb{B}^n$ be an arbitrary point. We expand a function $F$ in power 
series
	\begin{equation}
	\label{bordriv119}
	F(z)=\sum_{K\geq\mathbf{0}}b_K(z-z^0)^K= \sum_{k_1,\ldots,k_n\geq 0}b_{k_1,\ldots,k_n}(z_1-z_1^0)^{k_1} \ldots (z_n-z_n^0)^{k_n},
	\end{equation}
	where
	$b_{K}=b_{k_1,\ldots,k_n}= \frac{F^{(K)}(z^0)}{K!}.$
	
	Let $\mu(R,z^0,F)=\max\{|b_K|R^K\colon  \ K\geq \mathbf{0}\}$ be the maximal term of series \eqref{bordriv119} 
and $\nu(R)=\nu(R,z^0,F)=(\nu_1^0(R),\ldots,\nu_n^0(R))$ be a set of indices such that
	$$
	\mu(R,z^0,F)=|b_{\nu(R)}|R^{\nu(R)},\
	\|\nu(R)\|=\!\sum_{j=1}^n \nu_j(R)=\max\! \{\|K\|\colon  K\geq \mathbf{0},\  |b_K|R^K=\mu(R,z^0,F) \}.
	$$
	In view of inequality \eqref{bordriv115} we {obtain}
	for any $|R|< 1-|z^0|$ \ $\mu(R,z^0,F)\leq M(R,z^0,F).$
	Then for given $R'$ and $R''$ with $0<|R'|<1<|R''|<\beta$ we conclude 
	 \begin{gather*}
   M(R'R,z^0,F)\leq \sum_{k\geq\mathbf{0}}|b_k|(R'R)^k\leq
   \sum_{k\geq\mathbf{0}}\mu(R,z^0,F)(R')^k=\mu(R,z^0,F)\sum_{k\geq\mathbf{0}}(R')^k= \nonumber \\
   =\prod_{j=1}^{n}\frac{1}{1-r'_j}\mu(R,z^0,F).
   \end{gather*}
   Besides,
   \begin{gather*}
   \ln \mu(R,z^0,F)=\ln \{|b_{\nu(R)}|R^{\nu(R)} \}=
   \ln\left\{|b_{\nu(R)}|(RR'')^{\nu(R)}\frac{1}{(R'')^{\nu(R)}}\right\}= \nonumber \\
   =\ln\{|b_{\nu(R)}|(RR'')^{\nu(R)}\}+\ln\left\{\frac{1}{(R'')^{\nu(R)}}\right\}\leq
   \ln \mu(R''R,z^0,F)-\|\nu(R) \| \ln \min_{1\leq j\leq n} r_j''.
   \end{gather*}
   This implies that
   \begin{gather}
   \|\nu(R)\|\leq \frac{1}{\ln \min_{1\leq j \leq n} r_j''}( \ln \mu(R''R,z^0,F)- \ln \mu(R,z^0,F))\leq
   \nonumber \\
   \leq
   \frac{1}{\ln  \min_{1\leq j \leq n} r_j''}\left( \ln M(R''R,z^0,F)- \ln( \prod_{j=1}^n (1-r_j')  
M(R'R,z^0,F))\right)\leq
   \nonumber \\
   \leq
   \frac{1}{\ln  \min_{1\leq j \leq n} r_j''}\left( \ln M(R''R,z^0,F)- \ln M(R'R,z^0,F)\right)-
   \frac{\sum_{j=1}^{n}\ln(1-r'_j)}{ \min_{1\leq j \leq n} r_j''}= \nonumber \\
   =\frac{1}{ \min_{1\leq j \leq n} r_j''}
   \ln\frac{M(R''R,z^0,F)}{M(R'R,z^0,F)}- \frac{\sum_{j=1}^{n}\ln(1-r_j)}{ \min_{1\leq j \leq n} r_j''}.
   \label{th4suf4}
   \end{gather}
	
Put $R=\frac{\mathbf{1}}{\mathbf{L}(z^0)}.$ Now let $N(F,z^0,\mathbf{L})$ be a $\mathbf{L}$-index of the function $F$ in 
joint variables at point $z^0$ i. e. it is the least integer
   for which inequality \eqref{ineqoz2} holds at point $z^0.$ Clearly that
   \begin{equation} \label{eq1ver}
   N(F,z^0,\mathbf{L})\leq\nu\left(\frac{1}{\mathbf{L}(z^0)},z^0,F\right)=\nu(R,z^0,F).\end{equation}
   But
   \begin{equation} \label{eq2ver}
   M\left({R''}/{\mathbf{L}(z^0)},z^0,F\right)\leq p_1(R',R'')M\left({R'}/{\mathbf{L}(z^0)},z^0,F\right).
   \end{equation}
   Therefore, from \eqref{th4suf4}, \eqref{eq1ver}, \eqref{eq2ver} we obtain that $\forall z^0 \in\mathbb{B}^n$
   $$
   N(F,z^0,\mathbf{L})\leq \frac{-\sum_{j=1}^{2}\ln(1-r'_j)}{\ln \min \{r_1'',r_2''\}}+\frac{\ln p_1(R',R'')}{\ln \min 
\{r_1'',r_2''\}}.
   $$
   This means that $F$ has bounded $\mathbf{L}$-index in joint variables.

\end{proof}

\section{Boundedness of $\mathbf{L}$-index in every direction $\mathbf{1}_j$}
The boundedness of $l_j$-index of a function $F(z)$ in every variable $z_j,$ generally speaking,
does not imply the boundedness of $\mathbf{L}$-index in joint variables (see example in \cite{bagzmin}). But, if  $F$ has bounded 
$l_j$-index in every  direction $\mathbf{1}_j,$  $j\in\{1,\ldots, n\},$ then $F$ is a function of bounded 
$\mathbf{L}$-index in joint variables.

 For $\eta \in[0,\beta],$\ $z\in\mathbb{B}^n,$\
we define
$\lambda^{\mathbf{b}}_{1}(z,\eta,L)=\inf\left\{\frac{L(z+t\mathbf{b})} 
{L(z)}:
|t|\leq\frac{\eta}{L(z)}\right\},$
$\lambda^{\mathbf{b}}_{1}(\eta,L)=\inf\{\lambda^{\mathbf{b}}_{1}(z,\eta,L):
z\in\mathbb{B}^n\},$ 
$\lambda^{\mathbf{b}}_{2}(z,\eta,L)=\sup\left\{\frac{L(z+t\mathbf{b})}
{L(z)}:
|t|\leq\frac{\eta}{L(z)}\right\},$
$\lambda^{\mathbf{b}}_{2}(\eta,L)\!=\sup\{\lambda^{\mathbf{b}}_{2}(z,\eta,L):
z\in\mathbb{B}^n\}.$

By ${Q}_{\mathbf{b},\beta}(\mathbb{B}^n)$ we denote the class of all functions $L$ for which the following condition holds for any
$\eta\in[0,\beta]$ \ \ 
$0<\lambda^{\mathbf{b}}_{1}(\eta,L)\leq\lambda^{\mathbf{b}}_{2}(\eta,L)<+\infty.$
%Let  $\mathbb{D}\equiv\mathbb{B}^1,$ $Q_{\beta}(\mathbb{D})\equiv{Q}_{1,\beta}(\mathbb{D}).$ 

We need the following theorem. 
\begin{theorem}[\cite{banduraball}] \label{te3-ball}
Let $\beta>1,$ $L\in {Q}_{\mathbf{b},\beta}(\mathbb{B}^n)$. Analytic in $\mathbb{B}^n$ function
$F(z)$ is of bounded $L$-index in the  direction
$\mathbf{b}\in\mathbb{C}^{n}$ if and only if for any
 $r_{1}$ and any $r_{2}$ with $0<r_{1}<r_{2}\leq \beta,$
there exists number $P_{1}=P_{1}(r_{1},r_{2})\geq 1$ such that for each
$z^{0}\in \mathbb{B}^n$ 
\begin{gather}  \max\Big\{|F(z^{0}+t\mathbf{b})|:
|t|=\frac{r_{2}}{L(z^{0})}\Big\} \label{riv31} \leq P_{1}
\max\Big\{|F(z^{0}+t\mathbf{b})|:\
|t|=\frac{r_{1}}{L(z_{0})}\Big\}.
\end{gather}
\end{theorem}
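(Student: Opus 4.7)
The plan is to reduce the directional index problem to a one-variable one by slicing: for each fixed $z^{0}\in\mathbb{B}^{n}$ introduce $g_{z^{0}}(t)=F(z^{0}+t\mathbf{b})$, which is analytic in $t$ on the disc $\{t\in\mathbb{C}:|t|\le\beta/L(z^{0})\}$ (this uses $L\in Q_{\mathbf{b},\beta}(\mathbb{B}^{n})$ together with the containment $z^{0}+t\mathbf{b}\in\mathbb{B}^{n}$ analogous to the polydisc remark at the start of the paper). Boundedness of $L$-index of $F$ in the direction $\mathbf{b}$ is then equivalent to a uniform-in-$z^{0}$ bound on the $l$-index of the one-variable analytic functions $g_{z^{0}}$ with weight $l(t)=L(z^{0}+t\mathbf{b})$. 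The right-hand side of \eqref{riv31} is exactly the one-variable maximum modulus $M(r_{i}/L(z^{0}),0,g_{z^{0}})$. Once this reduction is made, the whole theorem is a uniform version of the classical one-dimensional Fricke-type criterion, which I would prove by adapting the arguments of Theorems \ref{bordte14-ball} and \ref{bordte43} to the one-variable, parameter-dependent setting.

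For the necessity, I would first establish the one-variable analogue of Theorem \ref{ball-cor1}: if $F$ has bounded $L$-index in direction $\mathbf{b}$ with index $N$, then for every $r_{2}\in(0,\beta]$ there exists $p_{0}\ge 1$ such that for every $z^{0}\in\mathbb{B}^{n}$ one can find $k_{0}\le N$ with
\[
\max\Bigl\{\tfrac{|\partial_{\mathbf{b}}^{k}F(z^{0}+t\mathbf{b})|}{k!L^{k}(z^{0})}:k\le N,\ |t|\le r_{2}/L(z^{0})\Bigr\}\le p_{0}\tfrac{|\partial_{\mathbf{b}}^{k_{0}}F(z^{0})|}{k_{0}!L^{k_{0}}(z^{0})}.
\]
This is proved by the same splitting $0=\tau_{0}<\tau_{1}<\cdots<\tau_{q}=r_{2}/L(z^{0})$ argument used in Theorem \ref{petr1-ball}, only in a single complex variable and using $\lambda^{\mathbf{b}}_{1},\lambda^{\mathbf{b}}_{2}$ instead of $\lambda_{1,j},\lambda_{2,j}$. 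Given this estimate, one then imitates the cascade-style computation of Theorem \ref{bordte14-ball} in one dimension: apply Cauchy's inequality in $t$ on the circle $|t|=r_{1}/L(z^{0})$ to the derivatives $\partial_{\mathbf{b}}^{k_{0}}F$, telescope between the values at $z^{0}+t_{k_{0}}^{*}\mathbf{b}$ and $z^{0}$ along the segment, and deduce \eqref{riv31} by contradiction, choosing $P_{1}$ large enough to absorb the constants depending on $p_{0}$, $\lambda^{\mathbf{b}}_{2}(r_{2},L)$ and $N$.

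For the sufficiency, I would fix $z^{0}\in\mathbb{B}^{n}$ and expand $g_{z^{0}}(t)=\sum_{k\ge 0}b_{k}t^{k}$ with $b_{k}=\partial_{\mathbf{b}}^{k}F(z^{0})/k!$. As in Theorem \ref{bordte43} introduce the maximal term $\mu(r,z^{0})=\max_{k}|b_{k}|r^{k}$ and the central index $\nu(r,z^{0})$, and use the one-variable inequalities $\mu(r,z^{0})\le M(r,z^{0},g_{z^{0}})$ and $M(r_{1}r,z^{0},g_{z^{0}})\le(1-r_{1})^{-1}\mu(r,z^{0})$ valid for $r_{1}<1$. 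Combining these with the hypothesis \eqref{riv31} with $r=1/L(z^{0})$ and choosing $0<r_{1}<1<r_{2}\le\beta$ gives a uniform bound
\[
\nu\bigl(1/L(z^{0}),z^{0}\bigr)\le\frac{\ln P_{1}(r_{1},r_{2})-\ln(1-r_{1})}{\ln r_{2}},
\]
independent of $z^{0}$, which majorizes the directional $L$-index at $z^{0}$ and hence the global index.

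The main obstacle will be the uniformity in $z^{0}$ at two places: first, verifying that the one-variable intermediate theorem extends from a fixed center to all centers simultaneously (this is where the hypothesis $L\in Q_{\mathbf{b},\beta}(\mathbb{B}^{n})$ is essential, since it supplies the bounds $\lambda^{\mathbf{b}}_{1}(\eta,L)>0$ and $\lambda^{\mathbf{b}}_{2}(\eta,L)<\infty$ that play the role of \eqref{clasqballn}); and second, in the telescoping step of necessity, controlling the accumulated constants so that the inequality \eqref{riv31} holds with a single $P_{1}$ valid for all $z^{0}\in\mathbb{B}^{n}$ rather than a $z^{0}$-dependent one. Since there are no arbitrary multi-indices to chain through, the combinatorics are genuinely simpler than in Theorem \ref{bordte14-ball}, so I expect no substantive new difficulty beyond bookkeeping.
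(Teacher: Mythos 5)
The present paper quotes Theorem \ref{te3-ball} from \cite{banduraball} without proof, as an ingredient for Theorem \ref{bordcor42}; there is therefore no in-paper argument against which to compare your attempt. Evaluating the proposal on its own terms: the slicing reduction is the right move, and it is the standard one for bounded index in a direction, since the defining inequality is pointwise and $\partial^{m}_{\mathbf{b}}F(z^{0}+t\mathbf{b})=g_{z^{0}}^{(m)}(t)$ turns it into a family of one-variable $l_{z^{0}}$-index statements for $g_{z^{0}}$, uniform in $z^{0}$.

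Your sufficiency half is complete and correct. Fixing a single pair $0<r_{1}<1<r_{2}\le\beta$, the two elementary estimates $\mu(r,z^{0})\le M(r,z^{0},g_{z^{0}})$ and $M(r_{1}r,z^{0},g_{z^{0}})\le(1-r_{1})^{-1}\mu(r,z^{0})$, combined with the hypothesis \eqref{riv31} at $r=1/L(z^{0})$, give the uniform bound on the central index $\nu(1/L(z^{0}))$ that you display, and exactly as in \eqref{eq1ver} this central index dominates the directional $L$-index of $F$ at the point $z^{0}$; since the bound is $z^{0}$-independent, the global directional index is finite. Your necessity half, however, is only a sketch: you correctly identify the two ingredients — a directional one-variable analogue of \eqref{net1}, proved by the same $q$-step subdivision as in Theorem \ref{petr1-ball} with $\lambda^{\mathbf{b}}_{1},\lambda^{\mathbf{b}}_{2}$ replacing $\lambda_{1,j},\lambda_{2,j}$, followed by a telescoping maximum-modulus cascade as in Theorem \ref{bordte14-ball} — but you carry neither through, exhibit no explicit $P_{1}(r_{1},r_{2})$, and do not verify its independence of $z^{0}$. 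Nothing in the plan looks problematic (in one complex variable the cascade collapses to a short telescope), but as written the necessity direction is a program rather than a proof and needs to be written out before the argument can be considered complete.
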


%    i.e. there exists $\nu\in\mathbb{Z}_+$ such that for all $z\in\mathbb{C}^n$ and $j\in\mathbb{Z}_+$
%  $$
%  \frac{1}{j!l_i(|z_i|)} \left|\frac{\partial^j f}{\partial z_i^j}(z)\right| \leq \max\left\{ \frac{1}{k!l_i^k(|z_i|)}\left|\frac{\partial^k f}{\partial z_i^k}(z)\right|, \ 0\leq k\leq \nu \right\}, \ i=1, \ldots,n.
%  $$
\begin{theorem} \label{bordcor42}\sl
	Let $\mathbf{L}(z)=(l_1(z),\ldots,l_n(z))$, where $l_j\in Q_{\mathbf{1}_{j},\beta/\sqrt{n}}(\mathbb{B}^n)$ 
$(j\in\{1,\ldots,n\})$.	If an analytic in $\mathbb{B}^n$ function $F$  has bounded $l_j$-index in the direction 
$\mathbf{1}_j$ for every $j\in\{1,\ldots,n\},$ then
	$F$ is of bounded $\mathbf{L}$-index in joint variables.
\end{theorem}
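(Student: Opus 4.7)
The plan is to verify the joint max--modulus estimate \eqref{bordriv112nec} from Theorem~\ref{bordte43} and then invoke that theorem. That is, I want to produce $R', R'' \in \mathbb{R}_+^n$ with $\mathbf{0} < R' < \mathbf{1} < R''$ and a constant $p_1 \geq 1$ such that for every $z^0 \in \mathbb{B}^n$,
\[
M(R''/\mathbf{L}(z^0), z^0, F) \leq p_1\, M(R'/\mathbf{L}(z^0), z^0, F),
\]
after which Theorem~\ref{bordte43} immediately delivers the boundedness of the $\mathbf{L}$-index in joint variables.

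First, the hypothesis that $F$ has bounded $l_j$-index in direction $\mathbf{1}_j$ for every $j$, combined with $l_j \in {Q}_{\mathbf{1}_{j},\beta/\sqrt{n}}(\mathbb{B}^n)$, lets me apply Theorem~\ref{te3-ball} in each axis direction separately. This yields, for each $j$ and every $0 < \rho_j' < \rho_j'' \leq \beta/\sqrt{n}$, a constant $P_{1,j}=P_{1,j}(\rho_j', \rho_j'') \geq 1$ such that, for all $w \in \mathbb{B}^n$,
\[
\max_{|t|=\rho_j''/l_j(w)} |F(w + t\mathbf{1}_j)| \leq P_{1,j} \max_{|t|=\rho_j'/l_j(w)} |F(w+t\mathbf{1}_j)|.
\]

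Next, fix $z^0 \in \mathbb{B}^n$ and pick $z^* \in \mathbb{T}^n(z^0, R''/\mathbf{L}(z^0))$ realising $|F(z^*)| = M(R''/\mathbf{L}(z^0), z^0, F)$; such a point exists by applying the maximum modulus principle iteratively in each complex coordinate. I interpolate between $z^0$ and $z^*$ one coordinate at a time via
\[
z^{[k]} = (z_1^*, \ldots, z_k^*, z_{k+1}^0, \ldots, z_n^0), \quad k = 0, 1, \ldots, n,
\]
so that $z^{[0]} = z^0$, $z^{[n]} = z^*$, and $z^{[k]} - z^{[k-1]} = (z_k^* - z_k^0)\mathbf{1}_k$ lies in the axis $\mathbf{1}_k$ with $|z_k^*-z_k^0|=r_k''/l_k(z^0)$. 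At step $k$, apply the one-dimensional inequality from the previous paragraph at the centre $w = z^{[k-1]}$ in direction $\mathbf{1}_k$: it replaces the outer radius on which $z^{[k]}$ sits by a smaller one, picking up a factor $P_{1,k}$ and moving $z^{[k]}$ one coordinate closer to the inner polydisc $\mathbb{D}^n[z^0, R'/\mathbf{L}(z^0)]$. Chaining the inequalities for $k = n, n-1, \ldots, 1$ yields $|F(z^*)| \leq \bigl(\prod_{k=1}^n P_{1,k}\bigr) M(R'/\mathbf{L}(z^0), z^0, F)$, which is \eqref{bordriv112nec} with $p_1 = \prod_{k=1}^n P_{1,k}$.

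The main technical obstacle appears at the $k$-th step: the natural radius for Theorem~\ref{te3-ball} at the shifted centre $z^{[k-1]}$, measured in units of $1/l_k(z^{[k-1]})$, equals $r_k''\cdot l_k(z^{[k-1]})/l_k(z^0)$, and the ratio $l_k(z^{[k-1]})/l_k(z^0)$ involves $l_k$ evaluated at two points that differ in coordinates other than the $k$-th, where the directional class $Q_{\mathbf{1}_k,\beta/\sqrt n}$ gives no direct control. One must therefore choose $R', R''$ small enough, and use the class conditions to bound these ratios, so that all intermediate radii remain in $[0, \beta/\sqrt n]$ and Theorem~\ref{te3-ball} is applicable at each of the $n$ centres. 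Once this calibration is handled, the chain of $n$ one-dimensional comparisons produces the required polydisc inequality, and Theorem~\ref{bordte43} completes the proof.
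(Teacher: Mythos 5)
Your overall plan matches the paper's: apply the directional criterion of Theorem~\ref{te3-ball} axis by axis, chain the resulting one-dimensional comparisons along an axis-parallel path from $z^0$ to a maximising point $z^*$, and conclude via Theorem~\ref{bordte43}. You also correctly locate the delicate point. But you leave the proof at exactly the step where the work lies, and it is not clear that ``use the class conditions'' closes it.

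The obstacle you identify --- at step $k$ one must compare $l_k(z^{[k-1]})$ with $l_k(z^0)$, and these two points differ in coordinates $\neq k$, where $l_k\in Q_{\mathbf{1}_k,\beta/\sqrt n}(\mathbb{B}^n)$ gives nothing --- cannot be dissolved by the directional hypotheses alone, since each $Q_{\mathbf{1}_k,\beta/\sqrt n}$ only constrains $l_k$ along its own axis. The paper resolves it by first passing from the $n$ directional conditions to the single joint condition $\mathbf{L}\in Q(\mathbb{B}^n)$ (stated as ``obvious'' in the text), which supplies the two-sided bounds $\lambda_{1,j}(R'')\le l_j(z)/l_j(z^0)\le\lambda_{2,j}(R'')$ uniformly over $z\in\mathbb{D}^n[z^0,R''/\mathbf{L}(z^0)]$, i.e.\ control over $l_j$ in \emph{all} coordinates, not merely the $j$-th. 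With that in hand, at step $k$ one first enlarges the radius by the factor $\lambda_{2,k}(R'')$ to switch the normalizing $l_k$ from $z^0$ to the shifted center, applies Theorem~\ref{te3-ball} there with inner radius $r_k'$, and then shrinks back by $\lambda_{1,k}(R'')$; the calibration is precisely the choice $\mathbf{1}<R''<\boldsymbol{\beta}$ and $R'<\Lambda_1(R'')$ so that the final point lies in $\mathbb{D}^n[z^0,R'/\mathbf{L}(z^0)]$. To make your proof complete you must either cite or prove the passage to $\mathbf{L}\in Q(\mathbb{B}^n)$ (which is not automatic, since each $Q_{\mathbf{1}_j,\cdot}$ hypothesis controls a single $l_j$ along a single axis) and then carry out the $\lambda_{1,k}$/$\lambda_{2,k}$ bookkeeping; as written, the chain of one-dimensional comparisons does not yet assemble into \eqref{bordriv112nec}.
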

\begin{proof}%[Proof of Proposition \ref{bordtv31}]
	Let  $F$ be an analytic in $\mathbb{B}^n$ function of  bounded $l_j$-index in every direction $\mathbf{1}_j.$ Then by Theorem \ref{te3-ball}
	for every $j\in\{1, \ldots, n\}$ and arbitrary $0<r_j'<1<r_j''\leq \frac{\beta}{\sqrt{n}}$ there exists a number $p_j=p_j(r',r'')$ such that  for every $(z_1,\ldots,z_{j-1},z_j^0,z_{j+1},\ldots,z_n)\in \mathbb{B}^{n}$  inequality
	\begin{gather}
	\max \left\{ |F(z)|\colon  |z_j-z_j^0| = \frac{r_j''}{l_j(z_1,\ldots,z_{j-1},z_j^0,z_{j+1},\ldots,z_n)}\right\} \leq  p_j (r_j',r_j'' )
	\times \nonumber\\ \times
	\max \left\{ |F(z)|\colon  |z_j-z_j^0|=\frac{r_j'}{l_j(z_1,\ldots,z_{j-1},z_j^0,z_{j+1},\ldots,z_n)}  \right\} \label{preineq}
	\end{gather}
	holds. %  for $i=1,\ldots, n.$

	Obviously, if for every $j\in\{1,\ldots,n\}$ \ $l_j\in Q_{\mathbf{1}_{j},\beta/\sqrt{n}}(\mathbb{B}^n)$  then $\mathbf{L}\in Q(\mathbb{B}^n).$
	Let $z^0$ be an arbitrary point in $\mathbb{B}^n,$ and a point $z^*\in \mathbb{T}^n(z^0,\frac{R''}{\mathbf{L}(z^0)})$ is such that
	$M(\frac{R''}{\mathbf{L}(z^0)},z^0,F)=|F(z^*)|.$
	We choose $R''$ and $R'$ such that $\mathbf{1}< R''<(\frac{\beta}{\sqrt{n}},\ldots,\frac{\beta}{\sqrt{n}})$  and $R'< \Lambda_1(R'').$
	Then inequality \eqref{preineq} implies that
	\begin{gather*}
	M\left(\frac{R''}{\mathbf{L}(z^0)},z^0,F\right)\leq   \max\left\{|F(z_1,z_2^*,z_3^*,\ldots,z_n^*)|\colon  \ |z_1-z_1^0|=\frac{r_1''}{l_1(z^0)}\right\} = \\
	= \max\left\{|F(z_1,z_2^*,\ldots,z_n^*)|\colon   |z_1-z_1^0|=\frac{r_1''}{l_1(z_1^0,z_2^*,\ldots,z_n^*)}\frac{l_1(z_1^0,z_2^*,\ldots,z_n^*)}{l_1(z^0)} \right\}\leq
	\\
	\leq \max\left\{|F(z_1,z_2^*,\ldots,z_n^*)|\colon  \ |z_1-z_1^0|=\frac{r_1''\lambda_{2,1}(R'')}{l_1(z_1^0,z_2^*,\ldots,z_n^*)}\right\}\leq
	\\
	\leq  p_1(r'_1,r_1''\lambda_{2,1}(R'')) \max\left\{|F(z_1,z_2^*,\ldots,z_n^*)|\colon  \ |z_1-z_1^0|=\frac{r_1'}{l_1(z_1^0,z_2^*,\ldots,z_n^*)}\right\} = \\
	=  p_1(r'_1,r_1''\lambda_{2,1}(R''))  \max \left\{ |F(z_1,z_2^*,\ldots,z_n^* )|\colon   |z_1 - z_1^0| = \frac{r_1'}{l_1(z^0)} \frac{l_1(z^0)}{ l_1 ( z_1^0,z_2^*,\ldots,z_n^*)} \right\} \leq  \\
	\leq p_1(r'_1,r_1''\lambda_{2,1}(R'')) \max\left\{|F(z_1,z_2^*,\ldots,z_n^*)|\colon  \ |z_1-z_1^0|=\frac{r_1'}{\lambda_{1,1}(R'')l_1(z^0)} \right\}=
	\\
	=p_1(r'_1,r_1''\lambda_{2,1}(R''))  |F(z_1^{**},z_2^*,\ldots,z_n^*)|
	\leq p_1(r'_1,r_1''\lambda_{2,1}(R''))  \times \\ \times
	\max\left\{|F(z_1^{**},z_2,z_3^*,\ldots,z_n^*)|\colon  \ |z_2-z_2^0|=\frac{r_2''}{l_2(z^0)} \right\}=    p_1(r'_1,r_1''\lambda_{2,1}(R''))   \times
	\\  \times     \max \left\{|F(z_1^{**},z_2,\ldots,z_n^*)|\colon  \ |z_2-z_2^0| = \frac{r_2''}{l_2(z_1^{**},z_2^0,\ldots,z_n^*)} \frac{l_2(z_1^{**},z_2^0,\ldots,z_n^*)}{l_2(z^0)}  \right\}  \leq
	\\  \leq           p_1(r'_1,r_1''\lambda_{2,1}(R''))
	\max\left\{|F(z_1^{**},z_2,\ldots,z_n^*)|\colon   |z_2-z_2^0|=\frac{r_2''\lambda_{2,2}(R'')}{l_2(z_1^{**},z_2^0,\ldots,z_n^*)} \right\}  \leq                \\ \leq  \prod_{j=1}^2   p_j(r'_j,r_j''\lambda_{2,j}(R''))
	\max \left\{ |F(z_1^{**},z_2,\ldots,z_n^*) |\colon   |z_2 - z_2^0| = \frac{r_2'}{l_2(z_1^{**},z_2^0,\ldots,z_n^*)}  \right\}  \leq
	\\   \leq   \prod_{j=1}^2   p_j(r'_j,r_j''\lambda_{2,j}(R''))
	\max \left\{ |F(z_1^{**},z_2,\ldots,z_n^*)|\colon   |z_2-z_2^0|=\frac{r_2'}{\lambda_{1,2}(R'')l_2(z^0)} \right\}  = \\
	=  \prod_{j=1}^2   p_j(r'_j,r_j''\lambda_{2,j}(R''))   |F(z_1^{**},z_2^{**},z_3^*,\ldots,z_n^*)| %\leq \\
	\leq \ldots \leq \prod_{j=1}^n   p_j(r'_j,r_j''\lambda_{2,j}(R''))    \times   \\ \times
	\max\left\{|F(z_1,z_2,\ldots,z_n)|\colon  \ |z_j-z_j^0| = \frac{r_j'}{\lambda_{1,j}(R'')l_j(z^0)}, j\in\{1,\ldots,n\} \right\}=\\
	= \prod_{j=1}^n   p_j(r'_j,r_j''\lambda_{2,j}(R'')) M\left(\frac{R'}{\Lambda_1(R'')\mathbf{L}(z^0)},z^0,F\right).
	\end{gather*}
	Hence,
	by Theorem \ref{bordte43} $F$ is of bounded $\mathbf{L}$-index in joint variables.
\end{proof}

%Note that if an entire function $F$ in $\mathbb{C}^n$  has bounded index in the direction $\mathbf{1}_j$ for every $j\in\{1,\ldots,n\},$ then
%$F$ is of bounded index in joint variables.

\section{Analogue of Theorem of Hayman for analytic in a ball function of bounded $\mathbf{L}$-index in joint variables}
Denote $\boldsymbol{\beta}=(\frac{\beta}{\sqrt{n}},\ldots,\frac{\beta}{\sqrt{n}}).$
\begin{theorem}
	\label{bordte15} Let $\mathbf{L}\in Q(\mathbb{B}^n).$  An analytic function $F$ in $\mathbb{B}^n$ has bounded $\mathbf{L}$-index in 
	joint variables  if and only if there exist $p\in\mathbb{Z}_+$ and $c\in\mathbb{R}_{+}$ 
	such that for each  $z\in\mathbb{B}^n$ 
	\begin{equation}
	\label{bordriv122}
	\max\left\{\frac{|F^{(J)}(z)|}{\mathbf{L}^J(z)}: \ \|J\|=p+1 \right\}\leq c\cdot   \max\left\{\frac{|F^{(K)}(z)|}{\mathbf{L}^K(z)}: \ \|K\|\leq p \right\}.
	\end{equation}
\end{theorem}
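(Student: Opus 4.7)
The necessity direction is immediate: if $F$ has bounded $\mathbf{L}$-index in joint variables with index $N=N(F,\mathbf{L},\mathbb{B}^n)$, take $p=N$; multiplying the defining inequality \eqref{ineqoz2} by $J!\le(p+1)!$ and using $K!\ge 1$ gives \eqref{bordriv122} with $c=(p+1)!$.

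For the sufficiency, assuming \eqref{bordriv122}, my plan is to verify the hypothesis of Theorem~\ref{ball-cor1} for a single $R$ and then invoke that theorem. I set $R=(r,\ldots,r)$ with a small $r<1$ to be chosen, fix an arbitrary $z^0\in\mathbb{B}^n$, and let $K^0$ with $\|K^0\|\le p$ be a multi-index attaining $A(z^0):=\max_{\|K\|\le p}|F^{(K)}(z^0)|/(K!\mathbf{L}^K(z^0))$. The goal is to bound $|F^{(K)}(z)|/(K!\mathbf{L}^K(z))$ uniformly by $p_0 A(z^0)$ for all $\|K\|\le p$ and $z\in\mathbb{D}^n[z^0,R/\mathbf{L}(z^0)]$. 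Taylor-expanding $F^{(K)}$ about $z^0$, together with the estimate $|(z-z^0)^M|\le R^M/\mathbf{L}^M(z^0)$ and the $Q(\mathbb{B}^n)$ comparison $\mathbf{L}^K(z)\ge\Lambda_1(R)^K\mathbf{L}^K(z^0)$, reduces the task to bounding, by a multiple of $A(z^0)$ uniform in $z^0$ and $K$, the series
\begin{equation*}
\sum_{M\ge\mathbf{0}}\binom{K+M}{M}R^M\,a_{K+M}(z^0),\qquad\text{where }\,a_J(z^0):=\frac{|F^{(J)}(z^0)|}{J!\,\mathbf{L}^J(z^0)}.
\end{equation*}

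Hypothesis \eqref{bordriv122} applied at $z^0$ directly gives $a_J(z^0)\le c\,p!\,A(z^0)/J!$ whenever $\|J\|=p+1$. For $\|J\|\ge p+2$, I apply Cauchy's integral formula in one coordinate at a time, over a circle of radius $\rho/l_j(z^0)$ with a fixed $\rho\in(1,\beta/\sqrt{n}]$ (which exists since $\beta>\sqrt{n}$), peeling off one differentiation at each step until the order drops to $p+1$, where the hypothesis applies. Each step supplies a geometric factor $\rho^{-1}<1$, while the $Q(\mathbb{B}^n)$ comparisons $\Lambda_1(R')\le\mathbf{L}(w)/\mathbf{L}(z^0)\le\Lambda_2(R')$ at the integration points $w$ keep the $\mathbf{L}$-factors under control. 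The estimate I aim for is $a_J(z^0)\le C\rho^{-(\|J\|-p)}A(z^0)$, uniform in $z^0$.

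The main obstacle is that each Cauchy-reduction produces a bound involving $P_p(w):=\max_{\|K\|\le p}|F^{(K)}(w)|/\mathbf{L}^K(w)$ at integration points $w\ne z^0$ rather than $A(z^0)$ directly. I plan to close this coupling by a simultaneous induction on $\|J\|-p$, in which the lower-order quantities $|F^{(K)}(w)|$ with $\|K\|\le p$ are themselves Taylor-expanded around $z^0$ and the resulting higher-order terms are absorbed into the same inductive majorant. Once the decay $a_J(z^0)\le C\rho^{-(\|J\|-p)}A(z^0)$ is secured, the choice $r<1<\rho$ ensures the series above is bounded by $C\prod_j(1-r/\rho)^{-(K_j+1)}A(z^0)\le C(1-r/\rho)^{-(n+p)}A(z^0)$, which is finite and uniform for $\|K\|\le p$. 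This verifies \eqref{net1} with $n_0=p$ and a uniform $p_0$, and Theorem~\ref{ball-cor1} concludes that $F$ has bounded $\mathbf{L}$-index in joint variables.
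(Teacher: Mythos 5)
Your necessity argument is fine: for a multi-index $J$ with $\|J\|=p+1$ one has $J!\le\|J\|!=(p+1)!$, so multiplying \eqref{ineqoz2} by $J!$ and dropping the $K!\ge1$ gives \eqref{bordriv122} with $c=(p+1)!$.

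The sufficiency, however, has a genuine gap at the point you flag as the main obstacle; the ``simultaneous induction'' cannot close. The Cauchy peeling bounds $a_J(z^0)$ by (up to uniform constants) $\rho^{-(\|J\|-p-1)}\max_w P_p(w)$, where $w$ runs over points at distance $\rho/\mathbf{L}(z^0)$ from $z^0$. To return this to $A(z^0)$ you Taylor-expand $F^{(K)}(w)$ about $z^0$. But at that very radius $\rho$, the target decay $a_{K+M}(z^0)\le C\rho^{-(\|K+M\|-p)}A(z^0)$ gives, whenever $\|K\|+\|M\|>p$, a contribution
$\binom{K+M}{M}\rho^{\|M\|}a_{K+M}(z^0)\le C\binom{K+M}{M}\rho^{p-\|K\|}A(z^0)$:
the geometric factors cancel exactly and $\sum_M\binom{K+M}{M}$ diverges. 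So the expansion of $P_p(w)$ does not converge under the inductive hypothesis, and there is no contraction in a self-consistent bound of the form $\sup_J a_J(z^0)\rho^{(\|J\|-p)^+}\lesssim A(z^0)$; truncating the series only pushes the same $\max_w P_p$ problem into the remainder, and taking a larger Cauchy radius $\rho_1$ reproduces the same marginal cancellation at $\rho_1$.

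The paper sidesteps this by never summing a Taylor series at the Cauchy scale. It works with $G(z)=\max_{\|K\|\le p}|F^{(K)}(z)|/\mathbf{L}^K(z^0)$, with $\mathbf{L}$ frozen at $z^0$; from \eqref{bordriv122} and the $Q(\mathbb{B}^n)$ bounds it derives a pointwise differential inequality $\frac{d}{dt}G(z(t))\le B\bigl(\sum_s l_s(z^0)|z'_s(t)|\bigr)G(z(t))$ along a piecewise-analytic curve, lying in a complex line, that joins a nonzero point of the small skeleton $\mathbb{T}^n\bigl(z^0,\mathbf{1}/(2\beta\sqrt{n}\,\mathbf{L}(z^0))\bigr)$ to the maximizing point of the large skeleton $\mathbb{T}^n\bigl(z^0,\boldsymbol{\beta}/\mathbf{L}(z^0)\bigr)$, while avoiding the zeros of $G$. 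Gronwall then yields $G(z^{(2)})\le e^{SB}G(z^{(1)})$ with $S$ a curve-length constant depending only on $\beta$ and $n$, and converting this into \eqref{bordriv112nec} lets one invoke Theorem \ref{bordte43}. The decisive feature is that $S$ is a finite geometric constant rather than an infinite tail sum, which is exactly the control the Taylor-series bootstrap cannot supply.
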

\begin{proof}
	
   Let $N=N(F,\mathbf{L},\mathbb{B}^n)<+\infty.$
	   The definition of the boundedness of $\mathbf{L}$-index in joint variables  yields the necessity with $p=N$ and $c=((N+1)!)^n.$

	   We prove the sufficiency. 
	   For $F\equiv 0$  theorem is obvious. Thus, we suppose that $F\not\equiv 0.$

	Assume that \eqref{bordriv122} holds, $z^0\in\mathbb{B}^n,$ $z\in \mathbb{D}^n[z^0,\frac{\boldsymbol{\beta}}{\mathbf{L}(z^0)}].$
	For all $J\in\mathbb{Z}^n_+,$ $\|J\|\leq p+1,$ one has 
	\begin{gather}
	\frac{|F^{(J)}(z)|}{\mathbf{L}^J(z^0)}\leq\Lambda_2^J(\boldsymbol{\beta}) \frac{|F^{(J)}(z)|}{\mathbf{L}^J(z)}\leq
	c\cdot\Lambda_2^J(\boldsymbol{\beta}) \max\left\{\frac{|F^{(K)}(z)|}{\mathbf{L}^K(z)}: \ \|K\|\leq p \right\} \leq \nonumber \\
	\leq c\cdot\Lambda_2^J(\boldsymbol{\beta}) \max\left\{\Lambda_1^{-K}(2) \frac{|F^{(K)}(z)|}{\mathbf{L}^K(z^0)}: \|K\|\leq p \right\} \leq BG(z), \label{bordriv123}
	\end{gather}
	where 
	$B=c\cdot\max\{\Lambda_2^{K}(\boldsymbol{\beta}): \ \|K\|=p+1\}\max\{\Lambda_1^{-K}(\boldsymbol{\beta}): \ \|K\|\leq p\},$
	and
	$ G(z)=\max\bigg\{\frac{|F^{(K)}(z)|}{\mathbf{L}^{K}(z^0)}: \ \|K\|\leq p\bigg\}.$
	
We choose $z^{(1)}\!=\!(z_1^{(1)},\ldots,z_n^{(1)})\!\in\! \mathbb{T}^n(z^0,\frac{\mathbf{1}}{2\beta\sqrt{n} \mathbf{L}(z^0)})$ and 
	$z^{(2)}\!=\!(z_1^{(2)},\ldots,z_n^{(2)})\in \mathbb{T}^n(z^0,\frac{\boldsymbol{\beta}}{\mathbf{L}(z^0)})$ such that 
	$F(z^{(1)})\neq 0$ and 
		\begin{equation} \label{choicez2}
	|F(z^{(2)})|=M\left( \frac{\boldsymbol{\beta}}{\mathbf{L}(z^0)},z^0,F\right) \neq 0.
	\end{equation}	
		
 These points exist, otherwise if $F(z)\equiv 0$ on skeleton
	$\mathbb{T}^n(z^0,\frac{\mathbf{1}}{2\beta\sqrt{n}\mathbf{L}(z^0)})$ or $\mathbb{T}^n(z^0,\frac{\boldsymbol{\beta}}{\mathbf{L}(z^0)})$ 
	  then 	by the uniqueness theorem $F\equiv 0$ in $\mathbb{B}^n.$
	We connect the points $z^{(1)}$ and $z^{(2)}$ with plane 
	$$
	\alpha=\left\{\begin{array}{r}
	z_2=k_2z_1+c_2,\\
	z_3=k_3z_1+c_3,\\
	\ldots \\
	z_n=k_nz_1+c_n,
	\end{array}
	\right.
	$$
	where 
	$k_i=\frac{z_i^{(2)}-z_i^{(1)}}{z_1^{(2)}-z_1^{(1)}},$ $c_i=\frac{z_i^{(1)}z_1^{(2)}-z_i^{(2)}z_1^{(1)}}{z_1^{(2)}-z_1^{(1)}},$ $i=2, \ldots, n.$
	It is easy to check   that $z^{(1)}\in\alpha$ and $z^{(2)}\in\alpha.$

  Let $\widetilde{G}(z_1)=G(z)|_{\alpha} $ be a restriction of the function $G$ onto $\alpha$.
  
  Fo every $K\in\mathbb{Z}^n_+$ the function $F^{(K)}(z)\big|_{\alpha}$ is analytic function of variable $z_1$ 
  and $\tilde{G}(z_1^{(1)})=G(z^{(1)})\big|_{\alpha}\neq 0$   	because $F(z^{(1)})\neq 0.$
  Hence, all zeros of the function  $F^{(K)}(z)\big|_{\alpha}$ are isolated as zeros of a function of one variable. Thus, zeros of the function   $\tilde{G}(z_1)$  are isolated too. Therefore, we can choose  piecewise analytic curve $\gamma$ onto $\alpha$ as following 
	$$z=z(t)=(z_1(t),k_2z_1(t)+c_2,\ldots,k_nz_1(t)+c_n), \   t\in[0,1],$$
		which connect the points $z^{(1)},$ $z^{(2)}$ and such that
 $G(z(t))\neq 0$  and 	$\int_0^1 |z_1'(t)|dt\leq \frac{2\beta}{\sqrt{n}l_1(z_1^0)}.$
	For a construction of the curve we connect $z^{(1)}_1$ and $z^{(2)}_1$ by a line $z^*_1(t)=(z_1^{(2)}-z_1^{(1)})t+z_1^{(1)},$ $t\in[0,1].$
The curve $\gamma$ can cross points $z_1$ at which the function $\widetilde{G}(z_1)=0.$ 
The number of such points $m=m(z^{(1)},z^{(2)})$ is finite. 
	Let $(z_{1,k}^{*})$ be a sequence of these points in ascending order of the value $|z_1^{(1)}-z_{1,k}^{*}|,$ $k\in \{1,2,...,m\}$.
We choose $r< \min\limits_{1\leq k\leq m-1} \{ |z^{*}_{1,k}-z^{*}_{1,k+1}|, |z^{*}_{1,1}-z^{(1)}_{1}|, |z^{*}_{1,m}-z^{(2)}_{1}|, 
\frac{2\beta^2-1}{2\pi\sqrt{n}\beta l_1(z^0)} \}.$ 
 Now  we construct circles with centers at the points $z_{1,k}^{*}$ and corresponding radii $r^{'}_k<\frac{r}{2^k}$ 
 such that $\widetilde{G}(z_1)\neq 0$ for all $z_1$ on the circles. It is possible, because $F\not\equiv  0$. 

Every such circle is divided onto two semicircles by the line $z_1^{*}(t)$.
The required piecewise-analytic curve consists with  arcs of the constructed semicircles  and segments of line $z_1^{*}(t)$, which
connect the arcs in series between themselves or with the points $z_1^{(1)}, z_1^{(2)}$. The length of $z_1(t)$ in $\mathbb{C}$ 
(but not $z(t)$ in $\mathbb{C}^n$!) 
is lesser than $\frac{\beta/\sqrt{n}}{l_1(z^0)}+\frac{1}{2\sqrt{n}\beta l_1(z^0)}+\pi r\leq \frac{2\beta}{\sqrt{n}l_1(z^0)}$.
	Then 
	\begin{gather*}
	\int_0^1|z_s'(t)|dt = |k_s|\int_0^1|z_1'(t)|dt\le \frac{|z_s^{(2)}-z_s^{(1)}|}{|z_1^{(2)}-z_1^{(1)}|} \frac{2\beta}{\sqrt{n}l_1(z^0)} \leq \\
	\leq \frac{2\beta^2+1}{2\sqrt{n}\beta l_s(z^0)} \frac{2\sqrt{n}\beta l_1(z^0)}{2\beta^2-1} \frac{2\beta}{\sqrt{n}l_1(z^0)} \leq
	 \frac{2\beta(2\beta^2+1)}{(2\beta^2-1)\sqrt{n}l_s(z^0)}, \ s\in\{2,\ldots,n\}.
	\end{gather*}
Hence, 
	\begin{equation}
	\label{bordriv124}
	\int_0^1 \sum_{s=1}^n l_s(z^0)|z_s'(t)|dt \leq  \frac{2\beta(2\beta^2+1)\sqrt{n}}{2\beta^2-1}=S.%\sum_{s=1}^n \lambda_{2,s} (\mathbf{2})=8C,
	\end{equation}
   Since the function $z=z(t)$ is piece-wise analytic on $[0,1]$,
     then   for arbitrary $K\in\mathbb{Z}^n_+,$ $J\in\mathbb{Z}^n_+,$ $\|K\|\leq p,$ either
	\begin{equation} \label{identzero}
	\frac{|F^{(K)}(z(t))|}{\mathbf{L}^{K}(z^0)}\equiv \frac{|F{(J)}(z(t))|}{\mathbf{L}^{J}(z^0)},
	\end{equation}
    or the equality
	\begin{equation} \label{pointequal}
	\frac{|F^{(K)}(z(t))|}{\mathbf{L}^{K}(z^0)}= \frac{|F^{(J)}(z(t))|}{\mathbf{L}^{J}(z^0)}
	\end{equation}
	   holds only for a finite set of points $t_k\in[0;1]$.	
	
	 Then for function $G(z(t))$ as maximum of such expressions $\frac{|F^{(J)}(z(t))|}{\mathbf{L}^{J}(z^0)}$  by all $\|J\|\leq p$ two cases are possible:
		\begin{enumerate}
			\item In some interval of analyticity of the curve $\gamma$ the function $G(z(t))$  identically equals simultaneously to some derivatives, that is 
			  \eqref{identzero} holds. It means that $G(z(t))\equiv \frac{|F^{(J)}(z(t))|}{\mathbf{L}^{J}(z^0)}$
			for some $J,$ $\|J\|\leq p.$ Clearly, the function $F^{(J)}(z(t))$ is analytic. Then $|F^{(J)}(z(t))|$ is continuously differentiable function on the interval of analyticity except points where  this partial derivative equals zero $|F^{(j_1,j_2)}(z_1(t), z_2(t))|=0$. However, there are not the points, because in the opposite case $G(z(t))=0.$ But it contradicts the construction of the curve $\gamma$.
	\item 	In some interval of analyticity of the curve $\gamma$ the function $G(z(t))$ equals simultaneously to some derivatives at a finite number of points $t_k,$ that is \eqref{pointequal} holds. Then the points $t_k$ divide interval of analyticity onto a finite number of segments, in which of them $G(z(t))$ equals to one from the partial derivatives, i. e.  $G(z(t))\equiv \frac{|F^{(J)}(z(t))|}{\mathbf{L}^{J}(z^0)}$ for some $J,$ $\|J\|\leq p.$
	As above, in each from these segments the functions $|F^{(J)}(z(t))|,$ and $G(z(t))$ are continuously differentiable except the points $t_k$.
	\end{enumerate}
	
The inequality  $\frac{d}{dt}|f(t)|\leq |\frac{df(t)}{dt}|$  holds for complex-valued functions of real argument outside a countable set of points.
In view of this fact and \eqref{bordriv123}  we have 
	\begin{gather*}
	\frac{d}{dt}G(z(t))\leq \max\Big\{ \frac{1}{\mathbf{L}^J(z^0)}\Big|\frac{d}{dt} F^{(J)}(z(t))\Big|: \ \|J\|\leq p \Big\}\leq \\
	\leq \max\Big\{\sum_{s=1}^n \Big|\frac{\partial^{\|J\|+1}F}{\partial z_1^{j_1}\ldots \partial z_s^{j_s+1}\ldots \partial z_n^{j_n}}(z(t))\Big| \frac{|z_s'(t)|}{\mathbf{L}^j(z^0)}: \ \|J\|\leq p \Big\}\leq \\
	\leq \max\Big\{\sum_{s=1}^n \Big| \frac{\partial^{\|J\|+1}F}{\partial z_1^{j_1}\ldots \partial z_s^{j_s+1}\ldots \partial z_n^{j_n}}(z(t))\Big|
	\frac{l_s(z^0)|z_s'(t)|}{l_1^{j_1}(z^0)\ldots l_s^{j_1+1}(z^0)\ldots l_n^{j_n}(z^0)}: \\  \|J\|\leq p
	\Big\} \leq \Big( \sum_{s=1}^n l_s(z^0)|z_s'(t)|\Big) \max\Big\{ \frac{|F^{(j)}(z(t))|}{\mathbf{L}^J(z^0)}: \ \|J\|\leq p+1 \Big\} \leq \\ \leq
	\Big(\sum_{s=1}^n l_s(z^0)|z_s'(t)| \Big)BG(z(t)).
	\end{gather*}
	 Therefore, \eqref{bordriv124} yields 
	\begin{gather*}
	\Big|\ln \frac{G(z^{(2)})}{G(z^{(1)})} \Big|= \Big|\int_0^1 \frac{1}{G(z(t))} \frac{d}{dt} G(z(t)) dt \Big|
	\leq B \int_0^1 \sum_{s=1}^n l_s(z^0)|z_s'(t)| dt \leq S\cdot B.
	\end{gather*}
	   Using \eqref{choicez2}, we deduce
	$M(\frac{\boldsymbol{\beta}}{\mathbf{L}(z^0)},z^0,F)\leq G(z^{(2)})\leq G(z^{(1)})e^{SB}.$
	Since $z^{(1)}\in \mathbb{T}^n(z^0,\frac{\mathbf{1}}{2\beta \sqrt{n}\mathbf{L}(z^0)}),$  the Cauchy inequality holds
	$$
	\frac{|F^{(J)}(z^{(1)})|}{\mathbf{L}^J(z^0)}\leq J!(2\beta \sqrt{n})^{\|J\|}M\left(\frac{\mathbf{1}}{2\beta \sqrt{n}\mathbf{L}(z^0)},z^0,F\right).
	$$  for all $J\in\mathbb{Z}^n_+.$ 
	Therefore, for $\|J\|\leq p$ we obtain 
	$ G(z^{(1)})\leq (p!)^n (2\beta \sqrt{n})^p M\!\left(\!\frac{\mathbf{1}}{2\beta \sqrt{n}\mathbf{L}(z^0)},z^0,F\!\right),$
	$$M\left(\frac{\boldsymbol{\beta}}{\mathbf{L}(z^0)},z^0,F\right) \leq e^{SB} (p!)^n (2\beta \sqrt{n})^p M\left(\frac{\mathbf{1}}{2\beta \sqrt{n}\mathbf{L}(z^0)},z^0,F\right).
	$$
Hence, by Theorem  \ref{bordte43} the function $F$ has bounded $\mathbf{L}$-index in joint variables.
\end{proof}

%Наступна теорема є новою навіть для цілих функцій обмеженого індексу за сукупністю змінних, хоча при $n=1$  цей результат відомий (див. \cite{sher}), а коли до того ще й $l\equiv1$, то відомий з праці \cite{fricke1975}.
\begin{theorem} \label{haymanjoint}
	Let $\mathbf{L} \in Q(\mathbb{B}^n)$. 
	 An analytic function $F$ in $\mathbb{B}^n$ has bounded $\mathbf{L}$-index in joint variables if and only if there exist $c\in (0;+\infty)$ and $N\in \mathbb{N}$ such that for each $z \in \mathbb{B}^n$ the inequality 
	\begin{gather}\label{th6main}
	\sum_{\|K\|=0}^{N}\frac{|F^{(K)}(z)|}{K!\mathbf{L}^{K}(z)}\geq c\sum_{\|K\|=N+1}^{\infty}\frac{|F^{(K)}(z)|}{K!\mathbf{L}^K(z)}.
	\end{gather}
\end{theorem}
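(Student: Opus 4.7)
The plan is to prove both implications by reducing to criteria already established in the paper: sufficiency via Theorem \ref{bordte15}, and necessity via Theorem \ref{petr1-ball} combined with Cauchy's inequality on a polydisc of polyradius $R/\mathbf{L}(z^{0})$ with $|R|=\beta$.

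For sufficiency, suppose \eqref{th6main} holds. For every multi-index $J$ with $\|J\|=N+1$, the quantity $|F^{(J)}(z)|/(J!\mathbf{L}^{J}(z))$ is a single nonnegative summand of the tail, so
$$\frac{|F^{(J)}(z)|}{\mathbf{L}^{J}(z)}\le\frac{J!}{c}\sum_{\|K\|\le N}\frac{|F^{(K)}(z)|}{K!\mathbf{L}^{K}(z)}.$$
Since $J!\le(N+1)!$ for such $J$, and the finite sum is bounded by its number of terms times $\max\{|F^{(K)}(z)|/\mathbf{L}^{K}(z):\|K\|\le N\}$ (using $K!\ge 1$), this yields exactly inequality \eqref{bordriv122} of Theorem \ref{bordte15} with $p=N$, whence $F$ has bounded $\mathbf{L}$-index in joint variables.

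For necessity, assume $N_{0}:=N(F,\mathbf{L},\mathbb{B}^{n})<\infty$ and put $r:=\beta/\sqrt{n}>1$, $R=(r,\ldots,r)$, so $|R|=\beta$. By Theorem \ref{petr1-ball} there exist $n_{0}\in\mathbb{Z}_{+}$ and $p_{0}>0$ (depending only on $R$) such that for each $z^{0}\in\mathbb{B}^{n}$ some $K^{0}$ with $\|K^{0}\|\le n_{0}$ satisfies \eqref{net1}; taking $K=\mathbf{0}$ on the left of \eqref{net1} gives $M(R/\mathbf{L}(z^{0}),z^{0},F)\le p_{0}|F^{(K^{0})}(z^{0})|/(K^{0}!\mathbf{L}^{K^{0}}(z^{0}))$. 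Cauchy's inequality on the skeleton $\mathbb{T}^{n}(z^{0},R/\mathbf{L}(z^{0}))$ then yields, for every $J\in\mathbb{Z}_{+}^{n}$,
$$\frac{|F^{(J)}(z^{0})|}{J!\mathbf{L}^{J}(z^{0})}\le\frac{M(R/\mathbf{L}(z^{0}),z^{0},F)}{R^{J}}\le\frac{p_{0}}{r^{\|J\|}}\,\frac{|F^{(K^{0})}(z^{0})|}{K^{0}!\mathbf{L}^{K^{0}}(z^{0})}.$$
Summing over $\|J\|\ge N+1$ and noting that $\sum_{\|J\|\ge N+1}r^{-\|J\|}=\sum_{m\ge N+1}\binom{m+n-1}{n-1}r^{-m}\to 0$ as $N\to\infty$ (since $r>1$), I would pick $N\ge n_{0}$ so large that $p_{0}\sum_{\|J\|\ge N+1}r^{-\|J\|}\le 1/2$; because $\|K^{0}\|\le n_{0}\le N$, the ratio $|F^{(K^{0})}(z^{0})|/(K^{0}!\mathbf{L}^{K^{0}}(z^{0}))$ is itself dominated by the partial sum on the left-hand side of \eqref{th6main}, producing \eqref{th6main} with $c=2$.

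The main obstacle is the uniformity in $z^{0}$: one must observe that $n_{0},p_{0}$ in Theorem \ref{petr1-ball} and the combinatorial tail $\sum_{m\ge N+1}\binom{m+n-1}{n-1}r^{-m}$ depend only on $R$ and $n$, so a single $N$ suffices for every $z^{0}\in\mathbb{B}^{n}$. The degenerate case $F\equiv 0$, in which the right-hand side of \eqref{net1} may vanish, is handled trivially since then both sides of \eqref{th6main} are zero.
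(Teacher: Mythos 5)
Your sufficiency argument is essentially the paper's: bound the $(N+1)$-th layer by the tail, apply \eqref{th6main}, and dominate the finite sum by a multiple of the $\max$, so Theorem \ref{bordte15} applies; the only cosmetic difference is that you keep track of the $J!$, $(N+1)!$ and binomial coefficients a bit more explicitly.

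Your necessity argument, however, is a genuinely different and correct route. The paper scales $\mathbf{L}$ down to $\widetilde{\mathbf{L}}=\Theta\mathbf{L}$ with $\tfrac{1}{\beta}<\theta_j<1$, invokes Theorem \ref{petr2} to preserve index boundedness, and then applies the definition of bounded $\widetilde{\mathbf{L}}$-index directly to $F$ at $z$; the geometric convergence of the tail comes from the factors $\prod_s\theta_s^{j_s-\widetilde{N}}$ with $\theta_s<1$. You instead invoke Theorem \ref{petr1-ball} with $R=(\tfrac{\beta}{\sqrt n},\dots,\tfrac{\beta}{\sqrt n})$, take $K=\mathbf 0$ on the left of \eqref{net1} to get $M(R/\mathbf{L}(z^0),z^0,F)\le p_0\,|F^{(K^0)}(z^0)|/(K^0!\mathbf{L}^{K^0}(z^0))$, and then push through Cauchy's inequality on the polydisc $\mathbb{D}^n[z^0,R/\mathbf{L}(z^0)]$; here the geometric convergence of the tail comes from $r^{-\|J\|}$ with $r=\beta/\sqrt n>1$. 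Both arguments ultimately rest on $\beta>\sqrt n$, but they package it differently: the paper uses the comparison theorem (Theorem \ref{petr2}), while you reuse the Cauchy-inequality machinery that already underpins the sufficiency half of Theorem \ref{petr1-ball}, which makes your necessity direction slightly more economical. You also correctly note the uniformity of $n_0,p_0$ in $z^0$, the choice $\|K^0\|\le n_0\le N$ so the reference term sits inside the finite sum, and the trivial case $F\equiv 0$, so the argument is complete.
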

\begin{proof}
	  Let $\frac{1}{\beta}<\theta_j<1,$ $j\in\{1,\ldots, n\},$ $\Theta=(\theta_1,\ldots,\theta_n).$ 
	   If the function $F$ has bounded $\mathbf{L}$-index in joint variables then by Theorem \ref{petr2} $F$ has bounded $\widetilde{\mathbf{L}}$-index in joint variables, where $\widetilde{\mathbf{L}}=(\widetilde{l}_1(z),\ldots,\widetilde{l}_n(z))$, $\widetilde{l}_j(z)=\theta_jl_j(z)$, $j\in\{1,\ldots,n\}$. 
	   Let $\widetilde{N}=N(F,\widetilde{L},\mathbb{B}^n).$
	  Therefore,
	 \begin{gather*}
	\max\left\{\frac{|F^{(K)}(z)|}{K!\mathbf{L}^{K}(z)}\colon \|K\|\leq \widetilde{N} \right\}
	=\max\left\{\frac{\Theta^K|F^{(K)}(z)|}
	{K!\widetilde{\mathbf{L}}^{K}(z)}\colon  \|K\|\leq \widetilde{N} \right\}\geq
	\nonumber \\ \geq
	\prod_{s=1}^n \theta_s^{\widetilde{N}}\max\left\{\frac{|F^{(K)}(z)|}
	{K!\widetilde{\mathbf{L}}^{K}(z)}\colon \|K\|\leq \widetilde{N} \right\}\geq
	%   \nonumber \\ \geq
	\prod_{s=1}^n \theta_s^{\widetilde{N}}\frac{|F^{(J)}(z)|}
	{J!\widetilde{\mathbf{L}}^{J}(z)}
	%   =   \nonumber \\
	\!= \prod_{s=1}^n \theta_s^{\widetilde{N}-j_s}\frac{|F^{(J)}(z)|}
	{J!\mathbf{L}^J(z)}
	\end{gather*}
	for all $J\geq \mathbf{0}$ and 
	\begin{gather*}
	\sum_{\|J\|=\widetilde{N}+1}^{\infty}\frac{|F^{(J)}(z)|}{J!\mathbf{L}^{j}(z)} \leq
	%   \nonumber \\ \leq
	\max\left\{\frac{|F^{(K)}(z)|}{K!\mathbf{L}^K(z)}\colon  \|K\|\leq \widetilde{N} \right\} \sum_{\|J\|=\widetilde{N}+1}^{\infty}\theta_s^{j_s-\widetilde{N}} = \nonumber \\
	=\prod_{i=1}^n \frac{\theta_s}{1-\theta_s} \max\left\{\frac{|F^{(K)}(z)|}{K!\mathbf{L}^{K}(z)}\colon \|K\|\leq \widetilde{N} \right\}
	%   \leq \nonumber \\
	\leq \prod_{i=1}^n \frac{\theta_s}{1-\theta_s}
	\sum_{\|K\|=0}^{\widetilde{N}}\frac{|F^{(K)}(z)|}{K!\mathbf{L}^{K}(z)}.
	\end{gather*}
	  Hence, we obtain \eqref{th6main} with $N=\widetilde{N}$ and $c=\prod_{i=1}^n \frac{\theta_s}{1-\theta_s}.$
	   On the contrary, inequality \eqref{th6main} implies 
	\begin{gather*}
	\max\left\{\frac{|F^{(J)}(z)|}{J!\mathbf{L}^{J}(z)}\colon \|J\|=N+1 \right\} %\leq \nonumber \\
	\leq \sum_{\|K\|=N+1}^{\infty}\frac{|F^{(K)}(z)|}{K!\mathbf{L}^{K}(z)}%\leq \nonumber \\
	\leq\frac{1}{c}\sum_{\|K\|=0}^{N}\frac{|F^{(K)}(z)|}{K!\mathbf{L}^{K}(z)}\leq \nonumber \\
	\leq \frac{1}{c}\sum_{i=0}^N C_{n+i-1}^i\max \left\{ \frac{|F^{(K)}(z)|}{K!\mathbf{L}^{K}(z)}\colon \|K\|\le N \right\}
	\end{gather*}
	and by Theorem  \ref{bordte15} $F$ is of bounded $\mathbf{L}$-index in joint variables.
\end{proof}

\section{Properties of power series of analytic  in a ball functions of bounded $\mathbf{L}$-index in joint variables}
Let $z^0\in \mathbb{B}^n$. 
 We develop an analytic in $\mathbb{B}^n$ function $F$ in the power series written in a diagonal form
\begin{gather}\label{th7main}
F(z)=\sum_{k=0}^{\infty}p_{k}((z-z^0))=
\sum_{k=0}^{\infty}\sum_{\|J\|=k}b_{J}(z-z^0)^{J},
\end{gather}
where $p_{k}$ are homogeneous polynomials of $k$-th degree, $b_{J}=\frac{F^{(J)}(z^0)}{J!}$.
The polynomial $p_{k_0}, k_0\in \mathbb{Z}_+,$ is called a dominating polynomial in the power series expansion \eqref{th7main} on $\mathbb{T}^n(z^0,R)$ if for every $z\in \mathbb{T}^n(z^0,R)$ the next inequality holds:
\begin{gather*}
|\sum_{k\neq k^0}p_{k}(z-z^0)|\leq \frac{1}{2}
\max\{|b_{J}|R^J \colon \|J\|=k^0 \}.
\end{gather*}
\begin{theorem} \label{bordmainpol}
	Let $\mathbf{L} \in Q(\mathbb{B}^n)$. 
	If an analytic function $F$ in $\mathbb{B}^n$ has bounded $\mathbf{L}$-index in joint variables then  there exists $p\in \mathbb{Z}_+$ that for all $d\in (0;\frac{\beta}{\sqrt{n}}]$ there exists $\eta(d)\in (0;d)$ such that for each $z^0 \in \mathbb{B}^n$ and some $r=r(d,z^0) \in(\eta (d),d),$ $k^0=k^0(d,z^0)\leq p$ the polynomial $p_{k^0}$ is the dominating polynomial in the series \eqref{th7main} on $\mathbb{T}^n(z^0,\frac{r\mathbf{1}}{\mathbf{L}(z^0)}).$
 \end{theorem}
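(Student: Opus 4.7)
The plan is to set $p:=N(F,\mathbf{L},\mathbb{B}^n)$ and exploit the bounded $\mathbf{L}$-index twice: first, to obtain a uniform envelope bound on all Taylor coefficients at $z^0$ in terms of those of small order; and second, to localise the ``central multi-index'' of the power series to $\{0,\dots,N\}$, independently of $r$ and $z^0$. A logarithmic-scale pigeonhole on the winning radius will then produce a radius $r\in(\eta(d),d)$ at which some $p_{k^0}$ dominates strictly.

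Fix $z^0$ and write $b_J:=F^{(J)}(z^0)/J!$ and $\widetilde M_k:=\max_{\|J\|=k}|b_J|/\mathbf{L}^J(z^0)$. The defining inequality \eqref{ineqoz2} at $z=z^0$ yields $\widetilde M_k\le A:=\max_{\|K\|\le N}|b_K|/\mathbf{L}^K(z^0)$ for every $k$, and $A=\widetilde M_{k^*}$ for some $k^*\le N$. On the skeleton $\mathbb{T}^n(z^0,r\mathbf{1}/\mathbf{L}(z^0))$ the quantity $|b_J(z-z^0)^J|$ equals $r^{\|J\|}|b_J|/\mathbf{L}^J(z^0)$, so it is natural to introduce the candidate maximal-term envelope $U_k(r):=r^k\widetilde M_k$ and its total $T_k(r):=r^k\sum_{\|J\|=k}|b_J|/\mathbf{L}^J(z^0)\le C_k U_k(r)$, with $C_k=\binom{n+k-1}{n-1}$. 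For $r<1$ and any $k>N$ one has $U_k(r)\le A r^k<A r^{k^*}=U_{k^*}(r)$, so the maximizer $k^0(r)$ of $U_k(r)$ over $k\in\mathbb{Z}_+$ always lies in $\{0,1,\dots,N\}$; this gives the required $p=N$.

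Because each $\log U_k(r)=\log\widetilde M_k+k\log r$ is linear in $\log r$ with integer slope, $r\mapsto k^0(r)$ is a non-decreasing step function with at most $N+1$ values on $(0,1)$. Fix a universal $r_0=r_0(n,N)\in(0,1)$ (chosen below), put $D:=\min(d,r_0)$, and partition $[\eta(d),D]$ into at most $N+1$ maximal subintervals on which $k^0$ is constant. The multiplicative pigeonhole produces a subinterval $[r_-,r_+]$ with $r_+/r_-\ge(D/\eta(d))^{1/(N+1)}$. Taking $r:=\sqrt{r_-r_+}$ and comparing $\widetilde M_k r_\pm^k$ with $\widetilde M_{k^0}r_\pm^{k^0}$ at the two transitions (where $k^0$ realises the maximum, so $\widetilde M_k\le\widetilde M_{k^0}r_-^{k^0-k}$ for $k<k^0$ and $\widetilde M_k\le\widetilde M_{k^0}r_+^{k^0-k}$ for $k>k^0$) yields the clean uniform estimate
\[
U_k(r)\le U_{k^0}(r)\sqrt{r_-/r_+}\qquad\text{for every }k\ne k^0.
\]

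It only remains to estimate $|\sum_{k\ne k^0}p_k(z-z^0)|\le\sum_{k\ne k^0}T_k(r)$ and compare it with $\tfrac12 U_{k^0}(r)$. The indices $k\le N$, $k\ne k^0$, contribute at most $\bigl(\sum_{k\le N}C_k\bigr)U_{k^0}(r)/\sqrt{r_+/r_-}$ by the previous display, which is $\le U_{k^0}(r)/4$ provided $r_+/r_-\ge\Gamma:=16\bigl(\sum_{k\le N}C_k\bigr)^2$. For the tail $k>N$ the coarser bound $T_k\le A C_k r^k$ together with $U_{k^0}(r)\ge A r^{k^*}$ gives $\sum_{k>N}T_k\le U_{k^0}(r)\sum_{k>N}C_k r^{k-k^*}$, and since $k-k^*\ge 1$ throughout, a small enough universal $r_0=r_0(n,N)\in(0,1)$ makes this tail $\le U_{k^0}(r)/4$ whenever $r\le r_0$. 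Setting $\eta(d):=D/\Gamma^{N+1}$ forces $r_+/r_-\ge\Gamma$, so the two halves add to $\tfrac12 U_{k^0}(r)$, which is exactly the dominating-polynomial inequality. The main obstacle I anticipate is that the transition radii depend on $z^0$; the pigeonhole on the finite set $\{0,\ldots,N\}$ of possible winners is precisely what lets $\eta(d)$ be chosen independently of $z^0$, while the statement's freedom to pick $r=r(d,z^0)$ and $k^0=k^0(d,z^0)$ absorbs the remaining $z^0$-dependence.
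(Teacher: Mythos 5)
Your proof is correct and arrives at the same conclusion ($p=N$, with $\eta(d)$ depending on $d$ and $N$ but not on $z^0$), but it reaches it by a genuinely different route. The paper's argument is discrete: it lays down a geometric sequence of radii $r_m=d/((d+1)c^m)$ with $c$ a large explicit constant, tracks the least maximizing degree $s_m$, introduces the second-largest term $\mu_m^*$ and shows by a two-step descent ($s_{m+2}\le s_m-1$) that within $m_0\le 2N+1$ steps the ratio $\mu_{m_0}^*/\mu_{m_0}$ drops below $1/c$. You instead use the convexity structure directly: since $\log U_k(r)=\log\widetilde M_k+k\log r$ is affine in $\log r$ with integer slopes $\le N$, the winner $k^0(r)$ is a non-decreasing step function taking at most $N+1$ values, so a multiplicative pigeonhole on $[\eta(d),D]$ produces a subinterval of ratio $\ge\Gamma$ on which the winner is constant, and the geometric midpoint $r=\sqrt{r_-r_+}$ yields the uniform gap $U_k(r)\le U_{k^0}(r)\sqrt{r_-/r_+}$ for all $k\ne k^0$. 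Your tail estimate for $k>N$ (bounded index gives $\widetilde M_k\le A=\widetilde M_{k^*}$, hence $\sum_{k>N}T_k\le U_{k^0}(r)\sum_{j\ge1}C_{N+j}r^j$, absorbed by choosing $D\le r_0(n,N)$) plays the role of the paper's Stirling/derivative computation. The Newton-polygon (monotone step function) observation is what the paper's inductive manipulation of $\mu_m^*$, $s_m^*$ is implicitly encoding, but your route exposes it cleanly, makes the constant $\Gamma^{N+1}$ transparent, and avoids the somewhat opaque bookkeeping around $s_m^*$. One small housekeeping remark: you should fix a convention (say the least maximizer) for $k^0(r)$ at ties so the step function and its monotonicity are unambiguous, and note that the degenerate case $A=0$ forces $F\equiv0$, for which the dominating-polynomial inequality is trivial.
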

\begin{proof}
		Let $F$ be an analytic function of bounded $\mathbf{L}$-index in joint variables with $N=N(F,\mathbf{L},\mathbb{B}^n)<+\infty$ and
	$n_0$ be the $\mathbf{L}$-index in joint variables at a point $z^0\in\mathbb{D}^2,$ 
	i.e. $n_0$ is the least number, for which inequality \eqref{ineqoz2} holds at the point $z^0.$
	Then for each $z^0 \in \mathbb{B}^n$ \ $n_0\leq N$.
	
	We put 
	$ a_J^*=\frac{|b_J|}{\mathbf{L}^J(z^0)}= \frac{|F^{(J)}(z^0)|}{J!\mathbf{L}^J(z^0)}, \  a_k=\max\{a^*_J: \ \|J\|=k \}, $
	$c=2\{(N+n+1)!(n+1)!+ (N+1)C_{n+N-1}^N\}.$
	Let $d\in (0;\frac{\beta}{\sqrt{n}}]$ be an arbitrary number.
	 We also denote $r_m=\frac{d}{(d+1)c^m},$ 
	$\mu_m=\max\{a_kr_m^k: \ k\in\mathbb{Z}_+\},$
	\ $s_m=\min\{k: \ a_kr_m^k=\mu_m\}$  for  $m\in \mathbb{Z}_{+}.$

	Since $z^0\in\mathbb{B}^n$ is a fixed point the inequality $a^*_K\leq \max\{a^*_J:\ \|J\|\leq n_0\}$ is valid for all $K\in\mathbb{Z}^n_+.$
	 Then $a_k\leq a_{n_0}$ for all $k\in \mathbb{Z}_+.$ Hence, for all 
	$k>n_0,$ in view of $r_0<1,$  we have $a_kr_0^k<a_{n_0}r_0^{n_0}.$ 
	This implies $s_0\leq n_0.$
	Since $cr_m=r_{m-1},$ we obtain that for each  $k>s_{m-1}$  ( $r_{m-1}<1$) 
	\begin{equation}
	a_{s_{m-1}}r_m^{s_{m-1}}=a_{s_{m-1}}r_{m-1}^{s_{m-1}}c^{-s_{m-1}}\geq a_k r^k_{m-1}c^{-s_{m-1}}=
	a_k r^k_mc^{k-s_{m-1}}\geq ca_k r^k_m. \label{bordriv127}
	\end{equation}
	It yields that $s_m\leq s_{m-1}$ for all $m\in\mathbb{N}.$ Thus, we can rewrite 
	$$\mu_0=\max\{a_kr_0^k: \ k\leq n_0\},  \ \mu_m=\max\{a_kr_m^k: \ k\leq s_{m-1}\}, \ m\in\mathbb{N}
	$$
	Let us to introduce additional notations for $m\in\mathbb{N}$
	\begin{gather*}
	\mu_0^*=\max\left\{a_kr_0^k: \ s_0\neq k\leq n_0\right\}, \
	s_0^*=\min\{k: \ k\neq s_0, a_kr_0^k=\mu_0^*\},\\
	\mu_m^*=\max\{a_kr_m^k: \ s_m\neq k\leq s_{m-1}\}, \
	s_m^*=\min\{k: \ k\neq s_m, a_kr_m^k=\mu^*_m\}.
	\end{gather*}
We will show that there exists $m_0 \in\mathbb{Z}_+$ such that 
	\begin{equation}
	\label{bordriv128}
	\frac{\mu^*_{m_0}}{\mu_{m_0}}\leq \frac{1}{c}.
	\end{equation}
		Suppose that for all $m\in \mathbb{Z}_+$ the next inequality holds
	\begin{equation}
	\label{bordriv129}
	\frac{\mu^*_{m}}{\mu_{m}}> \frac{1}{c}.
	\end{equation}
		If $s_m^{*}<s_m$ ($s_m^{*}\neq s_m$ in view of definition) then we have
	$$
	a_{s^*_m}r_{m+1}^{s^*_m}= \frac{a_{s^*_m}r_m^{s^*_m}}{c^{s^*_m}}=\frac{\mu^*_m}{c^{s^*_m}}>
	\frac{\mu_m}{c^{s^*_m+1}}=\frac{a_{s_m}r_m^{s_m}}{c^{s^*_m+1}}=\frac{a_{s_m}r_{m+1}^{s_m}}{c^{s^*_m+1-s_m}}\geq
	a_{s_m}r^{s_m}_{m+1}.
	$$
	Besides, for every $k>s^*_m,$ $k\neq s_m,$ (i. e., $k-1\geq s^*_m$) it can be deduced similarly that 
	$$
	a_{s^*_m}r_{m+1}^{s^*_m}= \frac{a_{s^*_m}r_m^{s^*_m}}{c^{s^*_m}}\geq  \frac{a_kr^k_m}{c^{s^*_m}}\geq  \frac{a_kr^k_m}{c^{k-1}}=
	c a_kr^k_{m+1}.
	$$
	Hence, $a_{s^*_m}r_{m+1}^{s^*_m} > a_k r_{m+1}^k $ for all $k>s^*_m.$ Then
	\begin{equation}
	\label{bordriv130}
	s_{m+1}\leq s^*_m\leq s_m-1.
	\end{equation}
	On the contrary, if $s_m<s^*_m\leq s_{m-1},$  then the equality  $s_{m+1}=s_m$ may holds.
	Indeed, by definition $s_{m+1}\leq s_m.$ It means that the specified equality is possible. 
	But if 	$s_{m+1}<s_m$ then $s_{m+1}\leq s_{m}-1$ (they are natural numbers!). 
	Hence, we obtain \eqref{bordriv130}.
	
	Thus, the inequalities $s^*_{m+1}\leq s_m$ and $s^*_m\neq s_{m+1}$ imply that
	$s^*_{m+1}<s_{m+1}.$
	As above instead \eqref{bordriv130} we have 
	$$s_{m+2}\leq s_{m+1}^*\leq s_{m+1}-1=s_m-1.$$
	
	Therefore, if for all $m\in\mathbb{Z}_+$ \ \eqref{bordriv129} holds, then for every $m\in\mathbb{Z}_+$ either 
	$s_{m+2}\leq s_{m+1}\leq s_m-1$  or  $s_{m+2}\leq s_{m}-1$ holds, that is  $s_{m+2}\leq s_{m}-1,$
	because $s_{m+2}\leq s_{m+1}.$
	 It follows that 
	$$
	s_m\!\leq\! s_{m-2}-1\leq \ldots \leq s_{m-2[m/2]}-[m/2]\leq s_0-[m/2] \leq
	n_0-[m/2]\leq N-[m/2].
	$$
  In other words, $s_m<0$ for $m>2N+1,$ which is impossible. 
  	Therefore, there exists $m_0\leq 2N+1$ such that \eqref{bordriv128} holds.
  We put $r=r_{m_0},$ $\eta(d)=\frac{d}{(d+1)c^{2(N+1)}},$ $p=N$ and $k_0=s_{m_0}.$ 
  Then for all $\|J\|\neq k_0=s_{m_0}$  in
	$\mathbb{T}^n(z^0,\frac{r\mathbf{1}}{\mathbf{L}(z^0)}),$ in view \eqref{bordriv127} and \eqref{bordriv128} we obtain 
	$$|b_J||z-z^0|^J=a_J^*r^{\|J\|}\leq a_{\|J\|}r^{\|J\|}\leq \frac{1}{c} a_{s_{m_0}}r_{m_0}^{s_{m_0}}=\frac{1}{c} a_{k_0}r^{k_0}.
	$$
	Thus, for  $z\in\mathbb{T}^n(z^0,\frac{r\mathbb{1}}{\mathbf{L}(z^0)})$ 
	\begin{gather}
	\left|\sum_{\|J\|\neq k_0}b_J(z-z^0)^J\right| \leq \sum_{\|J\|\neq k_0}a_j^*r^{\|J\|} \leq
	\sum_{\substack{k=0,\\ k\neq k_0}}^{\infty}a_kC^k_{n+k-1} r^k= \nonumber\\
	= \sum_{\substack{k=0,\\ k\neq s_{m_0}}}^{s_{m_0-1}}a_k C^k_{n+k-1} r^k+ \sum_{k=s_{m_0-1}+1}^{\infty} a_kC_{n+k-1}^k r^k.
	\label{bordriv131}
	\end{gather}
	We will estimate two sums in \eqref{bordriv131}.
	Taking into account \eqref{bordriv130}, it can established that
	\begin{equation}
	\label{bordriv132}
	\sum_{\substack{k=0,\\ k\neq s_{m_0}}}^{s_{m_0-1}}a_k C_{n+k-1}^k r^k \leq \frac{a_{k_0}r^{k_0}}{c} \sum_{k=0}^{N}C_{n+k-1}^k \leq
	\frac{a_{k_0}r^{k_0}}{c} (N+1)C_{n+N-1}^N.
	\end{equation}
	For all $k\geq s_{m_0-1}+1$ \ $a_kr^k_{m_0-1}\leq \mu_{m_0-1}$ holds. Then 
	$a_kr^k_{m_0}= \frac{a_kr^k_{m_0-1}}{c^k}\leq \frac{\mu_{m_0-1}}{c^k}. $
	In view of  \eqref{bordriv128} we deduce 
	\begin{gather}
	\sum_{k=s_{m_0-1}+1}^{\infty} a_k C_{n+k-1}^k r^k\leq \mu_{m_0-1} \sum_{k=s_{m_0-1}+1}^{\infty}C_{n+k-1}^k \frac{1}{c^k} \leq \nonumber\\
	\leq a_{s_{m_0-1}}r_{m_0}^{s_{m_0-1}}c^{s_{m_0-1}} \sum_{k=s_{m_0-1}+1}(k+1)(k+2)\ldots (k+n)\frac{1}{c^k} \leq \nonumber\\
	\!\leq\! \frac{a_{s_{m_0}}r^{s_{m_0}}}{c}c^{s_{m_0-1}}\!\bigg(\!\! \sum_{k=s_{m_0-1}\!+\!1}^{\infty} x^{k+n}\!\!\bigg)^{(n)}\Bigg|_{x\!=\!\frac{1}{c}}\!=\!
	%\nonumber\\=
	\frac{a_{k_0}r^{k_0}}{c}c^{s_{m_0\!-\!1}} \!\bigg\{\frac{x^{s_{m_0-1}\!+\!n\!+\!1}}{1\!-\!x}\!\bigg\}^{(n)}\Bigg|_{\!x\!=\!\frac{1}{c}}\!=\! \nonumber\\
	= \frac{a_{k_0}r^{k_0}}{c}c^{s_{m_0-1}}\sum_{j=0}^n C_n^j(n-j)! (s_{m_0-1}+n+1)\ldots (s_{m_0-1}+n-j+2) \times \nonumber \\
	\times   \left. \frac{x^{s_{m_0-1}+1+n-j}}{(1-x)^{n-j+1}}\right|_{x=\frac{1}{c}} \!\leq\!
	\frac{a_{k_0}r^{k_0}}{c}c^{s_{m_0}-1}n!(N+n+1)! \sum_{j=0}^n \frac{(1/c)^{s_{m_0-1}+1+n-j}}{(1-1/c)^{n-j+1}}\!=\! \nonumber \\
	=n! (N+n+1)! \frac{a_{k_0}r^{k_0}}{c} \sum_{j=0}^n \frac{1}{(c-1)^{n-j+1}}\leq (n+1)!(N+n+1)!  \frac{a_{k_0}r^{k_0}}{c},
	\label{bordriv133}
	\end{gather}
	because $c\geq 2.$ Hence, from \eqref{bordriv131}-\eqref{bordriv133} it follows that
	$$
	\left|\sum_{\|J\|\neq k_0}b_J(z-z^0)^J \right| \leq \frac{((N+1)C_{n+N-1}^N+(n+1)!(N+n+1)!)a_{k_0}r^{k_0}}{c} \leq
	\frac{1}{2} a_{k_0}r^{k_0}.
	$$
	It means that the polynomial  $P_{k_0}$ is the dominating polynomial in the series \eqref{th7main} on skeleton $\mathbb{T}^n(z^0,\frac{r\mathbf{1}}{\mathbf{L}(z^0)}).$
	\end{proof}

	\begin{theorem} \label{petr8}
	Let  $\mathbf{L} \in Q(\mathbb{B}^n)$. If there exist $p\in \mathbb{Z}_+,$ $d\in (0;1],$  $\eta\in (0;d)$ such that for each $z^0 \in \mathbb{B}^n$ and some $R=(r_1,\ldots,r_n)$ with $r_j=r_j(d,z^0) \in(\eta,d),$ $j\in\{1,\ldots,n\},$ and certain $k^0=k^0(d,z^0)\leq p$ the polynomial $p_{k^0}$ is the dominating polynomial in the series \eqref{th7main} on $\mathbb{T}^2(z^0,{R}/{\mathbf{L}(z^0)})$
	then the analytic  in $\mathbb{B}^n$ function $F$ has bounded $\mathbf{L}$-index in joint variables.
\end{theorem}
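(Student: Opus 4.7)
The plan is to reduce the hypothesis to the Hayman-type sufficient criterion of Theorem~\ref{bordte15}: I will exhibit a constant $c>0$, depending only on $p$ and $\eta$, such that
$$\max\Big\{\frac{|F^{(S)}(z^0)|}{\mathbf{L}^S(z^0)}\colon \|S\|=p+1\Big\}\leq c\max\Big\{\frac{|F^{(J)}(z^0)|}{\mathbf{L}^J(z^0)}\colon \|J\|\leq p\Big\}$$
for every $z^0\in\mathbb{B}^n$. Once this is in place, Theorem~\ref{bordte15} immediately yields bounded $\mathbf{L}$-index in joint variables.

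Fix $z^0\in\mathbb{B}^n$ and let $R=R(d,z^0)\in(\eta,d)^n$, $k^0=k^0(d,z^0)\leq p$ be the radii and degree supplied by the hypothesis, so that $p_{k^0}$ is dominating on $\mathbb{T}^n(z^0,R/\mathbf{L}(z^0))$. The main step is to convert this geometric condition into a coefficient inequality via Fourier analysis on the skeleton. Parametrize it by $\theta=(\theta_1,\ldots,\theta_n)\in[0,2\pi]^n$ through $z_j=z_j^0+(r_j/l_j(z^0))e^{i\theta_j}$; since $p_{k^0}$ is homogeneous of degree $k^0$, its Fourier expansion in $\theta$ is supported precisely on multi-indices of total degree $k^0$. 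Consequently, for every $S\in\mathbb{Z}_+^n$ with $\|S\|\neq k^0$, the $S$-th Fourier coefficient of $F-p_{k^0}$ on this skeleton agrees with that of $F$, which equals $b_S(R/\mathbf{L}(z^0))^S$ where $b_S=F^{(S)}(z^0)/S!$. Taking absolute values and invoking the dominating inequality gives
$$|b_S|(R/\mathbf{L}(z^0))^S\leq\max_{z\in\mathbb{T}^n(z^0,R/\mathbf{L}(z^0))}\Big|\sum_{k\neq k^0}p_k(z-z^0)\Big|\leq\frac{1}{2}\max\{|b_J|(R/\mathbf{L}(z^0))^J\colon \|J\|=k^0\}.$$

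Now take $S$ with $\|S\|=p+1$; since $k^0\leq p<p+1$ the constraint $\|S\|\neq k^0$ is automatic. Setting $\alpha_S:=|b_S|/\mathbf{L}^S(z^0)=|F^{(S)}(z^0)|/(S!\,\mathbf{L}^S(z^0))$, the preceding display reads $\alpha_S R^S\leq \tfrac12\max\{\alpha_J R^J\colon\|J\|=k^0\}$. Using $R^S\geq\eta^{p+1}$ together with $R^J\leq d^{k^0}\leq 1$ (which follow from $r_j\in(\eta,d)$ and $d\leq 1$), one obtains
$$\alpha_S\leq \frac{1}{2\eta^{p+1}}\max\{\alpha_J\colon\|J\|\leq p\}.$$
Multiplying by $S!\leq(p+1)!$ and noting $\alpha_J\leq|F^{(J)}(z^0)|/\mathbf{L}^J(z^0)$ produces the target inequality with $c=(p+1)!/(2\eta^{p+1})$; because this constant is independent of $z^0$, the Hayman condition of Theorem~\ref{bordte15} is verified and the conclusion follows.

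The one genuinely non-routine point is the Fourier extraction of the coefficient bound: recognizing that subtracting the homogeneous polynomial $p_{k^0}$ annihilates exactly the angular Fourier modes of total degree $k^0$ is what allows the geometric majorization on the skeleton to be transferred to every off-degree Taylor coefficient of $F$. Once this is secured, the remaining estimates are elementary and exploit only the two-sided control $\eta<r_j<d\leq 1$.
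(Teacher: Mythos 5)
Your proposal is correct and is essentially the paper's own argument: the Fourier-coefficient extraction on the skeleton is exactly the Cauchy coefficient estimate the paper applies to $F-p_{k^0}$ to get $a^*_J R^J\le \tfrac12 a_{k_0}r_0^{k_0}$, and the conclusion is drawn from the same Hayman-type Theorem~\ref{bordte15}. The only difference is organizational — you verify inequality \eqref{bordriv122} directly with the uniform constant $c=(p+1)!/(2\eta^{p+1})$, while the paper runs the same estimate through a proof by contradiction with $c=\big((p+1)!/\eta^{p+1}\big)^n$ — which does not change the substance.
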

\begin{proof}
Suppose that there exist $p\in\mathbb{Z}_+,$ $d\le 1$  and  $\eta\in(0;d)$  such that for each 
	$z^0\in\mathbb{B}^n$ and some 
	$R=(r_1,\ldots,r_n)$ with $r_j=r_j(d,z^0) \in(\eta,d),$ $j\in\{1,\ldots,n\},$ and 
	 $k_0=k_0(1,z^0)\leq p$ 
	the polynomial  $P_{k_0}$ is the dominating polynomial in the series
	\eqref{th7main} on $\mathbb{T}^n(z^0,\frac{R}{\mathbf{L}(z^0)}).$ 
		Let us to denote $r_0=max_{1\leq j\leq n} r_j.$
	Then 
	$$
	\left|\sum_{\|J\|\neq k_0}b_J(z-z^0)^J\right|= \left|f(z)- \sum_{\|J\|=k_0}b_J(z-z^0)^J\right|\leq \frac{a_{k_0}r_0^{k_0}}{2}.
	$$
		Using Cauchy's inequality we have
	$|b_J(z-z^0)^J|=a^*_jR^{J}\leq \frac{a_{k_0}r_0^{k_0}}{2}$
	for all $J\in\mathbb{Z}^n_{+},$ $\|J\|\neq k_0,$ that is for all $\|J\|=k\neq k_0$
	\begin{equation}
	\label{bordriv134}
	a_kR^J\leq \frac{a_{k_0}r_0^{k_0}}{2}.
	\end{equation}
	Suppose that $F$ is not a function of bounded $\mathbf{L}$-index in joint variables. 
Then in view of Theorem \ref{bordte15} for all $p_1\in\mathbb{Z}_+$ and $c\geq 1$ there exists $z^0\in\mathbb{B}^n$ such that the next inequality holds:
	$$
	\max\left\{\frac{|F^{(J)}(z^0)|}{\mathbf{L}^J(z^0)}: \ \|J\|=p_1+1 \right\}>c
	\max\left\{\frac{|F^{(K)}(z^0)|}{\mathbf{L}^K(z^0)}: \ \|K\|\leq p_1\right\}.
	$$
We put $p_1=p$ and $c=\left( \frac{(p+1)!}{\eta^{p+1}}\right)^n.$  Then for this $z^0(p_1,c)$ 
	$$
	\max\left\{\frac{|F^{(J)}(z^0)|}{J!\mathbf{L}^J(|z^0|)}: \ \|J\|=p+1 \right\}> \frac{1}{\eta^{p+1}}
	\max\{\frac{|F^{(K)}(z^0)|}{K!\mathbf{L}^K(|z^0|)}: \ \|K\|\leq p\},
	$$
	that is $a_{p+1}> \frac{a_{k_0}}{\eta^{p+1}}.$ Hence, 
	$a_{p+1}r_0^{p+1}> \frac{a_{k_0}r_0^{p+1}}{\eta^{p+1}}\geq a_{k_0}r^{k_0}.$
	The last inequality  contradicts \eqref{bordriv134}. Therefore, $F$ is of bounded $\mathbf{L}$-index in joint variables.
\end{proof}

% It is easy to see that in the poof of sufficiency the radii $R=(r,\ldots,r)$ of skeleton $\mathbb{T}^2\!(z^0,{R}/{\mathbf{L}(z^0)})$ can be replaced
% by the radii $R=(r_1,\ldots,r_n),$ where $r_j$ is not necessarily equal to each other. Thus, the following theorem is true.
% 
% \begin{theorem} \label{petr8}
% 	Let $\mathbf{L} \in Q(\mathbb{B}^n).$ 
% 	If there exist $p\in \mathbb{Z}_+,$ $d\in (0;\frac{\beta}{\sqrt{n}}],$   such that for each $z^0 \in \mathbb{B}^n$ and some $R=(r_1,\ldots,r_n)$ with $r_j=r_j(d,z^0) \in(\eta(d);d),$ $j\in\{1,2,\ldots,n\},$ and some $k^0=k^0(d,z^0)\leq p$ the polynomial $p_{k^0}$ is the dominating polynomial in the series \eqref{th7main} on $\mathbb{T}^n(z^0,{R}/{\mathbf{L}(z^0)})$
% 		then the analytic  in $\mathbb{B}^n$ function $F$ has bounded $\mathbf{L}$-index in joint variables.
% \end{theorem}

\section{Properties of $Q(\mathbb{B}^n).$}
\begin{theorem} \label{boundlogderjoint} 
	Let $\mathbf{L}(z)=(l_1(z),\ldots, l_{n}(z)),$ $l_j: \mathbb{B}^n \to \mathbb{C}$ and $\frac{\partial l_j}{\partial z_m}$ be continuous functions in $\mathbb{B}^n,$  
 for all $j,$ $m\in\{1,2,\ldots,n\}.$ 
 	If for every $ j\in\{1,2,\ldots,n\}$ $|l_j(z)|$ satisfies \eqref{Lbeta-ball} 
 	and there exist  $P>0$ and $c>0$ such that
	for all $z\in\mathbb{B}^n$ and every $ j, m\in\{1,2,\ldots,n\}$
	\begin{equation} \label{condqnb}
	\frac{1}{c+|l_j(z)|}\left|\frac{\partial l_j(z)}{\partial z_m}\right|\leq P 
	\end{equation}
	then $\mathbf{L}^*\in Q(\mathbb{B}^n),$ where $\mathbf{L}^*(z)=(c+|l_1(z)|,\ldots, c+|l_n(z)|).$
\end{theorem}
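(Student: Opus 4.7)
The plan is to verify the two defining conditions of the class $Q(\mathbb{B}^n)$ for $\mathbf{L}^*$: the lower bound \eqref{Lbeta-ball} on each component, and the ratio bounds \eqref{clasqballn}. The first is immediate: since $|l_j(z)| > \beta/(1-|z|)$ by hypothesis and $c>0$, we get $c+|l_j(z)| > \beta/(1-|z|)$, so \eqref{Lbeta-ball} holds for every component of $\mathbf{L}^*$.

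For the ratio condition, I would fix $R\in\mathbb{R}^n_+$ with $|R|\leq\beta$, a base point $z^0\in\mathbb{B}^n$, and any $z\in\mathbb{D}^n[z^0,R/\mathbf{L}^*(z^0)]$. Since $\mathbb{B}^n$ is convex, the segment $\gamma(t)=z^0+t(z-z^0)$, $t\in[0,1]$, lies in $\mathbb{B}^n$ (indeed, the argument of Remark~1 applied to $\mathbf{L}^*$ shows $z\in\mathbb{B}^n$). The key step is to bound the ratio $(c+|l_j(z)|)/(c+|l_j(z^0)|)$ by a Gronwall-type argument along $\gamma$. Parameterizing and applying the chain rule together with \eqref{condqnb} gives
\[
|l_j(\gamma(t))-l_j(z^0)|\leq \int_0^t\sum_{m=1}^{n}\left|\tfrac{\partial l_j}{\partial z_m}(\gamma(s))\right||z_m-z_m^0|\,ds \leq PC\int_0^t\bigl(c+|l_j(\gamma(s))|\bigr)ds,
\]
where $C=\sum_{m=1}^{n}|z_m-z_m^0|$. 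By the reverse triangle inequality, setting $\phi(t)=c+|l_j(\gamma(t))|$ gives $|\phi(t)-\phi(0)|\leq PC\int_0^t\phi(s)\,ds$, and Gronwall then yields
\[
e^{-PC}\leq \frac{\phi(1)}{\phi(0)}=\frac{c+|l_j(z)|}{c+|l_j(z^0)|}\leq e^{PC}.
\]

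To make these bounds uniform in $z^0$, I would estimate $C$ using the polydisc constraint $|z_m-z_m^0|\leq r_m/(c+|l_m(z^0)|)$ together with $c+|l_m(z^0)|>\beta/(1-|z^0|)\geq\beta$, so that $C<\sum_m r_m/\beta\leq\sqrt{n}|R|/\beta\leq\sqrt{n}$ by Cauchy--Schwarz. Hence $\lambda_{2,j}(R)\leq e^{P\sqrt{n}}<\infty$ and $\lambda_{1,j}(R)\geq e^{-P\sqrt{n}}>0$, independent of $z^0$ and $j$, which is precisely \eqref{clasqballn}.

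The main obstacle I expect is (a) handling the chain rule cleanly when $l_j$ is complex-valued, where one must interpret the given bound on $\partial l_j/\partial z_m$ as controlling the full first-order variation of $l_j$ along the real path $\gamma$ (in effect, the hypothesis \eqref{condqnb} plays the role of a uniform logarithmic-derivative estimate for $c+|l_j|$), and (b) setting up the integral inequality so that Gronwall delivers a two-sided bound rather than only an upper bound; this is done by applying Gronwall to $|\phi(t)-\phi(0)|$ as above, or symmetrically swapping the roles of $z$ and $z^0$ to obtain the matching lower bound.
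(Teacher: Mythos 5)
Your strategy is essentially the same as the paper's: verify \eqref{Lbeta-ball} for $\mathbf{L}^*$ directly, then bound $(c+|l_j(z)|)/(c+|l_j(z^0)|)$ uniformly by integrating along the segment $\varphi(\tau)=z^0+\tau(z-z^0)$, using \eqref{condqnb} and the polydisc constraint $|z_m-z_m^0|\le r_m/(c+|l_m(z^0)|)$. The paper reaches the two-sided ratio bound in one pass by estimating the \emph{pointwise} logarithmic derivative: from $\frac{d}{dt}|g(t)|\le |g'(t)|$ and $\frac{d}{dt}|g(t)|\ge -|g'(t)|$ it gets $\bigl|\frac{d}{d\tau}\ln(c+|l_j(\varphi(\tau))|)\bigr|\le P\sum_{m}|z_m-z_m^0|$ outside a negligible set, and a single integration yields both $\lambda_{2,j}(R)\le\exp\bigl(\frac{P}{c}\sum_m r_m\bigr)$ and $\lambda_{1,j}(R)\ge\exp\bigl(-\frac{P}{c}\sum_m r_m\bigr)$.

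The one real slip in your write-up is the claim that Gronwall applied to $|\phi(t)-\phi(0)|\le PC\int_0^t\phi(s)\,ds$ already delivers the lower bound $\phi(1)/\phi(0)\ge e^{-PC}$. It does not: Gronwall on $\phi(t)\le\phi(0)+PC\int_0^t\phi$ gives the upper bound $\phi(t)\le\phi(0)e^{PCt}$, but the lower-bound half $\phi(t)\ge\phi(0)-PC\int_0^t\phi$ is strictly weaker than a pointwise bound on $\phi'$ and is compatible with $\phi(1)/\phi(0)$ being arbitrarily small (for instance take $\phi\equiv\phi(0)$ on $[0,1-\delta]$ and then drop linearly to $\varepsilon\phi(0)$ on $[1-\delta,1]$ with $\delta\le 2\varepsilon/(1-\varepsilon)$: the integrated inequality holds, yet the ratio is $\varepsilon$). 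Your alternative fix --- running the same chain-rule estimate along the reversed parametrization from $z$ to $z^0$ --- is correct, and is precisely the integrated form of the paper's use of $\frac{d}{dt}|g|\ge-|g'|$; better still, just keep the bound at the derivative level $|\phi'(t)|\le PC\,\phi(t)$ and integrate $\frac{d}{dt}\ln\phi$, which is what the paper does. A minor cosmetic difference: you lower-bound $c+|l_m(z^0)|$ by $\beta$ to get the uniform constant $e^{P\sqrt n}$, whereas the paper simply uses $c+|l_m(z^0)|\ge c$; both give uniform constants.
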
	
\begin{proof}
	Clearly, the function $\mathbf{L}^*(z)$ is positive and continuous.
	For given $z\in\mathbb{B}^n,$ $z^0\in\mathbb{B}^n$ we define an analytic curve $\varphi: [0,1]\to \mathbb{B}^n$
	$$\varphi_j(\tau)=z^0_j+\tau (z_j-z_j^0), \ j\in\{1,2,\ldots,n\},$$
	where $\tau\in [0,1].$ 
	It is known that for every continuously differentiable function  $g$ of real variable $\tau$  the inequality  $\frac{d}{dt}|g(\tau)|\leq |g'(\tau)|$  holds except the points  where $g(\tau)=0.$
	Using restrictions of this lemma, we establish the upper estimate of $\lambda_{2,j}(z_0,R):$
	\begin{gather*}
	\lambda_{2,j}(z_0,R)=\sup \left\{\frac{c+|l_j(z)|}{c+|l_j(z^0)|}\colon z\in \mathbb{D}^n\left[z^0,\frac{R}{\mathbf{L}^*(z^0)}\right]   \right\}= \\ 
	= \sup_{z\in \mathbb{D}^n\left[z^0,\frac{R}{\mathbf{L}^*(z^0)}\right]  } \left\{ \exp\left\{\ln(c+|l_j(z)|) - \ln(c+|l_j(z^0)|)\right\}
	\right\} = \\
	\!=\! \sup\! \left\{\exp\left\{ \int_0^{1} \frac{d(c+|l_j(\varphi(\tau))|)}{c+|l_j(\varphi(\tau))|} \right \}: z\in \mathbb{D}^n\left[z^0,\frac{R}{\mathbf{L}^*(z^0)}\right]  \! \right\} \!\leq\! \\
	\!\leq\! \sup_{ z\in \mathbb{D}^n\left[z^0,\frac{R}{\mathbf{L}^*(z^0)}\right] }\!\left\{\! \exp\!\left\{\!\int_0^{1}\! \sum_{m=1}^n \frac{|\varphi_m'(\tau)|}{c+|l_j(\varphi(\tau))|} 
	\left|\frac{\partial l_j(\varphi(\tau))}{\partial z_m}\right| d\tau \right\}  \!\right \} \!\leq\! \\
	\leq \sup_{  z\in \mathbb{D}^n\left[z^0,\frac{R}{\mathbf{L}^*(z^0)}\right] } \left\{ \exp\left\{ \int_0^1 \sum_{m=1}^n P|z_m-z_m^0| d\tau \right\}
	\right\} \leq \\ \leq 
	\sup_{z\in \mathbb{D}^n\left[z^0,\frac{R}{\mathbf{L}^*(z^0)}\right] } \left\{ \exp\left\{ \sum_{m=1}^n \frac{Pr_j}{c+|l_m(z^0)|} \right\}
	\right\}
	\leq \exp\left(\frac{P}{c}\sum_{m=1}^nr_j \right).
	\end{gather*}
	Hence,  for all $R\geq \mathbf{0}$ \ $\lambda_{2,j}(R)=\sup\limits_{z^0\in\mathbb{B}^n}  \lambda_{2,j}(z^0,\eta) \leq \exp\left(\frac{P}{c}\sum\limits_{m=1}^nr_j \right) <\infty.$
	Using $\frac{d}{dt}|g(t)|\geq -|g'(t)|$ it can be proved that for every $\eta\geq 0$ \  $\lambda_{1,j}(R)\geq \exp\left(-\frac{P}{c}\sum\limits_{m=1}^nr_j \right)>0.$ Therefore, $\mathbf{L}^*\in Q(\mathbb{B}^n).$
\end{proof}
%}

%For estimate of growth of entire functions of bounded $\mathbf{L}$-index in joint variables we will use the following theorem which describes local behavior of these entire functions.
%
%\begin{theorem}[\cite{sufjointdir,monograph}] \label{bordte43}\sl
%	Let $\mathbf{L}\in Q^n.$ An entire function $F$ is of bounded $\mathbf{L}$-index in joint variables if and only if there exist numbers $R',$ $R'',$ $\mathbf{0}<R'<\mathbf{1}<R'',$  and $p_1=p_1(R',R'')\geq 1$ such that for every $z^0\in\mathbb{C}^n$ 
%	\begin{equation}
%	\label{bordriv112}
%	\max \left\{|F(z)|\colon  z\in \mathbb{T}^n\left(z^0,\frac{R''}{\mathbf{L}(z^0)}\right)\right\} \!\leq\! p_1  \max \left\{|F(z)|\colon  z\in \mathbb{T}^n\left(z^0,\frac{R'}{\mathbf{L}(z^0)}\right)\right\}. 
%	\end{equation}
%\end{theorem}

At first we prove the following lemma. 
\begin{lemma}  \label{qngrowth}
	If $\mathbf{L}\in Q(\mathbb{B}^n)$ then  for every $ j\in\{1,\ldots,n\}$ and for every fixed $z^*\in\mathbb{B}^n$ $|z_j|l_j(z^*+z_j\mathbf{1}_j)\to \infty$ as $|z^*+z_j\mathbf{1}_j|\to 1-0.$ 
	%\foreignlanguage{ukrainian}{\tred{Можна записати так: $|z|\mathbf{L}(z)\to +\infty$ as $|z|\to \infty$ where $|z|=(|z_1|,\ldots,|z_n|).$}}
\end{lemma}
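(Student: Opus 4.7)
The plan is to reduce the lemma to a direct consequence of the lower bound \eqref{Lbeta-ball}, combined with a geometric estimate showing that $|z_j|$ cannot go to zero when the point $w := z^*+z_j\mathbf{1}_j$ is pushed toward the boundary sphere with $z^*$ held fixed. Because $Q(\mathbb{B}^n)$ already presumes \eqref{Lbeta-ball}, the rest of the class conditions \eqref{clasqballn} will not be needed.

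First I would fix $z^*\in\mathbb{B}^n$ and $j\in\{1,\ldots,n\}$, and set $A:=\sum_{k\neq j}|z^*_k|^2$ and $c:=\sqrt{1-A}$. Since $z^*\in\mathbb{B}^n$ gives $|z^*|^2<1$, we get $A<1$ and $c>0$. Moreover, the identity $|z^*_j|^2=|z^*|^2-A$ combined with $|z^*|^2<1$ yields $|z^*_j|^2<1-A=c^2$, i.e.\ $|z^*_j|<c$. For $w=z^*+z_j\mathbf{1}_j$ one computes directly that $|w|^2=|z^*_j+z_j|^2+A$, so the hypothesis $|w|\to 1-0$ is equivalent to $|z^*_j+z_j|\to c$.

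Next, I would apply the triangle inequality in the form $|z_j|\geq |z^*_j+z_j|-|z^*_j|$. Since the right-hand side tends to $c-|z^*_j|>0$, for every $w$ sufficiently close to the boundary one has $|z_j|\geq \tfrac{1}{2}(c-|z^*_j|)>0$. Invoking \eqref{Lbeta-ball}, namely $l_j(w)>\beta/(1-|w|)$, multiplication of these two lower bounds gives
$$
|z_j|\,l_j(z^*+z_j\mathbf{1}_j)\;\geq\;\frac{(c-|z^*_j|)\,\beta}{2\,(1-|w|)}\;\longrightarrow\;+\infty
$$
as $|w|\to 1-0$, which is the claimed conclusion.

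The only real obstacle is the small geometric observation that $|z^*_j|<c$, which is precisely what prevents the triangle-inequality bound from collapsing to something useless; this follows cleanly from splitting the squared Euclidean norm into the $j$-th coordinate and the remaining coordinates. After that point every step is elementary, and the proof is essentially three lines.
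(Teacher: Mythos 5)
Your proof is correct, and in fact it is more complete than the paper's own one-line proof. The paper simply invokes \eqref{Lbeta-ball} to observe that $l_j(z^*+z_j\mathbf{1}_j)\geq \beta/(1-|z^*+z_j\mathbf{1}_j|)\to+\infty$ and stops there, never explaining why the extra factor $|z_j|$ cannot tend to zero and spoil the product. Your argument supplies exactly the missing step: writing $w=z^*+z_j\mathbf{1}_j$, decomposing $|w|^2=A+|z^*_j+z_j|^2$ with $A=\sum_{k\neq j}|z^*_k|^2$, noting that $|z^*_j|<c:=\sqrt{1-A}$ because $z^*$ lies strictly inside the ball, and concluding via the triangle inequality that $|z_j|\geq|z^*_j+z_j|-|z^*_j|\to c-|z^*_j|>0$. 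Combined with the lower bound on $l_j$, this gives the claimed divergence. So the approach is the same in spirit (the whole content is the coercivity condition \eqref{Lbeta-ball}), but you have repaired a gap in the published argument.
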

\begin{proof}
	In view of \eqref{Lbeta-ball} we have $l_j(z^*+z_j\mathbf{1}_j)\geq \frac{\beta}{1-|z^*+z_j\mathbf{1}_j|} \to+\infty$ 
	as $|z^*+z_j\mathbf{1}_j|\to 1-0.$ 
%	On the contrary, if there exist a number $C>0$ and a sequence $z_j^{(m)}$ such that $|z_j^{(m)}|l_j(z^*+z_j^{(m)}\mathbf{1}_j)=k_m\leq C,$ i. e. 
%	$|z_j^{(m)}| =\frac{k_m}{l_j(z^*+z_j^{(m)}\mathbf{1}_j)}.$ Then 
%	\begin{gather*}
%	\frac{1}{l_j(z^*+z_j^{(m)}\mathbf{1}_j)}l_j\left(z^*+z_j^{(m)}\mathbf{1}_j-\frac{k_me^{i\mathop{arg} z_j^{(m)}}\mathbf{1}_j}{l_j(z^*+z_j^{(m)}\mathbf{1}_j)}\right)=
%	\frac{|z_j^{(m)}|}{k_m}l_j(z^*)\to +\infty,
%	\end{gather*}
%	as	$z_j^{(m)}\to\infty,$
%	that is $\lambda_{2,j}(C\mathbf{1}_j)=+\infty$ and $\mathbf{L}\notin Q(\mathbb{B}^n).$
\end{proof}

\section{Estimates of growth of analytic in ball functions} 
Results in this section are similar to results obtained for entire functions in \cite{growth-joint-entire}.
Denote
$[0,2\pi]^n=\underbrace{[0,2\pi]\times \cdots \times[0,2\pi]}_{n-\text{th times}}.$
For $R=(r_1,\ldots,r_n)\in\mathbb{R}^n_{+},$ $\Theta=(\theta_1,\ldots,\theta_n)\in[0,2\pi]^n,$ $A=(a_1,\ldots,a_n)\in\mathbb{C}^n$ let us to write 
$Re^{i\Theta}=(r_1e^{i\theta_1},\ldots,r_ne^{i\theta_n}),$ 
$\mathop{arg} A=(\mathop{arg}a_1,\ldots,\mathop{arg}a_n).$

By $K(\mathbb{B}^n)$ we denote a class of positive continuous functions $\mathbf{L}(z)=(l_1(z),\ldots,l_n(z)),$ where $l_j:\mathbb{B}^n\to \mathbb{R}_+$  
satisfy \eqref{Lbeta-ball} 
and there exists $c\geq 1$  such that for
every $R\in\mathbb{R}^n_+$ with $|R|< 1$ and $j\in\{1,\ldots,n\}$  
$$
\max_{\Theta_1,\Theta_2\in[0,2\pi]^n} \frac{l_j(Re^{i\Theta_2})}{l_j(Re^{i\Theta_1})}\leq c.
$$
If $\mathbf{L}(z)=(l_1(|z_1|,\ldots,|z_n|),\ldots,l_n(|z_1|,\ldots,|z_n|))$ then $\mathbf{L}\in K(\mathbb{B}^n).$  
It is easy to prove that $\frac{|e^z| +1}{1-|z|}\in Q(\mathbb{D})\setminus K(\mathbb{D}),$ but $\frac{e^{e^{|z|}}}{1-|z|}\in K(\mathbb{D})\setminus Q(\mathbb{D}).$ 
%Examples of functions belonging to  $Q^1\setminus K^,$ but $e^{e^{|z|}}\in K^1\setminus Q^1,$ $z\in\mathbb{C}.$ 
Besides, if $\mathbf{L}_1, \mathbf{L}_2\in K(\mathbb{B}^n)$ then $\mathbf{L}_1+\mathbf{L}_2\in K(\mathbb{B}^n)$ and $\mathbf{L}_1\mathbf{L}_2\in K(\mathbb{B}^n).$
For simplicity, let us to write $M(F,R)= \max \{|F(z)|\colon z\in \mathbb{T}^n(\mathbf{0},R)\},$ where $|R|< 1.$ 
Denote $\boldsymbol{\beta}=(\frac{\beta}{c\sqrt{n}},\ldots,\frac{\beta}{c\sqrt{n}}).$
\begin{theorem} \label{thgrowthobig}
	Let $\mathbf{L}\in Q(\mathbb{B}^n)\cap K(\mathbb{B}^n),$ $\beta>c\sqrt{n}.$ If an analytic in $\mathbb{B}^n$ function $F$ has bounded $\mathbf{L}$-index in joint variables, then 
	\begin{equation}
	\label{growthobig}
	\ln M(F,R) \!=\! O\left(\min_{\sigma_n\in \mathcal{S}_n} \min_{\Theta\in[0,2\pi]^n} \sum_{j=1}^n \int_{0}^{r_j} l_j(R(j,\sigma_n,t)e^{i\Theta}) dt\right)
	\text{ as } \|R\|\to 1-0,
	\end{equation}
	where $\sigma_n$ is a permutation of $\{1,\ldots,n\},$ $\mathcal{S}_n$ is a set of all permutations of  $\{1,\ldots,n\},$ $R(j,\sigma_n,t)\!=(r'_1,\ldots,r'_n),$  $r'_k\!=\begin{cases}
	r^0_k, \text{ if } \sigma_n(k)<j,\\
	t, \text{ if } k=j,\\
	r_k, \text{ if } \sigma_n(k)>j,
	\end{cases}$
	$k\in \{1,\ldots,n\},$ 
	$R^0=(r_1^0,\ldots,r_n^0)$ is fixed radius. 	
\end{theorem}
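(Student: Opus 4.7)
The plan is to iterate the local maximum-modulus comparison of Theorem \ref{bordte14-ball} along a piecewise path that moves one coordinate at a time, and to count the number of applications needed by an integral of $l_j$.

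First, I would apply Theorem \ref{bordte14-ball} with $R''=\boldsymbol{\beta}$ (note $|\boldsymbol{\beta}|=\beta/c<\beta$ by the assumption $\beta>c\sqrt{n}$) and some $R'$ with $|R'|<|R''|$ to obtain a constant $p_1\geq 1$ such that
\[
M\!\left(\boldsymbol{\beta}/\mathbf{L}(z^0),z^0,F\right)\leq p_1\,M\!\left(R'/\mathbf{L}(z^0),z^0,F\right)
\]
for every $z^0\in\mathbb{B}^n$. In particular, evaluating the right-hand side at the centre $z^0$ (or any chosen point of the smaller polydisc), the local estimate transfers the value of $|F|$ on a big polydisc to a reference value at a neighbouring point with multiplicative cost $p_1$.

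Next, fix a small base radius $R^0\in\mathbb{R}_+^n$, a permutation $\sigma_n\in\mathcal{S}_n$, and angles $\Theta\in[0,2\pi]^n$. I would construct a polygonal path from $R^0 e^{i\Theta}$ to $Re^{i\Theta}$ that varies one coordinate at a time in the order prescribed by $\sigma_n$; along the $j$-th segment the point $R(j,\sigma_n,t)e^{i\Theta}$ varies only in its $j$-th coordinate while the remaining coordinates are frozen at values obtained in the previous stages. Partition this segment by $t_0<t_1<\cdots<t_{N_j}$ so that each $R(j,\sigma_n,t_{k+1})e^{i\Theta}$ lies in the closed polydisc $\mathbb{D}^n\!\left[R(j,\sigma_n,t_k)e^{i\Theta},\boldsymbol{\beta}/\mathbf{L}(R(j,\sigma_n,t_k)e^{i\Theta})\right]$; the conditions $\mathbf{L}\in Q(\mathbb{B}^n)$ and \eqref{lam2} make this achievable with
\[
N_j\leq C_1\int_{0}^{r_j}l_j\!\left(R(j,\sigma_n,t)e^{i\Theta}\right)dt+C_2
\]
for absolute constants $C_1,C_2>0$. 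Chaining the local inequality along the partition and through the $n$ stages yields
\[
\ln|F(Re^{i\Theta})|\leq (\ln p_1)\sum_{j=1}^{n}N_j+O(1)\leq C\sum_{j=1}^{n}\int_{0}^{r_j}l_j\!\left(R(j,\sigma_n,t)e^{i\Theta}\right)dt+O(1).
\]

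Finally, since this bound holds for every $\Theta$, the class $K(\mathbb{B}^n)$ lets me replace $l_j(\,\cdot\,e^{i\Theta})$ by $l_j(\,\cdot\,e^{i\Theta^{\ast}})$ for any other $\Theta^{\ast}$ at the cost of the constant factor $c$ appearing in the definition of $K(\mathbb{B}^n)$. Choosing $\Theta$ on the left so that $|F(Re^{i\Theta})|=M(F,R)$ turns the left-hand side into $\ln M(F,R)$, while on the right I am free to take the minimum over both $\sigma_n$ and $\Theta$, yielding \eqref{growthobig}. The main obstacle I expect is the bookkeeping step that controls $N_j$ by $\int l_j$: this requires using the lower bound $l_j(z)>\beta/(1-|z|)$ to guarantee that the step size $1/l_j$ shrinks fast enough near the boundary, together with the constants $\lambda_{2,j}(\boldsymbol{\beta})$ from \eqref{lam2} to compare $\mathbf{L}$ at neighbouring partition points; everything else is a matter of assembling standard chain-of-polydiscs estimates.
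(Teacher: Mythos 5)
Your strategy --- a chain-of-polydiscs propagation, counting the number of links by $\int l_j$ --- is a genuinely different route from the paper. The paper instead exploits the fact that $\ln^+\max\{|F(z)|:z\in\mathbb{T}^n(\mathbf{0},R)\}$ is convex in $(\ln r_1,\ldots,\ln r_n)$ (Ronkin's theorem), writes it as a sum of integrals of non-decreasing functions $A_j$, and then applies the two-radius inequality \eqref{bordriv112nec} once at the maximizing point on the enlarged skeleton to bound each $A_j$ pointwise by $\frac{2\ln p_1}{\beta/(c\sqrt{n})-1}\,r_j l_j(Re^{i\Theta})$. This dispenses entirely with chain counting and the bookkeeping you anticipate as "the main obstacle." Your route would in principle also work, and has the advantage of being conceptually closer to the one-variable chain argument, while the paper's route is shorter once the convexity representation is available.

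However, there is a real gap in your chaining step. You write that you would "evaluate the right-hand side at the centre $z^0$," i.e. pass from $M(R'/\mathbf{L}(z^0),z^0,F)$ to $|F(z^0)|$ so that the estimate "transfers the value of $|F|$ ... to a reference value at a neighbouring point." That inequality runs in the wrong direction: the maximum principle gives $M(R'/\mathbf{L}(z^0),z^0,F)\geq |F(z^0)|$, not $\leq$. Theorem \ref{bordte14-ball} compares a max on a large polydisc to a max on a small polydisc --- it never compares to a point value of $F$. To make the chain close, you must track the running quantity $M_k:=M(R'/\mathbf{L}(z^k),z^k,F)$ and ensure the nesting $\mathbb{D}^n[z^{k+1},R'/\mathbf{L}(z^{k+1})]\subset \mathbb{D}^n[z^k,\boldsymbol{\beta}/\mathbf{L}(z^k)]$, which gives $M_{k+1}\leq M(\boldsymbol{\beta}/\mathbf{L}(z^k),z^k,F)\leq p_1 M_k$. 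This nesting needs the $\Lambda_1,\Lambda_2$ constants from $Q(\mathbb{B}^n)$ to compare $\mathbf{L}(z^{k+1})$ with $\mathbf{L}(z^k)$ and a choice of $R'$ small enough, and it ultimately forces exactly the Riemann-sum control on $N_j$ you invoke. The conclusion then reads $\ln M_N\leq N_{\mathrm{tot}}\ln p_1+\ln M_0$ with $|F(z^N)|\leq M_N$ and $M_0$ a fixed finite quantity since $R^0$ is fixed and $F$ is analytic on a neighbourhood of $\mathbb{D}^n[R^0e^{i\Theta},R'/\mathbf{L}(R^0e^{i\Theta})]$. One minor citation point: for the local two-radius inequality you correctly invoke the necessity direction (Theorem \ref{bordte14-ball}); the paper cites Theorem \ref{bordte43}, which is the sufficiency statement --- that appears to be a slip in the paper, and your citation is the right one.
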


\begin{proof}
	Let $R>\mathbf{0},$ $|R|<1,$ $\Theta\in[0,2\pi]^n$ and the point $z^*\in \mathbb{T}^n(\mathbf{0},R+\frac{\boldsymbol{\beta}}{\mathbf{L}(Re^{i\Theta})})$ be a such that 
	$$|F(z^*)|=\max\left\{|F(z)|: z\in \mathbb{T}^n\left(\mathbf{0},R+\frac{\boldsymbol{\beta}}{\mathbf{L}(Re^{i\Theta})}\right)\right\}.$$
	Denote $z^0=\frac{z^*R}{R+{\boldsymbol{\beta}}/\mathbf{L}(Re^{i\Theta})}.$ Then 
	\begin{gather*}
	|z^0_j-z^*_j|=
	\left|\frac{z_j^*r_j}{r_j+\frac{\beta}{c\sqrt{n} l_j(Re^{i\Theta})}} -z^*_j\right| = 
	\left|\frac{z^*_j{\beta}/({c\sqrt{n} l_j(Re^{i\Theta})})}{r_j+\frac{\beta}{c\sqrt{n} l_j(Re^{i\Theta})}} \right| = 
   \frac{\beta}{c\sqrt{n} l_j(Re^{i\Theta})} \text{ and } \\ 
	\!\mathbf{L}(z^0)\!=\!\mathbf{L} \!\left(\!\frac{z^*R}{R+\boldsymbol{\beta}/\mathbf{L}(Re^{i\Theta})}\!\right)\! =\! \mathbf{L} 
	\left(\frac{(R+\boldsymbol{\beta}/\mathbf{L}(Re^{i\Theta}))e^{i\mathop{arg}z^*}R}{R+\boldsymbol{\beta}/\mathbf{L}(Re^{i\Theta})}\right)\! =\! 
	\mathbf{L} (Re^{i\mathop{arg} z^*}).
	\end{gather*}
	Since $\mathbf{L}\in K(\mathbb{B}^n)$ we have that $c\mathbf{L}(z^0)= c\mathbf{L}(Re^{i\mathop{arg}z^*})\geq \mathbf{L}(Re^{i\Theta}) \geq \frac{1}{c}\mathbf{L}(z^0).$
	We consider two 
	skeletons $\mathbb{T}^n(z^0,\frac{\mathbf{1}}{\mathbf{L}(z^0)})$ and $\mathbb{T}^n(z^0,\frac{\boldsymbol{\beta}}{\mathbf{L}(z^0)}).$ 
	%   Since $f$ is of bounded $\mathbf{L}$-index in joint variables then 
	By Theorem \ref{bordte43} there exists $p_1=p_1(\frac{\mathbf{1}}{c},c\boldsymbol{\beta})\geq 1$ such that 
	\eqref{bordriv112nec} holds with $R'=\frac{\mathbf{1}}{c},$ $R''=c\boldsymbol{\beta},$ 
	%   $   M\left(\frac{\mathbf{2}}{\mathbf{L}(z^0)},z^0,F\right)\leq pM\left(\frac{\mathbf{1}}{\mathbf{L}(z^0)},z^0,f\right), $
	i.e. 
	\begin{gather}
	\! \max\!\left\{|F(z)| \colon z\!\in\! \mathbb{T}^n\left(\mathbf{0},R+\frac{\boldsymbol{\beta}}{\mathbf{L}(Re^{i\Theta})}\!\right)\!\right\}=|F(z^*)|\leq \nonumber \\ \leq  
	\max\left\{|F(z)| \colon z\!\in\! \mathbb{T}^n\left(z^0,\frac{\boldsymbol{\beta}}{\mathbf{L}(Re^{i\Theta})}\right)\right\}  \leq \nonumber \\ 
	\leq       \max\left\{|F(z)| \colon z\in \mathbb{T}^n\left(z^0,\frac{c\boldsymbol{\beta}}{\mathbf{L}(z^0)}\right)\right\} \!\leq\! \nonumber \\ \leq 
	p_1   \max\left\{|F(z)| \colon z\in \mathbb{T}^n\left(z^0,\frac{\mathbf{1}}{c\mathbf{L}(z^0)}\right)\right\} \leq \nonumber \\ 
	\leq p_1\max\left\{|F(z)|: z\in \mathbb{T}^n\left(\mathbf{0},R+\frac{\mathbf{1}}{\mathbf{L}(Re^{i\Theta})}\right)\right\}
	%   M\left( \frac{\mathbf{2}}{\mathbf{L}(|z^0|)}, z^0, f\right)\leq \nonumber \\
	%   \leq pM\left(\frac{I}{\mathbf{L}(|z^0|)}, z^0, f\right) \leq p M \left(R+\frac{I}{\mathbf{L}(R)}, \square, f\right).
	\label{bordriv22}
	\end{gather}
	The function $\ln^+ \max \{|F(z)|\colon z\in \mathbb{T}^n(\mathbf{0},R)\}$ is a convex function of the variables  $\ln{r_1},$ $\ldots,$ $\ln{r_n}$ (see \cite{ronkin}, p.~138 in Russian edition or p.~84 in English translation). Hence, the function  admits a representation  
	\begin{gather}
	\ln^+ \max \{|F(z)|\colon z\in \mathbb{T}^n(\mathbf{0},R)\} -\nonumber \\ -
	\ln^+ \max \{|F(z)|\colon z\in \mathbb{T}^n(\mathbf{0},R+(r_j^0-r_j)\mathbf{1}_j)\}  = \nonumber\\ =
	%   \ln M(r_1,\ldots,r_n,\square,f)-\ln M(r_1,\ldots,r_{i-1},r_i^0,r_{i+1},\ldots,r_n,\square,f)= \\ =
	\int_{r_j^0}^{r_j} \frac{A_j(r_1,\ldots,r_{j-1},t,r_{j+1},\ldots,r_n)}{t}dt \label{presentconvex}
	\end{gather}
	for arbitrary $0<r_j^0\leq r_j,$ where the functions $A_j(r_1,\ldots,r_{j-1},t,r_{j+1},$ $\ldots,r_n)$ are positive non-decreasing in variable $t,$ 
	$j\in\{1, \ldots,n\}.$
	
	Using \eqref{bordriv22} we deduce 
	\begin{gather}
	\ln p_1 \ge \ln\max\left\{|F(z)| \colon z\in \mathbb{T}^n\left(\mathbf{0},R+\frac{\boldsymbol{\beta}}{\mathbf{L}(Re^{i\Theta})}\right)\right\} -\nonumber 
	\\ -\ln \max\left\{|F(z)|: z\in \mathbb{T}^n\left(\mathbf{0},R+\frac{\mathbf{1}}{\mathbf{L}(Re^{i\Theta})}\right)\right\}=\nonumber\\
	=\sum_{j=1}^n \ln\max\left\{|F(z)| \colon z\in \mathbb{T}^n\left(\mathbf{0},R+\frac{\mathbf{1}+\sum_{k=j}^n (\frac{\beta}{c\sqrt{n}}-1)\mathbf{1}_k}{\mathbf{L}(Re^{i\Theta})}\right)\right\} -\nonumber\\ - 
	\ln\max\left\{|F(z)| \colon z\in \mathbb{T}^n\left(\mathbf{0},R+\frac{\mathbf{1}+\sum_{k=j+1}^n (\frac{\beta}{c\sqrt{n}}-1) \mathbf{1}_k}{\mathbf{L}(Re^{i\Theta})}\right)\right\}=\nonumber 
	\\ 
	= \sum_{j=1}^n \int^{r_j+\beta/(c\sqrt{n}l_j(Re^{i\Theta}))}_{r_j+1/l_j(Re^{i\Theta})} \frac{1}{t} 
	A_j\left(r_1+\frac{1}{l_1(Re^{i\Theta})},\ldots, r_{j-1}+\frac{1}{l_{j-1}(Re^{i\Theta})},t, \right. \nonumber\\  \left.
	r_{j+1}+\frac{\beta}{c\sqrt{n} l_{j+1}(Re^{i\Theta})},\ldots, 
	r_n+\frac{\beta}{c\sqrt{n}l_n(Re^{i\Theta})}\right)dt  
	%   \geq \\ 
	\!\geq\! 
	\sum_{j=1}^n \ln\left(1+\frac{\frac{\beta}{c\sqrt{n}}-1}{r_jl_j(Re^{i\Theta})\!+\!1}\right)  \times \nonumber \\ \times 
	A_j\!\left(r_1\!+\!\frac{1}{l_1(Re^{i\Theta})},\ldots, r_{j-1}\!+\!\frac{1}{l_{j-1}(Re^{i\Theta})},r_j, 
	r_{j+1}+\frac{\beta}{c\sqrt{n}l_{j+1}(Re^{i\Theta})},\ldots, r_n+\frac{\beta}{c\sqrt{n}l_n(Re^{i\Theta})}\right) \label{estimaconvex}
	\end{gather}
	By Lemma \ref{qngrowth} the function  $r_jl_j(Re^{i\Theta})\to +\infty$ as $|R|\to 1-0.$ Hence, for $j\in\{1,\ldots, n\}$ and $r_i\geq r_i^0$  
	$$\ln\left(1+\frac{\frac{\beta}{c\sqrt{n}}-1}{r_jl_j(Re^{i\Theta})+1}\right) \sim 
\frac{\frac{\beta}{c\sqrt{n}}-1}{r_jl_j(Re^{i\Theta})+1} \geq 
\frac{\frac{\beta}{c\sqrt{n}}-1}{2r_jl_j(Re^{i\Theta})}.$$ 
	Thus, for every $j\in\{1,\ldots, n\}$ inequality \eqref{estimaconvex}  implies that 
	\begin{gather*}
	A_j\left(r_1+\frac{1}{l_1(Re^{i\Theta})},\ldots,r_{j-1}+\frac{1}{l_{j-1}(Re^{i\Theta})},r_j,r_{j+1}+\frac{\beta}{c\sqrt{n}l_{i+1}(Re^{i\Theta})},
	\ldots, \right. \\ \left. r_n+\frac{\beta}{c\sqrt{n}l_n(Re^{i\Theta})} \right) \leq \frac{2\ln p_1}{\frac{\beta}{c\sqrt{n}}-1}\ r_j l_j(Re^{i\Theta}).
	\end{gather*}
	%   for any $r_1,$ $\ldots,$ $r_{i-1},$ $r_{i+1},$ $\ldots,$ $r_n,$ 
	Let $R^0=(r_1^0,\ldots,r_n^0),$ where every $r_j^0$ is above chosen.
	Applying \eqref{presentconvex} $n$-th times consequently  we obtain 
	\begin{gather*}
	\ln \max \{|F(z)|\colon z\in \mathbb{T}^n(\mathbf{0},R)\} = \\ =  \ln \max \{|F(z)|\colon z\in \mathbb{T}^n(\mathbf{0},R+(r_1^0-r_1)\mathbf{1}_1)\}  + 
	\int_{r_1^0}^{r_1} \frac{A_1(t,r_2,\ldots,r_n)}{t}dt= \\ =
	\ln \max \{|F(z)|\colon z\in \mathbb{T}^n(\mathbf{0},R+(r_1^0-r_1)\mathbf{1}_1+(r_2^0-r_2)\mathbf{1}_2)\}  + \\ +
	\int_{r_1^0}^{r_1} \frac{A_1(t,r_2,\ldots,r_n)}{t}dt+  \int_{r_2^0}^{r_2} \frac{A_2(r_1^0,t,r_3\ldots,r_n)}{t}dt= \\ 
	=  \ln \max \{|F(z)|\colon z\in \mathbb{T}^n(\mathbf{0},R^0)\} 
	+\sum_{j=1}^n \int_{r_j^0}^{r_j} \frac{A_j(r_1^0,\ldots, r_{j-1}^0,t,r_{j+1},\ldots,r_n)}{t} dt \leq \\       
	\leq \ln \max \{|F(z)|\colon z\in \mathbb{T}^n(\mathbf{0},R^0)\} + \\        
	+   \frac{2\ln p_1}{\frac{\beta}{c\sqrt{n}}-1}\sum_{j=1}^n \int_{r_j^0}^{r_j} l_j(r_1^0e^{i\theta_1},\ldots, r_{j-1}^0e^{i\theta_{j-1}},te^{i\theta_j},r_{j+1}e^{i\theta_{j+1}},\ldots,r_n  e^{i\theta_n}) dt \leq 
	\\
	\leq \ln \max \{|F(z)|\colon z\in \mathbb{T}^n(\mathbf{0},R^0)\} + \\        
	+   \frac{2\ln p_1}{\frac{\beta}{c\sqrt{n}}-1} \sum_{j=1}^n \int_{0}^{r_j} l_j(r_1^0e^{i\theta_1},\ldots, r_{j-1}^0e^{i\theta_{j-1}},te^{i\theta_j},r_{j+1}e^{i\theta_{j+1}},\ldots,r_n  e^{i\theta_n}) dt \leq 
	\\ \leq 
	(1+o(1)) \frac{2\ln p_1}{\frac{\beta}{c\sqrt{n}}-1} \sum_{j=1}^n \int_{0}^{r_j} l_j(r_1^0e^{i\theta_1},\ldots, r_{j-1}^0e^{i\theta_{j-1}},te^{i\theta_j},r_{j+1}e^{i\theta_{j+1}},\ldots,r_n  e^{i\theta_n}) dt.
	\end{gather*}
	The function $\ln \max \{|F(z)|\colon z\in \mathbb{T}^n(\mathbf{0},R)\}$ is independent of $\Theta.$  Thus,  the following estimate holds
	\begin{gather*}  
	\ln \max \{|F(z)|\colon z\in \mathbb{T}^n(\mathbf{0},R)\} = \\ \!=\!O\left(\!\min_{\Theta\in[0,2\pi]^n}\!\sum_{j=1}^n \!\int_{0}^{r_j} \! l_j(r_1^0e^{i\theta_1},\ldots, r_{j-1}^0e^{i\theta_{j-1}},te^{i\theta_j},r_{j+1}e^{i\theta_{j+1}},\ldots,r_n  e^{i\theta_n}) dt\!\right),\!
	\end{gather*}
	as  $|R| \to 1-0.$
	It is obviously that similar equality can be proved for arbitrary permutation $\sigma_n$ of the set $\{1,2,\ldots,n\}.$ 
	Thus, estimate \eqref{growthobig} holds.
	Theorem \ref{thgrowthobig} is proved.
\end{proof}

\begin{corollary}
	If 	$\mathbf{L}\in Q(\mathbb{B}^n)\cap K(\mathbb{B}^n),$ $\min\limits_{\Theta\in[0,2\pi]^n}l_j(Re^{i\Theta})$ is non-decreasing in each variable $r_k,$ $k\in\{1,\ldots,n\},$ 
	analytic in $\mathbb{B}^n$ function $F$ has bounded $\mathbf{L}$-index in joint variables then 
	\begin{equation*}
	%  \label{growthobig}
	\ln \max \{|F(z)|\colon z\in \mathbb{T}^n(\mathbf{0},R)\} = O\left(\min_{\Theta\in[0,2\pi]^n} \sum_{j=1}^n \int_{0}^{r_j} l_j(R^{(j)}e^{i\Theta}) dt\right)
	\end{equation*}
	as $|R|\to 1-0,$ 
	where $R^{(j)}=(r_1,\ldots,r_{j-1},t,r_{j+1},\ldots,r_n).$
\end{corollary}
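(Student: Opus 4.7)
I would derive this corollary directly from Theorem~\ref{thgrowthobig} by bounding the integrand that appears in the theorem's $O$-estimate above by the simpler integrand appearing in the corollary, up to a multiplicative constant. Fix any permutation $\sigma_n\in\mathcal{S}_n$ (e.g.\ the identity); then the conclusion of Theorem~\ref{thgrowthobig} specializes to
$$
\ln M(F,R) = O\!\left(\min_{\Theta\in[0,2\pi]^n}\sum_{j=1}^{n}\int_{0}^{r_j} l_j(R(j,\sigma_n,t)e^{i\Theta})\,dt\right)
$$
as $|R|\to 1-0$. The crucial observation is that $R(j,\sigma_n,t)\le R^{(j)}$ componentwise for every $j$ and every $t\in(0,r_j]$: their $j$-th coordinates both equal $t$, while for $k\ne j$ the $k$-th coordinate of $R(j,\sigma_n,t)$ is either $r_k$ or the fixed $r_k^0\le r_k$, whereas the $k$-th coordinate of $R^{(j)}$ is $r_k$.

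Next, I would combine this componentwise comparison with the two structural hypotheses on $\mathbf{L}$. The $K(\mathbb{B}^n)$-condition yields $l_j(Ae^{i\Theta})\le c\min_{\Theta'} l_j(Ae^{i\Theta'})$ for every admissible radius vector $A$, while the assumption of the corollary states that $A\mapsto \min_{\Theta'} l_j(Ae^{i\Theta'})$ is nondecreasing in each coordinate of $A$. Chaining these two facts with the trivial bound $\min_{\Theta'}\le$ evaluation at $\Theta$ produces the pointwise estimate
$$
l_j(R(j,\sigma_n,t)e^{i\Theta}) \le c\min_{\Theta'} l_j(R(j,\sigma_n,t)e^{i\Theta'}) \le c\min_{\Theta'} l_j(R^{(j)}e^{i\Theta'}) \le c\, l_j(R^{(j)}e^{i\Theta}),
$$
uniformly in $\Theta\in[0,2\pi]^n$, $j\in\{1,\ldots,n\}$, and $t\in(0,r_j]$.

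Substituting this pointwise bound into the theorem's $O$-estimate, integrating termwise, and absorbing the factor $c$ into the implicit constant of the big-$O$ yields exactly the asymptotic claimed in the corollary. There is essentially no obstacle beyond bookkeeping: the only subtlety is that the three inequalities above must be applied in the listed order so that the comparison constant remains the single $K(\mathbb{B}^n)$-constant $c$, independent of $R$, $\Theta$, $j$, and $t$; this is precisely what allows the pointwise bound to be pulled through the minimum over $\Theta$ in the $O$-estimate.
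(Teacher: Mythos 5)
Your proposal is correct and fills in exactly the argument the paper leaves implicit: it derives the corollary from Theorem~\ref{thgrowthobig} by fixing one permutation and comparing integrands termwise, using (i) the componentwise inequality $R(j,\sigma_n,t)\le R^{(j)}$, (ii) the $K(\mathbb{B}^n)$ constant $c$ to pass from $l_j$ to $\min_{\Theta'} l_j$, and (iii) the coordinate-monotonicity hypothesis on $\min_{\Theta'} l_j$, so that the whole chain yields a pointwise bound with a single constant absorbed into the $O$. This is essentially the route the paper intends; the only unstated point (present already in Theorem~\ref{thgrowthobig} itself) is that the regime $|R|\to 1-0$ is tacitly restricted so that $r_k\ge r_k^0$ for all $k$, which is what makes the componentwise comparison in step (i) valid.
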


Note that Theorem \ref{thgrowthobig} is new even for $n=1$ (see Theorem 3.3 in \cite{sher}) because we replace the condition $l=l(|z|)$ by the condition $l\in K (\mathbb{D}),$ i.e. 
there exists $c>0$  such that for
every $r\in(0,1)$ 
$\max\limits_{\theta_1,\theta_2\in[0,2\pi]} \frac{l(re^{i\theta_2})}{l(re^{i\theta_1})}\leq c.$ 
Particularly, the following proposition is valid. 
\begin{corollary} \label{growthcor2joint}
	If 	$l\in Q\cap K$ and an analytic in $\mathbb{D}$ function $f$ has bounded $l$-index then 
	\begin{equation*}
	%  \label{growthobig}
	\ln \max \{|f(z)|\colon |z|=r\} = O\left(\min_{\theta\in[0,2\pi]}  \int_{0}^{r} l(te^{i\Theta}) dt\right)
	\ \text{ as } r\to 1-0.
	\end{equation*}
\end{corollary}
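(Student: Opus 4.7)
The plan is to derive this corollary as the specialization of Theorem~\ref{thgrowthobig} to dimension one. Since $\mathbb{D}=\mathbb{B}^1$, analyticity in $\mathbb{D}$ is the same as analyticity in $\mathbb{B}^1$, and bounded $l$-index in the classical sense is exactly bounded $\mathbf{L}$-index in joint variables for the single-component weight $\mathbf{L}=(l)$. For $n=1$ the set $\mathcal{S}_1$ of permutations has one element and the auxiliary radii $R(j,\sigma_n,t)$ collapse to $t$, so the sum on the right-hand side of \eqref{growthobig} reduces to the single integral $\int_0^r l(te^{i\theta})\,dt$, which is precisely the bound asserted here.

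The only delicate point is the technical hypothesis $\beta>c\sqrt{n}$ of Theorem~\ref{thgrowthobig}, which for $n=1$ reads $\beta>c$. If this already holds for the given $l$, Theorem~\ref{thgrowthobig} applies verbatim and the conclusion is immediate. Otherwise, I would rescale the weight: pick $\lambda>0$ with $\lambda\beta>c$ and set $\tilde l:=\lambda l$. Multiplying by a positive constant leaves the ratios $l(re^{i\theta_2})/l(re^{i\theta_1})$ unchanged, so $\tilde l\in K(\mathbb{D})$ with the same constant $c$, while the constant in \eqref{Lbeta-ball} is boosted to $\lambda\beta>c$, so $\tilde l\in Q(\mathbb{D})\cap K(\mathbb{D})$ with parameters satisfying the required inequality. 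Since $\tilde l\asymp l$, Theorem~\ref{petr2} ensures that $f$ also has bounded $\tilde l$-index.

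Applying Theorem~\ref{thgrowthobig} to $f$ with the weight $\tilde l$ yields
$$
\ln\max\{|f(z)|\colon |z|=r\} = O\!\left(\min_{\theta\in[0,2\pi]}\int_0^r \tilde l(te^{i\theta})\,dt\right) = O\!\left(\min_{\theta\in[0,2\pi]}\int_0^r l(te^{i\theta})\,dt\right)
$$
as $r\to 1-0$, the factor $\lambda$ being absorbed into the implicit constant. There is no genuine obstacle here — the corollary is just the scalar case of Theorem~\ref{thgrowthobig}. The author's own remark that Theorem~\ref{thgrowthobig} is new even for $n=1$ because it relaxes the condition $l=l(|z|)$ to $l\in K(\mathbb{D})$ confirms that the intended route is precisely this specialization.
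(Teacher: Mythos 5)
Your proof is correct and takes essentially the same route as the paper: the authors give no separate argument, simply asserting the corollary as the $n=1$ specialization of Theorem \ref{thgrowthobig}. Your rescaling step to secure $\beta>c$ is a careful addition; note that since $\lambda\geq 1$ gives $\tilde l=\lambda l\geq l$, Lemma \ref{indexgreatl} yields bounded $\tilde l$-index directly and avoids the extra hypothesis $\beta|\Theta_1|>\sqrt{n}$ that Theorem \ref{petr2} would impose.
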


W. K. Hayman, A. D. Kuzyk, M M. Sheremeta, V. O.  Kushnir and T. O. Banakh \cite{Hayman,vidlindex,bankush} improved an estimate \eqref{growthobig} 
by other conditions on the function $l$ for a case $n=1.$ M. T. Bordulyak and M. M. Sheremeta \cite{bagzmin} deduced similar results for entire functions of bounded $\mathbf{L}$-index in joint variables, if $l_j=l_j(|z_j|),$ $j\in\{1,\ldots,n\}.$
Using their method   we will generalize the estimate for $l_j:\mathbb{B}^n\to \mathbb{R}_+.$   

Let us to denote $a^+=\max\{a,0\},$  $u_j(t)=u_j(t,R,\Theta)=l_j(\frac{tR}{r^*}e^{i\Theta}),$   where $a\in\mathbb{R},$ $t\in\mathbb{R}_+,$ $j\in\{1,\ldots,n\},$ 
$r^*=\max_{1\leq j\leq n}r_j \neq 0$ and $\frac{t}{r^*}|R|<1.$
\begin{theorem} 
	\label{bordte22}
	Let $\mathbf{L}(Re^{i\Theta})$  be a positive continuously differentiable function in each variable $r_k,$ 
	 $k\in\{1,\ldots,n\},$ $|R|<1,$ $\Theta\in[0,2\pi]^n.$  If the function $\mathbf{L}$ satisfies \eqref{Lbeta-ball} and an analytic in $\mathbb{B}^n$ function $F$ has bounded $\mathbf{L}$-index $N=N(F,\mathbf{L})$ in joint variables then for every 	$\Theta\in[0,2\pi]^n$ and for every $R\in\mathbb{R}^n_+,$ $|R|<1,$   and $S\in\mathbb{Z}^n_+$
	\begin{gather}
	\ln\max\left\{\frac{|F^{(S)}(Re^{i\Theta})|}{S!\mathbf{L}^S(Re^{i\Theta})}:\ \|S\|\leq N \right\}\leq 
	\ln\max\left\{\frac{|F^{(S)}(\mathbf{0})|}{S!\mathbf{L}^S(\mathbf{0})}:\ \|S\|\leq N \right\}+ \nonumber \\ \!+\!
	\int_0^{r^*}\!\left(\!\max_{\|S\|\leq N}\!\left\{\!
	\sum_{j=1}^n \frac{r_j}{r^*}(k_j+1)l_j\!\left(\frac{\tau}{r^*} Re^{i\Theta}\!\right) 
	\!\right\} \!+\! \max_{\|S\|\leq N}\left\{ \sum_{j=1}^n \frac{k_j(-u_j'(\tau))^+}{l_j\left(\frac{\tau}{r^*} Re^{i\Theta}\right)}  \right\}\right)d\tau.\!
	\label{conclusiongrowth}
	\end{gather}   
	
	If, in addition,  there exists $C>0$ such that the function $\mathbf{L}$ satisfies inequalities
	\begin{gather} \label{suplinf}
	\sup\limits_{|R|<1} \max\limits_{t\in[0,r^*]}\max\limits_{\Theta\in[0,2\pi]^n} \max\limits_{1\leq j\leq n}\frac{(-(u_j(t,R,\Theta))'_t)^+}{\frac{r_j}{r^*} l_j^2(\frac{t}{r^*}Re^{i\Theta})} \leq C,
	\end{gather}
	then 
	\begin{equation}
	\label{nonzerogrowth}
	\varlimsup_{|R|\to 1-0}   \frac{\ln \max\{|F(z)\colon \ z\in 
\mathbb{T}^n(\mathbf{0},R)\}}{\max\limits_{\Theta\in[0,2\pi]^n} \int_0^{r^*}  \sum_{j=1}^n 
\frac{r_j}{r^*}l_j\left(\frac{\tau}{r^*}R e^{i\Theta}\right) d\tau}\leq (C+1) N+1.
	\end{equation}
	
	And if $r^*(-(u_j(t,R,\Theta))'_t)^+/(r_j l_j^2(\frac{t}{r^*}Re^{i\Theta}))\to 0$ 	uniformly for all $\Theta\in[0,2\pi]^n,$ $j\in\{1,\ldots,n\},$ $t\in[0,r^*]$ 
	 as $|R|\to 1-0$ 
	then 
	\begin{equation}
	\label{zerogrowth}
	\varlimsup_{|R|\to 1-0}   \frac{\ln \max\{|F(z)\colon \ z\in 
\mathbb{T}^n(\mathbf{0},R)\}}{\max\limits_{\Theta\in[0,2\pi]^n} \int_0^{r^*}  \sum_{j=1}^n 
\frac{r_j}{r^*}l_j\left(\frac{\tau}{r^*}R e^{i\Theta}\right) d\tau}\leq N+1.
	\end{equation}
\end{theorem}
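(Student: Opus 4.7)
\noindent\textbf{Proof plan for Theorem \ref{bordte22}.}

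The natural approach is to track, along the radial path from $\mathbf{0}$ to $Re^{i\Theta}$, the logarithm of the maximal normalized derivative, and then exploit the bounded index property to close up the differential inequality. Concretely, set $z(\tau)=\frac{\tau}{r^*}Re^{i\Theta}$ for $\tau\in[0,r^*]$ and
$$
\Phi(\tau)\;=\;\max\Bigl\{\tfrac{|F^{(S)}(z(\tau))|}{S!\,\mathbf{L}^S(z(\tau))}\colon \|S\|\le N\Bigr\}.
$$
Since $|z_j'(\tau)|=r_j/r^*$ and $\Phi$ is a finite maximum of smooth positive functions, $\ln\Phi$ is absolutely continuous, and at (almost) every $\tau$ its derivative is majorized by the derivative of the particular branch $\ln\bigl(|F^{(S)}(z(\tau))|/(S!\,\mathbf{L}^S(z(\tau)))\bigr)$ that realizes the maximum at $\tau$.

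First I would differentiate this branch: using $\frac{d}{d\tau}|g(\tau)|\le|g'(\tau)|$ together with
$$
\frac{\partial F^{(S)}}{\partial z_j}=F^{(S+\mathbf{1}_j)},\qquad \frac{d}{d\tau}l_j(z(\tau))=u_j'(\tau),
$$
one obtains
$$
\frac{d}{d\tau}\ln\Phi(\tau)\;\le\;\sum_{j=1}^n\frac{r_j}{r^*}\frac{|F^{(S+\mathbf{1}_j)}(z(\tau))|}{|F^{(S)}(z(\tau))|}\;+\;\sum_{j=1}^n k_j\frac{(-u_j'(\tau))^+}{l_j(z(\tau))},
$$
where the negative contributions from $-k_ju_j'/l_j$ have been dropped by taking positive parts. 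The bounded $\mathbf{L}$-index hypothesis applied to the $(S+\mathbf{1}_j)$-th derivative gives $|F^{(S+\mathbf{1}_j)}|\le(s_j+1)\,l_j(z(\tau))\,S!\,\mathbf{L}^S(z(\tau))\,\Phi(\tau)=(s_j+1)\,l_j(z(\tau))\,|F^{(S)}(z(\tau))|$ (at the maximizing $S$), and thus the first sum is bounded by $\sum_j\frac{r_j}{r^*}(k_j+1)l_j(z(\tau))$. Taking the maximum over admissible $S$ on both terms and integrating from $0$ to $r^*$ produces exactly \eqref{conclusiongrowth}.

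For \eqref{nonzerogrowth}, substitute hypothesis \eqref{suplinf} into the second max: $k_j(-u_j')^+/l_j\le C\,k_j\,\frac{r_j}{r^*}l_j$. The integrand is then at most $\sum_j\bigl((C+1)k_j+1\bigr)\frac{r_j}{r^*}l_j$, which, using $\sum_jk_j\le N$ and $\max_j\le\sum_j$, is no larger than $\bigl((C+1)N+1\bigr)\sum_j\frac{r_j}{r^*}l_j(z(\tau))$. Since trivially $|F(Re^{i\Theta})|\le\Phi(r^*)$, we get for every $\Theta$
$$
\ln|F(Re^{i\Theta})|\;\le\;\ln\Phi(0)\;+\;\bigl((C+1)N+1\bigr)\int_0^{r^*}\sum_{j=1}^n\frac{r_j}{r^*}l_j\!\Bigl(\tfrac{\tau}{r^*}Re^{i\Theta}\Bigr)d\tau.
$$
Taking the maximum over $\Theta$ on the left, majorizing the integral on the right by its maximum over $\Theta$, and dividing by the latter gives \eqref{nonzerogrowth} after noting that the denominator tends to $+\infty$ as $|R|\to 1-0$ (by Lemma \ref{qngrowth} the sum $\sum_j r_jl_j$ blows up along the sphere, and since $r_j/r^*$ are bounded below by $0$ and not all zero, the integral is unbounded). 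Under the stronger $o(1)$ hypothesis, the factor $C$ becomes $o(1)$ uniformly in $\Theta$ and $\tau$, so the same computation yields the coefficient $N+1+o(1)$, producing \eqref{zerogrowth}.

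The only delicate step is the very first one: justifying $\frac{d}{d\tau}\ln\Phi\le\max\frac{d}{d\tau}(\cdots)$ at the branch that realizes the max, and handling the absolute value of $F^{(S)}$ (which fails to be differentiable where it vanishes). This is the same issue that arose in the proof of Theorem \ref{bordte15}, and is resolved identically: $\Phi$ is a maximum of finitely many absolutely continuous positive functions, hence itself absolutely continuous, and its derivative equals the derivative of an active branch almost everywhere, while $\frac{d}{d\tau}|g(\tau)|\le|g'(\tau)|$ holds outside an at most countable set of zeros of $g$. Everything else is a bookkeeping of the bounded index inequality and of the hypotheses \eqref{suplinf} on $u_j'$.
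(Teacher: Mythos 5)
Your proposal follows essentially the same route as the paper's proof: parametrize a radial ray by $z(\tau)=\frac{\tau}{r^*}Re^{i\Theta}$, introduce $\Phi(\tau)=g(\tau)$, differentiate the active branch of the finite maximum using $\frac{d}{d\tau}|g|\le|g'|$, bound the resulting one–step–higher derivative with the $\mathbf{L}$-index inequality, and integrate the differential inequality for $\ln\Phi$; then relate $\beta+\gamma$ to $\widetilde\beta=\sum_j\alpha_jl_j$ via \eqref{suplinf} to extract the factor $(C+1)N+1$, and note the denominator blows up by \eqref{Lbeta-ball}. This is exactly the paper's argument (including the absolute-continuity/zero-set justification), so the proposal is correct and not a different approach. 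The only small imprecision is the phrase ``the integrand is then at most $\sum_j\bigl((C+1)k_j+1\bigr)\frac{r_j}{r^*}l_j$'': the two maxima in \eqref{conclusiongrowth} may be attained at different multi-indices $S$, so one should estimate each maximum separately by $(N+1)\widetilde\beta$ and $NC\,\widetilde\beta$ respectively (as the paper does via $\beta/\widetilde\beta\le N+1$ and $\gamma/\widetilde\beta\le NC$); the final bound $(C+1)N+1$ is nevertheless correct.
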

\begin{proof}
	Let $R\in \mathbb{R}\setminus \{\square\},$ $\Theta\in[0,2\pi]^n.$ Denote $\alpha_j=\frac{r_j}{r^*},$ $j\in\{1,\ldots,n\}$ and $A=(\alpha_1,\ldots,\alpha_n).$  
	We consider a function 
	\begin{equation} \label{defingt}
	g(t)=\max\left\{\frac{|F^{(S)}(Ate^{i\Theta})|}{S!\mathbf{L}^S(Ate^{i\Theta})}:\ \|S\|\leq N \right\},
	\end{equation}
	where $At=(\alpha_1t,\ldots,\alpha_nt),$ $Ate^{i\Theta}= (\alpha_1te^{i\theta_1},\ldots,\alpha_nte^{i\theta_n}).$  
	
	Since the function $\frac{|F^{(S)}(Ate^{i\Theta})|}{K!\mathbf{L}^{K}(Ate^{i\Theta})}$ is  continuously differentiable by real $t\in[0,+\infty),$
	outside the zero set of function $|F^{(S)}(Ate^{i\Theta})|,$
	the function $g(t)$ is a continuously differentiable function on $[0,\frac{r^*}{|R|}),$ except, perhaps, for a countable set of points.
	
	Therefore, using    the inequality
	$\frac{d}{dr} |g(r)|\leq |g'(r)|$  which holds except for the points $r=t$ such that $g(t)=0,$  we deduce 
	\begin{gather}
	\frac{d}{dt} \left(\frac{|F^{(S)}(Ate^{i\Theta})|}{S!\mathbf{L}^S(Ate^{i\Theta})}\right)= \frac{1}{S!\mathbf{L}^S(Ate^{i\Theta})} 
	\frac{d}{dt}|F^{(S)}(Ate^{i\Theta})|+\nonumber \\ +|F^{(S)}(Ate^{i\Theta})| \frac{d}{dt}\frac{1}{S!\mathbf{L}^S(Ate^{i\Theta})} %\leq \nonumber \\
	\leq  \frac{1}{S!\mathbf{L}^S(Ate^{i\Theta})} \left|\sum_{j=1}^n F^{(S+\mathbf{1}_j)}(Ate^{i\Theta}) \alpha_j e^{i\theta_j}\right|- \nonumber \\ \!-\!
	\frac{|F^{(S)}(Ate^{i\Theta})|}{S!\mathbf{L}^S(Ate^{i\Theta})} 
	\sum_{j=1}^n  \frac{k_j u'_j(t)}{l_j(Ate^{i\Theta})} %\leq \nonumber\\
	\leq \sum_{j=1}^n  \frac{|F^{(S+\mathbf{1}_j)}(Ate^{i\Theta})|}{(S\!+\!\mathbf{1}_{j})!\mathbf{L}^{S\!+\!\mathbf{1}_j}(Ate^{i\Theta})} 
	\alpha_j(k_j\!+\!1)l_j(Ate^{i\Theta})\!+\! \nonumber \\ +
	\frac{|F^{(S)}(Ate^{i\Theta})|}{S!\mathbf{L}^S(Ate^{i\Theta})} \sum_{j=1}^n  \frac{k_j (-u'_j(t))^+}{l_j(Ate^{i\Theta})} \label{derivindex}
	\end{gather}
	For absolutely continuous functions
	$h_1,$ $h_2, $ $\ldots,$ $h_k$ and $h(x):=\max\{h_j(z): 1\leq j\leq k\},$
	\  $h'(x)\leq \max\{h'_j(x): 1\leq j\leq k \},$ $x\in[a,b]$  (see \cite[Lemma~4.1, p.~81]{sher}). The function $g$ is absolutely continuous, therefore, from \eqref{derivindex} it follows that
	\begin{gather*}
	g'(t) \leq \max \left\{\frac{d}{dt} \left(\frac{|F^{(S)}(Ate^{i\Theta})|}{S!\mathbf{L}^S(Ate^{i\Theta})}\right)\colon 
	\|S\|\leq N \right\} \leq \\ \leq 
	\max_{ \|S\|\leq N} \left\{ 
	\sum_{j=1}^n  \frac{\alpha_j(s_j+1)l_j(Ate^{i\Theta})|F^{(S+\mathbf{1}_j)}(Ate^{i\Theta})|}{(K+\mathbf{1}_{j})!\mathbf{L}^{K+\mathbf{1}_j}(Ate^{i\Theta})} 
	+ \right. \\ \left.+
	\frac{|F^{(S)}(Ate^{i\Theta})|}{S!\mathbf{L}^S(Ate^{i\Theta})} \sum_{j=1}^n  \frac{s_j (-u'_j(t))^+}{l_j(Ate^{i\Theta})}    \right\}\leq \\
	\leq g(t) \left( \max_{ \|S\|\leq N}\left\{ 
	\sum_{j=1}^n \alpha_j(s_j+1)l_j(Ate^{i\Theta})\right\}+ %\right.\\ \left.+
	\max_{ \|S\|\leq N}\left\{ \sum_{j=1}^n \frac{s_j(-u_j'(t))^+}{l_j(Ate^{i\Theta})}\right\}\right)=\\ =
	g(t)(\beta(t)+\gamma(t)),
	\end{gather*}
	where 
	$$ \beta(t)=\max_{\|S\|\leq N}\left\{
	\sum_{j=1}^n \alpha_j(s_j+1)l_j(Ate^{i\Theta}) \right\}, 
	\gamma(t)=\max_{\|S\|\leq N}\left\{ \sum_{j=1}^n \frac{s_j(-u_j'(t))^+}{l_j(Ate^{i\Theta})} \right\}.$$
	Thus, 
	$\frac{d}{dt} \ln g(t)\leq \beta(t)+\gamma(t)$
	and 
	\begin{equation} \label{deqr1}
	g(t)\leq g(0)\exp \int_0^t (\beta(\tau)+\gamma(\tau))d\tau, 
	\end{equation} because $g(0)\neq 0.$
	But $r^*A=R.$ Substituting $t=r^*$ in \eqref{deqr1} and taking  into account \eqref{defingt},
	we deduce 
	\begin{gather*}
	\ln\max\left\{\frac{|F^{(S)}(Re^{i\Theta})|}{S!\mathbf{L}^S(Re^{i\Theta})}:\ \|S\|\leq N \right\}\leq 
	\ln\max\left\{\frac{|F^{(S)}(\mathbf{0})|}{S!\mathbf{L}^S(\mathbf{0})}:\ \|S\|\leq N \right\}+ \\ \!+\!
	\int_0^{r^*} \left(\max_{\|S\|\leq N}\left\{
	\sum_{j=1}^n \alpha_j(s_j+1)l_j(A\tau e^{i\Theta}) 
	\right\} \!+\! \max_{\|S\|\leq N}\left\{ \sum_{j=1}^n \frac{s_j(-u_j'(\tau))^+}{l_j(A\tau e^{i\Theta})} \right\}\right)d\tau,\!
	\end{gather*}   
	i.e.  \eqref{conclusiongrowth} is proved. 
	It is easy to see that if $\mathbf{L}$ satisfies \eqref{Lbeta-ball} then
	\begin{equation}
	 \label{intliminf}
\max\limits_{\Theta\in[0,2\pi]^n} \int_0^{r^*}  \sum_{j=1}^n \frac{r_j}{r^*}l_j\left(\frac{\tau}{r^*}R e^{i\Theta}\right) d\tau\to +\infty 
\text{ as } |R|\to 1-0.
	\end{equation}

	Denote $\widetilde{\beta}(t)= \sum_{j=1}^n \alpha_jl_j(Ate^{i\Theta}).$   
	If, in addition, \eqref{suplinf}  holds  
	then  for some $S^*,$ $\|S^*\|\leq N$ and $\widetilde{S},$ $\|\widetilde{S}\|\leq N,$ 
	\begin{gather*}
	\frac{\gamma(t)}{\widetilde{\beta}(t)}= \frac{\sum_{j=1}^n \frac{s^*_j(-u_j'(t))^+}{l_j(Ate^{i\Theta})}}{\sum_{j=1}^n \alpha_jl_j(Ate^{i\Theta})}\leq 
	\sum_{j=1}^n s^*_j\frac{(-u_j'(t))^+}{\alpha_jl^2_j(Ate^{i\Theta})}\leq 
	\sum_{j=1}^n  s^*_j \cdot C \leq NC \text{ and } \\
	\frac{\beta(t)}{\widetilde{\beta}(t)}=\frac{ \sum_{j=1}^n \alpha_j(\tilde{s}_j+1)l_j(Ate^{i\Theta})}{\sum_{j=1}^n \alpha_jl_j(Ate^{i\Theta})} =
	1+\frac{ \sum_{j=1}^n \alpha_j\tilde{s}_jl_j(Ate^{i\Theta})}{\sum_{j=1}^n \alpha_jl_j(Ate^{i\Theta})} \leq \\ 
	\leq 1+
	\sum_{j=1}^n \tilde{s}_j\leq 1+ N.
	\end{gather*}
	
	%   We show that $\frac{\gamma(t)}{\beta(t)}\to 0$ $(t\to+\infty).$ Indeed, with condition $\|S\|\leq N$ and 
	%   $\frac{(-l_j'(x))^+}{l_J^2(x)} \to 0,$ $x\to+\infty$ $(j=1,\ldots, n)$ it follows that for $t\to+\infty$ 
	%   $$
	%   \frac{\gamma(t)}{\beta(t)}=\frac{\sum_{j=1}^n \frac{(-\alpha_js_jl_j'(\alpha_jt))^+}{l_j(\alpha_jt)}}{\sum_{j=1}^n \alpha_j (\tilde{s}_j+1)l_j(\alpha_jt)}\leq \sum_{j=1}^n \frac{s_j}{\tilde{s}_j+1} \frac{(-\alpha_js_jl_j'(\alpha_jt))^+}{l_j^2(\alpha_jt)}\to 0.
	%   $$
	%   Furthermore,
	%   \begin{gather*}
	%   \int_0^t\beta(\tau)d\tau \leq \max_{\|S\|\leq N} \int_0^t \sum_{j=1}^n \alpha_j(s_j+1)l_j(A\tau e^{i\Theta})d\tau = \\
	%   = \max_{\|S\|\leq N} \sum_{j=1}^n (s_j+1)\int_0^{\alpha_jt} l_j\left(\frac{\tau}{\alpha_j}A e^{i\Theta}\right)d\tau \leq \\
	%   \leq \sum_{j=1}^n \int_0^{\alpha_jt} l_j\left(\frac{\tau}{\alpha_j}A e^{i\Theta}\right)d\tau +N\max_{1\leq j\leq n} \int_0^{\alpha_jt}l_j\left(\frac{\tau}{\alpha_j}A e^{i\Theta}\right)d\tau.
	%   \end{gather*}
	But 
	$|F(Ate^{i\Theta})|\leq g(t)\leq g(0) \exp \int_0^t (\beta(\tau)+\gamma(\tau))d\tau$ 
	and $r^*A=R.$ Putting $t=r^*$ and taking into account \eqref{intliminf} 
 we obtain 
	\begin{gather*}
	\ln \max\{|F(z)\colon \ z\in \mathbb{T}^n(\mathbf{0},R)\}=\ln \max_{\Theta\in[0,2\pi]^n}|F(Re^{i\Theta})| \leq \ln \max_{\Theta\in[0,2\pi]^n} g(r^*) \leq 
	\\ \leq 
	\ln g(0)+
	\max_{\Theta\in[0,2\pi]^n} \int_0^{r^*} (\beta(\tau)+\gamma(\tau))d\tau   \leq\\ 
	\leq \ln g(0)+(NC+N+1)    \max_{\Theta\in[0,2\pi]^n} \int_0^{r^*} \widetilde{\beta}(\tau) d\tau = \\ 
	= \ln g(0)+(NC+N+1) \max_{\Theta\in[0,2\pi]^n}  \int_0^{r^*}  \sum_{j=1}^n \alpha_jl_j(A\tau e^{i\Theta}) d\tau =\\ 
	=\ln g(0)+(NC+N+1) \max_{\Theta\in[0,2\pi]^n} \int_0^{r^*}  \sum_{j=1}^n \frac{r_j}{r^*}l_j\left(\frac{\tau}{r^*}R e^{i\Theta}\right) d\tau.
	%   \left(\sum_{j=1}^{n} \int_0^{r_j} l_j(\tau)d\tau +N\max_{1\leq j\leq n}\int_0^{r_j}l_j(\tau)d\tau \right), \|R\|\to +\infty.
	\end{gather*}
	Thus, we conclude that \eqref{nonzerogrowth} holds.
	Estimate \eqref{zerogrowth} can be deduced by analogy. 
	Theorem \ref{bordte22} is proved.
\end{proof}
We will write $u(r,\theta)=l(re^{i\theta}).$  Theorem \ref{bordte22} implies the following proposition for $n=1.$
\begin{corollary} \label{growthcor3joint}
	Let $l(re^{i\theta})$  be a positive continuously differentiable function in variable $r\in[0,1)$ for every $\theta\in[0,2\pi].$  If an analytic in $\mathbb{D}$ function $f$ has bounded $l$-index $N=N(f,l)$ 
	and  there exists $C>0$ such that %the function $l$ satisfies inequality 
	$\varlimsup\limits_{r\to 1-0}\max\limits_{\theta\in[0,2\pi]} \frac{(-u'_r(r,\theta))^+}{l^2(re^{i\Theta})} = C$   then 
	\begin{equation*}
	\varlimsup_{r\to 1-0}   \frac{\ln \max\{|f(z)\colon  |z|=r\}}{\max\limits_{\theta\in[0,2\pi]} \int_0^{r}   
		l\left(\tau e^{i\theta}\right) d\tau}\leq (C+1)N+1.
	\end{equation*}
\end{corollary}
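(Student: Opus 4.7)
The idea is to specialize Theorem \ref{bordte22} to $n=1$ and then to relax its sup-type hypothesis \eqref{suplinf} to the $\varlimsup$-type hypothesis stated in the corollary. Setting $n=1,$ $r^{*}=r,$ $\alpha_{1}=1$ in \eqref{conclusiongrowth}, and using $\max_{k\le N}(k+1)=N+1$ and $\max_{k\le N}k=N,$ one reads off
\begin{equation*}
\ln|f(re^{i\theta})|\le M_{0}+\int_{0}^{r}\left((N+1)+N\,\frac{(-u'_{r}(\tau,\theta))^{+}}{l^{2}(\tau e^{i\theta})}\right)l(\tau e^{i\theta})\,d\tau,
\end{equation*}
since $|f(re^{i\theta})|=|f^{(0)}(re^{i\theta})|/(0!\,l^{0})$ is dominated by the maximum on the left-hand side of \eqref{conclusiongrowth}, and $M_{0}$ depends only on the values of $f$ and $l$ at the origin.

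Fix $\varepsilon>0.$ The limsup hypothesis supplies some $r_{0}\in(0,1)$ such that for every $\tau\in[r_{0},1)$ and every $\theta\in[0,2\pi],$ one has $(-u'_{r}(\tau,\theta))^{+}/l^{2}(\tau e^{i\theta})\le C+\varepsilon.$ On the compact set $[0,r_{0}]\times[0,2\pi]$ the continuous function $u'_{r}$ is bounded and $l$ is bounded away from $0,$ so the same ratio is bounded there by some constant $M_{1}$; consequently $\int_{0}^{r_{0}}\bigl(\ldots\bigr)l(\tau e^{i\theta})\,d\tau\le M_{2}$ for some $M_{2}$ independent of $r$ and $\theta.$ On the remaining interval $[r_{0},r]$ the bracket is at most $N+1+N(C+\varepsilon),$ so
\begin{equation*}
\int_{0}^{r}\bigl(\ldots\bigr)l(\tau e^{i\theta})\,d\tau\le M_{2}+\bigl(N+1+N(C+\varepsilon)\bigr)\int_{0}^{r}l(\tau e^{i\theta})\,d\tau.
\end{equation*}

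Taking $\max_{\theta}$ on both sides and dividing by $D(r):=\max_{\theta\in[0,2\pi]}\int_{0}^{r}l(\tau e^{i\theta})\,d\tau,$ which tends to $+\infty$ as $r\to 1-0$ (because $l>\beta/(1-\tau)$ by \eqref{Lbeta-ball} forces $D(r)\ge-\beta\ln(1-r)$), one obtains
\begin{equation*}
\varlimsup_{r\to 1-0}\frac{\ln M(f,r)}{D(r)}\le N+1+N(C+\varepsilon),
\end{equation*}
and letting $\varepsilon\to 0^{+}$ yields the claimed bound $(C+1)N+1.$ The only genuine obstacle is exactly this passage from a uniform sup bound to a $\varlimsup$ bound: the key observation is that the small-$\tau$ contribution $\int_{0}^{r_{0}}$ remains a bounded constant while the denominator $D(r)$ diverges, so only the tail of the integral governs the limsup, and on the tail the limsup hypothesis supplies precisely the pointwise estimate that the sup hypothesis of Theorem \ref{bordte22} would have provided.
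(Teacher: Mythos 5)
Your proof is correct, and it follows the route the paper intends: the paper offers no written proof of this corollary, merely the remark that Theorem \ref{bordte22} implies it for $n=1$. What you do beyond that remark is actually valuable, because a literal application of Theorem \ref{bordte22} does not give the stated constant: hypothesis \eqref{suplinf} is a global sup over all $t\in[0,r^*]$ and $|R|<1$, whereas the corollary only assumes a $\varlimsup$ as $r\to 1-0$, so direct citation would yield $(C'+1)N+1$ with $C'$ the global sup, possibly larger than $C$. Your splitting of the integral in \eqref{conclusiongrowth} at a radius $r_0$ supplied by the limsup hypothesis, bounding the head by a constant and observing that the denominator $D(r)$ diverges (exactly the paper's \eqref{intliminf}, coming from \eqref{Lbeta-ball}), is the right repair and is in the spirit of how the paper proves \eqref{nonzerogrowth} from the differential inequality for $g$. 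Two small points you should make explicit, both implicitly inherited from Theorem \ref{bordte22}: the bound of the ratio on the compact set $[0,r_0]\times[0,2\pi]$ uses joint continuity of $l$ and of $\partial l/\partial r$ (the corollary's wording only states regularity in $r$ for each fixed $\theta$), and the divergence of $D(r)$ uses that $l$ satisfies \eqref{Lbeta-ball}, which the corollary does not restate but must be assumed, as in the theorem it specializes.
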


\begin{remark}
	Our result is sharper than known result of Sheremeta which is obtained in a case $n=1,$ $C\neq 0$ 
	and $l=l(|z|).$
	Indeed, corresponding theorem \cite[p.~83]{sher} claims that 
	\begin{equation*}
	\varlimsup_{r\to 1-0}   \frac{\ln \max\{|f(z)\colon  |z|=r\}}{\int_0^{r}   
		l(\tau) d\tau}\leq (C+1)(N+1).
	\end{equation*}	
	Obviously, that $NC+N+1 < (C+1)(N+1)$ for $C\neq 0$ and $N\neq 0.$
\end{remark}

Estimate \eqref{zerogrowth} is sharp. It is easy to check for these functions $F(z_1,z_2)=\exp(z_1z_2),$  $l_1(z_1,z_2)=|z_2|+1,$  $l_2(z_1,z_2)=|z_1|+1.$ 
Then $N(F,\mathbf{L})=0$ and $\ln \max\{|F(z)|\colon z\in T^2(\mathbf{0},R)\}=r_1r_2.$

  \section{Bounded $\mathbf{L}$-index in joint variables in a bounded domain}
By $\overline{G}$ we denote the closure of a domain $G.$
\begin{theorem} \label{boundedomjoin-ball}
	Let $F(z)$ be an analytic in $\mathbb{B}^n$ function, $G$ be a bounded domain in $\mathbb{B}^n,$ 
 $d=\inf_{z\in\overline{G}} (1-|z|)>0$ and $\beta>\sqrt{n}.$
	If for every $j\in\{1,\ldots,n\}$ $l_j\colon \mathbb{B}^n\to\mathbb{R}_{+}$ is a continuous function  
	satisfying $l_j(z)\ge \frac{\beta}{d}$ for all $z\in \mathbb{B}^n$  then  there exists   $m\in\mathbb{Z}_{+}$ 
such that for all 
	$z\in \overline{G}$ and $J=(j_{1},j_{2},\ldots , j_{n})\in\mathbb{Z}^{n}_{+}$
	\begin{equation} \label{obmindex}
	\frac{|F^{(J)}(z)|}{J!\mathbf{L}^{J}(z)}\leq\max
	\left\{\frac{|F^{(K)}(z)|}{K! \mathbf{L}^{K}(z)}:\
	K\in\mathbb{Z}^{n}_{+},\ \|K\|\leq m\right\},
	\end{equation}
	where $\mathbf{L}(z)=(l_1(z),\ldots,l_n(z)).$
\end{theorem}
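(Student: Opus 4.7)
The plan is to combine Cauchy's inequality on suitable polydiscs (which will dominate all derivatives uniformly by a geometric sequence in $\|J\|$) with a compactness-plus-identity-theorem argument (which produces a uniform lower bound on some derivative).

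Since $d=\inf_{z\in\overline{G}}(1-|z|)>0$, the set $\overline{G}$ is a compact subset of $\mathbb{B}^n$ contained in $\{z:|z|\leq 1-d\}$. I would pick some $\beta'$ with $\sqrt{n}<\beta'<\beta$ (possible because $\beta>\sqrt{n}$) and set $R=(\beta'/\sqrt{n},\ldots,\beta'/\sqrt{n})$, so $|R|=\beta'$. Using $l_j(z)\geq\beta/d$ in the style of the Remark at the beginning, for every $z\in\overline{G}$ and every $w\in\mathbb{D}^n[z,R/\mathbf{L}(z)]$ one gets
$$|w|\leq|z|+\sqrt{\sum_{j=1}^n \bigl(r_j/l_j(z)\bigr)^2}\leq(1-d)+\frac{\beta'd}{\beta}=1-d\Bigl(1-\tfrac{\beta'}{\beta}\Bigr)<1,$$
so all these closed polydiscs lie inside a fixed compact set $\widetilde{G}\subset\mathbb{B}^n$. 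Setting $M:=\sup_{\widetilde{G}}|F|<+\infty$ and applying Cauchy's inequality on $\mathbb{T}^n(z,R/\mathbf{L}(z))$ gives, for every $z\in\overline{G}$ and every $J\in\mathbb{Z}^n_+$,
$$\frac{|F^{(J)}(z)|}{J!\,\mathbf{L}^J(z)}\leq\frac{M}{R^J}=M\Bigl(\frac{\sqrt{n}}{\beta'}\Bigr)^{\|J\|}.$$
Because $\beta'/\sqrt{n}>1$, this bound decays geometrically in $\|J\|$.

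If $F\equiv 0$ the conclusion is trivial, so assume $F\not\equiv 0$. The key step is to produce an index $m_1$ such that for each $z\in\overline{G}$ at least one derivative $F^{(K)}(z)$ with $\|K\|\leq m_1$ is nonzero. Setting
$$Z_m:=\bigl\{z\in\overline{G}\colon F^{(K)}(z)=0\text{ for every }\|K\|\leq m\bigr\}$$
yields a nested sequence of compact subsets of $\overline{G}$. The identity theorem on the connected open set $\mathbb{B}^n$ shows that a point of $\bigcap_m Z_m$ would be a zero of infinite order of $F$, forcing $F\equiv 0$; hence $\bigcap_m Z_m=\emptyset$. By the finite intersection property some $Z_{m_1}$ is empty, and
$$\varphi(z):=\max\Bigl\{\frac{|F^{(K)}(z)|}{K!\,\mathbf{L}^K(z)}\colon\|K\|\leq m_1\Bigr\}$$
is then continuous and strictly positive on $\overline{G}$, so by compactness $\varphi(z)\geq c>0$ on $\overline{G}$.

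To finish, I would choose $m\geq m_1$ so large that $M(\sqrt{n}/\beta')^{\|J\|}\leq c$ whenever $\|J\|>m$. For $\|J\|\leq m$ the inequality \eqref{obmindex} is immediate (take $K=J$), and for $\|J\|>m$ the chain
$$\frac{|F^{(J)}(z)|}{J!\,\mathbf{L}^J(z)}\leq M\Bigl(\frac{\sqrt{n}}{\beta'}\Bigr)^{\|J\|}\leq c\leq\varphi(z)\leq\max_{\|K\|\leq m}\frac{|F^{(K)}(z)|}{K!\,\mathbf{L}^K(z)}$$
completes the proof. The main obstacle is the uniformity step producing $m_1$: one must invoke the identity theorem globally on $\mathbb{B}^n$ — not just on $\overline{G}$, which in general has empty interior in $\mathbb{C}^n$ if thought of as a real set, but has interior $G$ through which vanishing of infinite order spreads — together with the nested-compacts argument to rule out zeros of arbitrarily high order on $\overline{G}$.
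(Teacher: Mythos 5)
Your proof is correct, and it reorganizes the argument into a more direct form than the paper's. The paper argues by contradiction: it assumes the local index $m_0(z^0)$ is not uniformly bounded on $\overline{G}$, picks points $z^m$ and multi-indices $J^m$ with $\|J^m\|>m$ witnessing the failure, extracts a convergent subsequence $z^m\to z'$, and applies Cauchy's integral formula on polydiscs $\mathbb{T}^n(z^m,R/\mathbf{L}(z^m))$ with $r_j>1$ to get $\max\{|F^{(K)}(z^m)|/(K!\mathbf{L}^K(z^m)):\|K\|\le m\}\le R^{-J^m}\max\{|F(z)|:z\in G_R\}\to 0$; passing to the limit forces all partial derivatives of $F$ to vanish at $z'$, contradicting $F\not\equiv 0$ via the uniqueness theorem. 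You instead split the job into two uniform estimates: a geometric-decay upper bound $|F^{(J)}(z)|/(J!\mathbf{L}^J(z))\le M(\sqrt{n}/\beta')^{\|J\|}$ from Cauchy on a compact $\widetilde{G}$, and a uniform positive lower bound $c$ on $\varphi(z)=\max_{\|K\|\le m_1}|F^{(K)}(z)|/(K!\mathbf{L}^K(z))$, obtained by showing the nested compact sets $Z_m$ have empty intersection (via the identity theorem) and invoking the finite intersection property. Then any $m\ge m_1$ with $M(\sqrt{n}/\beta')^{m+1}\le c$ works. The two proofs use the same three ingredients — Cauchy estimate, compactness of $\overline{G}$, and the identity theorem on the connected ball — but your nested-compacts/FIP formulation avoids the subsequence extraction and the limit passage, making the uniformity transparent and the role of the identity theorem cleaner (it is needed once, to show $\bigcap_m Z_m=\emptyset$, rather than hidden at the end of a limiting argument). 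Both proofs are sound; yours is the tidier exposition of the same compactness phenomenon.
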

\begin{proof}
	If $F(z)\equiv 0$ then \eqref{obmindex} is obvious.
	Let $F(z)\not \equiv 0.$ 
	For every fixed $z^0\in \overline{G}$  \ 
	$\frac{|F^{(J)}(z^0)|}{J!\mathbf{L}^{J}(z^0)}$ is the modulus of a coefficient of power series expansion of function $F(z),$ $z\in \mathbb{T}^n(z^0,\frac{R_0}{\mathbf{L}(z^0)}),$ where $|R_0|=\sqrt{n}.$
	Since $F(z)$ is analytic, for every $z^0\in \overline{G}$ \ $\frac{|F^{(J)}(z^0)|}{J!\mathbf{L}^{J}(z^0)}\to 0$ as $\|J\|\to \infty,$ 
	i. e. there exists $m_0=m(z^0),$ for which inequality \eqref{obmindex} holds.
	
	Assume on the contrary, that the set of $m_0$ is not uniformly bounded in $z^0:$  
	$\sup\limits_{z^0\in \overline{G}} m_0=+\infty.$ 
	Hence, for every $m\in\mathbb{Z}_+$ there exist $z_m\in \overline{G}$ and $J^{m}\in\mathbb{Z}^n_+$
	\begin{equation} \label{suprot}
	\frac{|F^{(J^m)}(z^m)|}{J^m!\mathbf{L}^{J^m}(z_m)}> \max
	\left\{\frac{|F^{(K)}(z^m)|}{K! \mathbf{L}^{K}(z^m)}:\
	K\in\mathbb{Z}^{n}_{+},\ \|K\|\leq m\right\}.
	\end{equation}
	Since $z^m\in \overline{G},$ there exists subsequence $z'^m\to z'\in \overline{G}$ as $m\to +\infty.$ 
	By Cauchy's integral formula for any $J\in\mathbb{Z}^n_+$  $$\frac{F^{(J)}(z^0)}{J!}=\frac{1}{(2\pi i)^n}\int_{z\in \mathbb{T}^n(z^0,R)} \frac{F(z)}{(z-z^0)^{J+\mathbf{1}}} dz.$$ 
	We rewrite \eqref{suprot} in the form  
	\begin{gather}
	\max\left\{\frac{|F^{(K)}(z^m)|}{K! \mathbf{L}^{K}(z^m)}:\
	K\in\mathbb{Z}^{n}_{+},\ \|K\|\leq m\right\} \leq \nonumber \\ \leq\frac{1}{(2\pi)^n \mathbf{L}^{J^m}(z^m)} \int_{z\in \mathbb{T}^n(z^0,\frac{R}{\mathbf{L}(z^m)})} \frac{|F(z)|}{|z-z^m|^{J^m+\mathbf{1}}} |dz| \leq%\nonumber %\\ \leq 
	\frac{1}{R^{J^m}} \max\{|F(z)|\colon  z\in G_R\}, \label{suprot2}
	\end{gather}
	where $G_R=\bigcup_{z^*\in \overline{G}} {D}^n[z^*,\frac{R}{\mathbf{L}(z^*)}],$ $|R|\leq \beta.$
	We choose $R$ such that $r_j>1$ i. e. $|R|>\sqrt{n}.$ Taking the limit in \eqref{suprot2} as $m\to \infty$ we deduce
	$$
	\forall K\in\mathbb{Z}^n_+ \ \ \  \frac{|F^{(K)}(z')|}{K!\mathbf{L}^{K}(z')} \leq 
	\lim_{m\to \infty} \frac{1}{R^{J^m}} \max\{|F(z)|\colon  z\in G_R\} = 0.
	$$  as $m\to +\infty.$
	Thus, all partial derivatives of the function $F$ at point $z'$ equals $0.$ By uniqueness theorem $F(z)\equiv 0.$ 
	It is impossible.
\end{proof}

\begin{remark}
	A similar proposition for analytic in $\mathbb{B}^n$ functions of bounded $L$-index in a direction  $\mathbf{b} \in \mathbb{C}^n\setminus \mathbf{0}$ is valid by additional assumption 
	$\forall z\in\overline{G}$ $F(z+t\mathbf{b})\not \equiv 0,$ where $t\in\mathbb{C}$ (see \cite{bandurasum,monograph}).
\end{remark}

\section{Exhaustion of $\mathbb{B}^n$ by balls}
Addition, scalar multiplication, and conjugation are defined on $\mathbb{C}^n$ componentwise. 
For $z\in\mathbb{C}^n$ and $w\in\mathbb{C}^n$ we define 
$$
\langle z,w\rangle =z_1\overline{w}_1+\cdots+z_n\overline{w}_n,
$$
where $w_k$ is the complex conjugate of $w_k.$

Denote 
$$\ell(z)=\min_{1\le j\le n} l_j(z), \ \mathcal{L}(z)=\max_{1\le j\le n} l_j(z).$$
Obviously, that $\ell(z) \leq  \mathcal{L}(z).$

%Note that if $\eta \in[0,\beta],$\  $z^0\in \mathbb{B}^n$ and $z \in \mathbb{B}^n[z^0,\eta/\ell(z^0)]$ then $z\in \mathbb{B}^n.$  
% Indeed,  we have $|z|\leq |z-z^0|+|z^0|\leq \frac{\eta}{\ell(z^0)}+|z^0| <\frac{\eta}{\beta/(1-|z^0|)}+ |z^0| \leq 1.$

The open ball $\{z\in\mathbb{C}^n: \ |z-z^0|<r\}$ is denoted by $\mathbb{B}^n(z^0,r),$ 
its boundary is a sphere $\mathbb{S}^n(z^0,r)=\{z\in\mathbb{C}^n: \ |z-z^0|=r\},$ 
the closed ball  $\{z\in\mathbb{C}^n: \ |z-z^0|\leq r\}$ is denoted by $\mathbb{B}^n[z^0,r],$ 
$\mathbb{B}^n=\mathbb{B}^n(\mathbf{0},1),$
$\mathbb{D}=\mathbb{B}^1=\{z\in\mathbb{C}: \ |z|<1\}.$

By $Q'(\mathbb{B}^n)$ we denote the class of functions $\mathbf{L}$, which satisfy the condition
\begin{equation} \label{clasqballnex}
(\forall r\in[0,\beta],\ j\in\{1,\ldots,n\})\colon \ 0<\lambda_{1,j}(r)\leq \lambda_{2,j}(r)<\infty,
\end{equation}
where
\begin{gather} \label{lam1ex}
\lambda_{1,j}(r)=\inf\limits_{ z^0\in \mathbb{B}^n} \inf \left \{
\frac{l_j(z)}{l_j(z^0)}: z\in \mathbb{B}^n\left[z^0, {r}/{\ell(z^0)}\right]\right\},\\
\lambda_{2,j}(r)=\sup\limits_{z^0\in \mathbb{B}^n} \sup \left \{
\frac{l_j(z)}{l_j(z^0)}: z\in \mathbb{B}^n\left[z^0, {r}/{\ell(z^0)}\right]\right\}.\label{lam2ex}\\
\Lambda_1(r)=(\lambda_{1,1}(r),\ldots,\lambda_{1,n}(r)), \ \Lambda_2(r)=(\lambda_{2,1}(r),\ldots,\lambda_{2,n}(r)).
%\frac{R}{\mathbf{L}(z^0)}={R}/{\mathbf{L}(z^0)}:= \left({r_1}/{l_1(z^0)},{r_2}/{l_2(z^0)}\right). \nonumber
\end{gather}
These denotations of $\lambda_{1,j}(r),$ $\lambda_{2,j}(r),$ $\Lambda_1(r),$ $\Lambda_2(r)$ are only valid in this section.
In other sections their meanings are defined in \eqref{lam1}-\eqref{lam2}.

 The following theorem is basic in theory of functions of bounded index. It was necessary to prove more efficient criteria of index boundedness 
which describe a behavior of maximum modulus on a disc or a behavior of logarithmic derivative (see \cite{kusher,sher,BandSk,monograph}). 
\begin{theorem} \label{petr1ex-ball}
	Let $\mathbf{L} \in Q'(\mathbb{B}^n)$. 
	In order that an analytic in $\mathbb{B}^n$ function $F$ be of bounded $\mathbf{L}$-index in joint variables it is necessary 
	that for each
	$r\in(0,\beta]$ there exist $n_0\in \mathbb{Z}_+$, $p_0>0$ such that for every $z^0 \in\mathbb{B}^n$ there exists $K^0\in \mathbb{Z}_+^n$, $\|K^0\|\leq n_0$, such that 
	\begin{gather}
	\max\left\{ \frac{|F^{(K)}(z)|}{K!\mathbf{L}^K(z)}\colon  \|K\| \leq n_0, \ z\in \mathbb{B}^n\left[z^0, {r}/{\mathcal{L}(z^0)}\right] \right\} \leq %\nonumber\\ \leq
	p_0 \frac{|F^{(K^0)}(z^0)|}{K^0!\mathbf{L}^{K^0} (z^0)}
	\label{net1ex}
	\end{gather}
	and it is sufficient  for each
	$r\in(0,\beta]$ there exist $n_0\in \mathbb{Z}_+$, $p_0>0$ such that for every $z^0 \in\mathbb{B}^n$ there exists $K^0\in \mathbb{Z}_+^n$, $\|K^0\|\leq n_0$, such that 
	\begin{gather}
	\max\left\{ \frac{|F^{(K)}(z)|}{K!\mathbf{L}^K(z)}\colon  \|K\| \leq n_0, \ z\in \mathbb{B}^n\left[z^0, {r}/{\ell(z^0)|}\right] \right\} \leq %\nonumber\\ \leq
	p_0 \frac{|F^{(K^0)}(z^0)|}{K^0!\mathbf{L}^{K^0} (z^0)}.
	\label{net1ex-ball}
	\end{gather}
\end{theorem}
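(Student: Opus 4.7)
My plan is to reduce both directions to Theorem~\ref{petr1-ball} via two elementary set inclusions between balls and polydiscs. For $z \in \mathbb{B}^n[z^0, r/\mathcal{L}(z^0)]$ one has $|z_j - z_j^0| \le |z-z^0| \le r/\mathcal{L}(z^0) \le r/l_j(z^0)$, giving the first inclusion $\mathbb{B}^n[z^0, r/\mathcal{L}(z^0)] \subset \mathbb{D}^n[z^0, r\mathbf{1}/\mathbf{L}(z^0)]$. Conversely, for $z \in \mathbb{D}^n[z^0, (r/\sqrt{n})\mathbf{1}/\mathbf{L}(z^0)]$ summing the coordinate bounds yields $|z - z^0|^2 \le r^2/\ell^2(z^0)$, producing the second inclusion $\mathbb{D}^n[z^0, (r/\sqrt{n})\mathbf{1}/\mathbf{L}(z^0)] \subset \mathbb{B}^n[z^0, r/\ell(z^0)]$.

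For the sufficiency, I assume \eqref{net1ex-ball} holds for some fixed $r \in (0,\beta]$. The second inclusion forces the maximum on the polydisc $\mathbb{D}^n[z^0, (r/\sqrt{n})\mathbf{1}/\mathbf{L}(z^0)]$ to be dominated by the maximum on the ball on the right-hand side of the hypothesis, so condition \eqref{net1} of Theorem~\ref{ball-cor1} holds with $R = (r/\sqrt{n})\mathbf{1}$, $|R|=r\le\beta$. Hence $F$ has bounded $\mathbf{L}$-index in joint variables.

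For the necessity, let $N := N(F,\mathbf{L},\mathbb{B}^n)$ and fix $r \in (0,\beta]$. For each $z \in \mathbb{B}^n[z^0, r/\mathcal{L}(z^0)]$ I define $R(z) := (|z_j - z_j^0|\, l_j(z^0))_{j=1}^n$, so that $z \in \mathbb{T}^n(z^0, R(z)/\mathbf{L}(z^0))$ and $|R(z)|^2 \le \mathcal{L}^2(z^0)\, |z-z^0|^2 \le r^2$, hence $|R(z)|\le\beta$. Applying Theorem~\ref{petr1-ball} at $z^0$ with this specific $R(z)$ and $n_0 := N$ yields some $K^0 \in \mathbb{Z}^n_+$ with $\|K^0\| \le N$; inspecting that proof shows $K^0$ is the argmax of $|F^{(K)}(z^0)|/(K!\mathbf{L}^K(z^0))$ over $\|K\|\le N$, hence depends only on $z^0$, not on the choice of $R$. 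Taking the supremum over $z$ and over $\|K\|\le N$ and bounding $p_0(R(z))$ uniformly by $p_0 := \sup\{p_0(R) : R\in\mathbb{R}^n_+,\ |R|\le r\}$ produces \eqref{net1ex}.

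The key remaining technical point is finiteness of $p_0$. From the explicit expression $p_0(R) = (2\prod_j (\lambda_{1,j}(R))^{-N}(\lambda_{2,j}(R))^N)^{q(R)}$ in the proof of Theorem~\ref{petr1-ball}, this reduces to uniform control of $\lambda_{1,j}(R)$ and $\lambda_{2,j}(R)$ on $\{R : |R|\le r\}$. The componentwise monotonicity of these quantities makes this immediate for $r \le \beta/\sqrt{n}$ via the admissible majorant $R = r\mathbf{1}$. The main obstacle is the range $r \in (\beta/\sqrt{n}, \beta]$, where $r\mathbf{1}$ no longer satisfies $|R|\le\beta$; here the bound must be extracted from the compactness of $\{R : |R|\le r\}$ together with the semicontinuity properties of $\lambda_{i,j}$ built into the class $Q(\mathbb{B}^n)$.
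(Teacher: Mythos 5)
Your reduction to the polydisc theorems is a genuinely different route from the paper's: the paper reproves both directions from scratch with ball geometry, using the Cauchy integral representation over the sphere $\mathbb{S}^n(z^0,r/\ell(z^0))$ (hence the surface-measure integrals, Gamma functions, Stirling estimates, and the Lagrange multiplier computation in the sufficiency step). Your sufficiency argument is correct and considerably cleaner than the paper's: the inclusion $\mathbb{D}^n[z^0,(r/\sqrt{n})\mathbf{1}/\mathbf{L}(z^0)]\subset\mathbb{B}^n[z^0,r/\ell(z^0)]$ together with $|(r/\sqrt{n})\mathbf{1}|=r\le\beta$ lets Theorem~\ref{ball-cor1} take over immediately, sidestepping the entire Gamma/Stirling machinery. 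You do need one extra sentence observing that $Q'(\mathbb{B}^n)\subset Q(\mathbb{B}^n)$ before invoking Theorem~\ref{ball-cor1}, but this follows from the same inclusion $\mathbb{D}^n[z^0,R/\mathbf{L}(z^0)]\subset\mathbb{B}^n[z^0,|R|/\ell(z^0)]$, which bounds the polydisc $\lambda$'s by the ball $\lambda$'s.

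The necessity direction, however, has a real gap where you wave at ``semicontinuity properties of $\lambda_{i,j}$ built into the class $Q(\mathbb{B}^n)$.'' The definition of $Q(\mathbb{B}^n)$ guarantees only pointwise finiteness $\lambda_{2,j}(R)<\infty$ and positivity $\lambda_{1,j}(R)>0$ for each fixed $R$ with $|R|\le\beta$; it provides no continuity or semicontinuity in $R$, so compactness of $\{R:|R|\le r\}$ buys nothing. The repair is not semicontinuity but the $Q'$ hypothesis itself: from $\mathbb{D}^n[z^0,R/\mathbf{L}(z^0)]\subset\mathbb{B}^n[z^0,|R|/\ell(z^0)]$ and monotonicity of the ball $\lambda$'s in the radius, one gets the uniform bound $\lambda^{Q}_{2,j}(R)\le\lambda^{Q'}_{2,j}(|R|)\le\lambda^{Q'}_{2,j}(r)<\infty$ for all $R$ with $|R|\le r\le\beta$ (and the dual bound for $\lambda_{1,j}$), which then dominates $q(R)$ and $p_0(R)$ uniformly. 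With that substitution your necessity argument closes; as written, the appeal to an unstated semicontinuity property is not justified, and it is precisely the point where the distinction between $Q(\mathbb{B}^n)$ and $Q'(\mathbb{B}^n)$ must do the work.
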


\begin{proof}
	Let $F$ be of bounded $\mathbf{L}$-index in joint variables with $N=N(F,\mathbf{L},\mathbb{B}^n)<\infty.$
	For every $r\in(0,\beta]$  we put
	%\begin{gather*}
	%\lambda_{1,j}(R)=\inf\limits_{ (z^0)\in \mathbb{D}^n} \inf \left \{
	% \frac{l_j(z)}{l_j(z^0)}: (z)\in \mathbb{D}^n(z^0, {R}/{\mathbf{L}(z^0)})\right\},\\
	% \lambda_{2,j}(R)=\sup\limits_{ (z^0)\in \mathbb{D}^n} \sup \left \{
	% \frac{l_j(z)}{l_j(z^0)}:(z)\in \mathbb{D}^n(z^0, {R}/{\mathbf{L}(z^0)})\right\}, \\
	$$q= q(r)= [ 2(N+1) r\sqrt{n} \prod_{j=1}^{n}(\lambda_{1,j}(r))^{-N}(\lambda_{2,j}(r))^{N+1}]+1$$
	%\end{gather*}	
	where $[x]$ is the analytic in $\mathbb{B}^n$ part of the real number $x,$ i.e. it is a floor function.
	For $p\in\{0,\ldots,q\}$ and $z^0\in\mathbb{B}^n$ we denote
	\begin{gather*}
	S_p(z^0,r)=\max\left\{\frac{|F^{(K)}(z)|}{K! \mathbf{L}^{K} (z)}: \|K\|\leq N, z\in \mathbb{B}^n \left[z^0,\frac{pr}{q\mathcal{L}(z^0)}\right] \right\},
	%\label{nets1}
	\\
	S^{*}_p(z^0,r)=\max\left\{\frac{|F^{(K)}(z)|}{K! \mathbf{L}^{K} (z^0)}: \|K\|\leq N, z\in \mathbb{B}^n \left[z^0,\frac{pr}{q\mathcal{L}(z^0)}\right] \right\}.
	%\label{nets2}	
	\end{gather*}
	Using \eqref{lam1} and $\mathbb{B}^n\left[z^0,\frac{pr}{q\ell(z^0)}\right] \subset \mathbb{B}^n \left[z^0,\frac{r}{\mathcal{L}(z^0)}\right] $, we have
	\begin{gather*}
	S_p(z^0,r)=\!\max\left\{\frac{|F^{(K)}(z)|}{K! \mathbf{L}^{K} (z)}\frac{\mathbf{L}^{K} (z^0)}{\mathbf{L}^{K} (z^0)}: \|K\|\leq \!N,
	z\in\! \mathbb{B}^n\! \left[z^0,\frac{pr}{q\mathcal{L}(z^0)}\!\right]\! \right\}\! \leq\! \nonumber \\ \leq S^{*}_p(z^0,r)\max\left\{ \prod_{j=1}^n \frac{l_j^{N} (z^0)}{l_j^{N} (z)}: z\in \mathbb{B}^n \left[z^0,\frac{pr}{q \mathcal{L}(z^0)}\right] \right\} \leq S^{*}_p(z^0,r)\prod_{j=1}^{n} (\lambda_{1,j}(r))^{-N}.
	% \label{netss1}
	\end{gather*}
	and, using \eqref{lam2}, we obtain
	\begin{gather}
	S^{*}_p(z^0,r)= \max\left\{\frac{|F^{(K)}(z)|}{K!\mathbf{L}^{K} (z)}\!\frac{\mathbf{L}^{K} (z)}{\mathbf{L}^{K} (z^0)}: \|K\|\leq N,
	z\in \mathbb{B}^n \left[z^0,\frac{pr}{q \mathcal{L}(z^0)}\right] \right\} \leq \nonumber \\
	\! \leq\! \max\left\{\frac{|F^{(K)}(z)|}{\!K! \mathbf{L}^{K} (z)}\!(\Lambda_{2}(r))^{K}: \|K\|\leq \!N,
	z\in \mathbb{B}^n \left[z^0,\frac{pr}{q \mathcal{L}(z^0)}\right] \right\} \! \leq\! S_p(z^0,r)\prod_{j=1}^{n} (\lambda_{2,j}(r))^N.
	\label{netss2ex}
	\end{gather}
	Let $K^{(p)}$ with $\| K^{(p)}\|\leq N$ and $z^{(p)} \in \mathbb{B}^n \left[z^0,\frac{pr}{q\mathcal{L}(z^0)}\right]$ be such that
	\begin{gather}
	S_p^*(z^0,r)=\frac{|F^{(K^{(p)})}(z^{(p)})|}{K^{(p)}!\mathbf{L}^{K^{(p)}} (z^{0})}
	\label{starsex}
	\end{gather}
	Since by the maximum principle $z^{(p)}\in \mathbb{S}_n(z^0,\frac{pr}{q \mathcal{L}(z^0)}),$ we have $z^{(p)}\neq z^0.$ We choose 
	$\widetilde z^{(p)}_{j}=z_j^0+\frac{p-1}{p}(z_j^{(p)}-z_j^0),$  $j\in\{1,\ldots,n\}$
	Then %for every 
	we have 
	\begin{gather}
	|\widetilde z^{(p)}-z^0|=\frac{p-1}{p}|z^{(p)}-z^0|=\frac{p-1}{p} \frac{pr}{q\mathcal{L}(z^0)},
	\label{zetex}\\
	|\widetilde z^{(p)}-z^{(p)}|=|z^{0}+\frac{p-1}{p}(z^{(p)}-z^0)-z^{(p)}|=
	\frac{1}{p}|z^0-z^{(p)}|= \frac{1}{p}\frac{pr}{q\ell(z^0)}=\frac{r}{q \mathcal{L}(z^0)}. \label{zetexwave}
	\end{gather}
	From \eqref{zetex} we obtain
	$\widetilde z^{(p)} \in \mathbb{B}^n \left[z^0,\frac{(p-1)r}{q \mathcal{L}(z^0)}\right]$ and 
	\begin{gather*}
	S^{*}_{p-1}(z^0,r)\geq
	\frac{|F^{(K^{(p)})}(\widetilde z^{(p)})|}{K^{(p)}!\mathbf{L}^{K^{(p)}} (z^0)}.
	\end{gather*}
	From \eqref{starsex} it follows that
	\begin{gather}
	0\leq S^{*}_p(z^0,r)-S^{*}_{p-1}(z^0,r) \leq \frac{|F^{(K^{(p)})}(z^{(p)})|-|F^{(K^{(p)})}(\widetilde z^{(p)})|}{K^{(p)}!\mathbf{L}^{K^{(p)}} (z^0)}=\nonumber \\
	= \frac{1}{K^{(p)}!\mathbf{L}^{K^{(p)}} (z^0)}\int_{0}^{1}
	\frac{d}{dt}|F^{(K^{(p)})}(\widetilde z^{(p)}+t(z^{(p)}-\widetilde z^{(p)}))|dt \leq
	\nonumber \\
	\leq \frac{1}{K^{(p)}!\mathbf{L}^{K^{(p)}}(z^0)}\int_0^1 \sum_{j=1}^n |z_j^{(p)}-\widetilde z_{j}^{(p)}| %\times \nonumber \\
	% \cdot 
	\left|\frac{\partial^{\|K^{(p)}\| + 1}F}{\partial z_1^{k_1^{(p)}}  \ldots  \partial z_j^{k_j^{(p)}+1} \ldots \partial z_n^{k_n^{(p)}}} (\widetilde z^{(p)}+t(z^{(p)}-\widetilde z^{(p)}))  \right|dt= \nonumber \\ =
	\frac{1}{K^{(p)}!\mathbf{L}^{K^{(p)}}(z^0)} \sum_{j=1}^n |z_j^{(p)}-\widetilde z_{j}^{(p)}|
	\left|\frac{\partial^{\|K^{(p)}\|+1}F}{\partial z_1^{k_1^{(p)}} \ldots \partial z_j^{k_j^{(p)}+1} \ldots \partial z_n^{k_n^{(p)}}} (\widetilde z^{(p)}+t^*(z^{(p)}-\widetilde z^{(p)}))  \right|,
	\label{bigex}
	\end{gather}
	where $0\leq t^*\leq 1, \widetilde z^{(p)}+t^*(z^{(p)}-\widetilde z^{(p)}) \in \mathbb{B}^n (z^0,\frac{pr}{q \mathcal{L}(z^0)})$.
	For $z\in \mathbb{B}^n (z^0,\frac{pr}{q \mathcal{L}(z^0)})$ and $J\in\mathbb{Z}^n_+$, $\|J\| \leq N+1$ we have
	\begin{gather*}
	\frac{|F^{(J)}(z)|\mathbf{L}^{J} (z)}{J!\mathbf{L}^{J} (z^0) \mathbf{L}^{J} (z)}
	\leq (\Lambda_{2}(r))^{J} \max\left\{\frac{|F^{(K)}(z)|}{K! \mathbf{L}^{K} (z)}: \|K\|\leq N \right\} %\leq \\
	\leq \prod_{j=1}^{n} (\lambda_{2,j}(r))^{N+1}(\lambda_{1,j}(r))^{-N} \times \\ \times \max\left\{\frac{|F^{(K)}(z)|}{K!\mathbf{L}^{K} (z^0)}: \|K\|\leq N \right\} %\leq \\
	\leq \prod_{j=1}^{n} (\lambda_{2,j}(r))^{N+1}(\lambda_{1,j}(r))^{-N} S^{*}_p(z^0,r).
	\end{gather*}
	From \eqref{bigex} and \eqref{zetexwave} we obtain
	\begin{gather*}
	0\leq S^{*}_p(z^0,r)-S^{*}_{p-1}(z^0,r) \leq \\ \leq \prod_{j=1}^{n} (\lambda_{2,j}(r))^{N+1}(\lambda_{1,j}(r))^{-N} S^{*}_p(z^0,r)\sum_{j=1}^{n}(k_j^{(p)}+1)l_j(z^0)|z_j^{(p)}-\widetilde{z}_j^{(p)}|= \\
	\\ = \prod_{j=1}^{n} (\lambda_{2,j}(r))^{N+1}(\lambda_{1,j}(r))^{-N} 
	S^{*}_p(z^0,r)  (N+1) \sum_{j=1}^{n}   l_j(z^0)|z_j^{(p)}-\widetilde{z}_j^{(p)}|
	\leq \\
	\leq \prod_{j=1}^{n} (\lambda_{2,j}(r))^{N+1}(\lambda_{1,j}(r))^{-N}  (N+1) S^{*}_p(z^0,R) \sqrt{n} \mathcal{L}(z^0) |z^{(p)}-\widetilde{z}^{(p)}| = \\ 
	= \prod_{j=1}^{n} (\lambda_{2,j}(r))^{N+1}(\lambda_{1,j}(r))^{-N} \sqrt{n} \frac{(N+1)r}{q(r)}  S^{*}_p(z^0,R) \leq \frac{1}{2}S^{*}_p(z^0,R).
	\end{gather*}
	This inequality implies
	%\begin{gather*}
	$S^{*}_p(z^0,r) \leq 2S^{*}_{p-1}(z^0,r),$
	%\end{gather*}
	and in view of inequalities \eqref{netss2ex} and \eqref{starsex} we have
	\begin{gather*}
	S_p(z^0,r) \leq 2 \prod_{j=1}^{n} (\lambda_{1,j}(r))^{-N}S^{*}_{p-1}(z^0,r) \leq
	2 \prod_{j=1}^{n} (\lambda_{1,j}(r))^{-N}(\lambda_{2,j}(r))^{N}S_{p-1}(z^0,r)
	\end{gather*}
	Therefore,
	\begin{gather}
	\max\left\{\frac{|F^{(K)}(z)|}{K!\mathbf{L}^{K} (z)}: \|K\|\leq N, z\in \mathbb{B}^n \left[z^0,\frac{pr}{q\mathcal{L}(z^0)}\right] \right\}%=\nonumber \\
	= S_q(z^0,r) \leq \nonumber \\ \leq 2 \prod_{j=1}^{n} (\lambda_{1,j}(r))^{-N}(\lambda_{2,j}(r))^{N}S_{q-1}(z^0,r) \leq \ldots %\leq \nonumber \\ 
	\leq (2 \prod_{j=1}^{n} (\lambda_{1,j}(r))^{-N}(\lambda_{2,j}(r))^{N})^q S_{0}(z^0,r)=\nonumber \\
	= (2 \prod_{j=1}^{n} (\lambda_{1,j}(r))^{-N}(\lambda_{2,j}(r))^{N})^q \max\left\{\frac{|F^{(K)}(z^0)|}{K!\mathbf{L}^{K} (z^0)}: \|K\|\leq N \right\}.
	\label{lastth1ex}
	\end{gather}
	
	From \eqref{lastth1ex} we obtain inequality \eqref{net1ex} with $p_0=(2 \prod_{j=1}^{n} (\lambda_{1,j}(r))^{-N}(\lambda_{2,j}(r))^{N})^q $ and some $K^0$ with $\|K^0\|\leq N$.
	The necessity of condition \eqref{net1ex} is proved.
	
	Now we prove the sufficiency. Suppose that for every $r\in(0,\beta]$ there exist $n_0\in \mathbb{Z}_+,$ $p_0>1 $ such that for all $ z_0 \in \mathbb{B}^n $ and some
	$ K^0 \in \mathbb{Z}_+^n,$ $\|K^0\|\leq n_0,$ the inequality \eqref{net1ex-ball} holds.
	
	We write Cauchy's formula for a ball (see \cite[p.~109]{kehe-zhu} or \cite[p.~349]{rudin-ball})  as following 
	$\forall z^0\in \mathbb{B}^n$ $\forall K\in \mathbb{Z}_+^n$ $\forall S \in \mathbb{Z}_+^n$ $\forall z\in \mathbb{B}^n(z^0,r/\ell(z^0))$ 
	$$
	F^{(K+S)}(z)= \frac{(n+\|S\|-1)!}{(n-1)!} \int_{\mathbb{S}^n(z^0,r/\ell(z^0))} \frac{|\xi-z^0|(\overline{\xi-z^0})^SF^{(K)}(\xi)}{(|\xi-z^0|^2-\langle z-z^0,\xi-z^0\rangle)^{n+\|S\|}} d\sigma(\xi),
	$$
	where $d\sigma(\xi)$ is the normalized surface measure on $\mathbb{S}_n,$  so
	that $\sigma (\mathbb{S}_n(\mathbf{0},1))= 1.$ 
	Put $z=z^0:$
	\begin{equation} \label{cauchy-ball}
	F^{(K+S)}(z^0) = \frac{(n+\|S\|-1)!}{(n-1)!} \int_{\mathbb{S}^n(z^0,r/\ell(z^0))} \frac{(\overline{\xi-z^0})^SF^{(K)}(\xi)}{|\xi-z^0|^{2(n+\|S\|)-1}} d\sigma(\xi)
	\end{equation}
	Therefore, applying \eqref{net1ex-ball}, we have
	\begin{gather}
	|F^{(K+S)}(z^0)|\leq  \frac{(n+\|S\|-1)!}{(n-1)!}  \int_{\mathbb{S}^n(z^0,r/\ell(z^0))} \frac{|(\xi-z^0)^S| |F^{(K)}(\xi)|}{|\xi-z^0|^{2(n+\|S\|)-1}} d\sigma(\xi)\leq
	\nonumber \\
	\leq \left(\frac{\ell(z^0)}{r}\right)^{2(n+\|S\|)-1}\frac{(n+\|S\|-1)!}{(n-1)!} \int_{\mathbb{S}^n(z^0,r/\ell(z^0))} \frac{|(\xi-z^0)^S| |F^{(K)}(\xi)|K!\mathbf{L}^K(\xi)}{K!\mathbf{L}^K(\xi)}d\sigma(\xi)\leq \nonumber\\
	\leq p_0 \left(\frac{\ell(z^0)}{r}\right)^{2(n+\|S\|)-1}\frac{(n+\|S\|-1)!}{(n-1)!} \int_{\mathbb{S}^n(z^0,r/\ell(z^0))} 
	\frac{|(\xi-z^0)^S| |F^{(K^0)}(z^0)|K!\mathbf{L}^K(z)}{K^0!\mathbf{L}^{K^0}(z^0)}d\sigma(\xi)\leq \nonumber\\
	\leq p_0
	\left(\frac{\ell(z^0)}{r}\right)^{2(n+\|S\|)-1}\frac{(n+\|S\|-1)!}{(n-1)!}  
	\frac{ |F^{(K^0)}(z^0)|K!\prod_{j=1}^n \lambda^{n_0}_{2,j}(r)  \mathbf{L}^K(z^0)}{K^0!\mathbf{L}^{K^0}(z^0)}
	\times \nonumber\\ \times 
	\int_{\mathbb{S}^n(z^0,r/\ell(z^0))} |(\xi-z^0)^S| d\sigma(\xi)\leq\nonumber\\ 
	\leq p_0
	\left(\frac{\ell(z^0)}{r}\right)^{\|S\|}\frac{(n+\|S\|-1)!}{(n-1)!}  
	\frac{ |F^{(K^0)}(z^0)|K!\prod_{j=1}^n \lambda^{n_0}_{2,j}(r)  \mathbf{L}^K(z^0)}{K^0!\mathbf{L}^{K^0}(z^0)} \times 
	\nonumber
	\\ \times 
	\int_{\mathbb{S}^n(z^0,r/\ell(z^0))} 
	\frac{|(\xi-z^0)^S|}{(r/\ell(z^0))^{\|S\|}} d\sigma\left(\frac{\xi-z^0}{r/\ell(z^0)}\right)\leq \nonumber\\ 
	\leq p_0
	\left(\frac{\ell(z^0)}{r}\right)^{\|S\|}\frac{(n+\|S\|-1)!}{(n-1)!}  
	\frac{ |F^{(K^0)}(z^0)|K!\prod_{j=1}^n \lambda^{n_0}_{2,j}(r)  \mathbf{L}^K(z^0)}{K^0!\mathbf{L}^{K^0}(z^0)} 
	\int_{\mathbb{S}^n(\mathbf{0},1)} |\xi^{S}| d\sigma(\xi) = \nonumber\\
	= p_0
	\left(\frac{\ell(z^0)}{r}\right)^{\|S\|}\frac{(n+\|S\|-1)!}{(n-1)!} 
	\frac{ |F^{(K^0)}(z^0)|K!\prod_{j=1}^n \lambda^{n_0}_{2,j}(r)  \mathbf{L}^K(z^0)}{K^0!\mathbf{L}^{K^0}(z^0)} 
	\frac{\Gamma(n)  \prod_{j=1}^n\Gamma(s_j/2+1)}{\Gamma(n+\|S\|/2)}  \label{gamma-cauchy-ball}
	\end{gather}
	This implies 
	\begin{gather}
	\frac{|F^{(K+S)}(z^0)|}{(K+S)!\mathbf{L}^{K+S}(z^0)}\leq \nonumber \\ \leq \frac{ |F^{(K^0)}(z^0)|}{K^0!\mathbf{L}^{K^0}(z^0)} 
	p_0 	\left(\frac{\ell(z^0)}{r}\right)^{\|S\|}
	\frac{K!\prod_{j=1}^n \lambda^{n_0}_{2,j}(r) (n+\|S\|-1)!  \prod_{j=1}^n\Gamma(s_j/2+1)}{
		(K+S)! 
		\Gamma(n+\|S\|/2)  \mathbf{L}^S(z^0)} \leq \nonumber \\
	\leq 
	\frac{ |F^{(K^0)}(z^0)|}{K^0!\mathbf{L}^{K^0}(z^0)} 
	p_0 	\frac{K!\prod_{j=1}^n \lambda^{n_0}_{2,j}(r) (n+\|S\|-1)!  \prod_{j=1}^n\Gamma(s_j/2+1)}{
		(K+S)! 	\Gamma(n+\|S\|/2)  r^{\|S\|}} \label{asymest}
	\end{gather}
	We choose $r> 1.$
	Since $\|K\|\le n_0$ the quantity 	$p_0K!\prod_{j=1}^n \lambda^{n_0}_{2,j}(R)$ does not depend of $S.$ Then there exists 
	$n_1$ such that 
	\begin{equation} \label{nongam1}
	\frac{p_0K!\prod_{j=1}^n \lambda^{n_0}_{2,j}(r)}{r^{\|S\|}}\leq 1 \text{ for all }\|S\|\ge n_1.
	\end{equation}
	The asymptotic behavior of $\frac{(n+\|S\|-1)!  \prod_{j=1}^n\Gamma(s_j/2+1)}{
		(K+S)! 	\Gamma(n+\|S\|/2)  r^{\|S\|}}$ is more difficult as $\|S\|\to +\infty.$
Using the Stirling formula $\Gamma(m+1)=\sqrt{2\pi m} \left(\frac{m}{e}\right)^m(1+\frac{\theta}{12m}),$ 
where $\theta=\theta(m)\in[0,1],$
we obtain 
\begin{gather}
\frac{(n+\|S\|-1)!  \prod_{j=1}^n\Gamma(s_j/2+1)}{(K+S)!\Gamma(n+\|S\|/2)  r^{\|S\|}}\leq 
\frac{(n+\|S\|-1)!  \prod_{j=1}^n\Gamma(s_j/2+1)}{S!\Gamma(n+\|S\|/2)  r^{\|S\|}}  \nonumber\\ 
= \frac{\sqrt{2\pi (n+\|S\|-1)} (\frac{n+\|S\|-1}{e})^{n+\|S\|-1} \prod_{j=1}^n \sqrt{2\pi s_j/2} (\frac{s_j}{2e})^{s_j/2}}
{\prod_{j=1}^n \sqrt{2\pi s_j} (\frac{s_j}{e})^{s_j} \sqrt{2\pi(n+\|S\|/2-1)} (\frac{n+\|S\|/2-1}{e})^{n+\|S\|/2-1} r^{\|S\|}} \times \nonumber\\
\times \frac{(1+\frac{\theta(n+\|S\|-1)}{12(n+\|S\|-1)}) \prod_{j=1}^n(1+\frac{\theta(s_j/2)}{12s_j/2})}
{(1+\frac{\theta(n+\|S\|/2)}{12(n+\|S\|/2)})\prod_{j=1}^n (1+\frac{\theta(s_j)}{12s_j})}.\nonumber
\end{gather}
Denoting 
$$\Theta(S) = 
\frac{(1+\frac{\theta(n+\|S\|-1)}{12(n+\|S\|-1)}) \prod_{j=1}^n(1+\frac{\theta(s_j/2)}{12s_j/2})}
{(1+\frac{\theta(n+\|S\|/2)}{12(n+\|S\|/2)})\prod_{j=1}^n (1+\frac{\theta(s_j)}{12s_j})}$$
and simplifying the previous inequality we deduce 
\begin{gather}
\frac{(n+\|S\|-1)!  \prod_{j=1}^n\Gamma(s_j/2+1)}{(K+S)!\Gamma(n+\|S\|/2)  r^{\|S\|}}\leq \nonumber\\
\leq \Theta(S)
\frac{2^{(1-n)/2}e^{-\|S\|/2}}{r^{\|S\|}}
\left(\frac{n-1+\|S\|}{n-1+\|S\|/2} \right)^{n-1+\|S\|/2} \cdot (n-1+\|S\|)^{\|S\|/2} \prod_{j=1}^n (\frac{e}{2s_j})^{s_j/2} \leq \nonumber \\
\leq  \Theta(S) \frac{2^{(n-1+\|S\|)/2}e^{-\|S\|/2}}{r^{\|S\|}}  (n-1+\|S\|)^{\|S\|/2} \prod_{j=1}^n (\frac{e}{2s_j})^{s_j/2}=\nonumber \\ 
= \Theta(S)\frac{2^{(n-1)/2}}{r^{\|S\|}} \left(1+\frac{n-1}{\|S\|}\right)^{\frac{\|S\|}{n-1}\cdot \frac{n-1}{2}}\cdot \|S\|^{\|S\|/2}
\prod_{j=1}^n \frac{1}{s_j^{s_j/2}} \le \nonumber \\ 
\le \Theta(S)(2e)^{(n-1)/2} \left(\frac{1}{r} \prod_{j=1}^n \left(\frac{\|S\|}{s_j}\right)^{\frac{s_j}{2\|S\|}}\right)^{\|S\|} \text{ as }
s_j\to \infty. \label{simplest}
\end{gather}

Denote $x_j=\frac{\|S\|}{s_j}\in(1,+\infty),$ $x=(x_1,\ldots,x_n).$ 
Obviously, $\Theta(S)\to 1$ as $s_j\to \infty,$ $j\in\{1,\ldots,n\}.$
Then \eqref{simplest} implies a constrained optimization problem 
	$$H(x):= \prod_{j=1}^n x_j^{1/(2x_j)}\to \max$$
	\begin{equation} \label{lagamma}
	\text{ subject to } \sum_{j=1}^n \frac{1}{x_j}=1, \ x_j\in(1,+\infty).
	\end{equation}
	If this problem has a solution, then $H(x)$ is not greater than some $H^*$ and we choose $r>H^*$ in \eqref{simplest}.
	
	Let us to introduce a Lagrange multiplier $\lambda$ and to study the Lagrange function $\mathcal{L}(x,\lambda)$ defined by
	$$
	\mathcal{L}(x,\lambda)= \prod_{j=1}^n x_j^{1/(2x_j)}+\lambda (\sum_{j=1}^n \frac{1}{x_j}-1).
	$$
	A necessary condition for optimality in constrained problems yield that
	$$
	\frac{\partial \mathcal{L}}{\partial x_j}=\frac{1-\ln{x_j}}{2x_j^2} \prod_{\substack{k=1}}^n x_k^{1/(2x_k)} 
	+\lambda (-\frac{1}{x_j^2})=0
	$$
	or 
	$$
	\frac{1-\ln x_j}{2}=\lambda/\prod_{\substack{k=1}}^n x_k^{1/(2x_k)}
	$$
	Hence, $x_j=\exp\left(1-2\lambda/\prod_{\substack{k=1}}^n x_k^{1/(2x_k)}\right),$ i.e. 
	$x_1=x_2=\ldots=x_n.$
	Constraint \eqref{lagamma} implies that 
	$$\sum_{j=1}^n \frac{1}{x_j}= \frac{n}{x_1}=1$$
	or $x_j=n$ for every $j\in\{1,\ldots,n\}.$ 
	Then $H(x)\leq \prod_{j=1}^n n^{1/(2n)}=\sqrt{n}.$
	
	We choose $r\ge \sqrt{n}. $ For this $r$ we have $\frac{1}{r} \prod_{j=1}^n \left(\frac{\|S\|}{s_j}\right)^{\frac{s_j}{2\|S\|}}\le 1.$
	In view of	\eqref{simplest} it means that there exist $n_2$ such that 
	\begin{equation} \label{gam1}
	\frac{(n+\|S\|-1)!  \prod_{j=1}^n\Gamma(s_j/2+1)}{(K+S)!\Gamma(n+\|S\|/2)  r^{\|S\|}} \leq 1
	\end{equation}
	for all $\|S\|\geq n_2.$
	
	The asymptotic behavior of right part \eqref{asymest} in other cases $S$ can be investigated similarly.
	Taking into account \eqref{asymest}, \eqref{nongam1} and \eqref{gam1} we have that for all $\|S\|\ge n_1+n_2$ 
	\begin{equation*}
	\frac{|F^{(K+S)}(z^0)|}{(K+S)!\mathbf{L}^{S+K}(z^0)}\leq \frac{ |F^{(K^0)}(z^0)|}{K^0!\mathbf{L}^{K^0}(z^0)}.
	\end{equation*}
	
	This means that for every $J\in \mathbb{Z}_+^n$
	\begin{gather*}
	\frac{|F^{(J)}(z^0)|}{J! \mathbf{L}^{J}(z^0)} \leq
	\max\left\{ \frac{|F^{(K)}(z^0)|}{K! {\mathbf{L}^{K}(z^0)}}: \|K\|\le n_0+n_1+n_2\right\}
	\end{gather*}
	where $n_0,$  $n_1,$ $n_2$ are independent of $z_0$. Therefore, the function $F$ has bounded $\mathbf{L}$-index in joint variables with $N(F,\mathbf{L},\mathbb{B}^n)\le n_0+n_1+n_2.$
\end{proof}

If we impose additional constraint by the function $\mathbf{L}$  then Theorem \ref{petr1ex-ball} implies the following criterion
\begin{theorem}
	Let $\mathbf{L} \in Q'(\mathbb{B}^n)$ be such that $\sup_{z\in\mathbb{B}^n} \frac{\mathcal{L}(z)}{\ell(z)}=C<
	\infty.$
	An analytic in $\mathbb{B}^n$ function $F$ has bounded $\mathbf{L}$-index in joint variables if and only if 
	for each
	$r\in(0,\beta]$ there exist $n_0\in \mathbb{Z}_+$, $p_0>0$ such that for every $z^0 \in\mathbb{B}^n$ there exists $K^0\in \mathbb{Z}_+^n$, $\|K^0\|\leq n_0$, such that inequality \eqref{net1ex-ball} holds.
\end{theorem}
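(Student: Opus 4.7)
The sufficiency is immediate: inequality~\eqref{net1ex-ball} is exactly the sufficient criterion already established in Theorem~\ref{petr1ex-ball}, and the extra hypothesis $\mathcal{L}/\ell\le C$ plays no role there.

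For the necessity, the plan is to rerun the proof of the necessary part of Theorem~\ref{petr1ex-ball} with a single change: every ball of radius $pr/(q\mathcal{L}(z^0))$ is replaced by a ball of radius $pr/(q\ell(z^0))$. Fix $r\in(0,\beta]$, let $N=N(F,\mathbf{L},\mathbb{B}^n)$, and redefine
\[
S_p(z^0,r)=\max\!\left\{\tfrac{|F^{(K)}(z)|}{K!\mathbf{L}^{K}(z)}\colon\ \|K\|\le N,\ z\in\mathbb{B}^n\!\left[z^0,\tfrac{pr}{q\ell(z^0)}\right]\right\},
\]
and $S_p^{*}(z^0,r)$ analogously with $\mathbf{L}^{K}(z^0)$ in the denominator, where $q$ is to be chosen. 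Because $\mathbb{B}^n[z^0,pr/(q\ell(z^0))]\subset \mathbb{B}^n[z^0,r/\ell(z^0)]$ for each $p\le q$, the bounds \eqref{lam1ex}--\eqref{lam2ex} on $l_j(z)/l_j(z^0)$ still apply, and the standard comparison between $S_p$ and $S_p^{*}$ carries over verbatim from Theorem~\ref{petr1ex-ball}.

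The only place where the replacement $\mathcal{L}\mapsto\ell$ affects the argument is the mean-value estimate on the increment $|z^{(p)}-\widetilde z^{(p)}|=r/(q\ell(z^0))$: using $\mathcal{L}(z^0)\le C\ell(z^0)$ one now obtains
\[
\sum_{j=1}^{n} l_j(z^0)\,|z_j^{(p)}-\widetilde z_j^{(p)}|\le \sqrt{n}\,\mathcal{L}(z^0)\cdot\frac{r}{q\ell(z^0)}\le \frac{\sqrt{n}\,C\,r}{q}.
\]
Accordingly I choose
\[
q=\Big[\,2(N+1)\sqrt{n}\,C\,r\prod_{j=1}^{n}(\lambda_{1,j}(r))^{-N}(\lambda_{2,j}(r))^{N+1}\,\Big]+1,
\]
which restores the contractive step $S_p^{*}(z^0,r)\le 2 S_{p-1}^{*}(z^0,r)$ as in Theorem~\ref{petr1ex-ball}. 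Iterating from $p=1$ to $p=q$ and translating $S_q^{*}$ back to $S_q$ yields the desired inequality~\eqref{net1ex-ball} with $p_0=\big(2\prod_{j=1}^{n}(\lambda_{1,j}(r))^{-N}(\lambda_{2,j}(r))^{N}\big)^{q}$ and some $K^0$ with $\|K^0\|\le N$.

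The only delicate point is tracking the single factor $C$ through the mean-value step and checking that the resulting $q$ is finite and independent of $z^0$; once that is done, the rest of the argument is verbatim that of Theorem~\ref{petr1ex-ball}. A less computational alternative would be to apply Theorem~\ref{petr1ex-ball} directly with $r':=Cr$ when $Cr\le\beta$ (so that $\mathbb{B}^n[z^0,r/\ell(z^0)]\subset \mathbb{B}^n[z^0,r'/\mathcal{L}(z^0)]$) and cover the remaining range $r\in(\beta/C,\beta]$ by a short chaining argument through finitely many overlapping balls; the integrated route above appears cleaner.
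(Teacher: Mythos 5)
Your proposal is correct and follows essentially the same route as the paper: the paper likewise takes sufficiency directly from Theorem~\ref{petr1ex-ball} and proves necessity by rerunning that proof with $\ell(z^0)$ in place of $\mathcal{L}(z^0)$, absorbing the factor $C$ into the choice of $q$. Your choice $q=[2(N+1)\sqrt{n}\,C\,r\prod_{j=1}^{n}(\lambda_{1,j}(r))^{-N}(\lambda_{2,j}(r))^{N+1}]+1$ even retains the $\sqrt{n}$ from the Cauchy--Schwarz step that the paper's terse restatement omits, so no further changes are needed.
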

\begin{proof}
	Sufficiency is proved in Theorem \ref{petr1ex-ball}. 
	As for necessity we choose $q= q(R)= [ 2(N+1) C r \prod_{j=1}^{n}(\lambda_{1,j}(r))^{-N}(\lambda_{2,j}(r))^{N+1}]+1$ 
	and replace $\mathcal{L}(z^0)$ by $\ell(z^0)$ in the proof of Theorem~\ref{petr1ex-ball}. No other changes. 
\end{proof}	

\begin{theorem}
	\label{ball-cor1ex}
	Let $\mathbf{L} \in Q'(\mathbb{B}^n).$ In order that an analytic in $\mathbb{B}^n$ function $F$ be of bounded $\mathbf{L}$-index in joint variables it is necessary that for every $r\in(0,\beta]$  $\exists n_0 \in \mathbb{Z}_+$ $\exists p\geq 1$ $\forall z^0 \in \mathbb{B}^n$ $\exists K^0 \in \mathbb{Z}_+^n $, $\|K^0\| \leq n_0,$ and
	\begin{gather}
	\max\left \{ |F^{(K^0)}(z)|:z \in \mathbb{B}^n\left[z^0, {r}/{\mathcal{L}(z^0)}\right] \right\} \leq p|F^{(K^0)}(z^0)|
	\label{conc1ex}
	\end{gather}
	and it is sufficient that for every $r\in(0,\beta]$ $\exists n_0 \in \mathbb{Z}_+$ $\exists p\geq 1$ $\forall z^0 \in \mathbb{B}^n$  $\forall j\in\{1, \ldots, n\}$ $\exists K^0_j=(0,\ldots,0, \underbrace{k^0_j}_{j\text{-th place}},0,\ldots,0)$  such that $k_j^0 \leq n_0$  and
	\begin{gather}
	\!\max \left\{ |F^{(K^0_j)}(z)|:  z \in \mathbb{B}^n\left[z^0, {r}/{\ell(z^0)}\right] \right\} \leq p|F^{(K_j^0)}(z^0)| \ \forall j\in\{1, \ldots, n\},\!
	\label{conc3ex}
	\end{gather}
\end{theorem}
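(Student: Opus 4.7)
The plan is to reduce both implications to Theorem~\ref{petr1ex-ball}, mirroring the reduction of Theorem~\ref{cor1} to Theorem~\ref{petr1-ball}. For necessity, I would invoke Theorem~\ref{petr1ex-ball} to obtain $n_0$, $p_0$ and, for each $z^0\in\mathbb{B}^n$, some $K^0$ with $\|K^0\|\le n_0$ such that \eqref{net1ex} holds. Specializing the left-hand side to $K=K^0$ alone, multiplying through by $K^0!\,\mathbf{L}^{K^0}(z)$, and bounding $\mathbf{L}^{K^0}(z)/\mathbf{L}^{K^0}(z^0)\le \prod_{j=1}^n \lambda_{2,j}^{n_0}(r)$ via \eqref{lam2ex} (the ball $\mathbb{B}^n[z^0,r/\mathcal{L}(z^0)]$ is contained in $\mathbb{B}^n[z^0,r/\ell(z^0)]$, so the supremum defining $\lambda_{2,j}(r)$ applies) yields \eqref{conc1ex} with $p=p_0\prod_{j=1}^n \lambda_{2,j}^{n_0}(r)$.

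For sufficiency, the key geometric observation is the inclusion $\mathbb{D}^n[z^0,r\mathbf{1}/(\sqrt{n}\,\ell(z^0))]\subset \mathbb{B}^n[z^0,r/\ell(z^0)]$, since any point of the polydisc has Euclidean distance at most $\sqrt{n\cdot r^2/(n\ell^2(z^0))}=r/\ell(z^0)$ from $z^0$. Fix $j\in\{1,\ldots,n\}$, take the corresponding $K_j^0=k_j^0\mathbf{1}_j$ supplied by the hypothesis, and apply the multidimensional Cauchy formula on the skeleton of this polydisc; since the skeleton sits inside the ball, hypothesis \eqref{conc3ex} gives, for every $S\in\mathbb{Z}_+^n$,
\begin{gather*}
\frac{|F^{(K_j^0+S)}(z^0)|}{S!}\le p|F^{(K_j^0)}(z^0)|\left(\frac{\sqrt{n}\,\ell(z^0)}{r}\right)^{\|S\|}.
\end{gather*}
Dividing by $(K_j^0+S)!\,\mathbf{L}^{K_j^0+S}(z^0)$ and using $\ell(z^0)^{\|S\|}\le \mathbf{L}^S(z^0)$ together with $K_j^0!\,S!/(K_j^0+S)!\le 1$, I arrive at
\begin{gather*}
\frac{|F^{(K_j^0+S)}(z^0)|}{(K_j^0+S)!\,\mathbf{L}^{K_j^0+S}(z^0)}\le p\left(\frac{\sqrt{n}}{r}\right)^{\|S\|}\frac{|F^{(K_j^0)}(z^0)|}{K_j^0!\,\mathbf{L}^{K_j^0}(z^0)}.
\end{gather*}

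Choosing $r\in(\sqrt{n},\beta]$ (admissible because $\beta>\sqrt{n}$), I fix $s_0$ with $p(\sqrt{n}/r)^{s_0}\le 1$. For arbitrary $J\in\mathbb{Z}_+^n$ with $\|J\|\ge nn_0+s_0$, pigeonholing $\|J\|$ over the $n$ coordinates produces some index $j$ with $j_j\ge n_0\ge k_j^0$; writing $J=K_j^0+S$ with $S\ge\mathbf{0}$ and $\|S\|\ge s_0$, the previous display bounds $|F^{(J)}(z^0)|/(J!\,\mathbf{L}^J(z^0))$ by $\max\{|F^{(K)}(z^0)|/(K!\,\mathbf{L}^K(z^0))\colon \|K\|\le n_0\}$, giving $N(F,\mathbf{L},\mathbb{B}^n)\le nn_0+s_0$. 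The main obstacle is securing the $\|S\|$-decay in the Cauchy estimate: the Euclidean-to-polydisc inclusion costs a factor of $\sqrt{n}$, and the decay requires $r>\sqrt{n}$, which is precisely why the standing hypothesis $\beta>\sqrt{n}$ from \eqref{Lbeta-ball} is indispensable here.
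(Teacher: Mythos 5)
Your necessity argument is the same as the paper's: specialize \eqref{net1ex} to $K=K^0$ and absorb the ratio $\mathbf{L}^{K^0}(z)/\mathbf{L}^{K^0}(z^0)$ via $\lambda_{2,j}(r)$. For sufficiency, though, you take a genuinely different and noticeably simpler route. The paper applies the spherical Cauchy formula
\[
F^{(K^0_j+S)}(z^0) = \frac{(n+\|S\|-1)!}{(n-1)!} \int_{\mathbb{S}^n(z^0,r/\ell(z^0))} \frac{(\overline{\xi-z^0})^S F^{(K^0_j)}(\xi)}{|\xi-z^0|^{2(n+\|S\|)-1}}\, d\sigma(\xi),
\]
reduces $\int_{\mathbb{S}^n(\mathbf 0,1)}|\xi^S|\,d\sigma$ to the Gamma-quotient $\Gamma(n)\prod_j\Gamma(s_j/2+1)/\Gamma(n+\|S\|/2)$, and then needs Stirling's formula together with a Lagrange-multiplier optimization of $\prod_j x_j^{1/(2x_j)}$ under $\sum 1/x_j=1$ to show that the decay factor falls below $1$ once $r\ge\sqrt n$. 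You instead observe the geometric inclusion $\mathbb{T}^n\!\left(z^0,\tfrac{r\mathbf 1}{\sqrt n\,\ell(z^0)}\right)\subset\mathbb{B}^n\!\left[z^0,\tfrac{r}{\ell(z^0)}\right]$ and apply the ordinary torus Cauchy formula, which hands you the decay $\left(\sqrt n/r\right)^{\|S\|}$ immediately; the constraint $r>\sqrt n$ is satisfied because $\beta>\sqrt n$. Your pigeonholing step (for $\|J\|\ge n n_0+s_0$ some coordinate satisfies $j_j\ge n_0\ge k_j^0$, and then $\|J-K_j^0\|\ge s_0$) is sound, and the bounds $\ell(z^0)^{\|S\|}\le\mathbf{L}^S(z^0)$ and $K_j^0!S!/(K_j^0+S)!\le1$ are used correctly. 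Your approach avoids all of the $\Gamma$-asymptotics and Lagrange machinery, at the mild cost of a slightly coarser bound on the index ($nn_0+s_0$ versus the paper's $n_0+n_1+n_2$), which is irrelevant since only finiteness is claimed.
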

\begin{proof}
	Proof of Theorem \ref{petr1ex-ball} implies that the inequality \eqref{net1ex}
	is true for some $K^0.$
	Therefore, we have
	\begin{gather*}
	\frac{p_0}{K^0!} \frac{|F^{(K^0)}(z^0)|}{\mathbf{L}^{K^0} (z^0)}%\geq \\
	\geq
	\max\left\{  \frac{|F^{(K^0)}(z)|}{K^0!\mathbf{L}^{K^0} (z)}: z\in \mathbb{B}^n \left[z^0,{r}/{\mathcal{L}(z^0)}\right] \right\} =
	\\
	= \max\left\{ \frac{|F^{(K^0)}(z)|}{K^0!} \frac{\mathbf{L}^{K^0} (z^0)}{ \mathbf{L}^{K^0} (z^0) \mathbf{L}^{K^0} (z)}: z\in \mathbb{B}^n \left[z^0,{r}/{\mathcal{L}(z^0)}\right] \right\} \geq
	\\
	\geq
	\max\left\{ \frac{|F^{(K^0)}(z)|}{K^0!} \frac{\prod_{j=1}^n{(\lambda_{2,j}(r))}^{-n_0}}{\mathbf{L}^{K^0} (z^0)}: z\in \mathbb{B}^n \left[z^0,{r}/{\mathcal{L}(z^0)}\right] \right\}.
	\end{gather*}
	This inequality implies
	\begin{gather}
	\frac{p_0\prod_{j=1}^n(\lambda_{2,j}(r))^{n_0}}{K^0!} \frac{|F^{(K^0)}(z^0)|}{\mathbf{L}^{K^0} (z^0)} %\geq \nonumber\\
	\geq
	\max\left\{ \frac{|F^{(K^0)}(z)|}{K^0!\mathbf{L}^{K^0} (z^0)}: z\in \mathbb{B}^n \left[z^0,{r}/{\mathcal{L}(z^0)}\right] \right\}.
	\label{conc2ex}
	\end{gather}
	From \eqref{conc2ex} we obtain inequality \eqref{conc1ex} with
	$p=p_0\prod_{j=1}^n{(\lambda_{2,j}(r))}^{n_0}$.
	The necessity of condition \eqref{conc1ex} is proved.
	
	Now we prove the sufficiency of \eqref{conc3ex}. Suppose that for every $r\in(0,\beta]$ $\exists n_0 \in \mathbb{Z}_+,$ $p>1$ such that $\forall z_0 \in \mathbb{B}^n $ and some
	$ K_j^0\in\mathbb{Z}^n_+$ with $k^0_j\leq n_0$ the inequality \eqref{conc3ex}  holds.

	$$
	\frac{F^{(K_J^0+S)}(z^0)}{S!}=\frac{1}{(2\pi i)^2} \int_{T^n\left(z^0,{R}/{\mathbf{L}(z^0)}\right)} \frac{F^{(K_J^0)}(z)}{(z-z^0)^{S+\mathbf{e}}} dz.
	$$
	In view of \eqref{cauchy-ball} we write Cauchy's formula as following $\forall z^0\in \mathbb{B}^n$  $\forall S \in \mathbb{Z}_+^n $
	$$F^{(K^0_j+S)}(z^0) = \frac{(n+\|S\|-1)!}{(n-1)!} \int_{\mathbb{S}^n(z^0,r/\ell(z^0))} \frac{(\overline{\xi-z^0})^SF^{(K^0_j)}(\xi)}{|\xi-z^0|^{2(n+\|S\|)-1}} d\sigma(\xi)$$
	
	As in \eqref{gamma-cauchy-ball}, this yields
	\begin{gather*}
	|F^{(K^0_j+S)}(z^0)| \leq \frac{(n+\|S\|-1)!}{(n-1)!} \left(\frac{\ell(z^0)}{r}\right)^{2(n+\|S\|)-1} \max\{|F^{(K^0_j)}(z)| \colon 
	z \in \mathbb{B}^n\left[z^0, {r}/{\ell(z^0)}\right]\} \times \\
	\times \int_{\mathbb{S}_n(z^0, {r}/{\ell(z^0)})}|(\overline{\xi-z^0})^S|d\sigma(\xi)\leq 
	\frac{(n+\|S\|-1)!}{(n-1)!} \left(\frac{\ell(z^0)}{r}\right)^{\|S\|} \times \\ \times \max\{|F^{(K^0_j)}(z)| \colon 
	z \in \mathbb{B}^n\left[z^0, {r}/{\ell(z^0)}\right]\}
	\int_{\mathbb{S}^n(z^0,r/\ell(z^0))} 
	\frac{|(\xi-z^0)^S|}{(r/\ell(z^0))^{\|S\|}} d\sigma\left(\frac{\xi-z^0}{r/\ell(z^0)}\right)\leq \\
	\leq \frac{(n+\|S\|-1)!}{(n-1)!} \left(\frac{\ell(z^0)}{r}\right)^{\|S\|} \max\{|F^{(K^0_j)}(z)| \colon 
	z \in \mathbb{B}^n\left[z^0, {r}/{\ell(z^0)}\right]\}
	\int_{\mathbb{S}^n(\mathbf{0},1)} |\xi^{S}| d\sigma(\xi) = \\ 
	= \frac{(n+\|S\|-1)!}{(n-1)!} \left(\frac{\ell(z^0)}{r}\right)^{\|S\|} \max\{|F^{(K^0_j)}(z)| \colon 
	z \in \mathbb{B}^n\left[z^0, {r}/{\ell(z^0)}\right]\}	 
	\frac{\Gamma(n)  \prod_{j=1}^n\Gamma(s_j/2+1)}{\Gamma(n+\|S\|/2)}
	\end{gather*}

	Now we put $r=\beta$ and use \eqref{conc3ex}
	\begin{gather}
	|F^{(K^0_j+S)}(z^0)| \leq 
	\left(\frac{\ell(z^0)}{\beta}\right)^{\|S\|}
	\frac{  (n+\|S\|-1)! \prod_{j=1}^n\Gamma(s_j/2+1)}{\Gamma(n+\|S\|/2)} 
	\times \nonumber \\ \times  \max\{|F^{(K^0_j)}(z)| \colon 
	z \in \mathbb{B}^n\left[z^0, {\beta}/{\ell(z^0)}\right]\}\leq \nonumber \\ \leq 
	p \left(\frac{\ell(z^0)}{\beta}\right)^{\|S\|}
	\frac{(n+\|S\|-1)!  \prod_{j=1}^n\Gamma(s_j/2+1)}{\Gamma(n+\|S\|/2)} 
	|F^{(K_j^0)}(z^0)|. \label{eqa1ex}
	\end{gather}
	Therefore \eqref{eqa1ex} implies for all $j\in\{1,\ldots,n\}$ and $k_j^0\leq n_0$ 
	\begin{gather*}
	\frac{|F^{(K_j^0+S)}(z^0)|}
	{{\mathbf{L}^{K_j^0+S}(z^0)}(K_j^0+S)!} \leq
	p \frac{K^0_j!(n+\|S\|-1)!  \prod_{j=1}^n\Gamma(s_j/2+1)}{\beta^{\|S\|} (K_j^0+S)!\Gamma(n+\|S\|/2)} 
	\frac{|F^{(K_j^0)}(z^0)|}{{\mathbf{L}^{K_j^0}(z^0)}K_j^0!} \leq \\ 
	\leq pn_0! \frac{(n+\|S\|-1)!  \prod_{j=1}^n\Gamma(s_j/2+1)}{\beta^{\|S\|} S!\Gamma(n+\|S\|/2)} 
	\frac{|F^{(K_j^0)}(z^0)|}{{\mathbf{L}^{K_j^0}(z^0)}K_j^0!}
	\end{gather*}
	In view of \eqref{gam1} there exists $n_1$ such that for all $\|S\|\geq n_1$ 
	$$ \frac{(n+\|S\|-1)!  \prod_{j=1}^n\Gamma(s_j/2+1)}{\beta^{\|S\|} S!\Gamma(n+\|S\|/2)} \leq 1.$$
	Obviously that there exists $n_2$ such that for all $\|S\|\geq n_2$ \ $\frac{ pn_0!}{\beta^{\|S\|}}\leq 1.$
	Consequently, we have
	$$ \frac{|F^{(K_j^0+S)}(z^0)|}
	{{\mathbf{L}^{K_j^0+S}(z^0)}(K_j^0+S)!} \leq
	\frac{|F^{(K_j^0)}(z^0)|}{{\mathbf{L}^{K_j^0}(z^0)}K_j^0!} \text{ for all } \|S\|\geq n_1+n_2
	$$
	i. e. $N(F,\mathbf{L},\mathbb{B}^n)\leq n_0+n_1+n_2.$
\end{proof}

\begin{remark}\rm
	Inequality \eqref{conc1ex} is a necessary and sufficient condition of boundedness of $l$-index for functions of one variable \cite{sher,kusher,sherkuz}. But it is unknown whether this condition is sufficient condition of boundedness of $\mathbf{L}$-index in joint variables. Our restrictions \eqref{conc3ex} are corresponding multidimensional sufficient conditions.
\end{remark}

\begin{lemma} \label{indexgreatlex}
		Let $\mathbf{L}_1, $ $\mathbf{L}_2$ be positive continuous functions in $\mathbb{B}^n$ and for every $z\in\mathbb{B}^n$ 
	$\mathbf{L}_1(z)\leq \mathbf{L}_2(z).$
If an analytic in $\mathbb{B}^n$ function $F$ has bounded $\mathbf{L}_1$-index in joint variables then 
	$F$ is of bounded $\mathbf{L}_2$-index in joint variables. 
	If, in addition, for every	 $z\in\mathbb{B}^n$  $\mathcal{L}_1(z)\leq \ell_2(z)$  then $N(F,\mathbf{L}_2,\mathbb{B}^n)\leq N(F,\mathbf{L}_1,\mathbb{B}^n).$
\end{lemma}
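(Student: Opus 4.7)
The plan is to mirror the proof of Lemma~\ref{indexgreatl}. Set $N_1=N(F,\mathbf{L}_1,\mathbb{B}^n)$. The defining inequality of bounded $\mathbf{L}_1$-index gives, for every $J\in\mathbb{Z}^n_+$ and $z\in\mathbb{B}^n$,
\[\frac{|F^{(J)}(z)|}{J!\mathbf{L}_1^J(z)} \leq \max_{\|K\|\leq N_1}\frac{|F^{(K)}(z)|}{K!\mathbf{L}_1^K(z)}.\]
I would multiply both sides by $\mathbf{L}_1^J(z)/\mathbf{L}_2^J(z)$ and rewrite $1/\mathbf{L}_1^K(z)=(\mathbf{L}_2/\mathbf{L}_1)^K(z)/\mathbf{L}_2^K(z)$ so as to recast everything in $\mathbf{L}_2$-weights, arriving at
\[\frac{|F^{(J)}(z)|}{J!\mathbf{L}_2^J(z)} \leq \max_{\|K\|\leq N_1}\left(\frac{\mathbf{L}_1(z)}{\mathbf{L}_2(z)}\right)^{J-K}\frac{|F^{(K)}(z)|}{K!\mathbf{L}_2^K(z)}.\]
The proof then reduces to controlling the factor $(\mathbf{L}_1/\mathbf{L}_2)^{J-K}$ and showing it is $\leq 1$ once $J$ is large enough relative to $K$.

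For the first assertion, $\mathbf{L}_1\leq\mathbf{L}_2$ componentwise gives $l_{1,i}/l_{2,i}\leq 1$, hence $(l_{1,i}/l_{2,i})^{J_i-K_i}\leq 1$ whenever $J_i\geq K_i$. Arguing as in Lemma~\ref{indexgreatl}, whenever $J$ componentwise dominates every admissible $K$ with $\|K\|\leq N_1$ --- which is automatic once each $J_i\geq N_1$, i.e.\ for $\|J\|$ beyond a threshold of order $nN_1$, with the residual multi-indices absorbed by enlarging the cap on $\|K\|$ --- all such factors are simultaneously $\leq 1$. This yields
\[\frac{|F^{(J)}(z)|}{J!\mathbf{L}_2^J(z)} \leq \max_{\|K\|\leq M}\frac{|F^{(K)}(z)|}{K!\mathbf{L}_2^K(z)}\]
for some $M=M(n,N_1)$, proving bounded $\mathbf{L}_2$-index in joint variables.

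For the second assertion, the stronger hypothesis $\mathcal{L}_1(z)\leq\ell_2(z)$ supplies the uniform cross-inequality $l_{1,i}(z)\leq l_{2,j}(z)$ for \emph{every} pair $(i,j)$ and every $z$, not merely on the diagonal. I would use this uniformity to replace the componentwise domination $J\geq K$ by the much weaker scalar comparison $\|J\|\geq\|K\|$: pairing each "deficit" coordinate (where $J_i<K_i$) with an "excess" coordinate (where $J_{i'}>K_{i'}$) via the cross-inequality and extracting a net power $(\mathcal{L}_1/\ell_2)^{\|J\|-\|K\|}\leq 1$, one gets $(\mathbf{L}_1(z)/\mathbf{L}_2(z))^{J-K}\leq 1$ as soon as $\|J\|\geq\|K\|$. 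Consequently, for every $J$ with $\|J\|>N_1\geq\|K\|$ the factor is $\leq 1$, the cap on the right-hand max may be kept at $N_1$, and we conclude $N(F,\mathbf{L}_2,\mathbb{B}^n)\leq N_1=N(F,\mathbf{L}_1,\mathbb{B}^n)$.

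The main obstacle will be justifying the key factor estimate $(\mathbf{L}_1/\mathbf{L}_2)^{J-K}\leq 1$ in the mixed-sign case, where some $J_i>K_i$ and others $J_i<K_i$. For the first assertion this is bypassed by forcing $J$ large in every coordinate, at the cost of an $n$-fold enlargement of the index bound (as in Lemma~\ref{indexgreatl}); for the second assertion the naive coordinatewise bound $(\mathbf{L}_1/\mathbf{L}_2)^{J-K}\leq(\mathcal{L}_1/\ell_2)^{\|J\|-\|K\|}$ fails and the uniform hypothesis $\mathcal{L}_1\leq\ell_2$ must be deployed through the cross-pairing of deficits with excesses, which is the delicate point to get right.
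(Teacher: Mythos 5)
Your proposal reproduces the paper's argument essentially verbatim: recast the defining inequality in $\mathbf{L}_2$-weights and attempt to bound the factor $\left(\mathbf{L}_1/\mathbf{L}_2\right)^{J-K}$ by $1$. You have correctly put your finger on the real difficulty --- the mixed-sign case, where some $j_i<k_i$ --- but neither of the remedies you sketch actually closes that gap. For the first assertion, the condition ``$J_i\geq N_1$ for every $i$'' is not implied by $\|J\|\geq nN_1$ (take $J=(m,0,\dots,0)$ with $m$ large), and those exceptional multi-indices have \emph{unbounded} norm, so they cannot be disposed of by ``enlarging the cap on $\|K\|$'' to a fixed $M$; the reduction simply does not reach them. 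The paper's proof is equally terse at this step (it asserts the conclusion for $\|J\|\geq nN_1$), so you are not missing an idea the paper supplies, but as written the step is not justified.

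For the second assertion the cross-pairing bound fails outright. Take $n=2$, $\mathbf{L}_1\equiv(1,1)$, $\mathbf{L}_2\equiv(10,2)$, so that $\mathcal{L}_1=1\leq 2=\ell_2$; with $J=(0,2)$ and $K=(1,0)$ one has $\|J\|=2>1=\|K\|$, and yet
\[
\left(\frac{\mathbf{L}_1}{\mathbf{L}_2}\right)^{J-K}=\left(\frac{1}{10}\right)^{-1}\left(\frac{1}{2}\right)^{2}=\frac{10}{4}>1.
\]
The hypothesis $\mathcal{L}_1\leq\ell_2$ bounds each ratio $l_{2,i}/l_{1,i}$ from \emph{below} by $\ell_2/\mathcal{L}_1\geq 1$, never from above, so a deficit factor $(l_{2,i}/l_{1,i})^{k_i-j_i}$ may be arbitrarily large and is not cancelled by an excess in a different coordinate. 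Note that the paper's proof invokes at this point the estimate $\left(\mathbf{L}_1/\mathbf{L}_2\right)^{J-K}\leq\left(\mathcal{L}_1/\ell_2\right)^{\|J-K\|}$, which the same example refutes; your proposal thus mirrors the paper's reasoning faithfully, but the key factor estimate is incorrect in the mixed-sign case and the argument does not go through as presented.
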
	
\begin{proof}
	Let $N(F,\mathbf{L}_1,\mathbb{B}^n)=n_0.$
	Using \eqref{ineqoz2} we deduce 
	\begin{gather}
	\frac{|F^{(J)}(z)|}{J!\mathbf{L}^{J}_2(z)} = \frac{\mathbf{L}_1^J(z)}{\mathbf{L}_2^J(z)} \frac{|F^{(J)}(z)|}{J!\mathbf{L}^{J}_1(z)} 
	\leq  \frac{\mathbf{L}_1^J(z)}{\mathbf{L}_2^J(z)}  \max
	\left\{\frac{|F^{(K)}(z)|}{K! \mathbf{L}_1^{K}(z)}:\
	K\in\mathbb{Z}^{n}_{+},\ \|K\|\leq n_0\right\} \leq \nonumber \\
	\leq \frac{\mathbf{L}_1^J(z)}{\mathbf{L}_2^J(z)} 
	\max
	\left\{
	\frac{\mathbf{L}_2^K(z)}{\mathbf{L}_1^K(z)} 
	\frac{|F^{(K)}(z)|}{K! \mathbf{L}_2^{K}(z)}:\
	K\in\mathbb{Z}^{n}_{+},\ \|K\|\leq n_0\right\}\leq \nonumber \\
	\leq \max_{\|K\|\leq n_0} \left(\frac{\mathbf{L}_1(z)}{\mathbf{L}_2(z)}\right)^{J-K} 
	\max
	\left\{\frac{|F^{(K)}(z)|}{K! \mathbf{L}_2^{K}(z)}:\
	K\in\mathbb{Z}^{n}_{+},\ \|K\|\leq n_0\right\}. \label{ineqbigl}
	\end{gather} 
	Since $\mathbf{L}_1(z)\leq \mathbf{L}_2(z)$ it means that for all $\|J\|\geq n n_0$
	$$
	\frac{|F^{(J)}(z)|}{J!\mathbf{L}^{J}_2(z)} \leq 
	\max
	\left\{\frac{|F^{(K)}(z)|}{K! \mathbf{L}_2^{K}(z)}:\
	K\in\mathbb{Z}^{n}_{+},\ \|K\|\leq n_0\right\}.
	$$
	Thus, $F$ has bounded $\mathbf{L}_2$-index in joint variables.
	
	If, in addition, for every	 $z\in\mathbb{B}^n$  $\mathcal{L}_1(z)\leq \ell_2(z)$ 
	then for all $\|J\|\geq n_0$ \eqref{ineqbigl} yields  
	\begin{gather*}
	\frac{|F^{(J)}(z)|}{J!\mathbf{L}^{J}_2(z)} \leq 
	\max_{\|K\|\leq n_0} \left(\frac{\mathcal{L}_1(z)}{\ell_2(z)}\right)^{\|J-K\|} 	
	\max
	\left\{\frac{|F^{(K)}(z)|}{K! \mathbf{L}_2^{K}(z)}:\
	K\in\mathbb{Z}^{n}_{+},\ \|K\|\leq n_0\right\}\leq \\ 
	\leq \max
	\left\{\frac{|F^{(K)}(z)|}{K! \mathbf{L}_2^{K}(z)}:\
	K\in\mathbb{Z}^{n}_{+},\ \|K\|\leq n_0\right\}
	\end{gather*}
	and $N(F,\mathbf{L}_2,\mathbb{B}^n)\leq N(F,\mathbf{L}_1,\mathbb{B}^n).$
\end{proof}

Denote $\widetilde{\mathbf{L}}(z)=(\widetilde{l}_1(z),\ldots,\widetilde{l}_n(z))$. 
The notation $\mathbf{L}\asymp \widetilde{\mathbf{L}}$ means that there exist $\varTheta_1=(\theta_{1,j},\ldots,\theta_{1,n})\in \mathbb{R}_+^n,$ $\varTheta_2=(\theta_{2,j},\ldots,\theta_{2,n})\in \mathbb{R}_+^n$ such that  
$\forall z \in \mathbb{B}^n$
$\theta_{1,j}\widetilde{l}_j(z) \leq l_j(z)\leq \theta_{2,j}\widetilde{l}_j(z)$
for each $j\in\{1,\ldots,n\}.$

\begin{theorem} \label{petr2ex}
	Let $\mathbf{L} \in Q'(\mathbb{B}^n),$ $\mathbf{L}\asymp \widetilde{\mathbf{L}},$ 
	$\sup_{z\in\mathbb{B}^n} \frac{\mathcal{L}(z)}{\ell(z)}=C<
	\infty$ and
	$\min_{1\le j\le n} \theta_{1,j}>\frac{\sqrt{n}}{\beta}.$
	%  $C|\frac{\mathbf{1}}{\Theta_1}|>\beta.$
	An analytic in $\mathbb{B}^n$  function $F$ has bounded $\widetilde{\mathbf{L}}$-index in joint variables if and only if it has bounded $\mathbf{L}$-index.
\end{theorem}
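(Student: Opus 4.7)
My plan is to mirror the polydisc argument of Theorem \ref{petr2}, replacing componentwise polydisc radii by the scalar quantities $\ell(z),\mathcal L(z)$. Throughout let $\theta_1=\min_j\theta_{1,j}$ and $\theta_2=\max_j\theta_{2,j}$.

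First I would verify that $\widetilde{\mathbf L}\in Q'(\mathbb B^n)$ and $\sup_z\widetilde{\mathcal L}(z)/\widetilde\ell(z)<\infty$; both are immediate from $\mathbf L\asymp\widetilde{\mathbf L}$, since the ball $\mathbb B^n[z^0,r/\widetilde\ell(z^0)]$ sits inside $\mathbb B^n[z^0,r\theta_2/\ell(z^0)]$ and each ratio $\widetilde l_j(z)/\widetilde l_j(z^0)$ is pinched between constant multiples of $l_j(z)/l_j(z^0)$. In particular the biconditional refinement of Theorem \ref{petr1ex-ball}, i.e.\ the unlabelled companion theorem that assumes $\sup\mathcal L/\ell<\infty$, applies to $\widetilde{\mathbf L}$.

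Assume $\widetilde N:=N(F,\widetilde{\mathbf L},\mathbb B^n)<\infty$. Invoking the necessary direction of that biconditional with $\tilde r=\beta$ supplies $\widetilde p_0\ge 1$ such that for every $z^0\in\mathbb B^n$ some $K^0$ with $\|K^0\|\le\widetilde N$ satisfies
\[\max\!\left\{\frac{|F^{(K)}(z)|}{K!\widetilde{\mathbf L}^K(z)}\colon \|K\|\le\widetilde N,\;z\in\mathbb B^n\!\left[z^0,\tfrac{\beta}{\widetilde\ell(z^0)}\right]\right\}\le\widetilde p_0\,\frac{|F^{(K^0)}(z^0)|}{K^0!\widetilde{\mathbf L}^{K^0}(z^0)}.\]
I would then transplant this into an inequality for $\mathbf L$: the componentwise bounds give $\Theta_1^K/\mathbf L^K(z)\le 1/\widetilde{\mathbf L}^K(z)\le\Theta_2^K/\mathbf L^K(z)$, and $\widetilde\ell(z^0)\le\ell(z^0)/\theta_1$ yields $\mathbb B^n[z^0,r_0/\ell(z^0)]\subseteq\mathbb B^n[z^0,\beta/\widetilde\ell(z^0)]$ for any $r_0\le\beta\theta_1$. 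Setting $a_1=\min\{\Theta_1^K\colon\|K\|\le\widetilde N\}>0$ and $a_2=\max\{\Theta_2^K\colon\|K\|\le\widetilde N\}$, these substitutions produce inequality \eqref{net1ex-ball} for $\mathbf L$ with $n_0=\widetilde N$, $p_0=\widetilde p_0 a_2/a_1$ and radius $r_0$; the sufficiency of Theorem \ref{petr1ex-ball} then delivers bounded $\mathbf L$-index. The converse implication follows from the same chain after interchanging $\mathbf L$ with $\widetilde{\mathbf L}$ and $(\Theta_1,\Theta_2)$ with $(1/\Theta_2,1/\Theta_1)$.

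The main obstacle is to choose $r_0$ within the admissible range $[\sqrt n,\beta]$ demanded by the sufficiency proof of Theorem \ref{petr1ex-ball}: its Stirling/Cauchy estimate in \eqref{gam1} closes only for $r_0\ge\sqrt n$. When $\theta_1\le 1$ the natural choice $r_0=\beta\theta_1$ lands in $(\sqrt n,\beta]$ precisely because of the hypothesis $\min_j\theta_{1,j}>\sqrt n/\beta$; when $\theta_1>1$ one first shrinks $\tilde r$ to $\beta/\theta_1\in(0,\beta]$, whereupon $r_0=\beta\ge\sqrt n$ by \eqref{Lbeta-ball}. The hypothesis $\min_j\theta_{1,j}>\sqrt n/\beta$ is consumed exactly at this step.
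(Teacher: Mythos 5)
Your overall strategy coincides with the paper's: feed boundedness of the $\widetilde{\mathbf{L}}$-index into the necessity half of the exhaustion-by-balls criterion, transfer the resulting local estimate to $\mathbf{L}$ by means of $\Theta_1,\Theta_2$, and close with the sufficiency half for $\mathbf{L}$; your transfer inequalities $\Theta_1^K/\mathbf{L}^K(z)\le 1/\widetilde{\mathbf{L}}^K(z)\le\Theta_2^K/\mathbf{L}^K(z)$ and the inclusion $\mathbb{B}^n[z^0,r_0/\ell(z^0)]\subseteq\mathbb{B}^n[z^0,\tilde r/\widetilde\ell(z^0)]$ for $r_0\le\tilde r\min_j\theta_{1,j}$ are correct. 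The genuine gap sits exactly at the step you yourself flag as the one that consumes the hypothesis: the choice $\tilde r=\beta$ (resp.\ $\tilde r=\beta/\theta_1$) is not available. With the paper's normalization of $\asymp$, the lower bound on the tilded function is $\widetilde l_j(z)\ge l_j(z)/\theta_{2,j}$, so $\widetilde{\mathbf{L}}$ satisfies a condition of type \eqref{Lbeta-ball} only with the constant $\beta/\max_j\theta_{2,j}$, and your own verification that $\widetilde{\mathbf{L}}\in Q'(\mathbb{B}^n)$ (via $\mathbb{B}^n[z^0,r/\widetilde\ell(z^0)]\subset\mathbb{B}^n[z^0,r\theta_2/\ell(z^0)]$) controls $\widetilde\lambda_{1,j}(r),\widetilde\lambda_{2,j}(r)$ only for $r\theta_2\le\beta$, $\theta_2=\max_j\theta_{2,j}$. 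Hence the admissible radii for the necessity applied to $\widetilde{\mathbf{L}}$ are governed by $\Theta_2$, not by $\theta_1$: as soon as some $\theta_{2,j}>1$, the ball $\mathbb{B}^n[z^0,\beta/\widetilde\ell(z^0)]$ need not lie in $\mathbb{B}^n$ and $\widetilde\lambda_{2,j}(\beta)$ need not be finite, so the displayed inequality you start from is not licensed, and your case split on $\theta_1\le1$ versus $\theta_1>1$ does not address this. Retreating to an admissible $\tilde r\le\beta/\theta_2$ only yields $r_0=\tilde r\,\theta_1\le\beta\theta_1/\theta_2$, which the hypothesis $\min_j\theta_{1,j}>\sqrt n/\beta$ does not push past the threshold $\sqrt n$ that your appeal to \eqref{gam1} requires; so the single-radius argument, as you set it up, does not close.

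For comparison, the paper never tries to reach a large radius: it asserts $\widetilde{\mathbf{L}}\in Q'(\mathbb{B}^n)$ with constant $\beta'=\beta\min_j\theta_{1,j}>\sqrt n$ (this is where it spends the hypothesis), applies the $\widetilde{\mathcal{L}}$-ball necessity \eqref{net1ex} of Theorem~\ref{petr1ex-ball} for arbitrary admissible $\widetilde r$, converts $\widetilde{\mathcal{L}}\to\mathcal{L}\to C\ell$ (this is where $C$ enters, rather than in upgrading to the $\ell$-ball companion criterion as in your write-up), and then quotes the sufficiency of Theorem~\ref{petr1ex-ball} without insisting on a radius of size $\sqrt n$. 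If you want to keep your single-radius bookkeeping, the correct repair is to replace the appeal to the sufficiency of Theorem~\ref{petr1ex-ball} by the unlabelled single-radius theorem stated right after the present one (boundedness of the $\mathbf{L}$-index already follows from \eqref{net1ex-ball} at one radius $r\in(0,\beta]$, via the rescaling $\mathbf{L}^*(z)=\beta\mathbf{L}(z)/r$); then any admissible $\tilde r$ suffices and the constraint $r_0\ge\sqrt n$ disappears. Note also that the converse implication is not a purely formal interchange of $(\Theta_1,\Theta_2)$ with $(1/\Theta_2,1/\Theta_1)$, since the hypotheses $\mathbf{L}\in Q'(\mathbb{B}^n)$, $\sup\mathcal{L}/\ell<\infty$ and $\min_j\theta_{1,j}>\sqrt n/\beta$ are imposed on $\mathbf{L}$ only; the paper, for what it is worth, proves only the direction you detail.
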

\begin{proof}
	It is easy to prove that if $\mathbf{L} \in Q'(\mathbb{B}^n)$ and $\mathbf{L}\asymp \widetilde{\mathbf{L}}$ then $\widetilde{\mathbf{L}} \in Q'(\mathbb{B}^n)$ with $\beta'=\beta \min_{1\le j\le n} \theta_{1,j} >\sqrt{n}$  instead of $\beta$ in \eqref{Lbeta-ball}. 
	
	Let $N(F,\widetilde{\mathbf{L}},\mathbb{B}^n)=\widetilde{n}_0<+\infty$. Then by Theorem \ref{petr1ex-ball} for every $\widetilde{r} \in(0,\beta)$ there exists $\widetilde{p}\ge 1$ such that for each $z^0 \in \mathbb{B}^n$ and some $K^0$ with  $\|K^0\|\leq \widetilde{n}_0,$ the inequality \eqref{net1ex} holds with $\widetilde{\mathbf{L}}$ and $\widetilde{r}$ instead of $\mathbf{L}$ and $r$.
Hence,
\begin{gather*}
\frac{\widetilde{p}}{K^0!} \frac{|F^{(K^0)}(z^0)|}{\mathbf{L}^{K^0} (z^0)}=
\frac{\widetilde{p}}{K^0!} \frac{\Theta_{2}^{K^0} |F^{(K^0)}(z^0)|}
{\Theta_{2}^{K^0} \mathbf{L}^{K^0} (z^0)} \geq
\frac{\widetilde{p}}{K^0!} \frac{|F^{(K^0)}(z^0)|}
{\Theta_{2}^{K^0} \widetilde{\mathbf{L}}^{K^0} (z^0)} \geq
\\ \geq
\frac{1}{\Theta_2^{K^0} }\max \left\{ \frac{|F^{(K)}(z)|}
{K!\widetilde{\mathbf{L}}^K(z)}: \|K\| \leq \widetilde{n}_0, z \in \mathbb{B}^n\left[z^0, {\widetilde{r}}/{\widetilde{\mathcal{L}}(z)}\right] \right\} \geq
\\ \geq
\frac{1}{\Theta_2^{K^0} }\max \left\{ \frac{\Theta_{1}^{K} |F^{(K)}(z)|}
{K!\mathbf{L}^{K}(z)}: \|K\| \leq \widetilde{n}_0, z \in \mathbb{B}^n\left[z^0, \min_{1\le j\le n} \Theta_{1,j}{\widetilde{r}}/{\mathcal{L}(z)}\right] \right\} \geq \\
\geq
\frac{\min \limits_{0\! \leq\! \|K\| \!\leq\! n_0
	} \{\Theta_{1}^{K} \}}{\Theta_2^{K^0} }
\max \left\{ \frac{|F^{(K)}(z)|}
{K!\mathbf{L}^{K}(z)}: \|K\| \leq\! \widetilde{n}_0, z \!\in\! \mathbb{B}^n\!\left[z^0, \min_{1\le j\le n} \Theta_{1,j}{\widetilde{r}}/{\mathcal{L}(z)}\right] \right\} \geq\\ 
\geq  
\frac{\min \limits_{0\! \leq\! \|K\| \!\leq\! n_0
	} \{\Theta_{1}^{K} \}}{\Theta_2^{K^0} }
\max \left\{ \frac{|F^{(K)}(z)|}
{K!\mathbf{L}^{K}(z)}: \|K\| \leq\! \widetilde{n}_0, z \!\in\! \mathbb{B}^n\!\left[z^0, \frac{{\widetilde{r}}\min_{1\le j\le n} \Theta_{1,j}}{C\ell(z)}\right] \right\}.
\end{gather*}
	In view of Theorem \ref{petr1ex-ball} we obtain that function $F$ has bounded $\mathbf{L}$-index.
\end{proof}
\begin{theorem}
	Let $\mathbf{L} \in Q'(\mathbb{B}^n),$ a function $F$ be analytic in $\mathbb{B}^n.$  If there exist $r\in(0,\beta],$ $n_0 \in \mathbb{Z}_+,$ $p_0>1$ such that for each $z^0 \in \mathbb{B}^n$ and for some $K^0 \in \mathbb{Z}_+^n$ with  $\|K^0\| \leq n_0$ the inequality \eqref{net1ex-ball} holds 
	then $F$ has bounded $\mathbf{L}$-index in joint variables.
\end{theorem}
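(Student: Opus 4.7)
The plan is to mimic the strategy of Theorem \ref{ball-cor1} (the polydisc-exhaustion analogue earlier in the paper), adapted to the ball-exhaustion framework of this section. The hypothesis gives the sufficient condition \eqref{net1ex-ball} for a single radius $r\in(0,\beta]$, whereas the sufficiency direction of Theorem \ref{petr1ex-ball} is formally phrased for \textit{every} such radius. The plan therefore is to reduce the single-$r$ hypothesis to an application of Theorem \ref{petr1ex-ball} at a radius that is large enough for the key Lagrange-multiplier bound $\tfrac{1}{r}\prod_j (\|S\|/s_j)^{s_j/(2\|S\|)}\le 1$ from that proof to hold, namely $r\ge\sqrt{n}$.

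If the given $r$ already satisfies $r\ge\sqrt{n}$, then the proof of Theorem \ref{petr1ex-ball} applies essentially verbatim at this single $r$, because the only role played by the radius in that argument is to bound the tail of the Cauchy–Stirling estimate, and any $r\in[\sqrt{n},\beta]$ suffices. This yields $N(F,\mathbf{L},\mathbb{B}^n)<\infty$ immediately.

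For the remaining case $r<\sqrt{n}$, I would introduce the rescaled weight $\mathbf{L}^*(z)=c\,\mathbf{L}(z)$ with $c:=\beta/r\ge 1$. A quick check shows $\mathbf{L}^*\in Q'(\mathbb{B}^n)$: the bound \eqref{Lbeta-ball} is preserved because $c\ge 1$, and $\lambda_{1,j},\lambda_{2,j}$ are invariant under a constant rescaling of $\mathbf{L}$. The crucial identity is $\ell^*(z^0)=c\,\ell(z^0)$, so the two balls coincide:
$$\mathbb{B}^n\!\left[z^0,\tfrac{\beta}{\ell^*(z^0)}\right]=\mathbb{B}^n\!\left[z^0,\tfrac{r}{\ell(z^0)}\right].$$
Writing the given inequality \eqref{net1ex-ball} in terms of $\mathbf{L}^*$ and using $c^{\|K\|-\|K^0\|}\le c^{n_0}$ for $\|K\|,\|K^0\|\le n_0$, one obtains
$$\max\!\left\{\tfrac{|F^{(K)}(z)|}{K!(\mathbf{L}^*)^{K}(z)}\colon \|K\|\le n_0,\,z\in \mathbb{B}^n\!\left[z^0,\tfrac{\beta}{\ell^*(z^0)}\right]\right\}\le p_0c^{n_0}\tfrac{|F^{(K^0)}(z^0)|}{K^0!(\mathbf{L}^*)^{K^0}(z^0)},$$
so that $F$ and $\mathbf{L}^*$ satisfy \eqref{net1ex-ball} at the radius $r^*=\beta\ge\sqrt{n}$ with adjusted constant $p_0 c^{n_0}$. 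The proof of Theorem \ref{petr1ex-ball} (sufficiency) then applies to $\mathbf{L}^*$ at $r^*=\beta$, yielding that $F$ has bounded $\mathbf{L}^*$-index in joint variables.

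The final step is to transfer boundedness back from $\mathbf{L}^*$ to $\mathbf{L}$ via Theorem \ref{petr2ex}. Here $\mathbf{L}^*\asymp\mathbf{L}$ with $\Theta_1=\Theta_2=c\mathbf{1}$, and $\min_j\theta_{1,j}=c=\beta/r>\sqrt{n}/\beta$ reduces to $r<\beta^2/\sqrt{n}$, which holds since $r\le\beta$ and $\beta>\sqrt{n}$. The main obstacle I expect is the auxiliary hypothesis $\sup_{z\in\mathbb{B}^n}\mathcal{L}/\ell<\infty$ appearing in Theorem \ref{petr2ex}: because the quotient is invariant under constant rescaling, it coincides for $\mathbf{L}$ and $\mathbf{L}^*$ and is not given by the hypotheses of the present theorem. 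If this pointwise quotient is unbounded, one cannot cite Theorem \ref{petr2ex} directly; in that case the cleanest remedy is to run the Cauchy-integral argument of Theorem \ref{petr1ex-ball} once more on the common ball $\mathbb{B}^n[z^0,\beta/\ell^*(z^0)]=\mathbb{B}^n[z^0,r/\ell(z^0)]$, absorbing the constant factor $c^{n_0+n_1+n_2}$ produced by the scalar ratio $\mathbf{L}^*/\mathbf{L}=c$ into the final index bound, thereby avoiding any appeal to the quotient $\mathcal{L}/\ell$ at all.
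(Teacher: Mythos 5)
You correctly mirror the paper's strategy: the case $r\ge\sqrt n$ is handled directly by the sufficiency argument of Theorem \ref{petr1ex-ball}, and for $r<\sqrt n$ you rescale to $\mathbf{L}^*=(\beta/r)\mathbf{L}$ so that \eqref{net1ex-ball} holds for $(\mathbf{L}^*,\beta)$ with constant $p_0(\beta/r)^{n_0}$; applying Theorem \ref{petr1ex-ball} to $\mathbf{L}^*$ then gives bounded $\mathbf{L}^*$-index. You also correctly flag the crux, which the paper glosses over: passing from bounded $\mathbf{L}^*$-index back to bounded $\mathbf{L}$-index is not automatic. Theorem \ref{petr2ex} cannot be invoked, since its hypothesis $\sup_z\mathcal{L}(z)/\ell(z)<\infty$ is not among the present assumptions and, being scale-invariant, cannot be manufactured by the rescaling. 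The paper's own final step, a cite of Lemma \ref{indexgreatlex}, in fact goes in the wrong direction: that lemma transfers boundedness from a \emph{smaller} weight to a \emph{larger} one, whereas here $\mathbf{L}\le\mathbf{L}^*=(\beta/r)\mathbf{L}$ and one knows boundedness for the larger weight $\mathbf{L}^*$ but needs it for the smaller weight $\mathbf{L}$.

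The remedy you sketch does not close this gap. Rewriting the defining inequality for bounded $\mathbf{L}^*$-index, namely $\frac{|F^{(K+S)}(z^0)|}{(K+S)!(\mathbf{L}^*)^{K+S}(z^0)}\le \frac{|F^{(K^0)}(z^0)|}{K^0!(\mathbf{L}^*)^{K^0}(z^0)}$, in terms of $\mathbf{L}$ introduces the factor $c^{\|K\|+\|S\|-\|K^0\|}$ with $c=\beta/r>1$; since $\|S\|$ ranges over all sufficiently large integers this is \emph{not} a fixed constant $c^{n_0+n_1+n_2}$ that can be ``absorbed into the final index bound.'' Running the Cauchy-integral estimate with the original $\mathbf{L}$ on the ball $\mathbb{B}^n[z^0,r/\ell(z^0)]$ does not help either, because the Stirling analysis in the proof of Theorem \ref{petr1ex-ball} shows the combinatorial factor behaves like $(\sqrt n/r)^{\|S\|}$, which does not tend to $0$ when $r<\sqrt n$. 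The case $r<\sqrt n$ therefore remains unresolved in your proposal and, as far as the lemma it cites is concerned, in the paper's own proof as written.
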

\begin{proof}
	The proof of sufficiency in Theorem \ref{petr1ex-ball} with $r=\beta$ implies that $N(F,\mathbf{L},\mathbb{B}^n)<+\infty.$
	
	Let $\mathbf{L}^{*}(z)=\frac{r_0\mathbf{L}(z)}{r},$ $\ell^*(z)=\frac{r_0\ell(z)}{r},$ $r^0=\beta$ and $r$ is radius for which 
	\eqref{net1ex-ball} is true. 
	In a general case from validity of \eqref{net1ex-ball} for $F$ and $\mathbf{L}$ with $r<\beta$  we obtain
	\begin{gather*}
	\max \left\{ \frac{|F^{(K)}(z)|}
	{K!(\mathbf{L}^* (z))^{K}}: \|K\| \leq n_0, z \in \mathbb{B}^n\left[z^0, {r_0}/{\ell^*(z^0)}\right] \right\}  \leq \\
	\leq  \max \left\{ \frac{|F^{(K)}(z)|}
	{K!(r_0 \mathbf{L} (z)/r)^{K}}: \|K\| \leq n_0, z \in \mathbb{B}^n\left[z^0, {r_0}/{(r_0 \ell(z^0)/r)}\right] \right\} \leq \\
	\leq
	\max \left\{ \frac{|F^{(K)}(z)|}
	{K!\mathbf{L}^{K} (z)}: \|K\| \leq n_0, z\! \in\! \mathbb{B}^n\left[z^0, {r}/{\ell(z^0)}\right] \right\} \leq
	\\
	\leq 
	\frac{p_0}{K^0!} \frac{|F^{(K^0)}(z^0)|}{\mathbf{L}^{K^0} (z^0)}
	=
	\frac{\beta^{\|K^0\|}p_0}{r^{\|K^0\|}K^0!} \frac{|F^{(K^0)}(z)|}{(r_0 \mathbf{L} (z)/r)^{K^0}}%<\\
	= \frac{p_0\beta^{n_0}}{r^{n_0}} \frac{|F^{(K^0)}(z)|}{K^0!(\mathbf{L}^*(z))^{K^0}}.
	\end{gather*}
	i. e. \eqref{net1ex} holds for $F,$ $\mathbf{L}^*$ and $r_0=\beta.$
	As above now %for $R=(\beta,\beta)$
	we apply Theorem \ref{petr1ex-ball} to the function $F(z)$ and $\mathbf{L}^{*}(z)=r_0 \mathbf{L} (z)/r$. This implies that $F$ is of bounded $\mathbf{L}^{*}$-index in joint variables. Therefore, by Lemma~\ref{indexgreatlex} the function $F$ has bounded $\mathbf{L}$-index in joint variables.
\end{proof}

\section{Boundedness of $L$-index in joint variables of analytic solutions of systems of partial differential equations} \label{r4.3}
Using  Theorems \ref{boundedomjoin-ball} and \ref{bordte15} we obtain this corollary.
\begin{corollary}[\cite{bandsets}] \label{naslidokt2}
	Let $\mathbf{L}\in{Q}^n,$ $F(z)$ be an analytic in $\mathbb{B}^n$ function, $G$ be a bounded 
domain in $\mathbb{B}^{n}$ such that  $d=\inf_{z\in\overline{G}} (1-|z|)>0.$ The function $F(z)$ is 
of bounded $\mathbf{L}$-index in joint variables and only if there exist
	$p\in\mathbb{Z}_{+}$ and $C>0$ such that for all
	$z\in\mathbb{B}^{n}\setminus G$ the inequality \eqref{bordriv122} holds.
\end{corollary}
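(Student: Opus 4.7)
The proposal is to reduce the corollary to Theorem~\ref{bordte15}, the Hayman-type criterion on all of $\mathbb{B}^n$, using Theorem~\ref{boundedomjoin-ball} to take care of what happens on the bounded subdomain $\overline{G}$. The necessity is immediate: if $F$ has bounded $\mathbf{L}$-index in joint variables, then Theorem~\ref{bordte15} supplies some $p\in\mathbb{Z}_+$ and $c\in\mathbb{R}_+$ for which \eqref{bordriv122} holds at \emph{every} $z\in\mathbb{B}^n$, and in particular at every $z\in\mathbb{B}^n\setminus G$.

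For the sufficiency, assume that there exist $p_1\in\mathbb{Z}_+$ and $C_1>0$ such that \eqref{bordriv122} holds for every $z\in\mathbb{B}^n\setminus G$. Because $\overline{G}$ is compact with $\inf_{z\in\overline{G}}(1-|z|)=d>0,$ Theorem~\ref{boundedomjoin-ball} (after the routine verification that $\mathbf{L}\in Q(\mathbb{B}^n)$ together with the hypothesis on $d$ puts us in its setting) supplies an integer $m\in\mathbb{Z}_+$ for which the pointwise index estimate \eqref{obmindex} holds at every $z\in\overline{G}$. Multiplying by $J!$ and using the trivial bound $J!\le((m+1)!)^n$ when $\|J\|=m+1,$ we convert \eqref{obmindex} into the Hayman-type inequality \eqref{bordriv122} on $\overline{G}$ with parameters $(m,\,((m+1)!)^n).$

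Next I would set $N=\max\{p_1,m\}$ and verify that \eqref{bordriv122} holds at every $z\in\mathbb{B}^n$ with this single value of $N$ and some joint constant $C^{*}.$ For $z\in\overline{G}$ this is immediate: since \eqref{obmindex} is available with index $m\le N,$ the same multiplication-by-$J!$ trick yields \eqref{bordriv122} with $N$ in place of $m$ at cost of replacing the constant by $((N+1)!)^n.$ For $z\in\mathbb{B}^n\setminus G,$ if $N=p_1$ the hypothesis applies directly; otherwise one uses Cauchy's formula inside a polydisc $\mathbb{D}^n[z,R/\mathbf{L}(z)]$ with $|R|\le\beta$ (which, by the opening remark, stays in $\mathbb{B}^n$) to raise the order of differentiation from $p_1+1$ to $N+1,$ invoking the $Q(\mathbb{B}^n)$-condition to compare $\mathbf{L}(w)$ and $\mathbf{L}(z)$ on the relevant skeleton; if the polydisc meets $G,$ the estimate on the part of the skeleton inside $\overline{G}$ is controlled by \eqref{obmindex}, while the remainder is controlled by \eqref{bordriv122} with $(p_1,C_1).$ Once \eqref{bordriv122} is established on all of $\mathbb{B}^n$ with parameters $(N,C^{*}),$ Theorem~\ref{bordte15} yields $N(F,\mathbf{L},\mathbb{B}^n)<\infty.$

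The routine piece is the calculation on $\overline{G},$ which is essentially a packaging of \eqref{obmindex}. The delicate step, and the one I expect to be the main obstacle, is the \emph{parameter-matching} on $\mathbb{B}^n\setminus G$: namely raising the order from $p_1+1$ to $N+1$ when $m>p_1,$ particularly for points $z\in\mathbb{B}^n\setminus G$ that lie close to $\partial G,$ where a polydisc centred at $z$ need not avoid $G.$ The remedy is to partition that polydisc's skeleton into the portion inside $\overline{G}$ (handled via \eqref{obmindex}) and the portion outside (handled via the hypothesis), producing one common Hayman-type bound whose final constant depends only on $p_1,\,m,\,C_1,$ and the $Q(\mathbb{B}^n)$-quantities $\Lambda_1(R),\Lambda_2(R).$
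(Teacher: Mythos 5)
Your overall shape---necessity by restricting Theorem \ref{bordte15} to $\mathbb{B}^n\setminus G$, sufficiency by combining Theorem \ref{boundedomjoin-ball} on the compact piece with the hypothesis outside, then invoking Theorem \ref{bordte15} again---is exactly what the paper's one-line remark indicates, and your conversion of \eqref{obmindex} into a Hayman-type bound on $\overline{G}$ (multiply by $J!$, use $J!\le ((m+1)!)^n$ for $\|J\|=m+1$, drop the $K!$ in the denominator) is correct. The parameter-matching remedy, however, does not close the gap. Theorem \ref{bordte15} requires \eqref{bordriv122} with a \emph{single} pair $(p,c)$ on all of $\mathbb{B}^n$, so when $m>p_1$ you must promote $p_1$ to $m$ at points $z\in\mathbb{B}^n\setminus G$. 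The Cauchy-formula-plus-skeleton-partition step you propose cannot do this: both \eqref{bordriv122} and \eqref{obmindex} compare derivatives of $F$ \emph{at the same point} $w$. After you write $F^{(J)}(z)$ as a Cauchy integral of $F^{(K^0)}(w)$ over the skeleton and then apply \eqref{bordriv122} or \eqref{obmindex} at $w$, you are left with bounds in terms of derivatives of $F$ at $w$, not at $z$. Transporting a skeleton estimate back to a center estimate is precisely the propagation property encoded in \eqref{net1} (Theorem \ref{petr1-ball}), which you do not yet have---indeed it is a reformulation of what you are trying to prove---so the step is circular.

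The clean route is to use the pointwise character of the sufficiency arguments in Theorems \ref{bordte15} and \ref{bordte43}: for a fixed $z^0$ they use \eqref{bordriv122} only at points of $\mathbb{D}^n[z^0,\boldsymbol{\beta}/\mathbf{L}(z^0)]$, derive the two-radius maximum-modulus inequality \eqref{bordriv112nec} \emph{at} $z^0$, and from it bound the local index $N(F,z^0,\mathbf{L})$ by a constant depending only on $p_1,C_1,\Lambda_1,\Lambda_2$. Now note, by the estimate in the Remark after \eqref{Lbeta-ball}, that if $\mathbb{D}^n[z^0,\boldsymbol{\beta}/\mathbf{L}(z^0)]$ meets $G$ at some $w$ then $|w-z^0|<1-|z^0|$, hence $|z^0|<(1+|w|)/2\le 1-d/2$. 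So with $G''=\{z:|z|<1-d/2\}\supset\overline{G}$ one has: for $z^0\notin\overline{G''}$ the whole polydisc lies in $\mathbb{B}^n\setminus G$, \eqref{bordriv122} holds on it with $(p_1,C_1)$, and the local arguments give $N(F,z^0,\mathbf{L})\le N_1$ uniformly; for $z^0\in\overline{G''}$, Theorem \ref{boundedomjoin-ball} applied to the bounded domain $G''$ gives $N(F,z^0,\mathbf{L})\le m'$. Taking the supremum over $z^0$ yields $N(F,\mathbf{L},\mathbb{B}^n)\le\max\{N_1,m'\}<\infty$, and there is never any need to force \eqref{bordriv122} into a single global $(p,c)$ before concluding.
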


Let us to denote $a^+=\max\{a,0\},$  $u_j(t)=u_j(t,R,\Theta)=l_j(\frac{tR}{r^*}e^{i\Theta}),$   where $a\in\mathbb{R},$ 
$t\in[0,r^*],$ $j\in\{1,\ldots,n\},$  $r^*=\max_{1\leq j\leq n}r_j \neq 0$ that is $\frac{t}{r^*}|R|<1.$

Let $\mathbf{L}(Re^{i\Theta})$  be a positive continuously differentiable function in each variable $r_k,$ 
	 $k\in\{1,\ldots,n\},$ $|R|<1,$ $\Theta\in[0,2\pi]^n.$
By $W(\mathbb{B}^n)$ we denote the class of the functions $\mathbf{L}$ such that 
	$r^*(-(u_j(t,R,\Theta))'_t)^+/(r_j l_j^2(\frac{t}{r^*}Re^{i\Theta}))\to 0$ 	uniformly for all 
$\Theta\in[0,2\pi]^n,$ $j\in\{1,\ldots,n\},$ $t\in[0,r^*]$ as $|R|\to 1-0$ 

%\tred{Let $K$ be the class of positive continuously differentiable on $[0,+\infty)$ functions $l$ such that 
%$l'(x)=o(l^{2}(x))$ as $x\to +\infty.$ By $\widetilde{K}$ we denote the class of functions  $L(z),$ 
%$z\in\mathbb{C}^{n},$ such that $L(z)=l(|z|),$ where $l\in K.$}
%
%\tred{Besides, we denote $M(r,F,z^0)=\max\{|F(z^0+t\mathbf{b})|: |t|=r\},$ where $t\in\mathbb{C}.$
%First, we prove the following two lemmas.}

\begin{lemma}
If $\mathbf{L}(z)=(l_1(z),\ldots, l_{n}(z)),$ where every $l_j(z): \mathbb{B}^n\to \mathbb{R}_+$ is a continuous function 
 satisfying \eqref{Lbeta-ball} then 
	$$\max\limits_{\Theta\in[0,2\pi]^n} \int_0^{r^*}  \sum_{j=1}^n \frac{r_j}{r^*}l_j\left(\frac{\tau}{r^*}R e^{i\Theta}\right) d\tau\to +\infty 
\text{ as } |R|\to 1-0.$$ 
\end{lemma}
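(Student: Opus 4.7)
The plan is to use only the pointwise lower bound \eqref{Lbeta-ball} on the components $l_j$ and to compute a single elementary integral; no maximum over $\Theta$ is actually needed, since a diverging lower bound that is uniform in $\Theta$ will automatically make the maximum diverge.

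First, I would fix any $\Theta\in[0,2\pi]^n$ and $R\in\mathbb{R}^n_+$ with $|R|<1$, and observe that for $\tau\in[0,r^*]$ the point $z(\tau)=\frac{\tau}{r^*}Re^{i\Theta}$ satisfies $|z(\tau)|=\frac{\tau|R|}{r^*}<1$. Condition \eqref{Lbeta-ball} then gives, for every $j\in\{1,\dots,n\}$,
\[
l_j\!\left(\frac{\tau}{r^*}Re^{i\Theta}\right)>\frac{\beta}{1-\tau|R|/r^*}=\frac{\beta r^*}{r^*-\tau|R|}.
\]
Summing weighted by $r_j/r^*$ and using $\sum_{j=1}^n r_j=\|R\|$, the integrand is bounded below by $\beta\|R\|/(r^*-\tau|R|)$, uniformly in $\Theta$.

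Next, I would compute the resulting integral explicitly:
\[
\int_0^{r^*}\frac{\beta\|R\|}{r^*-\tau|R|}\,d\tau=\frac{\beta\|R\|}{|R|}\bigl[-\ln(r^*-\tau|R|)\bigr]_0^{r^*}=-\frac{\beta\|R\|}{|R|}\ln(1-|R|).
\]
Since $\|R\|=\sum_{j=1}^n r_j\geq\sqrt{\sum_{j=1}^n r_j^2}=|R|$, the factor $\|R\|/|R|\geq 1$, so this lower bound is at least $-\beta\ln(1-|R|)$, which tends to $+\infty$ as $|R|\to 1-0$. Taking the maximum over $\Theta$ only increases the left-hand side, so the claim follows.

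There is no real obstacle here; the only care needed is keeping the two norm conventions straight (the Euclidean $|R|=\sqrt{\sum r_j^2}$ versus the sum $\|R\|=\sum r_j$), since condition \eqref{Lbeta-ball} is phrased with the Euclidean norm of the argument while the integrand groups the coefficients $r_j/r^*$ linearly.
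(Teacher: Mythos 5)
Your proof is correct and follows exactly the same route as the paper: bound each $l_j$ from below via \eqref{Lbeta-ball}, integrate the resulting rational function along the ray $\tau\mapsto \frac{\tau}{r^*}Re^{i\Theta}$, and obtain $-\frac{\beta\|R\|}{|R|}\ln(1-|R|)\to+\infty$. Your added remark that $\|R\|\geq|R|$ (making the prefactor at least $\beta$) is a small clarification the paper leaves implicit, but the argument is the same.
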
	
\begin{proof}
Using \eqref{Lbeta-ball} we obtain  
\begin{gather*}
\max\limits_{\Theta\in[0,2\pi]^n} \int_0^{r^*}  \sum_{j=1}^n \frac{r_j}{r^*}l_j\left(\frac{\tau}{r^*}R e^{i\Theta}\right) d\tau \geq 
\int_0^{r^*} \sum_{j=1}^n \frac{r_j}{r^*} \frac{\beta}{1-\frac{\tau}{r^*}|R|}d\tau = \\ =
-\sum_{j=1}^n \frac{r_j\beta}{R} \ln (1-|R|)\to +\infty \text{ as } |R|\to 1-0.
\end{gather*}
\end{proof}

\begin{lemma}\label{lema43}
	Let $\mathbf{L}\in W(\mathbb{B}^n),$ $F$ be an analytic in $\mathbb{B}^n$ function.   
	If there exists $R'\in\mathbb{R}^n_+,$ $|R'|<1, $ and $p\in\mathbb{Z}_{+},$ $c>0$ such that for all  
	$z\in\mathbb{B}^n\setminus \mathbb{D}^n(\mathbf{0},R')$ inequality
	\eqref{bordriv122} 
	% 	begin{equation} \label{lemriv3}
	% 	\frac{1}{l^{p+1}(|z|)}\left|\frac{\partial^{p+1}F(z)}{\partial\mathbf{b}^{p+1}}\right| \leq C
	% 	\max\left\{ \frac{1}{l^k(|z|)}\left|\frac{\partial^k F(z)}{\partial\mathbf{b}^k}\right|: 0\leq k\leq p\right\},
	% 	\end{equation} 
	holds
	then
	\begin{equation} \label{jontgrowth}
	\varlimsup\limits_{|R|\to 1-0} 
	\frac{\ln \max\{|F(z)| \colon z\in\mathbb{T}^n (\mathbf{0},R)\}}{\max\limits_{\Theta\in[0,2\pi]^n} \int_0^{r^*}  \sum_{j=1}^n \frac{r_j}{r^*}l_j\left(\frac{\tau}{r^*}R e^{i\Theta}\right) d\tau}\leq c. 
	\end{equation}
\end{lemma}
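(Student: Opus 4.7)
The plan is to adapt the differential-inequality strategy of Theorem~\ref{bordte22}, using the Hayman-type hypothesis \eqref{bordriv122} to supply the constant $c$ in place of $N+1$. For fixed $\Theta\in[0,2\pi]^n$ and $R$ with $|R|<1$, I will set $\alpha_j=r_j/r^*$, $A=(\alpha_1,\dots,\alpha_n)$, and work with
$$g(t)=\max\left\{\frac{|F^{(S)}(Ate^{i\Theta})|}{\mathbf{L}^S(Ate^{i\Theta})}\colon \|S\|\le p\right\}, \qquad t\in[0,r^*],$$
along the radial segment from $\mathbf{0}$ to $Re^{i\Theta}$. Following the derivation leading to \eqref{derivindex} and Lemma~4.1 of \cite{sher} on the derivative of a maximum, $g$ is absolutely continuous and satisfies, off a countable set,
$$g'(t)\le \sum_{j=1}^n\alpha_jl_j(Ate^{i\Theta})\max_{\|S\|\le p}\frac{|F^{(S+\mathbf{1}_j)}(Ate^{i\Theta})|}{\mathbf{L}^{S+\mathbf{1}_j}(Ate^{i\Theta})}+g(t)\sum_{j=1}^n\frac{p\,(-u_j'(t))^+}{l_j(Ate^{i\Theta})}.$$

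Next I will localise to the part of the segment lying in $\mathbb{B}^n\setminus\mathbb{D}^n(\mathbf{0},R')$. Because $|R|\ge|R'|$ forces $R\notin\mathbb{D}^n(\mathbf{0},R')$, the ray enters this set at some $t^*=t^*(R,\Theta)\le\max_jr_j'<1$ and remains there until $t=r^*$. On $[t^*,r^*]$ the hypothesis \eqref{bordriv122} yields $|F^{(S+\mathbf{1}_j)}|/\mathbf{L}^{S+\mathbf{1}_j}\le c\,g(t)$ once $\|S+\mathbf{1}_j\|=p+1$, while the lower orders $\|S+\mathbf{1}_j\|\le p$ are already dominated by $g(t)$ and get absorbed. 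Simultaneously, the hypothesis $\mathbf{L}\in W(\mathbb{B}^n)$ provides, for every $\varepsilon>0$, a radius $\rho_\varepsilon\in(|R'|,1)$ beyond which
$$\sum_{j=1}^n\frac{p\,(-u_j'(t))^+}{l_j(Ate^{i\Theta})}\le\varepsilon\sum_{j=1}^n\alpha_jl_j(Ate^{i\Theta})$$
uniformly in $\Theta$ and $t\in[0,r^*]$. Combining these gives the linear inequality
$$g'(t)\le(c+\varepsilon)\,g(t)\sum_{j=1}^n\alpha_jl_j(Ate^{i\Theta}), \qquad t\in[t^*,r^*].$$

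Integrating by Gronwall and using $|F(Re^{i\Theta})|\le g(r^*)$, then extending the integration range back to $0$ by positivity of the integrand, will yield
$$\ln|F(Re^{i\Theta})|\le \ln g(t^*)+(c+\varepsilon)\int_0^{r^*}\sum_{j=1}^n\alpha_jl_j(Ate^{i\Theta})\,dt.$$
Taking the maximum over $\Theta$, dividing by the denominator of \eqref{jontgrowth} (which tends to $+\infty$ as $|R|\to 1-0$ by the preceding lemma), and sending first $|R|\to 1-0$ and then $\varepsilon\downarrow 0$ will recover $\varlimsup\le c$. The main technical hurdle is controlling $\ln g(t^*)$ uniformly in $R$ and $\Theta$ well enough that it is absorbed by the divergent denominator: I plan to combine the bound $t^*\le\max_jr_j'<1$ with Corollary~\ref{naslidokt2} (which supplies the boundedness of the $\mathbf{L}$-index from the hypothesis) and a Cauchy-formula estimate on a polydisc centred at $At^*e^{i\Theta}$ to obtain $\ln g(t^*)=o\!\left(\max_\Theta\int_0^{r^*}\sum_j\alpha_jl_j\,dt\right)$ as $|R|\to 1-0$, closing the argument.
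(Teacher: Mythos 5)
Your plan follows essentially the same route the paper takes: introduce the radial auxiliary function $g(t)=\max_{\|S\|\le p}|F^{(S)}(Ate^{i\Theta})|/\mathbf{L}^S(Ate^{i\Theta})$, derive the one-variable differential inequality along the ray, feed in \eqref{bordriv122} and the $W(\mathbb{B}^n)$ condition to get $g'\le c(1+o(1))g\sum_j\alpha_j l_j$, integrate (Gronwall), and divide by the divergent denominator. The paper phrases the coefficient as $c(1+p\varepsilon)$ rather than $c+\varepsilon$, and starts the integration at an unspecified $t_0$ with $g(t_0)\neq 0$, but the mechanism is identical.

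The one place you diverge is in how you propose to tame $\ln g(t^*)$: invoking Corollary~\ref{naslidokt2} plus a Cauchy-formula estimate is an unnecessary detour, and in fact Corollary~\ref{naslidokt2} alone would not directly bound $g$ at the entry point. The cheaper observation is that the entry point $At^*e^{i\Theta}$ lies on $\partial\mathbb{D}^n(\mathbf{0},R')\subset\overline{\mathbb{B}^n(\mathbf{0},|R'|)}$, a fixed compact subset of $\mathbb{B}^n$ independent of $R$ and $\Theta$; since $F$ is analytic and the $l_j$ are positive and continuous there, $g(t^*)$ is bounded above uniformly (and the degenerate case $g(t^*)=0$ only helps). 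That gives $\ln g(t^*)=O(1)$, which is swallowed by the divergent denominator without any appeal to index boundedness. With that simplification your argument matches the paper's proof step for step.
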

\begin{proof}
	Let $R\in \mathbb{R}^n_+$ be such that $1>|R|>|R'|,$ $\Theta\in[0,2\pi]^n.$ Denote $\alpha_j=\frac{r_j}{r^*},$ $j\in\{1,\ldots,n\}$ and $A=(\alpha_1,\ldots,\alpha_n).$  
	We consider a function 
	\begin{equation*} 
	g(t)=\max\left\{\frac{|F^{(S)}(Ate^{i\Theta})|}{\mathbf{L}^S(Ate^{i\Theta})}:\ \|S\|\leq p \right\},
	\end{equation*}
	where  $Ate^{i\Theta}= (\alpha_1te^{i\theta_1},\ldots,\alpha_nte^{i\theta_n})$  and $|At|>|R'|.$
	
	Since the function $\frac{|F^{(S)}(Ate^{i\Theta})|}{\mathbf{L}^{S}(Ate^{i\Theta})}$ is  continuously differentiable by real $t\in[0,r^*],$
	outside the zero set of function $|F^{(S)}(Ate^{i\Theta})|,$
	the function $g(t)$ is a continuously differentiable function on $[0,r^*],$ except, perhaps, for a countable set of points.
	
	Therefore, using    the inequality
	$\frac{d}{dr} |g(r)|\leq |g'(r)|$  which holds except for the points $r=t$ such that $g(t)=0,$  we deduce 
	\begin{gather}
	\frac{d}{dt} \left(\frac{|F^{(S)}(Ate^{i\Theta})|}{\mathbf{L}^S(Ate^{i\Theta})}\right)= \frac{1}{\mathbf{L}^S(Ate^{i\Theta})} 
	\frac{d}{dt}|F^{(S)}(Ate^{i\Theta})|+\nonumber \\ +|F^{(S)}(Ate^{i\Theta})| \frac{d}{dt}\frac{1}{\mathbf{L}^S(Ate^{i\Theta})} %\leq \nonumber \\
	\leq  \frac{1}{\mathbf{L}^S(Ate^{i\Theta})} \left|\sum_{j=1}^n F^{(S+\mathbf{e}_j)}(Ate^{i\Theta}) \alpha_j e^{i\theta_j}\right|- \nonumber \\ - 
	\frac{|F^{(S)}(Ate^{i\Theta})|}{\mathbf{L}^S(Ate^{i\Theta})} 
	\sum_{j=1}^n  \frac{s_j u'_j(t)}{l_j(Ate^{i\Theta})} %\leq \nonumber\\
	\leq \sum_{j=1}^n  \frac{|F^{(S+\mathbf{e}_j)}(Ate^{i\Theta})|}{\mathbf{L}^{S\!+\!\mathbf{e}_j}(Ate^{i\Theta})} 
	\alpha_jl_j(Ate^{i\Theta})\!+\! \nonumber \\ +
	\frac{|F^{(S)}(Ate^{i\Theta})|}{\mathbf{L}^S(Ate^{i\Theta})} \sum_{j=1}^n  \frac{s_j (-u'_j(t))^+}{l_j(Ate^{i\Theta})} \label{derivindexh}
	\end{gather}
	For absolutely continuous functions
	$h_1,$ $h_2, $ $\ldots,$ $h_k$ and $h(x):=\max\{h_j(z): 1\leq j\leq k\},$
	\  $h'(x)\leq \max\{h'_j(x): 1\leq j\leq k \},$ $x\in[a,b]$  (see \cite[Lemma~4.1, p.~81]{sher}). The function $g$ is absolutely continuous, therefore, from 	\eqref{bordriv122}  and \eqref{derivindexh} it follows that
	\begin{gather*}
	g'(t) \leq \max \left\{\frac{d}{dt} \left(\frac{|F^{(S)}(Ate^{i\Theta})|}{\mathbf{L}^S(Ate^{i\Theta})}\right)\colon 
	\|S\|\leq p \right\} \leq \\ \leq 
	\max_{ \|S\|\leq p} \left\{ 
	\sum_{j=1}^n  \frac{\alpha_jl_j(Ate^{i\Theta})|F^{(S+\mathbf{e}_j)}(Ate^{i\Theta})|}{\mathbf{L}^{S+\mathbf{e}_j}(Ate^{i\Theta})} 
	+ 
	\frac{|F^{(S)}(Ate^{i\Theta})|}{\mathbf{L}^S(Ate^{i\Theta})} \sum_{j=1}^n  \frac{s_j (-u'_j(t))^+}{l_j(Ate^{i\Theta})}    \right\}\leq \\
	\leq g(t) \left( 
	c\sum_{j=1}^n \alpha_jl_j(Ate^{i\Theta})+ %\right.\\ \left.+
	\max_{ \|S\|\leq p}\left\{ \sum_{j=1}^n \frac{s_j(-u_j'(t))^+}{l_j(Ate^{i\Theta})}\right\}\right)=\\ =
	g(t)(\beta(t)+\gamma(t)),
	\end{gather*}
	where 
	$$ \beta(t)=c\sum_{j=1}^n \alpha_jl_j(Ate^{i\Theta}), 
	\gamma(t)=\max_{\|S\|\leq p}\left\{ \sum_{j=1}^n \frac{s_j(-u_j'(t))^+}{l_j(Ate^{i\Theta})} \right\}.$$
	Thus, 
	$\frac{d}{dt} \ln g(t)\leq \beta(t)+\gamma(t)$
	and 
	\begin{equation*}% \label{deqr1}
	g(t)\leq g(t_0)\exp \int_{t_0}^t (\beta(\tau)+\gamma(\tau))d\tau, 
	\end{equation*} where $t_0$ is chosen such that $g(t_0)\neq 0.$
	The condition	$\mathbf{L}\in W(\mathbb{B}^n)$ gives 
	\begin{gather*}
	\frac{\gamma(t)}{\beta(t)}= \frac{\sum_{j=1}^n \frac{s_j(-u_j'(t))^+}{l_j(Ate^{i\Theta})}}{c\sum_{j=1}^n \alpha_jl_j(Ate^{i\Theta})}\leq 
	p \sum_{j=1}^n \frac{(-u_j'(t))^+}{\alpha_jl^2_j(Ate^{i\Theta})}\leq 
	p \varepsilon,
	\end{gather*}
	where $\varepsilon=\varepsilon(R) \to 0$ uniformly for all 
	$\Theta\in[0,2\pi]^n,$ $t\in[r_0,r^*]$ as $|R|\to 1-0$ 
	
	But 
	$|F(Ate^{i\Theta})|\leq g(t)\leq g(t_0) \exp \int_{t_0}^t (\beta(\tau)+\gamma(\tau))d\tau$ 
	and $r^*A=R.$ Then we put $t=r^*$ 
	and obtain 
	\begin{gather*}
	\ln \max\{|F(z)\colon \ z\in \mathbb{T}^n(\mathbf{0},R)\}=\ln \max_{\Theta\in[0,2\pi]^n}|F(Re^{i\Theta})| \leq \ln \max_{\Theta\in[0,2\pi]^n} g(r^*) \leq 
	\\ \leq 
	\ln g(t_0)+
	\max_{\Theta\in[0,2\pi]^n} \int_{t_0}^{r^*} (\beta(\tau)+\gamma(\tau))d\tau   \leq\\ 
	\leq \ln g(t_0)+   \max_{\Theta\in[0,2\pi]^n} \int_{t_0}^{r^*} 
	c\sum_{j=1}^n \alpha_jl_j(A\tau e^{i\Theta}) \left(1+ p\varepsilon\right)d\tau = \\ 
	=\ln g(t_0)+c\max_{\Theta\in[0,2\pi]^n} \int_{t_0}^{r^*}  \sum_{j=1}^n \frac{r_j}{r^*}l_j\left(\frac{\tau}{r^*}R e^{i\Theta}\right) \left(1+ p\varepsilon\right) d\tau.
	%   \left(\sum_{j=1}^{n} \int_0^{r_j} l_j(\tau)d\tau +N\max_{1\leq j\leq n}\int_0^{r_j}l_j(\tau)d\tau \right), \|R\|\to +\infty.
	\end{gather*}
	This implies \eqref{jontgrowth}.
\end{proof}
\begin{lemma} \label{lema53}
	Let $\mathbf{L}\in W(\mathbb{B}^n),$ 
	$F$ be an analytic in $\mathbb{B}^n$ function.   
	If there exists $R'\in\mathbb{R}^n_+,$ $|R'|<1$ and $p\in\mathbb{Z}_{+},$ $c>0$ such that for all  
	$z\in\mathbb{B}^n\setminus \mathbb{D}^n(\mathbf{0},R')$ inequality
	\begin{equation}  \label{lemriv3}
	\max\left\{\frac{|F^{(J)}(z)|}{J!\mathbf{L}^J(z)}: \ \|J\|=p+1 \right\}\leq c\cdot   \max\left\{\frac{|F^{(K)}(z)|}{K!\mathbf{L}^K(z)}: \ \|K\|\leq p \right\}.
	\end{equation}
	holds
	then
	\begin{equation} \label{jontgrowth2}
	\varlimsup\limits_{|R|\to \infty} 
	\frac{\ln \max\{|F(z)| \colon z\in\mathbb{T}^n (\mathbf{0},R)\}}{\max\limits_{\Theta\in[0,2\pi]^n} \int_0^{r^*}  \sum_{j=1}^n \frac{r_j}{r^*}l_j\left(\frac{\tau}{r^*}R e^{i\Theta}\right) d\tau}\leq c(p+1)
	\end{equation}
	%	
	%	
	%	Let $l\in K,$ $F(z)$ be an analytic in $\mathbb{B}^n$ function,   $\forall z\in \mathbb{C}^n, $ $|z|< R,$  $F(z+t\mathbf{b})\not \equiv 0.$ If there exist numbers $p\in\mathbb{Z}_+,$ $C>0$ such that for all  $z\in\mathbb{C}^n,$ $|z|\geq R,$ the  inequality
	%	\begin{equation}\label{lemriv4}
	%	T_{p+1}(z) \! \leq \! C\!\max\!\left\{T_k(z): 0\!\leq\! k\!\leq\! p\!\right\}\!,\!
	%	\end{equation} holds
	%	then for all $z^0\in\mathbb{C}^n$
	%	% \begin{equation*}
	%	$\varlimsup\limits_{r\to +\infty}\frac{\ln M(r,F,z^0)}{\int\limits_0^r l(|z^0+x\mathbf{b}|)dx}\leq (p+1)\max\{1,C\}.$
	%	% \end{equation*}
	%
\end{lemma}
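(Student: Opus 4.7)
The plan is to mimic the proof of Lemma~\ref{lema43} almost verbatim, with the sole modification that the auxiliary function $g(t)$ incorporates the factorials $S!$ from the hypothesis \eqref{lemriv3}. First I would fix $R\in\mathbb{R}^n_+$ with $|R'|<|R|<1$, $\Theta\in[0,2\pi]^n$, set $\alpha_j=r_j/r^*$, $A=(\alpha_1,\ldots,\alpha_n)$, and introduce
\begin{equation*}
g(t)=\max\left\{\frac{|F^{(S)}(Ate^{i\Theta})|}{S!\mathbf{L}^S(Ate^{i\Theta})}:\ \|S\|\leq p \right\}, \qquad t\in[0,r^*],
\end{equation*}
which is absolutely continuous on every interval where it is nonzero and differentiable off a countable set.

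Next I would differentiate each individual term. Arguing exactly as in \eqref{derivindexh}, but retaining the factor $1/S!$, I would obtain
\begin{equation*}
\frac{d}{dt}\!\left(\frac{|F^{(S)}(Ate^{i\Theta})|}{S!\mathbf{L}^S(Ate^{i\Theta})}\right)
\le \sum_{j=1}^n \frac{\alpha_j l_j(Ate^{i\Theta}) |F^{(S+\mathbf{1}_j)}(Ate^{i\Theta})|}{S!\mathbf{L}^{S+\mathbf{1}_j}(Ate^{i\Theta})}
+\frac{|F^{(S)}(Ate^{i\Theta})|}{S!\mathbf{L}^{S}(Ate^{i\Theta})}\sum_{j=1}^n\frac{s_j(-u_j'(t))^+}{l_j(Ate^{i\Theta})}.
\end{equation*}
The key algebraic identity is $\dfrac{1}{S!}=\dfrac{s_j+1}{(S+\mathbf{1}_j)!}$, so the first sum equals
$\sum_{j}(s_j+1)\alpha_j l_j(Ate^{i\Theta})\dfrac{|F^{(S+\mathbf{1}_j)}|}{(S+\mathbf{1}_j)!\mathbf{L}^{S+\mathbf{1}_j}}$. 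For $\|S\|\le p$ and $|Ate^{i\Theta}|>|R'|$ the hypothesis \eqref{lemriv3} (and the trivial bound when $\|S+\mathbf{1}_j\|\le p$, assuming without loss of generality $c\ge 1$) yields $\dfrac{|F^{(S+\mathbf{1}_j)}|}{(S+\mathbf{1}_j)!\mathbf{L}^{S+\mathbf{1}_j}}\le c\,g(t)$, while $s_j+1\le p+1$. Applying the standard lemma that $h'\le\max h_j'$ for a maximum of absolutely continuous functions yields
\begin{equation*}
g'(t)\le g(t)\bigl(\beta(t)+\gamma(t)\bigr),\quad \beta(t)=c(p+1)\sum_{j=1}^n\alpha_j l_j(Ate^{i\Theta}),\quad
\gamma(t)=\max_{\|S\|\le p}\sum_{j=1}^n\frac{s_j(-u_j'(t))^+}{l_j(Ate^{i\Theta})}.
\end{equation*}

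Finally, choosing $t_0\in[0,r^*]$ with $|At_0|>|R'|$ and $g(t_0)\ne 0$, I would integrate $\frac{d}{dt}\ln g\le \beta+\gamma$ from $t_0$ to $r^*$ and use $|F(Re^{i\Theta})|\le g(r^*)$. The condition $\mathbf{L}\in W(\mathbb{B}^n)$ gives $\gamma(t)\le p\,\varepsilon(R)\sum_j\alpha_j l_j(Ate^{i\Theta})$ with $\varepsilon(R)\to 0$ uniformly in $\Theta,t$, so $\int_{t_0}^{r^*}\gamma \le \frac{p\varepsilon}{c(p+1)}\int_{t_0}^{r^*}\beta=o\bigl(\int_{t_0}^{r^*}\beta\bigr)$; meanwhile the preceding lemma guarantees that the integral of $\beta$ tends to $+\infty$ as $|R|\to 1-0$, so the initial constant $\ln g(t_0)$ is absorbed. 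Taking the maximum over $\Theta$ and dividing yields
$$\limsup_{|R|\to 1-0}\frac{\ln M(F,R)}{\max_\Theta\int_0^{r^*}\sum_{j}(r_j/r^*)l_j(\tfrac{\tau}{r^*}Re^{i\Theta})d\tau}\le c(p+1),$$
which is \eqref{jontgrowth2}. The only subtle point—which I view as the main bookkeeping obstacle rather than a true obstacle—is making sure the factor $(s_j+1)\le p+1$ accumulates into a single multiplicative $c(p+1)$ in $\beta(t)$ rather than being lost or double‑counted; everything else is parallel to Lemma~\ref{lema43}.
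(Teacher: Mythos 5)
Your proposal is correct and follows essentially the same route as the paper's proof: define $g(t)$ with the $S!$ factorials, use $1/S!=(s_j+1)/(S+\mathbf{1}_j)!$ to rewrite the derivative bound in the normalized form, invoke \eqref{lemriv3} to get the factor $c$, bound $\sum_j(s_j+1)\alpha_j l_j\le (p+1)\sum_j\alpha_j l_j$, and integrate $\frac{d}{dt}\ln g\le\beta+\gamma$ with the $W(\mathbb{B}^n)$ condition killing the $\gamma$ contribution. The only difference is a cosmetic one in how the factor $p+1$ is extracted: you bound each $s_j+1\le p+1$, whereas the paper bounds the ratio $\sum_j\alpha_j\tilde s_j l_j/\sum_j\alpha_j l_j\le\|\tilde S\|\le p$; both give the identical constant $c(p+1)$. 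Your explicit remark that the case $\|S+\mathbf{1}_j\|\le p$ requires $c\ge1$ (so the trivial bound $\le g(t)$ can be written $\le c\,g(t)$) makes visible a tacit assumption that the paper's proof also relies on without comment.
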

\begin{proof}
	The proof of Lemma \ref{lema53} is similar to the proof of Lemma \ref{lema43}.
	
	Let $R\in \mathbb{R}^n_+$ be such that $1>|R|>|R'|,$ $\Theta\in[0,2\pi]^n.$ Denote $\alpha_j=\frac{r_j}{r^*},$ $j\in\{1,\ldots,n\}$ and $A=(\alpha_1,\ldots,\alpha_n).$  
	We consider a function 
	\begin{equation*} 
	g(t)=\max\left\{\frac{|F^{(S)}(Ate^{i\Theta})|}{S!\mathbf{L}^S(Ate^{i\Theta})}:\ \|S\|\leq p \right\},
	\end{equation*}
	where $At=(\alpha_1t,\ldots,\alpha_nt),$ $Ate^{i\Theta}= (\alpha_1te^{i\theta_1},\ldots,\alpha_nte^{i\theta_n})$  and $|At|>|R'|.$
	
	As above the function $\frac{|F^{(S)}(Ate^{i\Theta})|}{S!\mathbf{L}^{S}(Ate^{i\Theta})}$ is  continuously differentiable by real $t\in[0,r^*],$
	outside the zero set of function $|F^{(S)}(Ate^{i\Theta})|,$
	the function $g(t)$ is a continuously differentiable function on $[0,r^*],$ except, perhaps, for a countable set of points.
	
	Therefore, using    the inequality
	$\frac{d}{dr} |g(r)|\leq |g'(r)|$  which holds except for the points $r=t$ such that $g(t)=0,$  we deduce 
	\begin{gather}
	\frac{d}{dt} \left(\frac{|F^{(S)}(Ate^{i\Theta})|}{S!\mathbf{L}^S(Ate^{i\Theta})}\right)= \frac{1}{S!\mathbf{L}^S(Ate^{i\Theta})} 
	\frac{d}{dt}|F^{(S)}(Ate^{i\Theta})|+\nonumber \\ +|F^{(S)}(Ate^{i\Theta})| \frac{d}{dt}\frac{1}{S!\mathbf{L}^S(Ate^{i\Theta})} %\leq \nonumber \\
	\leq  \frac{1}{S!\mathbf{L}^S(Ate^{i\Theta})} \left|\sum_{j=1}^n F^{(S+\mathbf{e}_j)}(Ate^{i\Theta}) \alpha_j e^{i\theta_j}\right|- \nonumber \\ \!-\!
	\frac{|F^{(S)}(Ate^{i\Theta})|}{S!\mathbf{L}^S(Ate^{i\Theta})} 
	\sum_{j=1}^n  \frac{s_j u'_j(t)}{l_j(Ate^{i\Theta})} %\leq \nonumber\\
	\leq \sum_{j=1}^n  \frac{|F^{(S+\mathbf{e}_j)}(Ate^{i\Theta})|}{(S\!+\!\mathbf{e}_{j})!\mathbf{L}^{S\!+\!\mathbf{e}_j}(Ate^{i\Theta})} 
	\alpha_j(s_j\!+\!1)l_j(Ate^{i\Theta})\!+\! \nonumber \\ +
	\frac{|F^{(S)}(Ate^{i\Theta})|}{S!\mathbf{L}^S(Ate^{i\Theta})} \sum_{j=1}^n  \frac{s_j (-u'_j(t))^+}{l_j(Ate^{i\Theta})} \label{derivindex2}
	\end{gather}
	For absolutely continuous functions
	$h_1,$ $h_2, $ $\ldots,$ $h_k$ and $h(x):=\max\{h_j(z): 1\leq j\leq k\},$
	\  $h'(x)\leq \max\{h'_j(x): 1\leq j\leq k \},$ $x\in[a,b]$  (see \cite[Lemma~4.1, p.~81]{sher}). The function $g$ is absolutely continuous. Therefore, 	\eqref{bordriv122}  and \eqref{derivindex2} yield 
	\begin{gather*}
	g'(t) \leq \max \left\{\frac{d}{dt} \left(\frac{|F^{(S)}(Ate^{i\Theta})|}{S!\mathbf{L}^S(Ate^{i\Theta})}\right)\colon 
	\|S\|\leq N \right\} \leq \\ \leq 
	\max_{ \|S\|\leq p} \left\{ 
	\sum_{j=1}^n  \frac{\alpha_j(s_j+1)l_j(Ate^{i\Theta})|F^{(S+\mathbf{e}_j)}(Ate^{i\Theta})|}{(S+\mathbf{e}_{j})!\mathbf{L}^{S+
			\mathbf{e}_j}(Ate^{i\Theta})} 
	+ \right. \\ \left.+
	\frac{|F^{(S)}(Ate^{i\Theta})|}{S!\mathbf{L}^S(Ate^{i\Theta})} \sum_{j=1}^n  \frac{s_j (-u'_j(t))^+}{l_j(Ate^{i\Theta})}    \right\}\leq \\
	\leq g(t) \left( c\max_{ \|S\|\leq p}\left\{ 
	\sum_{j=1}^n \alpha_j(s_j+1)l_j(Ate^{i\Theta})\right\}+ %\right.\\ \left.+
	\max_{ \|S\|\leq p}\left\{ \sum_{j=1}^n \frac{s_j(-u_j'(t))^+}{l_j(Ate^{i\Theta})}\right\}\right)=\\ =
	g(t)(\beta(t)+\gamma(t)),
	\end{gather*}
	where 
	$$ \beta(t)=c\max_{\|S\|\leq p}\left\{
	\sum_{j=1}^n \alpha_j(s_j+1)l_j(Ate^{i\Theta}) \right\}, 
	\gamma(t)=\max_{\|S\|\leq p}\left\{ \sum_{j=1}^n \frac{s_j(-u_j'(t))^+}{l_j(Ate^{i\Theta})} \right\}.$$
	Thus, 
	$\frac{d}{dt} \ln g(t)\leq \beta(t)+\gamma(t)$
	and 
	\begin{equation*}% \label{deqr1}
	g(t)\leq g(t_0)\exp \int_{t_0}^t (\beta(\tau)+\gamma(\tau))d\tau, 
	\end{equation*} where $t_0$ is chosen such that $g(t_0)\neq 0.$
	Denote $\widetilde{\beta}(t)= \sum_{j=1}^n \alpha_jl_j(Ate^{i\Theta}).$   
	Since $\mathbf{L}\in W(\mathbb{B}^n),$  for some $S^*,$ $\|S^*\|\leq p$ and $\widetilde{S},$ $\|\widetilde{S}\|\leq p,$ we obtain 
	\begin{gather*}
	\frac{\gamma(t)}{\widetilde{\beta}(t)}= \frac{\sum_{j=1}^n \frac{s^*_j(-u_j'(t))^+}{l_j(Ate^{i\Theta})}}{\sum_{j=1}^n \alpha_jl_j(Ate^{i\Theta})}\leq 
	\sum_{j=1}^n s^*_j\frac{(-u_j'(t))^+}{\alpha_jl^2_j(Ate^{i\Theta})}\leq 
	p\sum_{j=1}^n  \frac{(-u_j'(t))^+}{\alpha_jl^2_j(Ate^{i\Theta})} \leq p\varepsilon  \text{ and } \\
	\frac{\beta(t)}{\widetilde{\beta}(t)}=\frac{ c\sum_{j=1}^n \alpha_j(\tilde{s}_j+1)l_j(Ate^{i\Theta})}{\sum_{j=1}^n \alpha_jl_j(Ate^{i\Theta})} =
	c+c\frac{ \sum_{j=1}^n \alpha_j\tilde{s}_jl_j(Ate^{i\Theta})}{\sum_{j=1}^n \alpha_jl_j(Ate^{i\Theta})} \leq \\ 
	\leq c+c
	\sum_{j=1}^n \tilde{s}_j\leq c(1+ p),
	\end{gather*}
	where $\varepsilon=\varepsilon(R) \to 0$ uniformly for all 
	$\Theta\in[0,2\pi]^n,$ $t\in[r_0,r^*]$ as $|R|\to 1-0$ 
	
	But 
	$|F(Ate^{i\Theta})|\leq g(t)\leq g(t_0) \exp \int_{t_0}^t (\beta(\tau)+\gamma(\tau))d\tau$ 
	and $r^*A=R.$ Then we put $t=r^*$ 
	and obtain 
	\begin{gather*}
	\ln \max\{|F(z)\colon \ z\in \mathbb{T}^n(\mathbf{0},R)\}=\ln \max_{\Theta\in[0,2\pi]^n}|F(Re^{i\Theta})| \leq \ln \max_{\Theta\in[0,2\pi]^n} g(r^*) \leq 
	\\ \leq 
	\ln g(t_0)+
	\max_{\Theta\in[0,2\pi]^n} \int_{t_0}^{r^*} (\beta(\tau)+\gamma(\tau))d\tau   \leq\\ 
	\leq \ln g(t_0)+   \max_{\Theta\in[0,2\pi]^n} \int_{t_0}^{r^*} 
	\sum_{j=1}^n \alpha_jl_j(A\tau e^{i\Theta}) \left(c(1+p)+ p\varepsilon\right)d\tau = \\ 
	=\ln g(t_0)+\max_{\Theta\in[0,2\pi]^n} \int_{t_0}^{r^*}  \sum_{j=1}^n \frac{r_j}{r^*}l_j\left(\frac{\tau}{r^*}R e^{i\Theta}\right) \left(c(1+p)+ p\varepsilon\right) d\tau.
	%   \left(\sum_{j=1}^{n} \int_0^{r_j} l_j(\tau)d\tau +N\max_{1\leq j\leq n}\int_0^{r_j}l_j(\tau)d\tau \right), \|R\|\to +\infty.
	\end{gather*}
	This implies \eqref{jontgrowth2}.
\end{proof}
Using proved lemmas we will formulate and prove propositions that provide  growth estimates of analytic solutions of the following system of partial differential equations:
\begin{equation} \label{dvakhvylka}
G_{p_j\mathbf{e}_j}(z)F^{(p_j\mathbf{e}_j)}(z)+
\sum_{\|S_j\|\le p_j-1} G_{S_j}(z)F^{(S_j)}(z)=H_j(z), \ j\in\{1,\ldots,n\}
\end{equation}
$p_j\in \mathbb{N},$ $S_j\in\mathbb{Z}^n_+,$ $H_j$ and $G_{S_j}$ are analytic in $\mathbb{B}^n$ functions.
\begin{theorem} \label{teorema1}  \renewcommand{\labelenumi}{\arabic{enumi})}
	Let $\mathbf{L}\in W(\mathbb{B}^n)\cap Q(\mathbb{B}^n)$ and for all $z\in\mathbb{B}^n\setminus \mathbb{D}^n(\mathbf{0},R')$ analytic in $\mathbb{B}^n$ functions $H_j$ and $G_{S_j}$ satisfy the following conditions:
	\begin{enumerate}
		%		\item $|G_{S_j}(z)|\leq A_{S_j} \mathbf{L}^{p_j\mathbf{e}_j-S_j}(z) |G_{p_j\mathbf{e}_j}(z)|$ for all $\|S_j\|\leq p_j-1,$ $j\in\{1,\ldots,n\};$
		\item $ \left|G^{(M)}_{S_j}(z)\right|\leq B_{S_j,M} \mathbf{L}^{p_j\mathbf{e}_j-S_j+M}(z) |G_{p_j\mathbf{e}_j}(z)|$ 
		and $ \left|G^{(M)}_{p_j\mathbf{e}_j}(z)\right|< B_{p_j\mathbf{e}_j,M} \mathbf{L}^{M}(z) |G_{p_j\mathbf{e}_j}(z)|$ 
		for every $j\in\{1,\ldots,n\},$  $\|S_j\|\leq p_j-1,$ $\mathbf{0}\leq M\leq I$ 
		and  $\|I\|=1-p_j+ \sum_{k=1}^n p_k=1+\sum_{\stackrel{k=1}{k\neq j}}^n p_k$ 
		\item $ \left| H_j^{(I)}(z) \right|\leq D_{I,j} \mathbf{L}^{I}(z)|H_j(z)|$ for every $j\in\{1,\ldots,n\}$ and for all $\|I\|=1-p_j+ \sum_{k=1}^n p_k=1+\sum_{\stackrel{k=1}{k\neq j}}^n p_k.$
	\end{enumerate}
	where $B_{S_j,M}$ and $D_{I,j}$ are nonnegative constants,  and $B_{p_j\mathbf{e}_j,M}$ are positive constants.  If an analytic in $\mathbb{B}^n$ function $F(z)$ satisfies \eqref{dvakhvylka}  
	then $F$ has bounded $\mathbf{L}$-index in joint variables and 
	\begin{equation} \label{kashtan}
	\varlimsup\limits_{|R|\to 1-0} 
	\frac{\ln \max\{|F(z)| \colon z\in\mathbb{T}^n (\mathbf{0},R)\}}{\max\limits_{\Theta\in[0,2\pi]^n} \int_0^{r^*}  \sum_{j=1}^n \frac{r_j}{r^*}l_j\left(\frac{\tau}{r^*}R e^{i\Theta}\right) d\tau}\leq c,
	\end{equation}
	where 
	$$
	c\!=\!\max_{\stackrel{\|I\|=1-p_j+ \sum_{k=1}^n p_k,}{j\in\{1,\ldots,n\}}} \!\!\left(D_{I,j} (1+\sum_{\|S_j\|\le p_j-1} B_{S_j,\mathbf{0}})\!+\! \sum_{\stackrel{\mathbf{0}\leq M\leq I}{M\neq \mathbf{0}}}	C^M_{I} B_{p_j\mathbf{e}_j,M}\!+\! 
	\sum_{\mathbf{0}\leq M\leq I} 	C_{I}^M \sum_{\|S_j\|\le p_j-1} B_{S_j,M}\! \right).\!
	$$
\end{theorem}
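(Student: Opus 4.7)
The plan is to derive from the system \eqref{dvakhvylka} the Hayman-type inequality \eqref{bordriv122} of Theorem~\ref{bordte15} with $p=\sum_{k=1}^n p_k$, valid for every $z\in\mathbb{B}^n\setminus\mathbb{D}^n(\mathbf{0},R')$. Once this estimate is in place, Corollary~\ref{naslidokt2} immediately gives boundedness of the $\mathbf{L}$-index in joint variables, while Lemma~\ref{lema43} converts the same inequality into the growth estimate \eqref{kashtan} with the constant $c$ prescribed in the statement.

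Fix $j\in\{1,\ldots,n\}$ and a multi-index $I$ with $\|I\|=1-p_j+\sum_{k=1}^n p_k$. Applying $\partial^I$ to the $j$-th equation of \eqref{dvakhvylka} and expanding by the Leibniz rule, I obtain
\begin{gather*}
G_{p_j\mathbf{e}_j}\,F^{(p_j\mathbf{e}_j+I)} = H_j^{(I)}-\sum_{\mathbf{0}<M\le I}C_I^M\,G_{p_j\mathbf{e}_j}^{(M)}\,F^{(p_j\mathbf{e}_j+I-M)}\\ -\sum_{\|S_j\|\le p_j-1}\sum_{M\le I}C_I^M\,G_{S_j}^{(M)}\,F^{(S_j+I-M)}.
\end{gather*}
Every $F$-derivative occurring on the right is of order at most $p$: in the first sum $\|M\|\ge 1$ forces the order to equal $p+1-\|M\|\le p$, and in the second $\|S_j+I-M\|\le(p_j-1)+\|I\|=p$. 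Dividing by $G_{p_j\mathbf{e}_j}$, I bound each coefficient ratio directly from condition~1); the only delicate point is $H_j^{(I)}/G_{p_j\mathbf{e}_j}$, which I handle by first deriving from the original equation the auxiliary estimate
\begin{equation*}
\frac{|H_j(z)|}{|G_{p_j\mathbf{e}_j}(z)|\,\mathbf{L}^{p_j\mathbf{e}_j}(z)}\le\Bigl(1+\sum_{\|S_j\|\le p_j-1}B_{S_j,\mathbf{0}}\Bigr)\max\Bigl\{\tfrac{|F^{(K)}(z)|}{\mathbf{L}^K(z)}:\|K\|\le p\Bigr\},
\end{equation*}
which uses condition~1) with $M=\mathbf{0}$, and then combining it with condition~2) to bring in the factor $D_{I,j}\mathbf{L}^I$.

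After dividing through by $\mathbf{L}^{p_j\mathbf{e}_j+I}$ and collecting contributions, I arrive at
\begin{equation*}
\frac{|F^{(p_j\mathbf{e}_j+I)}(z)|}{\mathbf{L}^{p_j\mathbf{e}_j+I}(z)}\le c_{I,j}\max\Bigl\{\frac{|F^{(K)}(z)|}{\mathbf{L}^K(z)}:\|K\|\le p\Bigr\},
\end{equation*}
where $c_{I,j}$ coincides with the expression inside the outer maximum defining $c$. A short counting argument then shows that every multi-index $J$ with $\|J\|=p+1$ has at least one component $j_j\ge p_j$ (otherwise $\|J\|<\sum p_k<p+1$), whence $J=p_j\mathbf{e}_j+I$ for an admissible pair $(j,I)$. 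Taking the maximum over all such $J$ produces \eqref{bordriv122} on $\mathbb{B}^n\setminus\mathbb{D}^n(\mathbf{0},R')$ with the constant $c$ of the theorem, after which Corollary~\ref{naslidokt2} and Lemma~\ref{lema43} close the proof.

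The main technical obstacle will be the combinatorial bookkeeping of the Leibniz expansion and the final assembly of constants: verifying that the only surviving derivative of order $p+1$ is the isolated leading term $F^{(p_j\mathbf{e}_j+I)}$, that the powers of $\mathbf{L}$ supplied by conditions~1) and 2) cancel cleanly against $\mathbf{L}^{-(p_j\mathbf{e}_j+I)}$, and that the three groups of terms aggregate to reproduce exactly the constant $c_{I,j}$ appearing in the theorem's formula for $c$.
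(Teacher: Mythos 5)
Your proposal follows the paper's argument step for step: differentiate the $j$-th equation of the system by $\partial^I$, use condition~1) to control the coefficient ratios, handle $H_j^{(I)}$ through condition~2) plus the original equation (the paper's inequality \eqref{trykutnyk}), divide by $\mathbf{L}^{p_j\mathbf{e}_j+I}$, and feed the resulting \eqref{bordriv122}-type bound on $\mathbb{B}^n\setminus\mathbb{D}^n(\mathbf{0},R')$ into Corollary~\ref{naslidokt2} and Lemma~\ref{lema43}. The one place you make explicit what the paper leaves implicit is the counting argument showing every $J$ with $\|J\|=p+1$ decomposes as $p_j\mathbf{e}_j+I$ for an admissible $(j,I)$, which is a genuine (if small) improvement in exposition.
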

\begin{proof}
	First, we note that the first condition of the theorem yields $G_{p_j\mathbf{e}_j}(z)\neq 0$ for $z\in\mathbb{B}^n\setminus \mathbb{D}^n(\mathbf{0},R').$
	Taking into account that  the function $F(z)$ satisfies  system  (\ref{dvakhvylka}),  we calculate the partial derivative $I\in\mathbb{Z}^n_+$  in each equation of the system
	\begin{gather}
	%\tred{G_{p_j\mathbf{e}_j}(z)F^{(p_j\mathbf{e}_j+\mathbf{e}_k)}(z)+
	%	G^{(\mathbf{e}_k)}_{p_j\mathbf{e}_j}(z)F^{(p_j\mathbf{e}_j)}(z)
	%	+\sum_{\|S_j\|\le p_j-1} G_{S_j}(z)F^{(S_j+\mathbf{e}_k)}(z)+}
	%	\nonumber
	%	\\ \tred{+\sum_{\|S_j\|\le p_j-1} G^{(\mathbf{e}_k)}_{S_j}(z)F^{(S_j)}(z) =H^{(\mathbf{e}_k)}_j(z),	}
	%	\nonumber	\\
	\sum_{\mathbf{0}\leq M\leq I}	C^M_{I} G^{(M)}_{p_j\mathbf{e}_j}(z)F^{(p_j\mathbf{e}_j+I-M)}(z)+
	\sum_{\mathbf{0}\leq M\leq I} 	C_{I}^M \sum_{\|S_j\|\le p_j-1} G^{(M)}_{S_j}(z)F^{(S_j+I-M)}(z)=H_j^{(I)}(z),	\label{zirochka}
	\end{gather}
	where $C^M_I=\frac{i_1!\ldots i_n!}{m_1!(i_1-m_1)!\ldots m_n!(i_n-m_n)!}$ 
	and  $\|I\|=1-p_j+ \sum_{k=1}^n p_k=1+\sum_{\stackrel{k=1}{k\neq j}}^n p_k.$
	Using the second condition of the theorem, we obtain
	\begin{gather} 
	\!	|H_j^{(I)}(z)| \!\le\! D_{I,j} \mathbf{L}^{I}(z)|H_j(z)|\le \nonumber \\ \le 
	D_{I,j} \mathbf{L}^{I}(z)\left(|G_{p_j\mathbf{e}_j}(z)||F^{(p_j\mathbf{e}_j)}(z)|+
	\sum_{\|S_j\|\le p_j-1} |G_{S_j}(z)||F^{(S_j)}(z)|\right).\label{trykutnyk} \end{gather}
	Equation (\ref{zirochka}) yields 
	\begin{gather}
	F^{(p_j\mathbf{e}_j+I)}(z)=\frac{1}{G_{p_j\mathbf{e}_j}(z)}	
	\left(
	H_j^{(I)}(z)- 
	\sum_{\stackrel{\mathbf{0}\leq M\leq I}{M\neq \mathbf{0}}}	C^M_I G^{(M)}_{p_j\mathbf{e}_j}(z)F^{(p_j\mathbf{e}_j+I-M)}(z)-
	\right.  \nonumber\\ \left.-
	\sum_{\mathbf{0}\leq M\leq I} 	C_{I}^M \sum_{\|S_j\|\le p_j-1} G^{(M)}_{S_j}(z)F^{(S_j+I-M)}(z)
	\right).
	\label{dvizirochky}
	\end{gather}
	%	\tred{Putting in first condition of the theorem $m_0=1$,}
	From (\ref{dvizirochky}) and the first condition it follows  
	\begin{gather}
	\left|F^{(p_j\mathbf{e}_j+I)}(z)\right|\!=\!\frac{1}{|G_{p_j\mathbf{e}_j}(z)|}\!	
	\left(
	D_{I,j} \mathbf{L}^{I}(z)\left(|G_{p_j\mathbf{e}_j}(z)||F^{(p_j\mathbf{e}_j)}(z)|\!+\!
	\sum_{\|S_j\|\le p_j-1} |G_{S_j}(z)||F^{(S_j)}(z)|\right)\!+\!\right.\nonumber\\ 
	\left.+
	\sum_{\stackrel{\mathbf{0}\leq M\leq I}{M\neq \mathbf{0}}}	C^M_{I} |G^{(M)}_{p_j\mathbf{e}_j}(z)| |F^{(p_j\mathbf{e}_j+I-M)}(z)|+
	\sum_{\mathbf{0}\leq M\leq I} 	C_{I}^M \sum_{\|S_j\|\le p_j-1} |G^{(M)}_{S_j}(z)| |F^{(S_j+I-M)}(z)|
	\right)\leq \nonumber\\ 
	\leq
	D_{I,j} \mathbf{L}^{I}(z)\left(|F^{(p_j\mathbf{e}_j)}(z)|+
	\sum_{\|S_j\|\le p_j-1} B_{S_j,\mathbf{0}} L^{p_j\mathbf{e}_j-S_j}(z) |F^{(S_j)}(z)|\right)+\nonumber\\ 	
	+ 
	\sum_{\stackrel{\mathbf{0}\leq M\leq I}{M\neq \mathbf{0}}}	C^M_{I} B_{p_j\mathbf{e}_j,M} \mathbf{L}^{M}(z) |F^{(p_j\mathbf{e}_j+I-M)}(z)|+\nonumber\\+
	\sum_{\mathbf{0}\leq M\leq I} 	C_{I}^M \sum_{\|S_j\|\le p_j-1} B_{S_j,M} \mathbf{L}^{p_j\mathbf{e}_j-S_j+M}(z) |F^{(S_j+I-M)}(z)| \label{dopestimates}
	\end{gather}
	
	Dividing this inequality by $L^{p_j\mathbf{e}_j+I}(z),$ we obtain that for every $\|I\|=1+\sum_{\stackrel{k=1}{k\neq j}}^n p_k$ and $j\in\{1,\ldots,n\}$
	\begin{gather*}
	\frac{\left|F^{(p_j\mathbf{e}_j+I)}(z)\right|}{\mathbf{L}^{p_j\mathbf{e}_j+I}(z)}\leq 
	D_{I,j} \left( \frac{|F^{(p_j\mathbf{e}_j)}(z)|}{\mathbf{L}^{p_j\mathbf{e}_j}(z)}+
	\sum_{\|S_j\|\le p_j-1} B_{S_j,\mathbf{0}} \frac{|F^{(S_j)}(z)|}{\mathbf{L}^{S_j}(z)}\right)+\\ 	
	+ 
	\sum_{\stackrel{\mathbf{0}\leq M\leq I}{M\neq \mathbf{0}}}	C^M_{I} B_{p_j\mathbf{e}_j,M} \frac{|F^{(p_j\mathbf{e}_j+I-M)}(z)|}{\mathbf{L}^{p_j\mathbf{e}_j+I-M}(z)}+
	\sum_{\mathbf{0}\leq M\leq I} 	C_{I}^M \sum_{\|S_j\|\le p_j-1} B_{S_j,M} \frac{|F^{(S_j+I-M)}(z)|}{\mathbf{L}^{S_j+I-M}(z)}\leq \\ 
	\leq \left(D_{I,j} (1+\sum_{\|S_j\|\le p_j-1} B_{S_j,\mathbf{0}})+ \sum_{\stackrel{\mathbf{0}\leq M\leq I}{M\neq \mathbf{0}}}	C^M_{I} B_{p_j\mathbf{e}_j,M}+ 
	\sum_{\mathbf{0}\leq M\leq I} 	C_{I}^M \sum_{\|S_j\|\le p_j-1} B_{S_j,M} \right) \times \\ 
	\times \max\left\{ \frac{|F^{(S)}(z)|}{\mathbf{L}^{S}(z)}: \|S\|\le \sum_{j=1}^n p_j\right\}.
	\end{gather*}
	Obviously, $\|p_j\mathbf{e}_j+I\|=1+\sum_{j=1}^n p_j.$
	This implies 
	\begin{gather*}
	\max\left\{\frac{\left|F^{(K)}(z)\right|}{\mathbf{L}^{K}(z)}: 
	\|K\|=1+\sum_{j=1}^n p_j\right\} \leq 
	c\cdot   \max\left\{\frac{|F^{(S)}(z)|}{\mathbf{L}^S(z)}: \  \|S\|\le \sum_{j=1}^n p_j \right\},
	\end{gather*}
	where 
	$$
	c\!=\!\max_{\stackrel{\|I\|=1-p_j+ \sum_{k=1}^n p_k,}{j\in\{1,\ldots,n\}}} \!\left(D_{I,j} (1\!+\!\sum_{\|S_j\|\le p_j-1} B_{S_j,\mathbf{0}})+ \sum_{\stackrel{\mathbf{0}\leq M\leq I}{M\neq \mathbf{0}}}	C^M_{I} B_{p_j\mathbf{e}_j,M}+ 
	\sum_{\mathbf{0}\leq M\leq I} 	C_{I}^M \sum_{\|S_j\|\le p_j-1} B_{S_j,M} \!\right)\! 
	$$
	for all  $z\in\mathbb{C}^n\setminus \mathbb{D}^n(\mathbf{0},R').$
	
	Thus, by Lemma \ref{lema43}  estimate (\ref{kashtan}) holds, and by Corollary \ref{naslidokt2} the analytic in $\mathbb{B}^n$ function $F$ has bounded  $\mathbf{L}$-index in joint variables.
\end{proof}

If system (\ref{dvakhvylka}) is homogeneous $(H_j(z)\equiv 0),$ the previous theorem can be simplified.
\begin{theorem} \label{teorema2}
	Let $\mathbf{L}\in W(\mathbb{B}^n)\cap Q(\mathbb{B}^n)$ and for all $z\in\mathbb{C}^n\setminus \mathbb{D}^n(\mathbf{0},R')$ analytic in $\mathbb{B}^n$ functions $G_{S_j}$
	satisfy the conditions
	%	\begin{enumerate}
	%		\item[1)] $|G_{S_j}(z)|\leq A_{S_j} \mathbf{L}^{p_j\mathbf{e}_j-S_j}(z) |G_{p_j\mathbf{e}_j}(z)|$ for all $\|S_j\|\leq p_j-1,$ $j\in\{1,\ldots,n\};$ 
	%		\item[2)] 
	$ \left|G^{(M)}_{S_j}(z)\right|\!\leq\! B_{S_j,M} \mathbf{L}^{p_j\mathbf{e}_j-S_j+M}(z) |G_{p_j\mathbf{e}_j}(z)|$ 
	and $ \left|G^{(M)}_{p_j\mathbf{e}_j}(z)\right|\!<\! B_{p_j\mathbf{e}_j,M} \mathbf{L}^{M}(z) |G_{p_j\mathbf{e}_j}(z)|$ 
	for every $j\in\{1,\ldots,n\},$  $\|S_j\|\leq p_j-1,$ $\mathbf{0}\leq M\leq I$ 
	and  $\|I\|=-p_j+ \sum_{k=1}^n p_k=\sum_{\stackrel{k=1}{k\neq j}}^n p_k,$
	%	\end{enumerate}
	where $B_{S_j,M}$ are some nonnegative constants, $B_{p_j\mathbf{e}_j,M}$ are positive constants. If an analytic in $\mathbb{B}^n$ function $F$ is a solution of system
	\eqref{dvakhvylka} with $H_j(z)\equiv 0$  for every $j\in\{1,\ldots,n\}$ 
	then $F$ has bounded $\mathbf{L}$-index in joint variables and 
	\begin{equation} \label{kashtan0}
	\varlimsup\limits_{|R|\to \infty} 
	\frac{\ln \max\{|F(z)| \colon z\in\mathbb{T}^n (\mathbf{0},R)\}}{\max\limits_{\Theta\in[0,2\pi]^n} \int_0^{r^*}  \sum_{j=1}^n \frac{r_j}{r^*}l_j\left(\frac{\tau}{r^*}R e^{i\Theta}\right) d\tau}\leq c,
	\end{equation}
	where 
	$$
	c=\max_{\stackrel{\|I\|=-p_j+ \sum_{k=1}^n p_k,}{j\in\{1,\ldots,n\}}} \left(\sum_{\stackrel{\mathbf{0}\leq M\leq I}{M\neq \mathbf{0}}}	C^M_{I} B_{p_j\mathbf{e}_j,M}+ 
	\sum_{\mathbf{0}\leq M\leq I} 	C_{I}^M \sum_{\|S_j\|\le p_j-1} B_{S_j,M}\! \right).
	$$
\end{theorem}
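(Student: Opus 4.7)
The plan is to imitate the argument of Theorem~\ref{teorema1}, taking advantage of the simplification $H_j\equiv 0$ to lower by one the differentiation order required, so that the resulting Hayman-type inequality fits directly into Lemma~\ref{lema53} and Corollary~\ref{naslidokt2}.

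First I would note that the hypothesis $|G^{(M)}_{p_j\mathbf{e}_j}(z)|<B_{p_j\mathbf{e}_j,M}\mathbf{L}^M(z)|G_{p_j\mathbf{e}_j}(z)|$ forces $G_{p_j\mathbf{e}_j}(z)\ne 0$ on $\mathbb{B}^n\setminus\mathbb{D}^n(\mathbf{0},R')$, so the leading coefficient may be divided out. Then for every $j\in\{1,\ldots,n\}$ and every multi-index $I$ with $\|I\|=-p_j+\sum_{k=1}^n p_k$, I would apply the Leibniz rule to the $j$-th equation $G_{p_j\mathbf{e}_j}F^{(p_j\mathbf{e}_j)}+\sum_{\|S_j\|\le p_j-1}G_{S_j}F^{(S_j)}=0$, isolate the term corresponding to $M=\mathbf{0}$, namely $G_{p_j\mathbf{e}_j}(z)F^{(p_j\mathbf{e}_j+I)}(z)$, and divide by $G_{p_j\mathbf{e}_j}(z)$ to obtain a pointwise formula expressing $F^{(p_j\mathbf{e}_j+I)}(z)$ as a linear combination of lower-order partial derivatives of $F$ with coefficients built from partial derivatives of the $G_{S_j}$. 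The homogeneity is essential here: because there is no $H_j^{(I)}$ term to be rewritten via the original equation, I avoid the auxiliary estimate used in the proof of Theorem~\ref{teorema1} and therefore do not re-introduce $F^{(p_j\mathbf{e}_j)}$ on the right-hand side.

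Invoking the hypothesized bounds on $|G^{(M)}_{S_j}|$ and $|G^{(M)}_{p_j\mathbf{e}_j}|$ and then dividing through by $\mathbf{L}^{p_j\mathbf{e}_j+I}(z)$, the key bookkeeping is that $\|p_j\mathbf{e}_j+I\|=\sum_{k=1}^n p_k$ while every multi-index occurring on the right has norm at most $\sum_{k=1}^n p_k-1$, since $M\ne\mathbf{0}$ in the first sum and $\|S_j\|\le p_j-1$ in the second. Taking the maximum over $j$ and $I$ yields, for each $z\in\mathbb{B}^n\setminus\mathbb{D}^n(\mathbf{0},R')$,
\begin{equation*}
\max\Bigl\{\frac{|F^{(K)}(z)|}{\mathbf{L}^K(z)}\colon\|K\|=\sum_{k=1}^n p_k\Bigr\}\le c\max\Bigl\{\frac{|F^{(S)}(z)|}{\mathbf{L}^S(z)}\colon\|S\|\le\sum_{k=1}^n p_k-1\Bigr\},
\end{equation*}
with $c$ exactly the constant displayed in the statement.

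This inequality is precisely the hypothesis of Lemma~\ref{lema53} with $p=\sum_k p_k-1$, which yields the growth estimate \eqref{kashtan0}; and after absorbing the bounded combinatorial factors $K!$ and $S!$ into the constant, it is also the hypothesis of Corollary~\ref{naslidokt2}, from which the boundedness of the $\mathbf{L}$-index in joint variables follows. The argument is essentially routine once Theorem~\ref{teorema1} is available; the only subtle step is the correct choice $\|I\|=-p_j+\sum_k p_k$ (one lower than in the inhomogeneous case) and the attendant verification that every multi-index on the right has norm at most $\sum p_k-1$, which is what pins down the exact constant $c$ in the conclusion.
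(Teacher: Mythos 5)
Your derivation of the pointwise inequality is correct and matches what the paper does: applying the Leibniz rule to the homogeneous equation, isolating the $M=\mathbf{0}$ term, dividing by $G_{p_j\mathbf{e}_j}$, invoking the hypothesized bounds, and normalizing by $\mathbf{L}^{p_j\mathbf{e}_j+I}(z)$ yields, on $\mathbb{B}^n\setminus\mathbb{D}^n(\mathbf{0},R')$,
\begin{equation*}
\max\left\{\frac{|F^{(K)}(z)|}{\mathbf{L}^K(z)}\colon \|K\|=\sum_{k=1}^n p_k\right\}\le c\,\max\left\{\frac{|F^{(S)}(z)|}{\mathbf{L}^S(z)}\colon \|S\|\le \sum_{k=1}^n p_k-1\right\},
\end{equation*}
with the correct constant $c$, and the norm-counting (every derivative on the right has order at most $\sum p_k-1$) is right.

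The problem is the final step. This inequality is the hypothesis of \textit{Lemma \ref{lema43}}, not Lemma \ref{lema53}. Lemma \ref{lema43} takes as input \eqref{bordriv122}, which is stated \emph{without} factorials, and delivers the growth bound with constant exactly $c$. Lemma \ref{lema53}, by contrast, requires the factorial-normalized inequality \eqref{lemriv3} and only yields the weaker constant $c(p+1)$. So if you route through Lemma \ref{lema53}, two things go wrong: (i) your derived inequality lacks the factorials it demands, so you would have to convert using bounded ratios $K!/S!$, which changes $c$; and (ii) even after that, the output would be $c(p+1)$ rather than the $c$ claimed in the theorem. The paper avoids both issues by invoking Lemma \ref{lema43} for the growth estimate (together with Corollary \ref{naslidokt2} for boundedness of the $\mathbf{L}$-index, for which your derived inequality is already exactly the hypothesis, with no factorials to absorb). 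Replacing the citation of Lemma \ref{lema53} with Lemma \ref{lema43} repairs the argument and makes it coincide with the paper's.
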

\begin{proof}
	If $H_j(z)\equiv 0$ then \eqref{dvizirochky} implies
	\begin{gather}
	F^{(p_j\mathbf{e}_j+I)}(z)=\frac{1}{G_{p_j\mathbf{e}_j}(z)}	
	\left(- 
	\sum_{\stackrel{\mathbf{0}\leq M\leq I}{M\neq \mathbf{0}}}	C^M_I G^{(M)}_{p_j\mathbf{e}_j}(z)F^{(p_j\mathbf{e}_j+I-M)}(z)-
	\right.  \nonumber\\ \left.-
	\sum_{\mathbf{0}\leq M\leq I} 	C_{I}^M \sum_{\|S_j\|\le p_j-1} G^{(M)}_{S_j}(z)F^{(S_j+I-M)}(z)
	\right).
	\label{dvizirochky0}
	\end{gather}
	Hence, we obtain 
	\begin{gather*}
	|F^{(p_j\mathbf{e}_j+I)}(z)|\leq \frac{1}{|G_{p_j\mathbf{e}_j}(z)|}	
	\left(\sum_{\stackrel{\mathbf{0}\leq M\leq I}{M\neq \mathbf{0}}}	C^M_I |G^{(M)}_{p_j\mathbf{e}_j}(z)| |F^{(p_j\mathbf{e}_j+I-M)}(z)|+
	\right.  \nonumber\\ \left.+
	\sum_{\mathbf{0}\leq M\leq I} 	C_{I}^M \sum_{\|S_j\|\le p_j-1} |G^{(M)}_{S_j}(z)| |F^{(S_j+I-M)}(z)|
	\right).
	\end{gather*} 	
	
	%	$$
	%	|G_{p_j\mathbf{e}_j}(z)|\, |F^{(p_j\mathbf{e}_j)}(z)|=\sum_{\|S_j\|\le p_j-1} |G_{S_j}(z)|\,|F^{(S_j)}(z)|.
	%	$$
	Dividing the obtained inequality by $\mathbf{L}^{p_j\mathbf{e}_j+I}(z)$ and using assumptions  of the theorem  on the functions $G_{S_j},$ we deduce
	\begin{gather*}
	\frac{|F^{(p_j\mathbf{e}_j+I)}(z)|}{\mathbf{L}^{p_j\mathbf{e}_j+I}(z)}\leq 
	\frac{1}{|G_{p_j\mathbf{e}_j}(z)|\mathbf{L}^{p_j\mathbf{e}_j+I}(z)}	
	\left( \sum_{\stackrel{\mathbf{0}\leq M\leq I}{M\neq \mathbf{0}}}	C^M_I B_{p_j\mathbf{e}_j,M} \mathbf{L}^{M}(z) |G_{p_j\mathbf{e}_j}(z)| |F^{(p_j\mathbf{e}_j+I-M)}(z)|+
	\right.  \nonumber\\ \left.+
	\sum_{\mathbf{0}\leq M\leq I} 	C_{I}^M \sum_{\|S_j\|\le p_j-1} B_{S_j,M} \mathbf{L}^{p_j\mathbf{e}_j-S_j+M}(z) |G_{p_j\mathbf{e}_j}(z)| |F^{(S_j+I-M)}(z)|
	\right)=\\
	= \sum_{\stackrel{\mathbf{0}\leq M\leq I}{M\neq \mathbf{0}}}	C^M_I B_{p_j\mathbf{e}_j,M} \frac{|F^{(p_j\mathbf{e}_j+I-M)}(z)|}{\mathbf{L}^{p_j\mathbf{e}_j+I-M}(z)}+
	\sum_{\mathbf{0}\leq M\leq I} 	C_{I}^M \sum_{\|S_j\|\le p_j-1} B_{S_j,M} \frac{|F^{(S_j+I-M)}(z)|}{\mathbf{L}^{S_j+I-M}(z)}\leq \\
	\!\leq\! \left(\sum_{\stackrel{\mathbf{0}\leq M\leq I}{M\neq \mathbf{0}}}	C^M_{I} B_{p_j\mathbf{e}_j,M}\!+\! 
	\sum_{\mathbf{0}\leq M\leq I} 	C_{I}^M \sum_{\|S_j\|\le p_j-1} B_{S_j,M} \right) \times
	\max\left\{ \frac{|F^{(S)}(z)|}{\mathbf{L}^{S}(z)}: \|S\|\le -1+\sum_{j=1}^n p_j\right\}.\!
	\end{gather*}
	Obviously, $\|p_j\mathbf{e}_j+I\|=\sum_{j=1}^n p_j.$
	Therefore, 
	\begin{gather*}
	\max\left\{\frac{\left|F^{(K)}(z)\right|}{\mathbf{L}^{K}(z)}: 
	\|K\|=\sum_{j=1}^n p_j\right\} \leq 
	c\cdot   \max\left\{\frac{|F^{(S)}(z)|}{\mathbf{L}^S(z)}: \  \|S\|\le -1+\sum_{j=1}^n p_j \right\},
	\end{gather*}
	where 
	$$
	c\!=\!\max_{\stackrel{\|I\|=-p_j+ \sum_{k=1}^n p_k,}{j\in\{1,\ldots,n\}}} \!\left(\sum_{\stackrel{\mathbf{0}\leq M\leq I}{M\neq \mathbf{0}}}	C^M_{I} B_{p_j\mathbf{e}_j,M}+ 
	\sum_{\mathbf{0}\leq M\leq I} 	C_{I}^M \sum_{\|S_j\|\le p_j-1} B_{S_j,M} \!\right)\! 
	$$
	for all  $z\in\mathbb{B}^n\setminus \mathbb{D}^n(\mathbf{0},R').$
	
	Thus, all conditions of Corollary \ref{naslidokt2} are satisfying. Hence, the function $F$ has bounded $\mathbf{L}$-index in joint variables and by Lemma  \ref{lema43} estimate (\ref{kashtan0}) holds.
\end{proof}

Note that estimate \eqref{kashtan} and \eqref{kashtan0} cannot be improved (see examples for $n=1$ in \cite{bordulyakgrowth}). 

%\begin{example}
%	It is illustrated by the differential equation $\frac{\partial w}{\partial\mathbf{b}}+ 2\langle z,\mathbf{b}\rangle w=0$ with an analytic in $\mathbb{B}^n$ solution $w(z)=\exp\left\{-\sum\limits_{j=1}^n z_j^2\right\},$  $L(z)=|\langle z,\mathbf{b}\rangle|+1$ for $|z|\geq 1,$  $m_1=2.$
%	% Since $|g_1(z)|=2|\langle z,\mathbf{b}\rangle|< 2L(z),$ then $m_1=2.$ Besides,
%	% $$\int\limits_{0}^r L(z^0+t\mathbf{b})dt=\int\limits_0^r r|\mathbf{b}|^2+1 dr=\frac{1}{2}r^2|\mathbf{b}|^2+r \text{ at } z^0=0.$$
%	% Using that for $z=x+iy,$ $x\in\mathbb{R},$ $y\in\mathbb{R},$ it holds $e^x=|e^z|\leq e^{\sqrt{x^2+y^2}}= e^{|z|},$ we obtain
%	% $\ln\left |\exp\left\{-\sum\limits_{j=1}^n z_j^2\right\}\right|\leq \left|\sum\limits_{j=1}^n(z_j^0+rb_j)^2\right|\leq\sum_{j=1}^nr^2|b_j|^2=r^2|\mathbf{b}|^2 $ for  $z^0=0.$ Therefore, $$\frac{\ln M(r,w,0)}{\int\limits_0^r L(z^0+t\mathbf{b}) dt}=\frac{r^2|\mathbf{b}|^2}{\frac{r^2}{2}|\mathbf{b}|^2+r}\to 2=m_1 \text{ at }r\to+\infty.$$
%\end{example}

Moreover, using Corollary \ref{naslidokt2} and Lemma \ref{lema53} we can supplement two previous Theorems \ref{teorema1} and \ref{teorema2} with propositions, that contain estimates  $\max\{|F(z)| \colon z\in\mathbb{T}^n (\mathbf{0},R)\},$ which can sometimes be better than (\ref{kashtan0}) and (\ref{kashtan}).
Two following theorems have similar proofs to Theorems \ref{teorema1} and \ref{teorema2}.

\begin{theorem} \label{teorema3}
	Let $\mathbf{L}\in W(\mathbb{B}^n)\cap Q(\mathbb{B}^n)$ and for all $z\in\mathbb{C}^n\setminus \mathbb{D}^n(\mathbf{0},R')$ analytic in $\mathbb{B}^n$ functions $H_j$ and $G_{S_j}$ satisfy the following conditions:
	\begin{enumerate}
		%		\item $|G_{S_j}(z)|\leq A_{S_j} \mathbf{L}^{p_j\mathbf{e}_j-S_j}(z) |G_{p_j\mathbf{e}_j}(z)|$ for all $\|S_j\|\leq p_j-1,$ $j\in\{1,\ldots,n\};$
		\item[1)] $ \left|G^{(M)}_{S_j}(z)\right|\leq B_{S_j,M} \mathbf{L}^{p_j\mathbf{e}_j-S_j+M}(z) |G_{p_j\mathbf{e}_j}(z)|$ 
		and $ \left|G^{(M)}_{p_j\mathbf{e}_j}(z)\right|< B_{p_j\mathbf{e}_j,M} \mathbf{L}^{M}(z) |G_{p_j\mathbf{e}_j}(z)|$ 
		for every $j\in\{1,\ldots,n\},$  $\|S_j\|\leq p_j-1,$ $\mathbf{0}\leq M\leq I$ 
		and  $\|I\|=1-p_j+ \sum_{k=1}^n p_k=1+\sum_{\stackrel{k=1}{k\neq j}}^n p_k$ 
		\item[2)] $ \left| H_j^{(I)}(z) \right|\leq D_{I,j} \mathbf{L}^{I}(z)|H_j(z)|$ for every $j\in\{1,\ldots,n\}$ and for all $\|I\|=1-p_j+ \sum_{k=1}^n p_k=1+\sum_{\stackrel{k=1}{k\neq j}}^n p_k.$
	\end{enumerate}
	where $B_{S_j,M}$ and $D_{I,j}$ are nonnegative constants,  and $B_{p_j\mathbf{e}_j,M}$ are positive constants.  If an analytic in $\mathbb{B}^n$ function $F(z)$ satisfies \eqref{dvakhvylka}  
	then $F$ has bounded $\mathbf{L}$-index in joint variables and 
	\begin{equation} \label{kashtan3}
	\varlimsup\limits_{|R|\to 1-0} 
	\frac{\ln \max\{|F(z)| \colon z\in\mathbb{T}^n (\mathbf{0},R)\}}{\max\limits_{\Theta\in[0,2\pi]^n} \int_0^{r^*}  \sum_{j=1}^n \frac{r_j}{r^*}l_j\left(\frac{\tau}{r^*}R e^{i\Theta}\right) d\tau}\leq c,
	\end{equation}
	where 
	\begin{gather}
	B=\max\{B_{S_j,M}, B_{p_j\mathbf{e}_j,M}\colon j\in\{1,\ldots,n\}, \mathbf{0}\le M\le I, \|I\|=1+\sum_{\stackrel{k=1}{k\neq j}}^n p_k\},\nonumber\\
	c= \!\max_{\stackrel{\|I\|=1-p_j+ \sum_{k=1}^n p_k,}{j\in\{1,\ldots,n\}}} \left(D_{I,j} \left(\frac{p_j!}{(p_j\mathbf{e}_j+I)!}+
	B\sum_{\|S_j\|\le p_j-1} \frac{(p_j\mathbf{e}_j-S_j)!}{(p_j\mathbf{e}_j+I)!}\right)+\right.\nonumber\\ 
	\left.
	+ B\sum_{\stackrel{\mathbf{0}\leq M\leq I}{M\neq \mathbf{0}}}	C^M_{I}  \frac{(p_j\mathbf{e}_j+I-M)!|}{(p_j\mathbf{e}_j+I)!}+B\sum_{\mathbf{0}\leq M\leq I} 	C_{I}^M \sum_{\|S_j\|\le p_j-1}  \frac{(S_j+I-M)!}{(p_j\mathbf{e}_j+I)!} \right) \label{definc}
	\end{gather}
\end{theorem}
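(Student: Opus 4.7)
\medskip

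\noindent\textbf{Proof proposal.} The plan is to adapt the argument of Theorem~\ref{teorema1} almost verbatim, and then to invoke Lemma~\ref{lema53} instead of Lemma~\ref{lema43}. The former requires a Hayman-type inequality with the factorial $J!$ in the denominators, which is precisely why the constant $c$ in \eqref{definc} is now weighted by ratios of factorials of the multi-indices involved.

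First, I would take the analytic in $\mathbb{B}^n$ function $F$ satisfying \eqref{dvakhvylka}, differentiate each equation of the system by a multi-index $I$ with $\|I\|=1-p_j+\sum_{k=1}^{n}p_k$, and solve the resulting identity \eqref{zirochka} for $F^{(p_j\mathbf{e}_j+I)}$ (note that the first hypothesis guarantees $G_{p_j\mathbf{e}_j}(z)\neq 0$ outside $\mathbb{D}^n(\mathbf{0},R')$). Using the hypothesis $|H_j^{(I)}(z)|\le D_{I,j}\mathbf{L}^I(z)|H_j(z)|$ together with \eqref{dvakhvylka} to eliminate $H_j(z)$, and applying the estimates on $G_{S_j}^{(M)}$ and $G_{p_j\mathbf{e}_j}^{(M)}$, I reproduce the pointwise bound \eqref{dopestimates} verbatim.

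The key deviation from Theorem~\ref{teorema1} occurs in the next step. Rather than dividing \eqref{dopestimates} by $\mathbf{L}^{p_j\mathbf{e}_j+I}(z)$, I would divide by $(p_j\mathbf{e}_j+I)!\,\mathbf{L}^{p_j\mathbf{e}_j+I}(z)$ and rewrite each summand on the right-hand side in the form
\[
\frac{|F^{(K)}(z)|}{(p_j\mathbf{e}_j+I)!\mathbf{L}^{K}(z)} = \frac{K!}{(p_j\mathbf{e}_j+I)!}\cdot\frac{|F^{(K)}(z)|}{K!\,\mathbf{L}^{K}(z)},
\]
replacing all the $B_{S_j,M}$ and $B_{p_j\mathbf{e}_j,M}$ by their common upper bound $B$. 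This produces an inequality of the form
\[
\frac{|F^{(p_j\mathbf{e}_j+I)}(z)|}{(p_j\mathbf{e}_j+I)!\,\mathbf{L}^{p_j\mathbf{e}_j+I}(z)} \le c\cdot \max\left\{\frac{|F^{(K)}(z)|}{K!\,\mathbf{L}^K(z)}\colon \|K\|\le \sum_{j=1}^n p_j\right\}
\]
with $c$ precisely as in \eqref{definc}, valid for every $j$ and every admissible $I$, hence for all $z\in\mathbb{B}^n\setminus\mathbb{D}^n(\mathbf{0},R')$. Taking the maximum of the left side over such $j$ and $I$ (noting $\|p_j\mathbf{e}_j+I\|=1+\sum_{k=1}^n p_k$) yields exactly the hypothesis \eqref{lemriv3} of Lemma~\ref{lema53} with $p=\sum_{j=1}^n p_j$.

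Finally, Lemma~\ref{lema53} gives the growth estimate \eqref{kashtan3} (the factor $(p+1)$ in that lemma is absorbed into the definition of $c$ and does not affect the asymptotic rate, only the constant), and Corollary~\ref{naslidokt2}, applied with $\mathbb{B}^n\setminus G=\mathbb{B}^n\setminus\mathbb{D}^n(\mathbf{0},R')$ and $d=\inf(1-|z|)>0$ on the relevant complement, concludes that $F$ is of bounded $\mathbf{L}$-index in joint variables. The main obstacle is purely bookkeeping: keeping track of the multi-indices $M$, $S_j$, $I$ and matching the factorial ratios $\tfrac{(p_j\mathbf{e}_j+I-M)!}{(p_j\mathbf{e}_j+I)!}$ and $\tfrac{(S_j+I-M)!}{(p_j\mathbf{e}_j+I)!}$ so that they match \eqref{definc} exactly; no new conceptual ingredient beyond Lemma~\ref{lema53} is required.
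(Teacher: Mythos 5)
Your proposal follows the paper's proof essentially step by step: derive \eqref{dopestimates} exactly as in Theorem~\ref{teorema1}, then \emph{divide by $(p_j\mathbf{e}_j+I)!\,\mathbf{L}^{p_j\mathbf{e}_j+I}(z)$} rather than just $\mathbf{L}^{p_j\mathbf{e}_j+I}(z)$, replace all the constants $B_{S_j,M},B_{p_j\mathbf{e}_j,M}$ by their common upper bound $B$, collect the factorial ratios to arrive at the normalized Hayman-type inequality \eqref{lemriv3} with $p=\sum_{j=1}^n p_j$ on $\mathbb{B}^n\setminus\mathbb{D}^n(\mathbf{0},R')$, and finally apply Corollary~\ref{naslidokt2} for boundedness of the $\mathbf{L}$-index and Lemma~\ref{lema53} for the growth estimate. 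That is precisely the route the paper takes.

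The one place where your reasoning is off is the parenthetical claim that the factor $(p+1)$ from Lemma~\ref{lema53} ``is absorbed into the definition of $c$.'' It is not: $c$ in \eqref{definc} is an explicit quantity that contains no factor of $\bigl(\sum_{j=1}^n p_j+1\bigr)$. Invoking Lemma~\ref{lema53} with $p=\sum_{j=1}^n p_j$ actually gives the bound $c\bigl(\sum_{j=1}^n p_j+1\bigr)$ in \eqref{kashtan3}, not $c$. The paper itself glosses over this (it simply writes ``by Lemma~\ref{lema53} estimate \eqref{kashtan3} holds'' with no comment), so your instinct that something needs reconciling is correct — but the reconciliation should be that the constant on the right-hand side of \eqref{kashtan3} ought to read $c\bigl(1+\sum_{j=1}^n p_j\bigr)$ with $c$ as in \eqref{definc}, not that $(p+1)$ is hidden inside $c$.
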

\begin{proof}
	As in proof of Theorem \ref{teorema1},
	dividing \eqref{dopestimates} by $(p_j\mathbf{e}_j+I)!L^{p_j\mathbf{e}_j+I}(z),$ we obtain that for every $\|I\|=1+\sum_{\stackrel{k=1}{k\neq j}}^n p_k$ and $j\in\{1,\ldots,n\}$
	\begin{gather*}
	\frac{|F^{(p_j\mathbf{e}_j+I)}(z)|}{(p_j\mathbf{e}_j+I)!\mathbf{L}^{p_j\mathbf{e}_j+I}(z)}	\leq\\ \leq
	D_{I,j} \left(\frac{|F^{(p_j\mathbf{e}_j)}(z)|}{(p_j\mathbf{e}_j+I)!\mathbf{L}^{p_j\mathbf{e}_j}(z)}+
	\sum_{\|S_j\|\le p_j-1} B_{S_j,\mathbf{0}} \frac{|F^{(S_j)}(z)|}{(p_j\mathbf{e}_j+I)!\mathbf{L}^{p_j\mathbf{e}_j-S_j}(z)}\right)+\nonumber\\ 	
	+ 
	\sum_{\stackrel{\mathbf{0}\leq M\leq I}{M\neq \mathbf{0}}}	C^M_{I} B_{p_j\mathbf{e}_j,M} \frac{|F^{(p_j\mathbf{e}_j+I-M)}(z)|}{(p_j\mathbf{e}_j+I)!\mathbf{L}^{p_j\mathbf{e}_j+I-M}(z)}+\nonumber\\+
	\sum_{\mathbf{0}\leq M\leq I} 	C_{I}^M \sum_{\|S_j\|\le p_j-1} B_{S_j,M} \frac{|F^{(S_j+I-M)}(z)|}{(p_j\mathbf{e}_j+I)!\mathbf{L}^{S_j+I-M}(z)}\leq \\
	\leq 			D_{I,j} \left(\frac{|F^{(p_j\mathbf{e}_j)}(z)|}{(p_j\mathbf{e}_j+I)!\mathbf{L}^{p_j\mathbf{e}_j}(z)}+
	B\sum_{\|S_j\|\le p_j-1} \frac{|F^{(S_j)}(z)|}{(p_j\mathbf{e}_j+I)!\mathbf{L}^{p_j\mathbf{e}_j-S_j}(z)}\right)+\nonumber \\
	+ B\sum_{\stackrel{\mathbf{0}\leq M\leq I}{M\neq \mathbf{0}}}	C^M_{I}  \frac{|F^{(p_j\mathbf{e}_j+I-M)}(z)|}{(p_j\mathbf{e}_j+I)!\mathbf{L}^{p_j\mathbf{e}_j+I-M}(z)}+%\nonumber\\+
	B\sum_{\mathbf{0}\leq M\leq I} 	C_{I}^M \sum_{\|S_j\|\le p_j-1}  \frac{|F^{(S_j+I-M)}(z)|}{(p_j\mathbf{e}_j+I)!\mathbf{L}^{S_j+I-M}(z)}\leq \\ 
	\leq \left(D_{I,j} \left(\frac{p_j!}{(p_j\mathbf{e}_j+I)!}+
	B\sum_{\|S_j\|\le p_j-1} \frac{(p_j\mathbf{e}_j-S_j)!}{(p_j\mathbf{e}_j+I)!}\right)
	+ B\sum_{\stackrel{\mathbf{0}\leq M\leq I}{M\neq \mathbf{0}}}	C^M_{I}  \frac{(p_j\mathbf{e}_j+I-M)!|}{(p_j\mathbf{e}_j+I)!}+\right.\nonumber\\ 
	\left.+B\sum_{\mathbf{0}\leq M\leq I} 	C_{I}^M \sum_{\|S_j\|\le p_j-1}  \frac{(S_j+I-M)!}{(p_j\mathbf{e}_j+I)!} \right) 
	\max\left\{ \frac{|F^{(S)}(z)|}{\mathbf{L}^{S}(z)}: \|S\|\le \sum_{j=1}^n p_j\right\}.\!				
	\end{gather*}
	where $B=\max\{B_{S_j,M}, B_{p_j\mathbf{e}_j,M}\colon j\in\{1,\ldots,n\}, \mathbf{0}\le M\le I, \|I\|=1+\sum_{\stackrel{k=1}{k\neq j}}^n p_k.\}$
	
	Obviously, $\|p_j\mathbf{e}_j+I\|=1+\sum_{j=1}^n p_j.$
	For all  $z\in\mathbb{C}^n\setminus \mathbb{D}^n(\mathbf{0},R')$ it implies 
	\begin{gather*}
	\max\left\{\frac{\left|F^{(K)}(z)\right|}{K!\mathbf{L}^{K}(z)}: 
	\|K\|=1+\sum_{j=1}^n p_j\right\} \leq 
	c\cdot   \max\left\{\frac{|F^{(S)}(z)|}{S!\mathbf{L}^S(z)}: \  \|S\|\le \sum_{j=1}^n p_j \right\},
	\end{gather*}
	where $c$ is defined in \eqref{definc}.
	
	In view of Corollary \ref{naslidokt2} the analytic in $\mathbb{B}^n$ function $F$ has bounded  $\mathbf{L}$-index in joint variables.
	And by Lemma \ref{lema53}  estimate \eqref{kashtan3} holds.

\end{proof}
By analogy to the proofs of Theorems \ref{teorema2} and \ref{teorema3} it can be proved the following assertion. 
\begin{theorem} \label{teorema4}
	Let $\mathbf{L}\in W(\mathbb{B}^n)\cap Q(\mathbb{B}^n)$ and for all $z\in\mathbb{C}^n\setminus \mathbb{D}^n(\mathbf{0},R')$ analytic in $\mathbb{B}^n$ functions $G_{S_j}$
	satisfy the conditions
	$ \left|G^{(M)}_{S_j}(z)\right|\!\leq\! B_{S_j,M} \mathbf{L}^{p_j\mathbf{e}_j-S_j+M}(z) |G_{p_j\mathbf{e}_j}(z)|$ 
	and $ \left|G^{(M)}_{p_j\mathbf{e}_j}(z)\right|\!<\! B_{p_j\mathbf{e}_j,M} \mathbf{L}^{M}(z) |G_{p_j\mathbf{e}_j}(z)|$ 
	for every $j\in\{1,\ldots,n\},$  $\|S_j\|\leq p_j-1,$ $\mathbf{0}\leq M\leq I$ 
	and  $\|I\|=-p_j+ \sum_{k=1}^n p_k=\sum_{\stackrel{k=1}{k\neq j}}^n p_k,$
	%	\end{enumerate}
	where $B_{S_j,M}$ are some nonnegative constants, $B_{p_j\mathbf{e}_j,M}$ are positive constants. If an analytic in $\mathbb{B}^n$ function $F$ is a solution of system
	\eqref{dvakhvylka} with $H_j(z)\equiv 0$  for every $j\in\{1,\ldots,n\}$ 
	then $F$ has bounded $\mathbf{L}$-index in joint variables and 
	\begin{equation*} 
	\varlimsup\limits_{|R|\to \infty} 
	\frac{\ln \max\{|F(z)| \colon z\in\mathbb{T}^n (\mathbf{0},R)\}}{\max\limits_{\Theta\in[0,2\pi]^n} \int_0^{r^*}  \sum_{j=1}^n \frac{r_j}{r^*}l_j\left(\frac{\tau}{r^*}R e^{i\Theta}\right) d\tau}\leq c,
	\end{equation*}
	where 
	\begin{gather*}
	B=\max\{B_{S_j,M}, B_{p_j\mathbf{e}_j,M}\colon j\in\{1,\ldots,n\}, \mathbf{0}\le M\le I, \|I\|=\sum_{\stackrel{k=1}{k\neq j}}^n p_k\},\nonumber \\
	c= B\!\max_{\stackrel{\|I\|=-p_j+ \sum_{k=1}^n p_k,}{j\in\{1,\ldots,n\}}} 
	\left(\sum_{\stackrel{\mathbf{0}\leq M\leq I}{M\neq \mathbf{0}}}	C^M_{I}  \frac{(p_j\mathbf{e}_j+I-M)!|}{(p_j\mathbf{e}_j+I)!}+\sum_{\mathbf{0}\leq M\leq I} 	C_{I}^M \sum_{\|S_j\|\le p_j-1}  \frac{(S_j+I-M)!}{(p_j\mathbf{e}_j+I)!} \right). \label{definch}
	\end{gather*}
\end{theorem}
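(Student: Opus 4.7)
The plan is to mimic the strategy of Theorem \ref{teorema2} but keep the factorials intact, so that the final bound is on quantities of the form $\frac{|F^{(K)}(z)|}{K!\mathbf{L}^K(z)}$ rather than $\frac{|F^{(K)}(z)|}{\mathbf{L}^K(z)}$ (this is what distinguishes the growth estimate obtainable through Lemma \ref{lema53} from the one obtainable through Lemma \ref{lema43}). First I would fix $j\in\{1,\ldots,n\}$ and an $I\in\mathbb{Z}^n_+$ with $\|I\|=\sum_{k\neq j}p_k$, differentiate the $j$-th equation of \eqref{dvakhvylka} (with $H_j\equiv 0$) using the Leibniz formula, and solve for the top-order term $F^{(p_j\mathbf{e}_j+I)}(z)$, exactly as in \eqref{dvizirochky0}. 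Division by $G_{p_j\mathbf{e}_j}(z)$ is legitimate outside $\mathbb{D}^n(\mathbf{0},R')$ because the first hypothesis forces $G_{p_j\mathbf{e}_j}(z)\neq 0$ there.

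Next, applying the triangle inequality together with the bounds on $|G^{(M)}_{p_j\mathbf{e}_j}(z)|$ and $|G^{(M)}_{S_j}(z)|$ supplied by the hypotheses, I would obtain
\begin{gather*}
|F^{(p_j\mathbf{e}_j+I)}(z)|\leq B\!\!\sum_{\stackrel{\mathbf{0}\leq M\leq I}{M\neq \mathbf{0}}}\!\!C^M_I\mathbf{L}^M(z)|F^{(p_j\mathbf{e}_j+I-M)}(z)|\\
+\,B\sum_{\mathbf{0}\leq M\leq I}C^M_I\sum_{\|S_j\|\le p_j-1}\mathbf{L}^{p_j\mathbf{e}_j-S_j+M}(z)|F^{(S_j+I-M)}(z)|,
\end{gather*}
where $B$ is the common upper bound of the constants $B_{S_j,M}$ and $B_{p_j\mathbf{e}_j,M}$ given in the statement. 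Dividing through by $(p_j\mathbf{e}_j+I)!\,\mathbf{L}^{p_j\mathbf{e}_j+I}(z)$ and rewriting every summand on the right as $\frac{|F^{(K)}(z)|}{K!\mathbf{L}^K(z)}$ times a ratio of factorials, every factor of $\mathbf{L}$ cancels and the leftover combinatorial coefficients are precisely $\frac{(p_j\mathbf{e}_j+I-M)!}{(p_j\mathbf{e}_j+I)!}$ and $\frac{(S_j+I-M)!}{(p_j\mathbf{e}_j+I)!}$ respectively. Since the multi-indices $K$ that appear on the right all satisfy $\|K\|\leq -1+\sum_k p_k$ while $\|p_j\mathbf{e}_j+I\|=\sum_k p_k$ is independent of $j$, taking a maximum over $j$ and over all $\|K\|=\sum_k p_k$ yields
\[
\max\Big\{\tfrac{|F^{(K)}(z)|}{K!\mathbf{L}^K(z)}\colon\|K\|=\textstyle\sum_k p_k\Big\}\leq c\cdot\max\Big\{\tfrac{|F^{(S)}(z)|}{S!\mathbf{L}^S(z)}\colon\|S\|\leq -1+\textstyle\sum_k p_k\Big\}
\]
for all $z\in\mathbb{B}^n\setminus\mathbb{D}^n(\mathbf{0},R')$, with $c$ exactly the constant stated in the theorem.

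To conclude, I would invoke Corollary \ref{naslidokt2} with $G=\mathbb{B}^n\cap\mathbb{D}^n(\mathbf{0},R')$ (a bounded subdomain satisfying $\inf_{z\in\overline{G}}(1-|z|)>0$ because $|R'|<1$) to deduce that $F$ has bounded $\mathbf{L}$-index in joint variables, and then Lemma \ref{lema53} with $p=-1+\sum_k p_k$ applied to the same inequality (which has the form \eqref{lemriv3}) delivers the desired asymptotic estimate on $\ln\max\{|F(z)|\colon z\in\mathbb{T}^n(\mathbf{0},R)\}$. The only non-trivial step is the combinatorial bookkeeping in the second paragraph: one must carefully verify that after normalizing by $(p_j\mathbf{e}_j+I)!\mathbf{L}^{p_j\mathbf{e}_j+I}(z)$ the factorial ratios $\frac{(p_j\mathbf{e}_j+I-M)!}{(p_j\mathbf{e}_j+I)!}$ and $\frac{(S_j+I-M)!}{(p_j\mathbf{e}_j+I)!}$ appear with the multinomial coefficients $C^M_I$ in exactly the configuration given by the constant $c$, and that the maximum over $j$ reproduces the outer maximum in the statement. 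Beyond this accounting, no new analytical ingredient is needed.
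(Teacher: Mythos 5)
Your proof proposal reproduces exactly what the paper indicates: the paper's ``proof'' of this theorem is a single line saying ``By analogy to the proofs of Theorems~\ref{teorema2} and~\ref{teorema3} it can be proved the following assertion,'' and you correctly synthesize the two, taking the homogeneous decomposition from Theorem~\ref{teorema2} and the factorial normalization (dividing by $(p_j\mathbf{e}_j+I)!\mathbf{L}^{p_j\mathbf{e}_j+I}(z)$) from Theorem~\ref{teorema3}. The combinatorial bookkeeping is right: after the division the $\mathbf{L}$-powers cancel and the leftover ratios are $\frac{(p_j\mathbf{e}_j+I-M)!}{(p_j\mathbf{e}_j+I)!}$ and $\frac{(S_j+I-M)!}{(p_j\mathbf{e}_j+I)!}$, the indices on the right all have order $\leq -1 + \sum_k p_k$, and the pigeonhole observation that every $K$ with $\|K\|=\sum_k p_k$ has some $k_j\geq p_j$ ensures the maximum over $j$ covers all such $K$.

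There is one point you (and the paper) pass over. Lemma~\ref{lema53}, when fed the inequality \eqref{lemriv3} with constant $c$ and $p = -1 + \sum_k p_k$, delivers the bound $c(p+1) = c\sum_k p_k$ in \eqref{jontgrowth2}, \emph{not} $c$. So if $c$ is the combinatorial constant you computed, the growth estimate you actually obtain is $\varlimsup \leq c\sum_k p_k$ rather than the $\leq c$ claimed in the theorem statement; the same discrepancy is already present in the paper's Theorem~\ref{teorema3}. Also, strictly speaking Corollary~\ref{naslidokt2} asks for \eqref{bordriv122}, which has no factorials, whereas you derive the factorial form \eqref{lemriv3}; the two are equivalent up to a multiplicative constant bounded by $((p+1)!)^n$ since $\|K\|\leq p+1$, so the boundedness-of-index conclusion is unaffected, but this conversion should at least be acknowledged. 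Neither issue is a new error on your part — you have faithfully filled in the details the paper omitted — but your last sentence (``delivers the desired asymptotic estimate'') should be qualified, since what Lemma~\ref{lema53} literally delivers is the estimate with the extra factor $\sum_k p_k$.
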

\begin{remark}
	We should like to note that the obtained propositions are new even for analytic in a disc functions.
%	
%	 improvements of corresponding theorems of Bordulyak for $n=1$ in \cite{bordulyakgrowth}. 
%	Indeed, she considered a positive continuous function $l=l(|z|)$ such that $l'(t)=o(l^2(t))$ as $t\to +\infty.$ But our restrictions by the function $l$ are weaker. 
%	We investigate a positive continuous function $l=l(z)$ such that 
%	$(-(u(r,\theta))'_t)^+/l^2(r,\theta) \to 0$ 
%	uniformly for all $\theta\in[0,2\pi]$  as $t\to \infty,$ 
%	where $u(r,\theta)=l(re^{i\theta}).$ 
\end{remark}

For example, if $n=1$ then system \eqref{dvakhvylka} reduces to the following differential equation
\begin{equation} \label{dvakhvylkan1}
g_{p}(z)f^{(p)}(z)+
\sum_{j-0}^{p-1} g_{j}(z)f^{(j)}(z)=h(z),
\end{equation}
where  $h$ and $g_j$ are analytic in $\mathbb{D}$ functions.
Then Theorem \ref{teorema1} implies the corollary for $n=1.$
\begin{corollary}   \renewcommand{\labelenumi}{\arabic{enumi})}
	Let $l\in W(\mathbb{D})\cap Q(\mathbb{D})$ and for all $z\in\mathbb{C}$ such that $|z|>r'$ analytic in $\mathbb{D}$ functions $h$ and $g_{j}$ satisfy the following conditions 
	\begin{enumerate}
		%		\item $|G_{S_j}(z)|\leq A_{S_j} \mathbf{L}^{p_j\mathbf{e}_j-S_j}(z) |G_{p_j\mathbf{e}_j}(z)|$ for all $\|S_j\|\leq p_j-1,$ $j\in\{1,\ldots,n\};$
		\item $ \left|g^{(m)}_{j}(z)\right|\leq B_{j,m}l^{p-j+m}(z) |g_{p}(z)|$ 
		and $ \left|g'_{p}(z)\right|< B_{p,1}l^{m}(z) |g_{p}(z)|$ 
		for every $j\in\{1,\ldots,p-1\},$ $m\in\{0,1\},$ 
		\item $ \left| h'(z) \right|\leq D l(z)|h(z)|,$
	\end{enumerate}
	where $B_{j,m}$ and $D$ are nonnegative constants,  and $B_{p,1}$ is positive constant.  If an analytic in $\mathbb{D}$  function $f$ satisfies \eqref{dvakhvylkan1}  
	then $f$ has bounded $l$-index and 
	\begin{equation*} 
	\varlimsup\limits_{r\to \infty} 
	\frac{\ln \max\{|f(z)| \colon |z|=r \}}{\max\limits_{\theta\in[0,2\pi]} \int_0^{r} l\left(\tau e^{i\theta}\right) d\tau}\leq c,
	\end{equation*}
	where 
	$c\!=D (1+\sum_{j=0}^{p-1} B_{j,0})+ B_{p,1}\!+\! 
	\sum_{m=0}^{1} \sum_{j=0}^{p-1} B_{j,m}.$
\end{corollary}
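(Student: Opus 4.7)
The plan is to observe that this statement is simply the one-dimensional specialization of Theorem~\ref{teorema1}, and so the proof should consist of translating the hypotheses, verifying that the computation of the constant collapses to the stated formula, and checking that the growth integral reduces to the one-dimensional form. First I would fix $n=1$ in system~\eqref{dvakhvylka}; then $j$ runs only over $\{1\}$, the single index $p_1$ becomes $p$, the multi-index $S_j$ reduces to a scalar $s\in\{0,1,\ldots,p-1\}$, and $G_{p_j\mathbf{e}_j}=g_p$, $G_{S_j}=g_s$, $H_j=h$. Thus equation~\eqref{dvakhvylkan1} is exactly \eqref{dvakhvylka} in this case.

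Next I would identify the multi-indices $I$ and $M$ appearing in the hypotheses of Theorem~\ref{teorema1}. Since $\|I\|=1-p_j+\sum_k p_k=1$ when $n=1$, the only admissible $I$ is $I=1$, and the admissible $M$ satisfy $0\le M\le 1$, so $M\in\{0,1\}$. Hence condition~1) of Theorem~\ref{teorema1} becomes precisely the conditions $|g_s^{(m)}(z)|\le B_{s,m}l^{p-s+m}(z)|g_p(z)|$ for $s\le p-1$, $m\in\{0,1\}$, and $|g_p'(z)|<B_{p,1}l(z)|g_p(z)|$; similarly condition~2) reduces to $|h'(z)|\le D\,l(z)|h(z)|$ with $D=D_{1,1}$. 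The hypothesis $\mathbf{L}\in W(\mathbb{B}^n)\cap Q(\mathbb{B}^n)$ becomes $l\in W(\mathbb{D})\cap Q(\mathbb{D})$.

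Then I would compute the constant $c$ from Theorem~\ref{teorema1} in this setting. With $I=1$ the binomial coefficients $C^M_I$ equal $1$ for $M=0,1$, the sum $\sum_{\mathbf{0}<M\le I}C^M_I B_{p_j\mathbf{e}_j,M}$ reduces to the single term $B_{p,1}$, and $\sum_{\mathbf{0}\le M\le I}C^M_I\sum_{\|S_j\|\le p_j-1}B_{S_j,M}$ becomes $\sum_{m=0}^{1}\sum_{j=0}^{p-1}B_{j,m}$. Therefore
\[
c=D\Bigl(1+\sum_{j=0}^{p-1}B_{j,0}\Bigr)+B_{p,1}+\sum_{m=0}^{1}\sum_{j=0}^{p-1}B_{j,m},
\]
which is the constant in the corollary. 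Finally, for $n=1$ the expression $\max_{\Theta\in[0,2\pi]^n}\int_0^{r^*}\sum_{j=1}^{n}\frac{r_j}{r^*}l_j(\tfrac{\tau}{r^*}Re^{i\Theta})\,d\tau$ collapses to $\max_{\theta\in[0,2\pi]}\int_0^{r}l(\tau e^{i\theta})\,d\tau$ because $r^*=r_1=r$, so estimate~\eqref{kashtan} becomes the stated estimate. Thus the corollary follows directly from Theorem~\ref{teorema1}; there is no genuine obstacle beyond careful bookkeeping of the indices, and no new analytic argument is required.
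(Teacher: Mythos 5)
Your proposal is correct and matches the paper's approach exactly: the paper itself derives the corollary simply by remarking that for $n=1$ system \eqref{dvakhvylka} reduces to \eqref{dvakhvylkan1} and that Theorem~\ref{teorema1} then applies. Your verification of the index bookkeeping (forcing $I=1$, $M\in\{0,1\}$, $C^M_I=1$) and of the resulting constant $c$ and one-dimensional growth integral is accurate and supplies the details the paper leaves implicit.
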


{\footnotesize

   Department of Advanced Mathematics
   
   Ivano-Frankivs'k National Technical University of Oil and Gas
   
   andriykopanytsia@gmail.com% Author's institution address, e-mail address.
   
   Department of Function Theory and Theory of Probability
   
   Ivan Franko National University of Lviv
   
   olskask@gmail.com
}

%\received{?} %(to fill in by the Editors)

%%\revised{?} % (to fill in by the Editors)
\end{document}